\tikzset{
  commutative diagrams/.cd, 
  arrow style=tikz, 
  diagrams={>=stealth}
}
\definecolor{darkblue}{HTML}{0000A6}
\space\href{https://doi.org/#1}{#1}
\space\href{http://arxiv.org/\abx@arxivpath/#1}{#1}
\space\href{http://www.ams.org/mathscinet-getitem?mr=MR#1}{#1}
\space\href{http://zbmath.org/?q=an:#1}{#1}
\newcommand{\printreferences}{\raggedright\printbibliography[heading=bibintoc]}
\ifundef{\abstract}{}{\patchcmd{\abstract}%
    {\quotation}{\quotation\noindent\ignorespaces}{}{}}
\numberwithin{equation}{section}
\renewcommand{\qedsymbol}{$\blacksquare$}
\newcommand{\CorollaryQED}{\qedsymbol}
\newcommand{\ConjectureQED}{$\square$}
\newcommand{\SituationQED}{$\times$}
\newcommand{\DefinitionQED}{$\spadesuit$}
\newcommand{\NotationQED}{$\blacktriangleright$}
\newcommand{\ExampleQED}{$\bullet$}
\newcommand{\RemarkQED}{$\clubsuit$}
\declaretheorem[numberlike=equation,]{theorem}
\declaretheorem[numbered=no,name=Theorem]{theorem*}
\declaretheorem[numberlike=equation,name=Lemma]{lemma}
\declaretheorem[numberlike=equation,name=Proposition]{prop}
\declaretheorem[numberlike=equation,name=Corollary,qed=\CorollaryQED]{cor}
\declaretheorem[numberlike=equation,name=Definition,style=definition,qed=\DefinitionQED]{definition}
\declaretheorem[numbered=no,name=Definition,style=definition,qed=\DefinitionQED]{definition*}
\declaretheorem[numberlike=equation,name=Notation,style=definition,qed=\NotationQED]{notation}
\declaretheorem[numberlike=equation,style=definition,qed=\ExampleQED]{example}
\declaretheorem[numberlike=equation,style=remark,qed=\RemarkQED]{remark}
\declaretheorem[numbered=no,style=remark,name=Remark,qed=\RemarkQED]{remark*}
\declaretheorem[numberlike=equation,style=definition]{question}
\def\makeautorefname#1#2{\AtBeginDocument{\expandafter\def\csname#1autorefname\endcsname{#2}}}
\numberwithin{substep}{step}
\setlist[description]{leftmargin=!,labelindent=1em}
\setlist[enumerate]{label={\rm (\arabic*)},ref=\arabic*}
\setlist[enumerate,2]{label={\rm (\alph*)},ref=\theenumi.\alph*}
\setlist[enumerate,3]{label={\rm (\roman*)},ref=\theenumii.\roman*}
\let\C\undefined
\DeclareFontFamily{U}{mathx}{\hyphenchar\font45}
\DeclareFontShape{U}{mathx}{m}{n}{
      <5> <6> <7> <8> <9> <10>
      <10.95> <12> <14.4> <17.28> <20.74> <24.88>
      mathx10
      }{}
\DeclareSymbolFont{mathx}{U}{mathx}{m}{n}
\DeclareMathAccent{\widecheck}{0}{mathx}{"71}
\DeclareMathAccent{\wideparen}{0}{mathx}{"75}
\DeclareMathOperator{\Aut}{Aut}
\DeclareMathOperator{\Diff}{Diff}
\DeclareMathOperator{\End}{End}
\DeclareMathOperator{\Ext}{Ext}
\DeclareMathOperator{\GL}{GL}
\DeclareMathOperator{\graph}{graph}
\DeclareMathOperator{\HF}{\HF}
\DeclareMathOperator{\Hol}{Hol}
\DeclareMathOperator{\Hom}{Hom}
\DeclareMathOperator{\coker}{coker}
\DeclareMathOperator{\diag}{diag}
\DeclareMathOperator{\im}{im}
\DeclareMathOperator{\ind}{index}
\DeclareMathOperator{\spec}{spec}
\DeclarePairedDelimiter{\norm}{\|}{\|}
\DeclarePairedDelimiterX{\inp}[2]{\langle}{\rangle}{#1, #2}
\DeclarePairedDelimiter{\abs}{\lvert}{\rvert}
\def\({\left(}
\def\){\right)}
\def\<{\left\langle}
\def\>{\right\rangle}
\newcommand{\C}{{\mathbf{C}}}
\newcommand{\N}{{\mathbf{N}}}
\newcommand{\Q}{\mathbf{Q}}
\newcommand{\R}{\mathbf{R}}
\newcommand{\SO}{\mathrm{SO}}
\newcommand{\SU}{\mathrm{SU}}
\newcommand{\Vect}{\mathrm{Vect}}
\newcommand{\Z}{\mathbf{Z}}
\newcommand{\id}{\mathbf 1}
\newcommand{\ob}{\mathrm{ob}}
\newcommand{\qandq}{\quad\text{and}\quad}
\newcommand{\reg}{\mathrm{reg}}
\newcommand{\sing}{\mathrm{sing}}
\renewcommand{\epsilon}{\varepsilon}
\newcommand{\vol}{\mathrm{vol}}
\renewcommand{\Im}{\operatorname{Im}}
\renewcommand{\O}{\mathrm{O}}
\renewcommand{\Re}{\operatorname{Re}}
\renewcommand{\emptyset}{\varnothing}
\renewcommand{\setminus}{{\backslash}}
\renewcommand{\leq}{\leqslant}
\renewcommand{\geq}{\geqslant}
\newcommand{\w}{\wedge}
\newcommand{\tn}{\otimes}
\renewcommand*\env@matrix[1][*\c@MaxMatrixCols c]{%
  \hskip -\arraycolsep
  \let\@ifnextchar\new@ifnextchar
  \array{#1}}
\renewcommand\xleftrightarrow[2][]{%
  \ext@arrow 9999{\longleftrightarrowfill@}{#1}{#2}}
\newcommand\longleftrightarrowfill@{%
  \arrowfill@\leftarrow\relbar\rightarrow}
\newcommand{\bn}{{\mathbf{n}}}
\newcommand{\bD}{{\mathbf{D}}}
\newcommand{\bR}{{\mathbf{R}}}
\newcommand{\cD}{\mathcal{D}}
\newcommand{\cE}{\mathcal{E}}
\newcommand{\cI}{\mathcal{I}}
\newcommand{\cK}{\mathcal{K}}
\newcommand{\cM}{\mathcal{M}}
\newcommand{\cS}{\mathcal{S}}
\newcommand{\cX}{\mathcal{X}}
\newcommand{\cZ}{\mathcal{Z}}
\newcommand{\sP}{\mathscr{P}}
\newcommand{\fc}{{\mathfrak c}}
\newcommand{\fm}{{\mathfrak m}}
\newcommand{\fF}{{\mathfrak F}}
\newcommand{\fL}{{\mathfrak L}}
\newcommand{\ocs}{{\operatorname{cs}}}
\newcommand{\bsP}{{\bm{\sP}}}
\newcommand{\bcM}{{\bm{\cM}}}
\newcommand{\bphi}{{\bm{\phi}}}
\author{Gorapada Bera}
\title{Deformations and desingularizations of conically singular associative submanifolds}
\date{\vspace{-5ex}}
\begin{document}
\maketitle
\begin{abstract}The proposals of \citet{Joyce2016} and Doan--Walpuski \cite{Doan2017d} on counting closed associative submanifolds of $G_2$-manifolds depend on various conjectural transitions. This article contributes to the study of transitions arising from the degenerations of associative submanifolds into conically singular (CS) associative submanifolds. First, we study the moduli space of CS associative submanifolds with isolated singularities modeled on associative cones in $\R^7$, establishing transversality results in both fixed and one-parameter families of co-closed $G_2 $-structures. We prove that for a generic co-closed $G_2$-structure (or a generic path thereof) there are no CS associative submanifolds having singularities modeled on cones with stability-index greater than $0$ (or $1$, respectively). We establish that associative cones whose links are null-torsion holomorphic curves in $S^6$ have stability-index greater than $4$, and all special Lagrangian cones in $\C^3$ have stability-index greater than or equal to $1$ with equality only for the Harvey--Lawson $T^2$-cone and a transverse pair of planes. Next, we study the desingularizations of CS associative submanifolds in a one-parameter family of co-closed $G_2$-structures. Consequently, we derive desingularization results relating the above transitions for CS associative submanifolds with a Harvey--Lawson $T^2$-cone singularity and for associative submanifolds with a transverse self-intersection.  
\end{abstract}

\section{Introduction: Main results}
\citet{Joyce2016} and Doan--Walpuski \cite{Doan2017d} have made proposals of constructing enumerative invariants of $G_2$-manifolds by counting closed associative submanifolds. It has been observed that the  counting (possibly with sign) of associative submanifolds does not lead to an invariant due to various transitions that may occur along a generic path of $G_2$-structures. One type of such transition is the degeneration of associative submanifolds into conically singular associative submanifolds. In particular, it has been conjectured (see \citet[Conjectures 4.4 and 5.3]{Joyce2016}) that at least
the following transitions can occur along a generic path of $G_2$-structures $\phi_t$ on a $7$-dimensional manifold $Y$. 
\begin{enumerate}[1.]	
\item Three families of embedded closed associative submanifolds (see \autoref{Fig_T2Singularities}), $P^1_t$ with $-T<t<0$ and $P^2_t, P^3_t$ with $0<t<T$ in $(Y,\phi_t)$, converge in the sense of currents to an associative submanifold $P$ with a Harvey--Lawson (HL) $T^2$-cone singularity at $x$ in $(Y,\phi_{0})$, as $t\to 0$. The $P_t^i$ are diffeomorphic to the Dehn fillings of $P^o:=P\setminus B(x)$ along  simple closed curves $\mu_i\subset \partial P^o\cong T^2$ which satisfy $\mu_1.\mu_2=\mu_2.\mu_3=\mu_3.\mu_1=-1$.
\begin{figure}[h]
  \centering
  \begin{tikzpicture}
    \draw[thick,magenta] (-3,1.5) node [left] {$P_t^1$}  -- (0,1.5);
    \draw[thick,violet] (0,1.5) .. controls (0,2) and (2.75,2.5).. (3,2.5) node [right] {$P_t^2$} ;
    \draw[thick,cyan] (0,1.5) .. controls (0,1) and (2.75,.5).. (3,.5) node [right] {$P_t^3$};
    \filldraw[red] (0,1.5) node [above left] {$P$} circle (0.05);
    \draw[gray] (-3,0) -- (0,0);
    \draw[|-stealth,gray] (0,0) -- (3,0) node [right] {$\phi_t$};
  \end{tikzpicture}  
  \caption{Three associatives arising from a singular associative.}
  \label{Fig_T2Singularities}
\end{figure}
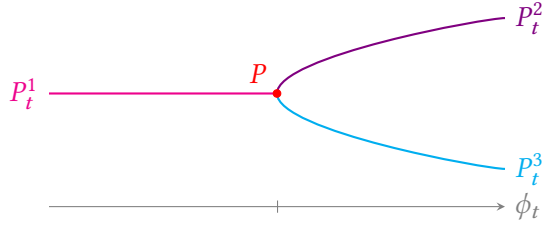

\

 \item Two families of embedded closed associative submanifolds (see \autoref{Fig_IntersectingAssociatives}), $P_t$ with $0\neq t\in (-T,T)$ and $P^\#_t$ with $0<t<T$ in $(Y,\phi_t)$, converge in the sense of currents to an associative submanifold $P$ with a self intersection in $(Y,\phi_{0})$, as $t\to 0$. The $P^\#_t$ are diffeomorphic to the connected sums $P_t\#(S^1\times S^2)$ if $P_t$ are connected, and otherwise to $P_t^+\#P_t^-$, where $P_t=P_t^+\amalg P_t^-$.	
 	\begin{figure}[h]
  \centering
  \begin{tikzpicture}
    \draw[thick,cyan] (-3,.5) -- (3,.5) node[right] {$P_t$};% node[above left] {\footnotesize  $\pm$};
    \draw[thick,Blue] (0,.5) .. controls (0,1.5) and (2,1.5) .. (3,1.5) node[right] {$P^\#_t$}; %node[below left] {\footnotesize  $\pm$};
    %\filldraw[gray] (0,.5) circle (0.05);
    \filldraw[red] (0,.5) node [above left] {$P$} circle (0.05);
    \draw[gray] (-3,0) -- (0,0);
    \draw[|-stealth,gray] (0,0) -- (3,0) node [right] {$\phi_t$};
  \end{tikzpicture}  
  \caption{Birth of an associative out of an associative with self intersection.}
  \label{Fig_IntersectingAssociatives}
\end{figure}
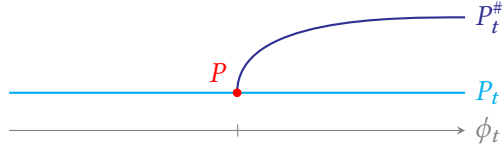

\end{enumerate}

One of the contributions of this article is to confirm the above two transitions, provided there is an associative submanifold with a Harvey--Lawson $T^2$-cone singularity (see \autoref{thm desing HL sing}) and an associative submanifold with a self intersection (see \autoref{thm desing intersection}), respectively. More generally, we prove a desingularization theorem (see \autoref{thm main desing}) for conically singular associative submanifolds under a certain hypothesis \autoref{hyp desing CS}. This is done in a $1$-parameter family of co-closed $G_2$-structures\footnote{These structures are weaker than torsion-free $G_2$-structures, but torsion-free structures form a special subclass; see \autoref{def G2 manifold}. We work with co-closed rather than closed structures, as the deformation operator for associatives is self-adjoint only in the co-closed case—a property used extensively in this article, and also being used in \cite{Joyce2016} to define canonical orientations. While closed structures are more suited to compactness issues, \citet[Section 2.5]{Joyce2016} addresses this by introducing the subclass of tamed structures. Since compactness is not relevant here, co-closed structures suffice.} by gluing rescaled asymptotically conical associative submanifolds of $\R^7$. Consequently we derive the first transition (see \autoref{thm desing HL sing}) and partially derive the second transition (see \autoref{thm desing intersection}). 

Desingularization results of a similar nature have been obtained previously for special Lagrangians in Calabi--Yau manifolds and for coassociatives in $G_2 $-manifolds \cites{Joyce2004c, Joyce2004d, Lotay2009a, Lotay2014}, but the associative case poses significantly greater difficulties. In particular, to account for the scaling freedom in the gluing construction, the $G_2 $-structure must vary in a one-parameter family; otherwise, one would produce a one-parameter family of associatives in a fixed $G_2$-manifold, contradicting the expected dimension zero. This necessity, captured in the hypothesis \autoref{hyp desing CS}, introduces further analytical complexity. Moreover, in contrast to the special Lagrangian and coassociative cases, associative deformations can be obstructed even in the smooth setting, making the problem substantially more delicate.

Another contribution of this article is the study of the deformation theory of conically singular (CS) associative submanifolds. This not only underpins the aforementioned desingularization results but also makes further progress toward the conjectural enumerative theories proposed by \citet{Joyce2016} and Doan--Walpuski \cite{Doan2017d} for $G_2$-manifolds. Specifically, one needs to understand all possible degenerations of closed associative submanifolds that may arise in a generic $d$-parameter family of co-closed $G_2$-structures for $d=0,1$. It is known in geometric measure theory (see \autoref{thm cptness current}) that the associative submanifolds can only degenerate into an associative integral current with a singular set of Hausdorff dimension at most $1$. However, the precise regularity of this singular set remains unknown.  If all the tangent cones are Jacobi integrable multiplicity $1$ associative cones with smooth link, then the associative integral current is conically singular with isolated singular points (see \autoref{def CS asso}). This naturally leads to the following question.
  \begin{question}\label{question all CS asso types}
  	What are all the possible conically singular (CS) associative submanifolds that may appear in a generic $d$-parameter family of co-closed $G_2$-structures with $d=0,1$?
  \end{question}
 To answer this question we study the deformation theory of conically singular associative submanifolds. We explain (see \autoref{thm moduli cs asso}) that the index of the deformation operator can be expressed completely in terms of a certain non-negative integer associated to the tangent cones, which we define to be the stability-index (see \autoref{def stability index}). In particular, CS associative submanifolds having one singularity modeled on cones with stability-index $0$ and $1$ can only appear in a generic $0$ and $1$-parameter family of co-closed $G_2$-structures, respectively (see \autoref{thm generic moduli cs asso}). Therefore we ask the following subsequent question. 
\begin{question}\label{question all asso cones stabilty index leq 1}
  	What are all the associative cones in $\R^7$ with stability-index equal to $0$ and $1$?
  \end{question}
 We establish that the Harvey--Lawson $T^2$-cone and a union of two transverse special Lagrangian planes have stability-index $1$ (see \autoref{thm stability index of cones}). Also we prove with the help of a result obtained by \citet{Haskins2004a} that all other special Lagrangian cones in $\C^3$ have stability-index strictly greater than $1$. Furthermore, building on a result of \citet{Madnick2021}, we prove that all associative cones in $\R^7$ whose links are null-torsion holomorphic curves in $S^6$ (see \citet{Bryant1982}) have stability-index strictly greater than $4$. In particular, this applies to all associative cones whose link in $S^6$ is of genus $0$ but not a totally geodesic sphere. We also establish that associative cones with link of genus $1$ always have stability-index greater than or equal to $1$. Although these results do not fully resolve \autoref{question all asso cones stabilty index leq 1}, they do rule out a large class of associative cones, narrowing down the possibilities and paving the way for future exploration.

We now provide a summary of the main results established in this article, encompassing the discussions above.
\subsection*{Associative cones}
Since our aim is to study the deformation theory of conically singular associative submanifolds-- where the associative cones play an important role-- we begin in \autoref{subsection Associative cones} with an investigation of associative cones in $\R^7$. These are the cones whose links in $(S^6,J)$ are holomorphic curves (see \autoref{subsection Associative cones}). For this purpose, we express the moduli space of holomorphic curves in $S^6$ locally as the zero set of a nonlinear map (see \autoref{def nonlinear holo in S6}), as described in the following theorem.

 \begin{theorem}\label{thm moduli holo}Let $\Sigma$ be a closed holomorphic curve in $(S^6,J)$. Then a neighbourhood of $\Sigma$ in the moduli space of holomorphic curves  is homeomorphic to the zero set of a smooth map (often called the obstruction map or Kuranishi map):
$$\ob_\Sigma:\mathcal I_\Sigma \to \coker(\bD_\Sigma+2J),$$
where the operator $\bD_\Sigma$ is defined in \autoref{eq dirac sigma} and $\mathcal I_\Sigma$ is an open neighbourhood of $0$ in $\ker(\bD_\Sigma+2J)$.
Moreover, the index of the deformation operator~ $\mathbf D_\Sigma+2J$ is zero. This deformation operator is related to another standard deformation operator, the normal Cauchy-Riemann operator $\bar\partial^N_{\nabla}$ (see \autoref{def normal Cauchy Riemann}), by a $J$-anti-linear isomorphism $\boldsymbol{\gamma}_\Sigma: \overline{\operatorname{Hom}}_\C(T\Sigma,N\Sigma)\to {N\Sigma}$ as follows:  $$\boldsymbol{\gamma}_\Sigma\circ \bar\partial^N_{\nabla}=\mathbf D_\Sigma+2J.$$
\end{theorem}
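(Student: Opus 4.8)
The plan is a Kuranishi (Lyapunov--Schmidt) reduction for the holomorphicity equation, together with an explicit bundle identification to recognise the linearised operator. By the tubular neighbourhood theorem every closed surface $C^1$-close to $\Sigma$ in $S^6$ is a normal graph $\Sigma_v=\{\exp_p(v_p):p\in\Sigma\}$ for a unique small $v\in\Gamma(N\Sigma)$, and $\Sigma_v$ is holomorphic precisely when $v$ lies in the zero set of the smooth nonlinear map $F$ of \autoref{def nonlinear holo in S6}, which records---after transporting back to $\Sigma$ by the tubular neighbourhood diffeomorphism---the failure of $T\Sigma_v$ to be $J$-invariant. Since $\Sigma$ is holomorphic, $F(0)=0$, and the linearisation $\rd F_0$ is the deformation operator of $\Sigma$ as a holomorphic curve, namely the normal Cauchy--Riemann operator $\bar\partial^N_{\nabla}$ of \autoref{def normal Cauchy Riemann}; precomposing with the isomorphism $\bgamma_\Sigma$ discussed below turns it into $\bD_\Sigma+2J$.

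Next I would carry out the reduction. The operator $\bD_\Sigma+2J$ is first-order elliptic on the closed surface $\Sigma$, hence Fredholm on suitable Hölder completions; fix $L^2$-orthogonal projections onto $K:=\ker(\bD_\Sigma+2J)$ and onto $\coker(\bD_\Sigma+2J)$. Decomposing $F(v)=0$ into its $\im(\bD_\Sigma+2J)$-component and its cokernel component, the implicit function theorem solves the first component for $v=\xi+w(\xi)$ with $\xi$ ranging over a small ball $\cI_\Sigma\subset K$, where $w$ is smooth, $w(0)=0$ and $\rd w_0=0$. Then $\ob_\Sigma(\xi):=\Pi_{\coker}F(\xi+w(\xi))$ defines a smooth map $\ob_\Sigma\colon\cI_\Sigma\to\coker(\bD_\Sigma+2J)$ whose zero set parametrises, via $\xi\mapsto\Sigma_{\xi+w(\xi)}$, a neighbourhood of $\Sigma$ in the moduli space; uniqueness in the implicit function theorem gives injectivity of this parametrisation, and elliptic regularity (holomorphic curves being smooth) makes it a homeomorphism for the $C^\infty$ topology.

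For the index I would argue as follows. Since $\bD_\Sigma+2J$ maps $\Gamma(N\Sigma)$ to itself and differs from $\bD_\Sigma$ only by the relatively compact zeroth-order term $2J$, one has $\ind(\bD_\Sigma+2J)=\ind\bD_\Sigma$, and $\bD_\Sigma$ is formally self-adjoint, so its index vanishes. Equivalently---and independently of self-adjointness---once $\bgamma_\Sigma$ is in hand the index equals $\ind\bar\partial^N_{\nabla}$; since the $\C$-antilinear torsion term in $\bar\partial^N_{\nabla}$ is zeroth-order and hence does not change the index, this is twice the complex index of the underlying $\bar\partial$-operator on $N\Sigma$, which by Riemann--Roch is $\deg_\C N\Sigma+\rk_\C N\Sigma\,(1-g)$ with $g$ the genus of $\Sigma$. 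Finally, $c_1(TS^6)=0$ and the $J$-invariant splitting $TS^6|_\Sigma=T\Sigma\oplus N\Sigma$ give $\deg_\C N\Sigma=-\deg_\C T\Sigma=2g-2$, so the index is $2\bigl((2g-2)+2(1-g)\bigr)=0$.

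The remaining ingredient, and the main obstacle, is the isomorphism $\bgamma_\Sigma$ together with the identity $\bgamma_\Sigma\circ\bar\partial^N_{\nabla}=\bD_\Sigma+2J$. Here I would use that the normal bundle of the associative cone $C(\Sigma)\subset\R^7$ is the radially constant bundle $\pi^*N\Sigma$, carrying the Clifford-module structure induced from $TC(\Sigma)$, and define $\bgamma_\Sigma\colon\overline{\Hom}_\C(T\Sigma,N\Sigma)\to N\Sigma$ by inserting the $(0,1)$-cotangent factor (via the metric) into a tangent direction and applying Clifford multiplication; a rank count---both sides have complex rank $2$---and nondegeneracy of Clifford multiplication make this a $J$-anti-linear bundle isomorphism. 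The operator identity is then a pointwise computation relating the $(0,1)$-part of the normal Levi-Civita connection to the definition of $\bD_\Sigma$ in \autoref{eq dirac sigma}, the constant $2$ being produced exactly by the nearly-K\"ahler torsion $\nabla J$ of $S^6$. Keeping this computation under control---tracking the discrepancy between the Levi-Civita and canonical connections on $S^6$ and pinning down the torsion constant---is the delicate step; the rest is routine Fredholm theory.
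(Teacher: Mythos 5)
Your proposal follows essentially the same route as the paper: express the holomorphicity condition as the zero set of the nonlinear map $\mathcal F$ of \autoref{def nonlinear holo in S6}, run a Lyapunov--Schmidt / implicit-function reduction around the elliptic linearisation, identify that linearisation with $\bD_\Sigma+2J$ through the cross-product isomorphism $\bgamma_\Sigma$ and the nearly-K\"ahler torsion (as in \autoref{prop relation dirac and cauchy riemann}), and bootstrap elliptic regularity to upgrade H\"older to smooth. The index argument differs slightly in flavour but not in substance: the paper observes $d\mathcal F_0=J\bD_\Sigma-2$ is formally self-adjoint, using $J$-antilinearity and self-adjointness of $\bD_\Sigma$ together, whereas you reduce to self-adjointness of $\bD_\Sigma$ alone by compact-perturbation stability; your Riemann--Roch computation $\deg_\C N\Sigma+2(1-g)=0$ via $c_1(TS^6)=0$ is a correct alternative not in the paper and serves as a useful sanity check. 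Two small imprecisions are worth fixing: $d\mathcal F_0$ is not $\bar\partial^N_\nabla$ on the nose but $J\bgamma_\Sigma\circ\bar\partial^N_\nabla$, i.e.\ differs by a bundle isomorphism (this does not affect Fredholmness, kernel, or cokernel, but the statements should not be conflated), and one must \emph{postcompose} with $\bgamma_\Sigma$ rather than precompose, since $\bgamma_\Sigma$ carries the codomain $\overline{\operatorname{Hom}}_\C(T\Sigma,N\Sigma)$ of $\bar\partial^N_\nabla$ back to $N\Sigma$.
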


We denote by $\mathcal{M}^{\mathrm{hol}}$ the moduli space of closed holomorphic curves in $(S^6,J)$. Its subset $\mathcal{M}^{\mathrm{hol}}_{\bullet}$ consists of closed, connected holomorphic curves. This space naturally carries a structure of a real analytic space, essentially by \autoref{thm moduli holo}, but it is not necessarily a smooth manifold. In order to establish the results in \autoref{thm moduli cs asso} and \autoref{thm generic moduli cs asso} concerning the moduli space of conically singular associative submanifolds and their transversality properties, we equip $\mathcal{M}^{\mathrm{hol}}_{\bullet}$ with a \textbf{canonical minimal Whitney stratification} in the sense of \cite[Chapter I, Sections 1–2]{Gibson1976}, a decomposition
\begin{equation}\label{eq stratification}
\mathcal{M}^{\mathrm{hol}}_{\bullet} = \bigsqcup_{k \in I} \mathcal{Z}^{(k)},
\end{equation}
where
\begin{itemize}
    \item $I \subset \Z_{\geq 0}$ is a countable index set corresponding to the dimensions of the strata,
    \item each stratum $\mathcal{Z}^{(k)}$ is a smooth manifold of dimension $k$ and is preserved under the natural action of the group $G_2$,
    \item the strata are pairwise disjoint and collectively form a Whitney stratification,
      \item the stratification is minimal and canonical:  it consists of the fewest strata necessary to satisfy the above properties and is uniquely determined up to local equivalence of stratified spaces.
\end{itemize}
Such a canonical minimal Whitney stratification exists for any real analytic space, as established in classical results (see, for example, \cites{Hironaka1973,Hardt1975}). While such stratifications are not necessarily unique globally, any two are locally equivalent as stratified spaces.  This guarantees that many of the quantities considered below are independent of the particular choice of the above stratification.

Using any canonical minimal Whitney stratification as in \autoref{eq stratification}, we associate to each associative cone $C$ in $\R^7$, an integer invariant $\operatorname{s-ind}(C)$, referred to as the stability index. It turns out that the negative of this stability-index is essentially the virtual dimension of the moduli space of conically singular associative submanifolds modeled on $C$.

\begin{definition}[Stability-index]\label{def stability index}Let $C$ be an associative cone in $\R^7$. Denote the link by $\Sigma$, which is a closed $J$-holomorphic curve in $S^6$. Let $d_\lambda$ be the dimension of homogeneous kernels from \autoref{def homogeneous kernel}. Let $\Sigma=\amalg_{j=1}^l\Sigma_j$ be the decomposition into connected components.  Let $\cZ_j$, $j=1,\dots,l$, be the stratum in the decomposition \autoref{eq stratification} containing $\Sigma_j$. The \textbf{stability index} of $C$ is defined by
\[\operatorname{s-ind}(C):=\frac {d_{-1}}2+\displaystyle\sum_{-1<\lambda\leq 1}d_\lambda-7-\sum_{j=1}^l\dim \cZ_j.\qedhere\] 
\end{definition}
\begin{remark}Since canonical minimal Whitney stratifications are locally equivalent and $\dim \cZ_j$ coincides with the dimension of the tangent space of $\cZ_j$ at $\Sigma_j$,  $\operatorname{s-ind}(C)$ is independent of the particular choice of such a stratification. We will see in \autoref{rmk d0 geq 7} that if $C$ is not a $3$-plane then $\operatorname{s-ind}(C)\geq 0$. 
\end{remark}
In what follows, we introduce the notions of upper and lower stability indices, which provide effective upper and lower bounds on the stability index.
\begin{definition}[Upper and lower stability-indices]\label{def uplo stability index}
Let $C$ be an associative cone with link components $\Sigma_j$ as in \autoref{def stability index}. Denote the cone of $\Sigma_j$ by $C_j$. Let $H_j$ be the maximal subgroup of $G_2$ that fixes $\Sigma_j$ under the standard action of $G_2$ on $S^6$. The subgroup $H_j$ is called the \textbf{symmetry group} of ~$C_j$. We define the \textbf{upper stability-index} by
 \begin{equation*}
\operatorname{s-ind}_+(C):= \frac {d_{-1}}2+\displaystyle\sum_{-1<\lambda\leq 1}d_\lambda-7-\displaystyle\sum_{j=1}^l(\dim{G_2}-\dim{H_j}), 
\end{equation*}
and the \textbf{lower stability-index} by
 \begin{equation*}
\operatorname{s-ind}_-(C):= \frac {d_{-1}}2+\displaystyle\sum_{-1<\lambda< 1}d_\lambda-7. \qedhere
\end{equation*}
\end{definition}

\begin{remark}
Since $d_1$ represents the sum of the dimensions of the space of infinitesimal deformations of $\Sigma_j$, which definitely contains all the actual deformations induced by the $G_2$-action, it is larger than the sum of $\dim {G_2}/{H_j}$. Furthermore, since each stratum is $G_2$-invariant, $\cZ_j$ always contains $G_2 \cdot \Sigma_j$ and therefore $\dim \cZ_j \geq \dim {G_2}/{H_j}$. Hence,
\[
\operatorname{s-ind}_+(C) \geq \operatorname{s-ind}(C) \geq \operatorname{s-ind}_-(C). \qedhere
\]
\end{remark}

Finally, we introduce a natural class of cones that will be the primary focus of this article: the rigid cones.
\begin{definition}[Rigid associative cones]\label{def rigid cones}
An  associative cone $C$ is said to be \textbf{rigid} if $$\operatorname{s-ind}_+(C)=\operatorname{s-ind}_-(C),$$
 or equivalently all the infinitesimal deformations of each component of the link are induced by the $G_2$-action, that is, 
\begin{equation*}
 	d_1=\displaystyle\sum_{j=1}^l(\dim{G_2}-\dim{H_j}).\qedhere
 \end{equation*}
\end{definition}
Having introduced the necessary definitions, we now present the theorem proved in \autoref{subsection Associative cones}.
\begin{theorem}\label{thm stability index of cones}Let $C$ be an associative cone in $\R^7$ with link $\Sigma \subset S^6$.
\begin{enumerate}[(i)]
\item  If the genus of $\Sigma$ is $1$ then $\operatorname{s-ind}_-(C)\geq 1.$
\item If $\Sigma$ is a {null torsion} holomorphic curve in $S^6$ (see \autoref{eg Bryant null torsion}) then $$\operatorname{s-ind}_-(C)>4.$$ In particular this holds for any holomorphic curve of genus $0$ in $S^6$ which is not a totally geodesic sphere. 
\item If $C$ is the Harvey--Lawson $T^2$-cone (see \autoref{eg Harvey-Lawson $T^2$-cone}) or a union of two special Lagrangian planes with transverse intersection at the origin (see \autoref{eg Transverse pair of SL planes}) then it is rigid and $$\operatorname{s-ind}(C)=\operatorname{s-ind}_\pm(C)=1.$$	
\item If $C$ is a special Lagrangian cone in $\C^3$ that is not a plane then
\begin{equation*}
	\operatorname{s-ind}(C)\geq \operatorname{s-ind}_-(C)\geq \frac {b^1(\Sigma)}2+b^0(\Sigma)-1\geq 1
\end{equation*}
with equality if and only if $C$ is one of the cones in part (iii).
\end{enumerate}
  \end{theorem}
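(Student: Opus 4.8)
The unifying idea is that all four parts concern the spectrum of a single ``link operator''. Separating variables on the cone $C=C(\Sigma)$ writes the associative deformation operator as $\sigma(\partial_r+r^{-1}A_\Sigma)$ up to a zeroth-order shift, where $A_\Sigma$ is a self-adjoint elliptic first-order operator on $\Sigma$; hence $d_\lambda=\dim\ker(A_\Sigma+(\lambda+1))$, in particular $d_{-1}=\dim\ker A_\Sigma$, and self-adjointness makes the multiset of critical rates symmetric about $\lambda=-1$, i.e.\ $d_\lambda=d_{-2-\lambda}$. Two universal facts follow: the seven ambient translations give $d_0\geq7$ whenever $C$ is not a $3$-plane (\autoref{rmk d0 geq 7}), and, summing the definition over link components, $\operatorname{s-ind}_-(C)=\sum_j\operatorname{s-ind}_-(C_j)+7(l-1)$ with $\operatorname{s-ind}_-(C_j)\geq0$ for non-planar $C_j$ and $\operatorname{s-ind}_-(C_j)=-3$ for a $3$-plane. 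The technical heart, which I would prove first, is that pulled-back harmonic $1$-forms of $\Sigma$ lie in $\ker A_\Sigma$, so that $d_{-1}\geq b^1(\Sigma)$. Granting this, $\operatorname{s-ind}_-(C)\geq\tfrac12 b^1(\Sigma)+(d_0-7)\geq\tfrac12 b^1(\Sigma)$ for every non-planar $C$, which immediately gives part~(i), since a genus-$1$ link has $b^1=2$.

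For the special Lagrangian statements I use the splitting $\R^7=\C^3\oplus\R$. If $C\subset\C^3$ is special Lagrangian then $NC\cong T^*C\oplus\underline\R$ and, by McLean's identification, the associative deformation operator is the de Rham operator $d+d^*$ on $\Omega^1(C)\oplus\Omega^0(C)$, whose homogeneous solutions are given by the Hodge theory of the metric cone $C(\Sigma)$ in terms of harmonic forms of $\Sigma$ and eigenvalues of $\Delta_\Sigma$, exactly as in Joyce's treatment of special Lagrangian cones with isolated conical singularities. Tracking rates gives that the six coordinate $1$-forms together with the $b^0(\Sigma)$ constant sections of $\underline\R$ span the rate-$0$ kernel, the $b^1(\Sigma)$ pulled-back harmonic $1$-forms sit at rate $-1$, and each remaining critical rate in $(-1,1)$ corresponds to an eigenvalue of $\Delta_\Sigma$ in an explicit interval; hence
\[
\operatorname{s-ind}_-(C)=\tfrac12 b^1(\Sigma)+b^0(\Sigma)-1+N_\Sigma,\qquad N_\Sigma\geq0,
\]
$N_\Sigma$ being the multiplicity count of those eigenvalues. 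The first two inequalities of part~(iv) follow, and $\tfrac12 b^1+b^0-1\geq1$ fails only when $\Sigma$ is connected with $b^1\leq1$, i.e.\ $\Sigma\cong S^2$; but a special Legendrian $2$-sphere in $S^5$ is either totally geodesic, so $C$ is a plane and excluded, or a non-geodesic genus-$0$ holomorphic curve in $S^6$, hence null-torsion by Bryant, so part~(ii) gives $\operatorname{s-ind}_-(C)>4$.

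Part~(iii) is a direct computation of the $d_\lambda$. For a transverse pair of special Lagrangian $3$-planes the link operator is the block sum of two copies of the flat $\R^3$-Dirac operator twisted by $\C^2$; since the relevant $S^2$-operator has no kernel one gets $d_{-1}=0$, $d_0=8$, $d_1=16$, and $d_\lambda=0$ for $0<|\lambda|<1$, and a short count gives $\operatorname{s-ind}(C)=\operatorname{s-ind}_\pm(C)=1$ with rigidity automatic. For the Harvey--Lawson $T^2$-cone, its $T^2$-symmetry Fourier-decomposes $A_\Sigma$ into ordinary differential operators on the circle; analysing their indicial roots ---equivalently combining Joyce's explicit description of the deformations of the Harvey--Lawson cone with the $\underline\R$-summand above--- shows the only critical rates in $(-1,1]$ are $\lambda=-1$ with $d_{-1}=b^1(T^2)=2$, $\lambda=0$ with $d_0=7$, and $\lambda=1$ with $d_1=\dim G_2-\dim T^2=12$, that the symmetry group is $T^2$ up to a finite group, and that the cone is unobstructed, whence $\operatorname{s-ind}(C)=\operatorname{s-ind}_\pm(C)=1$ and $C$ is rigid. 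For part~(ii), building on \citet{Madnick2021}'s second-variation computation for null-torsion holomorphic curves, the positivity of the normal bundle of such a curve forces $\sum_{-1<\lambda<1}d_\lambda\geq12$, i.e.\ at least five critical rates of $C$ lie strictly inside $(-1,1)$ beyond the seven translations, so $\operatorname{s-ind}_-(C)\geq5>4$; since every genus-$0$ holomorphic curve in $S^6$ other than the totally geodesic $S^2$ is null-torsion, the special case follows. Finally, equality throughout part~(iv) forces $N_\Sigma=0$ and $b^1(\Sigma)+2b^0(\Sigma)=4$: if $\Sigma$ is a torus, \citet{Haskins2004a}'s characterisation of the Harvey--Lawson $T^2$-cone as the unique special Lagrangian $T^2$-cone with no critical eigenvalue of $\Delta_\Sigma$ identifies $C$; if $\Sigma$ is a disjoint pair of spheres, each must be totally geodesic (a non-geodesic one is null-torsion, forcing $\operatorname{s-ind}_-(C)>1$ by superadditivity), so $C$ is a transverse pair of special Lagrangian planes.

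The main obstacle is the passage from spectral data of $\Sigma$ to the rate counts $d_\lambda$ of $C$: (a) proving $d_{-1}\geq b^1(\Sigma)$ for a general associative cone, where one lacks the identification $NC\cong T^*C$ and must instead locate $H^1(\Sigma)$ inside $\ker A_\Sigma$ via the structure of \autoref{thm moduli holo} and the isomorphism $\boldsymbol{\gamma}_\Sigma$; (b) extracting from \citet{Madnick2021} a lower bound on the number of critical rates lying in the \emph{open} interval $(-1,1)$, discarding the endpoint contribution at $\lambda=1$ which a naive index count also detects; and (c) controlling the endpoint behaviour in the Harvey--Lawson and two-plane computations so that no unexpected critical rate at $\lambda=\pm1$ or in $(-1,0)\cup(0,1)$ occurs. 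Joyce's harmonic-form analysis on special Lagrangian cones and Haskins' rigidity theorem then enter as black boxes.
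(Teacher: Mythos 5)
Your treatment of parts (ii), (iii), and (iv) is essentially the same as the paper's: (iii) is the two direct computations done in \autoref{eg s-ind Pair of transverse associative planes} and \autoref{eg s-ind Harvey-Lawson cone}; (iv) combines the spectral description of $V_\lambda$ for special Lagrangian cones (\autoref{cor homogeneous kernel}) with Haskins' rigidity theorem (\autoref{prop SL stability index}); and (ii) runs through Madnick's formula $\mathcal L_\Sigma=2\bar\partial^*\bar\partial-2$ for null-torsion curves. (Your phrasing of the lower bound in (ii) is slightly off from the paper's — the paper bounds $d_{-1}+d_0=\dim E^{-2}_{\mathcal L_\Sigma}\geq 4b\geq 24$ via $\ind\bar\partial_{\nabla^\perp,\hat J}$, then uses $\tfrac12 d_{-1}+d_0\geq \tfrac12(d_{-1}+d_0)$, rather than your claim $\sum_{-1<\lambda<1}d_\lambda\geq 12$, but after \autoref{prop Jacobi operator} the final estimate $\geq 5$ is the same.)

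The genuine gap is part (i). Your argument rests on the lemma that pulled-back harmonic $1$-forms of $\Sigma$ lie in $\ker A_\Sigma$, giving $d_{-1}\geq b^1(\Sigma)$ for every associative cone. You flag obstacle (a) yourself, and it is fatal: for a general holomorphic curve $\Sigma\subset S^6$ there is simply no natural map $T^*\Sigma\to N_{S^6}\Sigma$. The identification $V_{-1}\cong H^1(\Sigma;\R)$ in \autoref{cor homogeneous kernel} is specific to special Lagrangian cones, where McLean's isomorphism $NC\cong T^*C\oplus\underline\R$ converts normal deformations into forms. For an associative cone this isomorphism is unavailable; instead $d_{-1}=2\dim_\C H^0(\Sigma,N\Sigma)$ where $N\Sigma$ carries the holomorphic structure induced by $\widetilde\nabla$, and there is no a priori relation between $h^0(\Sigma,N\Sigma)$ and the genus of $\Sigma$. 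The paper's actual proof of (i) is quite different and avoids the issue: it case-splits on whether $\Sigma$ is null-torsion. In the null-torsion case, part (ii) already gives $\operatorname{s-ind}_-(C)>4$. In the non-null-torsion case it uses the exact sequence $0\to L_N\to N\Sigma\to L_B\to 0$ of \autoref{eq LB}: since $K_\Sigma\cong\mathcal O$ forces $\deg L_N+\deg L_B=0$, and the nonvanishing torsion $\operatorname{III}$ gives an effective divisor whose degree bounds $\deg L_B\geq[Z]$, one concludes $L_N\cong L_B\cong\mathcal O$, and the long exact sequence of the extension (with $\operatorname{III}\neq 0$ as connecting map) shows $\dim_\C H^0(\Sigma,N\Sigma)=1$, so $d_{-1}=2$ exactly. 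That is the step your outline would need to supply in place of the harmonic-form inclusion.
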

  
  \subsection*{Moduli space of conically singular associative submanifolds}
Let $(Y,\phi)$ be a co-closed $G_2$-manifold (see \autoref{def coclosed and tamed G2}). We denote by $\sP$ the space of all co-closed $G_2$-structures on $Y$ and by $\bsP$ the space of all smooth paths $[0,1]\to \sP$. We consider conically singular (CS) associative submanifolds with isolated singularities at a finite number of points in $(Y,\phi)$ (see \autoref{def CS asso}). These singularities are locally modeled on associative cones in $\R^7$. The moduli space of all CS associative submanifolds in $(Y,\phi)$ is denoted by $\cM_\ocs^\phi$ (see \autoref{def moduli CS asso}).  Given a path of co-closed $G_2$-structures $\bphi\in \bsP$, the $1$-parameter moduli space of all CS associative submanifolds is denoted by $\bcM_{\operatorname{cs}}^\bphi$ (see \autoref{def 1 para moduli CS}). More explicitly,
\begin{equation*}
	\bcM_\ocs^\bphi=\{(t,P): t\in [0,1], P\in \cM_\ocs^{\phi_t} \}.
\end{equation*} Both these moduli spaces are equipped with the weighted $C^\infty$ topology (see \autoref{def CS weighted topo}). We will now decompose the moduli spaces $\cM_{\operatorname{cs}}^\phi$ and $\bcM_{\operatorname{cs}}^\bphi$ as a countable union of sub-moduli spaces whose deformation theory will be studied.

\begin{definition}\label{def decomp CS asso moduli}
 Let $\cZ=\prod_{i=1}^m\cZ_i$ where $\cZ_i= \prod_{j=1}^l\cZ^j_i$ with $\cZ^j_i$ one of the strata in the decomposition \autoref{eq stratification}.
We denote by $\cM_{\operatorname{cs},\cZ}^\phi$ the set of all CS associative submanifolds with $m$ conical singularities, whose asymptotic cones $C_i$ have links $\Sigma_i=\sqcup_{j=1}^l \Sigma_i^j$ with $\Sigma_i^j\in \cZ_i^j$. We also define  \begin{equation*}
	\bcM_{\ocs,\cZ}^\bphi:=\{(t,P): t\in [0,1], P\in \cM_{\ocs,\cZ}^{\phi_t} \}.
\end{equation*}
 We can express the moduli spaces $\cM_{\operatorname{cs}}^\phi$ and $\bcM_{\operatorname{cs}}^\bphi$ as a countable union of sub-moduli spaces, namely
\[\cM_{\operatorname{cs}}^\phi=\bigcup_{\cZ}\cM_{\operatorname{cs},\cZ}^\phi\ \ \text{and}\ \  \bcM_{\operatorname{cs}}^\bphi=\bigcup_{\cZ}\bcM_{\operatorname{cs},\cZ}^\bphi.\]
Here the unions run over all possible $\cZ=\prod_{i=1}^m\cZ_i$ where $\cZ_i= \prod_{j=1}^l\cZ^j_i$ as above. \end{definition}
We prove the following thereom in \autoref{subsection moduli CS asso} about local Kuranishi models for the above moduli spaces.
\begin{theorem}\label{thm moduli cs asso} Let $\cZ$, $\cM_{\operatorname{cs},\cZ}^\phi$ and $\bcM_{\operatorname{cs},\cZ}^\bphi$ be as in \autoref{def decomp CS asso moduli}. Let $P\in \cM_{\operatorname{cs},\cZ}^\phi$ be a conically singular associative submanifold in a co-closed $G_2$-manifold $(Y,\phi)$ with $m$ singularities modeled on cones $C_i\in \cZ_i$, $i=1,\dots,m$ (see \autoref{def CS asso}). Let $\bphi:[0,1]\to \sP$ be a path of co-closed $G_2$ structures such that $\phi(t_0)=\phi$. Then there exist open neighbourhoods $\widetilde \cI_{P,\cZ}$ and $\widebar \cI_{P,\cZ}$ of $0$ in $\ker \widetilde \bD_{P,\mu,\cZ}$ and $\ker \widebar  \bD_{P,\mu,\cZ}$ respectively (where $\widetilde \bD_{P,\mu,\cZ}$  and $ \widebar \bD_{P,\mu,\cZ}$ are defined in \autoref{def lin L, D with Z} and \autoref{def 1 para moduli CS} respectively and $\mu$ is chosen as in \autoref{def CS weighted topo}) such that
\begin{enumerate}[(i)]
\item the moduli space $\cM_{\operatorname{cs},\cZ}^\phi$ near $P$ is homeomorphic to $\ob_{P,\cZ}^{-1}(0)$, the zero set of a smooth map
 $$\ob_{P,\cZ}:\widetilde \cI_{P,\cZ}\to \coker \widetilde \bD_{P,\mu,\cZ}.$$
 Moreover,
 $\ind \widetilde \bD_{P,\mu,\cZ}=-\displaystyle\sum_{i=1}^m\operatorname{s-ind}(C_i).$
\item  the $1$-parameter moduli space $\bcM_{\operatorname{cs},\cZ}^\bphi$ near $(t_0,P)$ is homeomorphic to $\ob_{t_0,P,\cZ}^{-1}(0)$, the zero set of a smooth map
 $$\ob_{t_0,P,\cZ}: \widebar \cI_{P,\cZ}\to \coker \widebar  \bD_{P,\mu,\cZ}.$$
 Moreover, $\ind \widebar  \bD_{P,\mu,\cZ}=\ind \widetilde \bD_{P,\mu,\cZ}+1=-\displaystyle\sum_{i=1}^m\operatorname{s-ind}(C_i)+1$.
\end{enumerate}
   \end{theorem}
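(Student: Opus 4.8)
The plan is to realize both moduli spaces as zero sets of a smooth Fredholm section over a Banach manifold of nearby CS submanifolds, reducing to finite dimensions via a standard Lyapunov–Schmidt (Kuranishi) argument, and then to identify the index via the relative index formula for the weighted Dirac-type operator $\bD_P$ together with the stratum constraint. First I would set up the configuration space. For a fixed co-closed $G_2$-manifold $(Y,\phi)$ and a CS associative $P$ with cones $C_i\in\cZ_i$, I would describe nearby CS submanifolds by graphs of sections of the normal bundle $NP$ over $P^o=P\setminus\{\text{singularities}\}$, but with two kinds of parameters: (a) the "gluing-in" parameters recording how the conical model near each singular point is deformed inside its stratum, i.e. a point of $\prod_i\cZ_i$ near $(C_i)_i$, together with the $7$-dimensional translation/position freedom of each cone vertex in $Y$; and (b) a decaying normal section in a weighted Hölder (or Sobolev) space $C^{\infty}_\mu$ with the weight $\mu$ chosen as in the definition of the CS weighted topology, so that the associated linearized operator $\widetilde\bD_{P,\mu,\cZ}$ is Fredholm. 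The associativity condition defines a smooth map $F$ from this configuration space to a bundle whose fibre is $C^{\infty}_{\mu-1}$ of the "obstruction bundle" $E\to P^o$ (the bundle such that $\bD_P\co \Gamma(NP)\to\Gamma(E)$), and the linearization of $F$ at $P$ is precisely $\widetilde\bD_{P,\mu,\cZ}$ by construction (this is where \autoref{def lin L, D with Z} enters: $\widetilde\bD_{P,\mu,\cZ}$ is the ordinary CS-deformation operator augmented by the finite-dimensional stratum-plus-position directions).

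Next I would invoke Fredholmness: the weighted analysis for CS (and asymptotically conical) elliptic operators, exactly as in the special Lagrangian / coassociative literature \cites{Joyce2004c,Lotay2009a}, shows $\widetilde\bD_{P,\mu,\cZ}$ is Fredholm provided $\mu$ avoids the set of critical rates — which are governed by the homogeneous kernels $d_\lambda$ of the cone operators. Granting this, the implicit function theorem in Banach spaces gives a finite-dimensional reduction: choosing a complement to $\im\widetilde\bD_{P,\mu,\cZ}$ and applying Lyapunov–Schmidt, the zero set of $F$ near $P$ is homeomorphic to $\ob_{P,\cZ}^{-1}(0)$ for a smooth map $\ob_{P,\cZ}\co\widetilde\cI_{P,\cZ}\subset\ker\widetilde\bD_{P,\mu,\cZ}\to\coker\widetilde\bD_{P,\mu,\cZ}$; this is (i) up to checking that the graph construction is a local homeomorphism onto $\cM^\phi_{\ocs,\cZ}$ near $P$, which follows because any nearby CS associative in the stratum $\cZ$ is, by the CS regularity/ graphical neighbourhood theorem, uniquely such a graph. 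For the one-parameter version (ii), I would repeat the same construction over the extended configuration space that also varies $t\in[0,1]$ near $t_0$ and uses the path $\bphi$; the linearization gains exactly one extra parameter direction (the $t$-derivative), which either lies in or outside the image of $\widetilde\bD_{P,\mu,\cZ}$, and in all cases the Fredholm index increases by exactly $1$: $\ind\widebar\bD_{P,\mu,\cZ}=\ind\widetilde\bD_{P,\mu,\cZ}+1$. This gives the homeomorphism with $\ob_{t_0,P,\cZ}^{-1}(0)$ and the stated index relation.

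The core computational step — and the one I expect to be the main obstacle — is the index formula $\ind\widetilde\bD_{P,\mu,\cZ}=-\sum_{i=1}^m\operatorname{s-ind}(C_i)$. The strategy is a relative index (excision) argument: $\widetilde\bD_{P,\mu,\cZ}$ differs from the Dirac-type operator on the compact double (or from the operator on a closed associative in the same homology class) by localized contributions near each singular point, and the change-of-weight formula for conical operators says that pushing the weight $\mu$ across a critical rate $\lambda$ changes the index by $\mp d_\lambda$ (with the rate $\lambda=-1$ contributing $d_{-1}/2$ because of the self-adjointness/ symmetry of $\bD$ in the co-closed case — this is the "$\tfrac{d_{-1}}2$" in \autoref{def stability index}). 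Summing the jumps for $-1<\lambda\le 1$, subtracting the $7$ position parameters per cone already built into $\widetilde\bD_{P,\mu,\cZ}$, and subtracting the $\dim\cZ_j$ stratum directions (which are added to the domain and hence enter the index with a plus sign, giving $-\dim\cZ_j$ in $-\operatorname{s-ind}$), one reads off exactly $-\operatorname{s-ind}(C_i)$ per singular point; here one uses that the smooth closed associative has index $0$ (the relevant Dirac operator is self-adjoint in the co-closed case) so there is no leftover "bulk" term. The delicate points are: (1) bookkeeping the half-integer contribution at $\lambda=-1$ correctly, which requires the precise structure of the indicial roots and the pairing between kernels at rates $\lambda$ and $-1-\lambda$; and (2) verifying that the finite-dimensional augmentations (positions $\oplus$ stratum tangent spaces) are genuinely transverse to / complementary with the relevant cokernel pieces so that they contribute cleanly to the index. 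Both are handled by the indicial-root analysis of the cone operator $\bD_C$ together with \autoref{thm moduli holo}, which identifies the rate-$1$ homogeneous kernel with infinitesimal holomorphic deformations of the link (hence with the tangent space to the moduli of holomorphic curves, and in particular containing the $G_2$-orbit directions).
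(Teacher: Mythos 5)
Your proposal is correct and follows essentially the same route as the paper: describe nearby CS associatives as graphs over an end-conical model $P_C$, augment the weighted normal-section space by the finite-dimensional position ($\R^7$) and stratum-tangent directions, linearize the associativity condition to get $\widetilde\bD_{P,\mu,\cZ}$, verify Fredholmness via Lockhart–McOwen weighted elliptic theory, and apply the implicit function theorem plus an elliptic-regularity bootstrap to get the Kuranishi map; part (ii) is the same with a one-dimensional $t$-augmentation, which bumps the index by $1$. One small mismatch in emphasis: you frame the index computation as a relative/excision argument against a compact double or a closed associative, but the paper does not compare to any compact model. Instead it anchors the index directly at the critical weight $\lambda=-1$ using the formal self-adjointness of $\bD_P$ (co-closedness is crucial here): $\ker\bD_{P,-1-s}\cong\coker\bD_{P,-1+s}$ gives $\ind\bD_{P,-1+s}=-\ind\bD_{P,-1-s}$, which combined with the wall-crossing jump $d_{-1}$ yields $\ind\bD_{P,-1+s}=-d_{-1}/2$ (for the CS side); one then wall-crosses from $-1+s$ up to $\mu>1$ and adds the $7+\dim\cZ_i$ augmentation per cone to land on $-\operatorname{s-ind}(C_i)$. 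Since you yourself invoke both the wall-crossing formula and the kernel pairing at $\lambda$ versus $-1-\lambda$, you clearly have the right ingredients; the "relative index against a closed model" phrasing is just a less direct route to the same bookkeeping.
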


 The moduli spaces $\cM_{\operatorname{cs},\cZ}^\phi$ and $\bcM_{\operatorname{cs},\cZ}^\bphi$ are not always smooth manifolds. The following definition will make these moduli spaces smooth in a generic situation.
  \begin{definition}\label{def regular G2 str}
Let $\cZ$, $\cM_{\operatorname{cs},\cZ}^\phi$ and $\bcM_{\operatorname{cs},\cZ}^\bphi$ be as in \autoref{def decomp CS asso moduli}. We define $\sP_{\operatorname{cs},\cZ}^{\operatorname{reg}}$ to be the subset consisting of all $\phi\in \sP$ with the property that for all CS associative $P \in \cM_{\operatorname{cs},\cZ}^\phi$ the linear operator 
$$\widetilde \bD_{P,\mu,\cZ} \ \ \text{is surjective}. $$
 Similarly we define $\bsP_{\operatorname{cs},\cZ}^{\operatorname{reg}}$ to be the subset consisting of all $\bphi\in \bsP$ with the property that for all $(t_0,P)\in \cM_{\operatorname{cs},\cZ}^{\bphi}$ the linear operator 
   \begin{equation*}
	\widebar  \bD_{P,\mu,\cZ} \ \ \text{is surjective}.  
	\end{equation*}
	The operators $\widetilde \bD_{P,\mu,\cZ}$  and $\widebar  \bD_{P,\mu,\cZ}$ are defined in \autoref{def lin L, D with Z} and \autoref{def 1 para moduli CS} respectively. 

In addition we define
\[\sP_{\operatorname{cs}}^{\operatorname{reg}}:=\bigcap_{\cZ}\sP_{\operatorname{cs},\cZ}^{\operatorname{reg}}\ \ \text{and}\ \ \ \bsP_{\operatorname{cs}}^{\operatorname{reg}}:=\bigcap_{\cZ}\bsP_{\operatorname{cs},\cZ}^{\operatorname{reg}}.\]
Here the intersections run over all possible $\cZ=\prod_{i=1}^m\cZ_i$ where $\cZ_i= \prod_{j=1}^l\cZ^j_i$ as in \autoref{def decomp CS asso moduli} and hence are countable intersections. 
	 \end{definition}
\begin{remark}
Since the definitions of the operators $\widetilde \bD_{P,\mu,\cZ}$ and $\widebar \bD_{P,\mu,\cZ}$ in \autoref{def lin L, D with Z} and \autoref{def 1 para moduli CS} depend only on the tangent space to the stratum $\cZ_i^j$ at $\Sigma_i^j$, and not on the full stratum itself, the spaces $\sP_{\operatorname{cs}}^{\operatorname{reg}}$ and $\bsP_{\operatorname{cs}}^{\operatorname{reg}}$ are independent of the choice of a canonical minimal Whitney stratification \autoref{eq stratification}.
\end{remark}	
We prove the following theorem about generic transversality of the above moduli spaces in \autoref{subsection generic moduli CS asso}. It also tells us about what type of singularity model cones appear in a generic co-closed $G_2$-structure, as well as in a generic path of co-closed $G_2$-structures. Before stating the theorem, let us clarify the notion of generic.

\begin{definition}
Let $X$ be a topological space and let $S \subset X$. The set $S$ is said to be \textbf{meager} if it is contained in a countable union of closed subsets with empty interior. The complement of a meager set is called \textbf{comeager}.
\end{definition}

\begin{remark}
In a completely metrizable space (such as $\sP$ or $\bsP$), the Baire category theorem asserts that meager sets have empty interior, and comeager sets are necessarily dense. Therefore, meager sets are often viewed as non-generic, while comeager sets are seen as generic.
\end{remark}

\begin{theorem}\label{thm generic moduli cs asso} Let $\cZ=\prod_{i=1}^m\cZ_i$ where $\cZ_i= \prod_{j=1}^l\cZ^j_i$ as in \autoref{def decomp CS asso moduli}. Then the subsets $\sP_{\operatorname{cs},\cZ}^{\operatorname{reg}}$, $\sP_{\operatorname{cs}}^{\operatorname{reg}}$ in $\sP$, and $\bsP_{\operatorname{cs},\cZ}^{\operatorname{reg}}$, $\bsP_{\operatorname{cs}}^{\operatorname{reg}}$ in $\bsP$  are comeager. 
 In particular, the following holds. Let $C_i$, $i=1,\dots,m$ be associative cones in $\R^7$ having links $\Sigma_i=\sqcup_{j=1}^l \Sigma_i^j$ with $\Sigma_i^j\in \cZ_i^j$. 
\begin{enumerate}[(i)]
	\item If $\sum_{i=1}^m\operatorname{s-ind}(C_i)>0$, then 
for any co-closed $G_2$-structure $\phi \in \sP_{\operatorname{cs}}^{\operatorname{reg}}$ the moduli space $\cM_{\operatorname{cs}}^\phi$ contains no conically singular associative submanifolds having singularities modeled on cones with links in a neighbourhood of  $\Sigma_i$ in $\cZ_i$. 
\item If $\sum_{i=1}^m\operatorname{s-ind}(C_i)>1$, then 
for any path of co-closed $G_2$-structures $\bphi \in \bsP_{\operatorname{cs}}^{\operatorname{reg}}$ the moduli space $\bcM_{\operatorname{cs}}^\bphi$ contains no conically singular associative submanifolds having singularities modeled on cones with links in a neighbourhood of $\Sigma_i$ in $\cZ_i$. 
\end{enumerate}
\end{theorem}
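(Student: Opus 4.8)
The plan is to run a Sard--Smale argument on a \emph{universal} moduli space. Fix $\cZ$ as in the statement. One first assembles the universal moduli space
\[
\widetilde{\cM}_{\operatorname{cs},\cZ} := \{(\phi, P): \phi\in\sP,\ P\in \cM_{\operatorname{cs},\cZ}^\phi\},
\]
together with its one-parameter analogue $\widebar{\cM}_{\operatorname{cs},\cZ}$ over $\bsP$. Using the Kuranishi description from \autoref{thm moduli cs asso}, adapted so that the $G_2$-structure is allowed to vary, one shows that $\widetilde{\cM}_{\operatorname{cs},\cZ}$ is locally cut out by a smooth Fredholm section whose linearization at $(\phi,P)$ is the operator $\widetilde\bD_{P,\mu,\cZ}$ \emph{augmented} by the derivative in the $\phi$-direction, i.e.\ $T_\phi\sP\to \coker\widetilde\bD_{P,\mu,\cZ}$, $\dot\phi\mapsto$ (the first-order change of the associative condition induced by $\dot\phi$). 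The crucial point is that this augmented operator is surjective: this is where one invokes the co-closedness (so the relevant operator is self-adjoint and $\coker\widetilde\bD_{P,\mu,\cZ}$ is finite-dimensional and can be paired against sections supported away from the singular points), together with the fact that a nonzero element of the cokernel pairs nontrivially with some explicit variation $\dot\phi$ of the $G_2$-structure — a unique-continuation/density argument showing that cokernel elements cannot vanish on an open set of $P$, combined with the flexibility of $G_2$-structures on $Y$. Granting surjectivity, $\widetilde{\cM}_{\operatorname{cs},\cZ}$ (and $\widebar{\cM}_{\operatorname{cs},\cZ}$) are smooth Banach manifolds, and the projection $\pi\colon \widetilde{\cM}_{\operatorname{cs},\cZ}\to\sP$ is a smooth Fredholm map of index $\ind\widetilde\bD_{P,\mu,\cZ} = -\sum_i\operatorname{s-ind}(C_i)$ (respectively $-\sum_i\operatorname{s-ind}(C_i)+1$ in the path case). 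By the Sard--Smale theorem the set of regular values of $\pi$ is comeager; a regular value $\phi$ is precisely a structure for which $\widetilde\bD_{P,\mu,\cZ}$ is surjective for every $P\in\cM_{\operatorname{cs},\cZ}^\phi$, i.e.\ $\phi\in\sP_{\operatorname{cs},\cZ}^{\operatorname{reg}}$. This proves $\sP_{\operatorname{cs},\cZ}^{\operatorname{reg}}$ is comeager, and $\sP_{\operatorname{cs}}^{\operatorname{reg}}$, $\bsP_{\operatorname{cs}}^{\operatorname{reg}}$ are comeager as countable intersections of comeager sets (the union in \autoref{def decomp CS asso moduli} being countable). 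One technical wrinkle is that $\sP$ is a Fréchet space, not Banach; the standard remedy is to work with $C^{k,\alpha}$-structures, run Sard--Smale there, and then recover the smooth statement by the Taubes trick (intersecting the $C^{k,\alpha}$-comeager sets over all $k$ and using elliptic regularity to see the resulting structures are as good as smooth).

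For the two ``in particular'' consequences: if $\sum_i\operatorname{s-ind}(C_i)>0$, then for $\phi\in\sP_{\operatorname{cs}}^{\operatorname{reg}}$ the component $\cM_{\operatorname{cs},\cZ}^\phi$ through any such $P$ is, near $P$, a smooth manifold of dimension $\ind\widetilde\bD_{P,\mu,\cZ}=-\sum_i\operatorname{s-ind}(C_i)<0$, hence empty; so no CS associative with links in a neighbourhood of the $\Sigma_i$ in $\cZ_i$ exists. The one-parameter case is identical with the index shifted by $1$: if $\sum_i\operatorname{s-ind}(C_i)>1$ then the local dimension of $\bcM_{\operatorname{cs},\cZ}^\bphi$ is $-\sum_i\operatorname{s-ind}(C_i)+1<0$, forcing emptiness for $\bphi\in\bsP_{\operatorname{cs}}^{\operatorname{reg}}$. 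One should note that the statement is about links in a neighbourhood of the given $\Sigma_i$, which is exactly the content of ranging over the (open) strata $\cZ_i^j$, so the covering of $\cM_{\operatorname{cs}}^\phi$ by the $\cM_{\operatorname{cs},\cZ}^\phi$ already encodes this neighbourhood statement.

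The main obstacle is the surjectivity of the augmented linearization — i.e.\ proving that varying the co-closed $G_2$-structure can kill any prescribed element of $\coker\widetilde\bD_{P,\mu,\cZ}$. On a closed smooth associative this is a by-now standard computation (one writes down the first variation of the associator under a change of $\phi$ and uses that harmonic/cokernel sections have unique continuation, so they are detected by a compactly supported perturbation of $\phi$ near a regular point of $P$); the delicate part here is that $P$ is only conically singular and the relevant function spaces are weighted Sobolev spaces with a weight $\mu$ adapted to the cones $C_i$, so one must ensure the perturbation of $\phi$ can be taken supported in the ``neck-free'' region $P\setminus\bigcup B(x_i)$ and that the pairing between $\coker\widetilde\bD_{P,\mu,\cZ}$ and such perturbations is nondegenerate — this uses that elements of the cokernel, being solutions of the formal adjoint equation, cannot be supported arbitrarily close to the singularities without vanishing identically (a consequence of the choice of $\mu$ avoiding the indicial/critical rates, via \autoref{def homogeneous kernel}). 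A secondary subtlety is making the constraint ``links lie in the stratum $\cZ_i^j$'' compatible with the Banach-manifold setup, since $\cZ_i^j$ is only a smooth manifold inside the possibly-singular $\cM^{\mathrm{hol}}_\bullet$; but because the operators $\widetilde\bD_{P,\mu,\cZ}$ depend only on the tangent space $T_{\Sigma_i^j}\cZ_i^j$ (as the remark preceding the theorem records), it suffices to linearize the stratum constraint, which is harmless.
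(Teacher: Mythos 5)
Your high-level plan is correct and in the same spirit as the paper: set up a universal moduli space, prove surjectivity of the augmented linearization by perturbing the $G_2$-structure (via a unique-continuation argument — this is exactly \autoref{lem main generic DW}, adapted from Doan--Walpuski), and invoke Sard--Smale on the projection. The description of items (i) and (ii) via the index formula from \autoref{thm moduli cs asso} is also correct.

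However, there is a genuine gap in your final step, precisely the step the paper's \autoref{subsection generic moduli CS asso} is designed to handle. You propose running Sard--Smale in the Banach space $\sP^{k,\alpha}$ of $C^{k,\alpha}$ co-closed $G_2$-structures and then ``intersecting the $C^{k,\alpha}$-comeager sets over all $k$.'' This does not yield comeagerness in $\sP$ with its $C^\infty$ topology: a set that is dense in the (coarser) $C^{k,\alpha}$ topology need not be dense in the (finer) $C^\infty$ topology, so $\bigcap_k U_k \cap \sP$ need not be comeager in $\sP$ without further argument. You also misattribute the ``Taubes trick'': it is not the procedure of intersecting over regularity classes $k$, but rather the exhaustion of the \emph{moduli space of submanifolds} by countably many compact pieces $\cM_{\ocs,N,\cZ}^\phi$ (\autoref{def taubes CS asso with N}), parametrized by quantitative bounds $N$, so that the resulting regular set $\sP_{\operatorname{cs},\cZ}^{\operatorname{reg}}=\bigcap_N\sP_{\operatorname{cs},N,\cZ}^{\operatorname{reg}}$ is a countable intersection of sets that can be shown to be \emph{open} (using a compactness/convergence lemma, \autoref{lem CS convergence} and \autoref{lem coclosed N open}) and \emph{dense} (\autoref{lem coclosed comeager}). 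Finally, the paper explicitly considers and rejects the $C^k$-structure enlargement you propose — because then the associative-condition nonlinear map has only finitely many derivatives and tracking regularity becomes awkward — and instead uses Floer's $C_\epsilon$ spaces $\sP_\epsilon\subset\sP$, which are separable \emph{Banach} subspaces of the smooth $G_2$-structures, dense in $\sP$, and on which the nonlinear map is smooth. Sard--Smale is applied in $\sP_\epsilon$, and the passage to $\sP$ is accomplished by the compactness-based Taubes trick just described, not by stacking regularity classes. Your proposal also omits the openness half of comeagerness entirely, which in the paper requires the compactness \autoref{lem CS convergence}.
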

\begin{remark}We conclude from \autoref{thm generic moduli cs asso} that if $\phi \in \sP_{\operatorname{cs}}^{\operatorname{reg}}$ then the moduli space $\cM_{\operatorname{cs}}^\phi$ essentially can contain only conically singular associative submanifolds having singularities modeled on cones $C$ with $\operatorname{s-ind}(C)=0$. Similarly, if $\bphi \in \bsP_{\operatorname{cs}}^{\operatorname{reg}}$ then the moduli space $\bcM_{\operatorname{cs}}^\bphi$ essentially can contain only conically singular associative submanifolds having singularities modeled on cones $C$ with $\operatorname{s-ind}(C)=0$ or $1$. 
\end{remark}

\subsection*{Desingularizations of conically singular associative submanifolds}
Let $P$ be a conically singular (CS) associative submanifold in a co-closed $G_2$-manifold $(Y, \phi)$, with an isolated singularity modeled on a cone $C$ whose link is $\Sigma \subset S^6$ (see \autoref{def CS asso}). To desingularize $P$, we glue rescaled asymptotically conical (AC) associative submanifolds to obtain  approximate associative submanifolds, which we then aim to deform into genuine associative submanifolds, referred to as desingularizations. However, there is an obstruction--the deformation operator associated to this approximate one is not surjective due to the freedom of scaling. Due to the self-adjointness, this is equivalent  to the fact that its kernel—identified with the {matching kernel}, consisting of bounded kernel elements on the CS and AC sides that agree to leading order—never vanishes. To compensate for this obstruction, we perform the deformation within a one-parameter family of co-closed $G_2$-structures under the hypothesis that both the matching kernel and its extension over the family, called the {extended matching kernel}, are one-dimensional and generated by scaling. Under this assumption, we obtain the desired desingularizations in this one parameter family. 

More precisely, let $L$ be an AC associative submanifold in $\R^7$ with the same asymptotic cone $C$ (see \autoref{def AC associative}). The deformation operators $\bD_P$ and $\bD_L$ are defined in \autoref{def D_M}, and the kernels $\ker \bD_{P, \lambda}$ and $\ker \bD_{L, \lambda}$ can be informally described as follows:
\[\ker \bD_{P,\lambda}:=\{u\in C^\infty(NP):\bD_{P}u=0, u=O(r^\lambda)\ \text{as}\ r\to 0\},\]
and
\[\ker \bD_{L,\lambda}:=\{u\in C^\infty(NL):\bD_{L}u=0, u=O(r^\lambda)\ \text{as}\ r\to \infty\}.\]

\begin{definition}\label{def D matching kernel} We define the \textbf{matching kernel} $\mathcal K^{\mathfrak m}$ by
\begin{equation*}\mathcal K^{\mathfrak m}:=\{(u_L,u_{P})\in \ker \bD_{L,0}\oplus \ker \bD_{P,0}:i_\infty u_{L}=i_0u_{P}\},
	\end{equation*}
where the maps $i_\infty$ and $i_0$ are the asymptotic limit maps $i_{L,0}$ and $i_{P,0}$ respectively, defined in \autoref{def asymp limit map}.
\end{definition}

The dilation action of $\R^+$ on $\R^7$ induces a \textbf{canonical Fueter section} $\hat s_L \in C^\infty(NL)$, which satisfies
\begin{equation}\label{def canonical Fueter section}
	\bD_L \hat s_L = 0.
\end{equation}
Moreover, $\hat s_L$ vanishes at infinity, that is, $i_\infty \hat s_L = 0$. As a consequence, the matching kernel $\mathcal K^{\mathfrak m}$ defined in \autoref{def D matching kernel} always contains the element $(\hat s_L, 0)$. This observation will be useful in formulating the hypothesis of the desingularization theorem (\autoref{thm main desing}).

Let $\bphi\in \bsP$ be a path of co-closed $G_2$-structures on $Y$ and $t_0\in (0,1)$ such that $\bphi(t_0)=\phi$. We set $\phi_{t}:=\bphi(t)$. The $G_2$-structure $\phi_t$ induces the $4$-form $\psi_t$ (see \autoref{def almost G2}). 
\begin{definition}We define the \textbf{extended matching kernel} by
\begin{equation*}
\widetilde{\cK}^{\mathfrak m} := \{(u_L,u_P,t)\in \ker \bD_{L,0}\oplus C_{P,0}^{\infty}\oplus \R:\bD_Pu_P+t\hat f_P=0, i_{\infty}u_L=i_{0}u_P\},
\end{equation*}
where $\hat f_P$ is the linearization at $t_0$ of the nonlinear map (relevant for associative deformations) along the path $\bphi\in \bsP$ with $P$ fixed, defined in \autoref{def 1 para moduli CS}. Here, $C_{P,0}^{\infty}$ means bounded, i.e., of order $O(r^0)$, smooth normal vector fields on $P$.
\end{definition}

Note that the inclusion $\langle (\hat s_L,0) \rangle_\R\subset {\cK}^{\mathfrak m}\subset \widetilde{\cK}^{\mathfrak m}$ always holds. Equality in both of these inclusions implies that the relevant extended deformation operator for the family is surjective. This fact, together with other technical estimates, allows us to establish the following desingularization theorem, that is proved in \autoref{subsection main desing}.
\begin{theorem}\label{thm main desing} Let $P$ be a conically singular associative submanifold in a co-closed $G_2$-manifold $(Y,\phi)$ with singularity at a single point modeled on a cone $C$. Let $L$ be an asymptotically conical associative submanifold in $\R^7$ with the same asymptotic cone $C$ and rate $\nu<0$. Let $\bphi\in \bsP$ be a path of co-closed $G_2$-structures on $Y$ and $t_0\in (0,1)$ such that $\phi_{t_0}=\phi$. Assume that the matching kernel $\cK^\fm$ and the extended matching kernel $\widetilde \cK^\fm$ both are one dimensional, that is,
\begin{equation}\label{hyp desing CS}
	\widetilde \cK^\fm=\cK^\fm=\langle (\hat s_L,0) \rangle_\R,
\end{equation}
 where $\hat s_L$ is the canonical Fueter section from \autoref{def canonical Fueter section}. Then there exist $\tilde \epsilon_0>0$, a continuous function $t:[0,\tilde \epsilon_0)\to [0,1]$ with $t(0)=t_0$ and  smooth closed embedded associative submanifolds $\tilde P_{\epsilon, t(\epsilon)}$ in $(Y,\phi_{t(\epsilon)})$ for all $\epsilon\in (0,\tilde \epsilon_0)$ such that $\tilde P_{\epsilon, t(\epsilon)}\to P$ as $\epsilon\to 0$ in the sense of integral currents. 
\end{theorem}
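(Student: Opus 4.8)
The plan is to run the standard gluing-and-deformation scheme for conically singular associative submanifolds, but with the extra scaling parameter absorbed by allowing the $G_2$-structure to vary along the path $\bphi$. First I would fix a small gluing neck parameter $\epsilon>0$ and construct, for each such $\epsilon$, an approximate associative submanifold $P_\epsilon^{\mathrm{app}}$ in $(Y,\phi)$ by excising a small ball around the singular point $x$ of $P$, excising the far region of $L$, rescaling $L$ by $\epsilon$ so that its conical end matches that of $P$ to high order near the neck, and interpolating between the two using a cut-off function supported in an annulus of radius comparable to $\epsilon^\alpha$ for a suitable $\alpha\in(0,1)$. Standard estimates (modeled on \cites{Joyce2004c,Lotay2014} but adapted to the associative operator) give that the ``error'' $\mathcal F(\phi, P_\epsilon^{\mathrm{app}})$ — the associativity defect, measured via the tangential part of $\psi$ restricted to $P_\epsilon^{\mathrm{app}}$ — is $O(\epsilon^{\kappa})$ in a suitable weighted $C^{0,\gamma}$ norm for some $\kappa>0$ depending on the rate $\nu<0$ of $L$ and on the smallest homogeneity in $(0,1]$ of the link Jacobi fields, i.e. on the spectral data encoded by the $d_\lambda$.

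Next I would analyze the linearized operator $\bD_{P_\epsilon^{\mathrm{app}}}$ acting on weighted Hölder (or Sobolev) spaces of normal vector fields on $P_\epsilon^{\mathrm{app}}$. By the standard gluing/parametrix construction, its approximate kernel and cokernel are modeled on the ``matched'' kernels on the CS side $P$ and the AC side $L$ at weight $0$; by self-adjointness (here is where co-closedness is used, cf. the footnote) cokernel $\cong$ kernel, so the obstruction space is modeled on $\cK^\fm$. Under the hypothesis \autoref{hyp desing CS} we have $\cK^\fm=\langle(\hat s_L,0)\rangle_\R$, i.e. the only obstruction is the one coming from the scaling Fueter section $\hat s_L$. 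To kill it, I would enlarge the deformation problem to the pair $(P,t)$ with $t$ ranging over a neighbourhood of $t_0$ in $[0,1]$: the derivative of $\mathcal F$ with respect to $t$ is $\hat f_P$, and the extended linearized operator is $(u,t)\mapsto \bD_{P_\epsilon^{\mathrm{app}}}u + t\,\hat f_P$ (plus lower-order gluing corrections). Its kernel/cokernel is modeled on $\widetilde\cK^\fm$, and the hypothesis $\widetilde\cK^\fm=\cK^\fm=\langle(\hat s_L,0)\rangle_\R$ forces the extended operator to be surjective with one-dimensional kernel spanned (to leading order) by $(\hat s_L,0)$ — precisely the direction that corresponds to varying $\epsilon$. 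Quotienting by this one-dimensional kernel (e.g. by imposing a slice condition transverse to $\hat s_L$, or equivalently regarding $\epsilon$ itself as the parameter that the kernel direction moves in), the extended operator becomes a Banach space isomorphism with inverse bounded by $C\epsilon^{-\beta}$ for some $\beta\geq0$.

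With a uniformly (after rescaling) bounded right inverse and the quadratic estimate $\|\mathcal Q(\phi,u_1)-\mathcal Q(\phi,u_2)\|\leq C(\|u_1\|+\|u_2\|)\|u_1-u_2\|$ for the nonlinearity in the weighted norms, I would apply the contraction mapping principle (or the quantitative inverse function theorem) in the Banach space of pairs $(u,t)$: provided $\epsilon$ is small enough that $\epsilon^\kappa\cdot\epsilon^{-\beta}\ll 1$, there is a unique small solution $(u_\epsilon, t(\epsilon))$ with $u_\epsilon$ a normal vector field on $P_\epsilon^{\mathrm{app}}$ and $t(\epsilon)$ close to $t_0$, such that the normal graph of $u_\epsilon$ over $P_\epsilon^{\mathrm{app}}$ is a genuine smooth associative submanifold $\tilde P_{\epsilon,t(\epsilon)}$ in $(Y,\phi_{t(\epsilon)})$. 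Embeddedness for small $\epsilon$ follows since $P_\epsilon^{\mathrm{app}}$ is embedded away from the neck (where it is a graph over an embedded piece of $P$) and $\epsilon L$ is embedded near the neck, with $\|u_\epsilon\|$ controlled in $C^1$. Continuity of $t(\epsilon)$ (and $t(0)=t_0$, which I would arrange by making $P_0^{\mathrm{app}}:=P$ and $u_0=0$ the trivial solution at $\epsilon=0$) follows from uniqueness in the contraction argument plus continuous dependence on $\epsilon$. Finally, as $\epsilon\to0$ the submanifolds $\tilde P_{\epsilon,t(\epsilon)}$ converge to $P$ as integral currents: the masses converge because $\mathrm{mass}(\epsilon L)=\epsilon^3\,\mathrm{mass}(L)\to0$ and the glued-in correction $u_\epsilon$ has small $C^0$ norm, and the currents converge weakly since $\tilde P_{\epsilon,t(\epsilon)}$ agrees with a $C^1$-small graph over $P\setminus B(x)$ outside a shrinking neighbourhood of $x$.

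The main obstacle I expect is obtaining the right inverse of the extended linearized operator with a norm that degrades slowly enough in $\epsilon$ (the exponent $\beta$ above) to beat the error estimate $\epsilon^\kappa$. This is the familiar difficulty in associative/special-Lagrangian gluing: the operator $\bD_{P_\epsilon^{\mathrm{app}}}$ has a cluster of small eigenvalues coming from the neck region and from the would-be-obstruction $\hat s_L$, so one must choose the weight $\mu$ (as in \autoref{def CS weighted topo}) in a ``gap'' of the homogeneities of the link Jacobi operator — avoiding the critical rates dictated by the $d_\lambda$ — and carefully patch the CS-side inverse, the AC-side inverse, and a neck parametrix while tracking all $\epsilon$-powers. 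The additional associative-specific complication, absent in the special Lagrangian and coassociative settings, is that the CS-side operator $\bD_P$ can itself be obstructed (non-surjective) even though we are quotienting only by the scaling direction; the hypothesis \autoref{hyp desing CS} is exactly what guarantees that, after enlarging by the $t$-variable, no further obstruction survives, so verifying that the extended parametrix really does surject — uniformly in $\epsilon$ — is the crux of the argument.
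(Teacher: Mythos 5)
Your overall architecture is the same as the paper's: glue $\epsilon L$ into $P$ near the singular point, write the associativity condition on normal graphs as a nonlinear map, identify the cokernel with the matching kernel via self-adjointness of the Fueter operator (co-closedness is indeed the point), enlarge the problem by the path parameter $t$ so that the extended matching kernel hypothesis $\widetilde\cK^\fm=\cK^\fm=\langle(\hat s_L,0)\rangle_\R$ becomes a surjectivity statement, and then close via a contraction argument. The structure of Section 6 of the paper follows precisely this outline.

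The genuine gap is exactly where you say the crux lies, and you leave it unresolved. Two things go wrong with the phrasing ``inverse bounded by $C\epsilon^{-\beta}$ for some $\beta\geq 0$'' together with ``provided $\epsilon^\kappa\cdot\epsilon^{-\beta}\ll1$.'' First, the quadratic constant $C_{Q_\epsilon}$ in \autoref{prop quadratic estimate with epsilon} is \emph{not} $\epsilon$-independent; it degrades like $\epsilon^{-1}$ (the factor $\epsilon^{-1}$ in part (ii) there). So the contraction hypothesis in \autoref{lem Banach contraction} requires
\[
\|\fF_\epsilon(\alpha_\epsilon)\|\leq \frac{1}{10\,C^2_{\tilde L_\epsilon}\,C_{Q_\epsilon}},
\]
which, if one only knows $C_{\tilde L_\epsilon}=O(\epsilon^{-\beta})$ for some fixed $\beta>0$, forces the error $\|\fF_\epsilon(\alpha_\epsilon)\|=O(\epsilon^{q\mu})$ to beat $\epsilon^{2\beta+1}$. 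For a fixed unspecified $\beta$ this is hopeless, since one also needs $q\leq\frac{1-\nu}{\mu-\nu}$ and hence $q\mu$ is bounded above. The paper's key technical point, not present in your proposal, is that \autoref{prop main linear estimate} and \autoref{prop main linear estimate 1 parameter} prove the bound $C_{\tilde L_\epsilon}=O(\epsilon^{-\omega})$ for \emph{every} $\omega>0$. This is not obtained by the kind of parametrix patching you describe; it is a rescaling blow-up argument by contradiction: a would-be violating sequence $u_n$ with $\|u_n\|_{L^\infty_{\epsilon_n,0}}=1$ and $\epsilon_n^{-\omega}\|\fL_{P_{\epsilon_n}}u_n\|\to0$ is split into a CS piece and an $\epsilon$-rescaled AC piece, each of which converges to a kernel element on $P$ resp.\ $L$ with matching asymptotic limits; orthogonality in $\mathcal X^{k+1,\gamma}_\epsilon$ forces both to vanish, contradicting the normalization. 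With this, the contraction hypothesis becomes $\epsilon^{q\mu}\lesssim\epsilon^{1+2\omega}$, which holds exactly because $q\mu>1$ can be arranged while respecting $q\leq\frac{1-\nu}{\mu-\nu}$; this simultaneous arrangement is where the hypothesis $\nu<0$ actually enters (if $\nu>0$ the window $(\frac{1}{\mu},\frac{1-\nu}{\mu-\nu}]$ would be empty), and your proposal never invokes $\nu<0$.

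Two smaller points. The approximate kernel complement in the paper is not defined by a ``slice condition transverse to $\hat s_L$'' but by $L^2$-orthogonality over the compact cores $K_L$, $K_P$ against \emph{all} matched pairs in $\cK^\fm$ (see \autoref{def matching kernel}); under the hypothesis this set is only $\{(\hat s_L,0)\}$, so in practice your slice picture is morally correct, but the precise formulation matters for the blow-up argument. And continuity of $t(\epsilon)$ at $\epsilon=0$ does not come from declaring $P_0^{\mathrm{app}}:=P$ (the parametrization is not extended to $\epsilon=0$); it comes from the quantitative bound $|t(\epsilon)-t_0|\lesssim\epsilon^{q\mu-\omega}$ delivered by \autoref{lem Banach contraction}, which goes to $0$ because $q\mu>1>\omega$.
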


As applications of the \autoref{thm main desing}, we proceed to desingularize CS associative submanifold with a Harvey--Lawson $T^2$-cone singularity and associative submanifold with a transverse unique self intersection. 

  \subsubsection*{Desingularizations for CS associatives with a Harvey--Lawson $T^2$-cone singularity:}
  Let $\bphi \in \bsP_{\operatorname{cs}}^{\operatorname{reg}}$ be a generic path of co-closed $G_2$-structures on $Y$ (see \autoref{def regular G2 str}). Consider a conically singular associative submanifold $P$ in  $(Y, \phi_{t_0})$ for some $t_0 \in (0,1)$, with a singularity modeled on the Harvey--Lawson $T^2$-cone at a single point (see \autoref{eg Harvey-Lawson $T^2$-cone}). Since this cone is a rigid special Lagrangian in $\C^3$ with a genus-one link, the genericity of the path implies that $P$ is an isolated point in the moduli space $\bcM_{\operatorname{cs}}^\bphi$ (see \autoref{lem generic matching kernel d=2}). Moreover, there exists normal vector field $\hat{v}_P$ of order $O(r^{-1})$, which spans $\ker \bD_{P,-1}$ and gives rise to the following non-zero quantity that will be useful in the following \autoref{thm desing HL sing}:
\begin{equation}\label{eq generic HL vP}
	a := \langle \hat{v}_P, \hat{f}_P \rangle_{L^2} \neq 0,
\end{equation}
where $\hat{f}_P$ is defined in \autoref{def 1 para moduli CS}. The quantity $a$ essentially represents the first order obstruction to deform $P$ as a CS associative along the path $\boldsymbol{\phi}$.
This also implies that the kernels contributing to both the matching kernel and the extended matching kernel originate entirely from the AC side and consist of elements decaying at the rate $O(r^{-1})$ at infinity. Notably, the AC special Lagrangians employed in the desingularization process always possess a one-dimensional kernel of this decay type, which corresponds precisely to the canonical Fueter section arising from scaling. In summary, the genericity assumption on the path $\bphi$ is sufficient to ensure that the hypothesis \autoref{hyp desing CS} required in the desingularization \autoref{thm main desing} is satisfied.

There are three AC special Lagrangians in $\C^3$, $L^i :=L^i_1$, for $i = 1,2,3$ from \autoref{eg Harvey-Lawson special Lagrangians} asymptotic to the Harvey--Lawson $T^2$-cone, which will be used to construct three one-parameter families of desingularizations. However, to realize the first transition discussed at the beginning of this article, we need to impose additional transversality conditions. These restrict the choice of the path of $G_2$-structures $\bphi$ to a subset $\bsP^\bullet$ of $\bsP_{\operatorname{cs}}^{\operatorname{reg}}$, which we expect remains comeager—so the choice of $\bphi$ is still generic, though a proof is not included in this article. More precisely, we assume that $\hat{v}_P$ appearing in \autoref{eq generic HL vP} is transverse, in leading order, to each $L^i$. The following definition formalizes this assumption.

\begin{definition}$\bsP^\bullet\subset \bsP_{\operatorname{cs}}^{\operatorname{reg}}$ consists of all $\bphi\in \bsP_{\operatorname{cs}}^{\operatorname{reg}}$ such 
	that for all CS associative submanifolds $P$ in $\bcM_{\operatorname{cs}}^\bphi$ with Harvey--Lawson $T^2$-cone singularity at a single point, the asymptotic limit of $\hat v_P$ appeared in  \autoref{eq generic HL vP} has the form  \begin{equation}\label{eq generic HL asymp limit}
	i_{P,-1}\hat v_P=b_1\xi_1+b_2\xi_2,\ b_1\neq 0,b_2\neq 0, b_1\neq b_2.   
\end{equation}
where $\xi_1$ and $\xi_2$ denote the leading order $O(r^{-1})$ terms in the expansions of the AC special Lagrangians $L_1^1$ and $L_1^2$, respectively, as given in \autoref{eq AC HL} of \autoref{eg Harvey-Lawson special Lagrangians}. The quantities $b_1$, $b_2$ and $b_1-b_2$ essentially represent the first order obstructions to desingularizations of $P$ by gluing $L_1^1$, $L_1^2$ and $L_1^3$ within the fixed $G_2$-structure $\phi_{t_0}$.
\end{definition}
\begin{remark}
The ratios $\frac {b_1}{a}, \frac {b_2}{a}$ do not depend on the particular choice of $\hat v_P$.  
\end{remark}
We now state the desingularization theorem for conically singular associatives with Harvey--Lawson $T^2$-cone singularity at a single point that is proved in \autoref{subsection desing HL}.
 \begin{theorem}\label{thm desing HL sing}Let $\bphi\in \bsP_{\operatorname{cs}}^{\operatorname{reg}} $ be a path of co-closed  $G_2$-structures on $Y$ and $t_0\in (0,1)$. Let $P$ be a conically singular associative submanifold of $(Y,\phi_{t_0})$ in $\bcM_{\operatorname{cs}}^\bphi$ with Harvey--Lawson $T^2$-cone singularity at a single point $x$. There exist $\tilde \epsilon_0>0$, three continuous functions $t^i:[0,\tilde \epsilon_0)\to [0,1]$ with $t^i(0)=t_0$, $i=1,2,3$ and  smooth closed embedded associative submanifolds $\tilde P_{\epsilon, t^i(\epsilon)}$ in $(Y,\phi_{t^i(\epsilon)})$ for all $\epsilon\in (0,\tilde \epsilon_0)$ such that $\tilde P_{\epsilon, t^i(\epsilon)}\to P$ as $\epsilon\to 0$ in the sense of integral currents. The $\tilde P_{\epsilon, t^i(\epsilon)}$ are diffeomorphic to the Dehn filling of $P^o:=P\setminus B_\epsilon(x)$ along  simple closed curves $\mu_i\subset \partial P^o\cong T^2$ that satisfy $\mu_1.\mu_2=\mu_2.\mu_3=\mu_3.\mu_1=-1$. Furthermore, if $\bphi\in \bsP^\bullet$ then there is a constant $c\neq 0$ such that
 \begin{equation}\label{eq leading HL desing}
 \begin{split}
 	t^1(\epsilon)=t_0-\frac {cb_2}{a}\epsilon^2+o(\epsilon^2),\ \ t^2(\epsilon)=t_0+\frac {cb_1}{a}\epsilon^2+o(\epsilon^2),\\  \text{and} \ \ t^3(\epsilon)=t_0+\frac {c(b_2-b_1)}{a}\epsilon^2+o(\epsilon^2).
 	\end{split}
 \end{equation}

 \end{theorem}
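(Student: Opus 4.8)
The plan is to deduce \autoref{thm desing HL sing} from the desingularization \autoref{thm main desing}, applied three times --- once for each of the three asymptotically conical special Lagrangians $L^i := L^i_1 \subset \C^3 \subset \R^7$ asymptotic to the Harvey--Lawson $T^2$-cone $C$ from \autoref{eg Harvey-Lawson special Lagrangians}, each of rate $\nu = -1$ --- and then to read off the leading-order behaviour of the three functions $t^i(\epsilon)$ from the gluing construction in the case $\bphi \in \bsP^\bullet$.

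\textbf{Verifying the hypothesis.} The first task is to check, for each $i$, that \eqref{hyp desing CS} holds for the pair $(P, L^i)$, i.e.\ that $\widetilde\cK^\fm = \cK^\fm = \langle (\hat s_{L^i}, 0)\rangle_\R$. Since $C$ is the Harvey--Lawson $T^2$-cone it is a rigid special Lagrangian with genus-one link and $\operatorname{s-ind}(C) = 1$ by \autoref{thm stability index of cones}(iii). Because $\bphi \in \bsP_{\operatorname{cs}}^{\operatorname{reg}}$, \autoref{thm generic moduli cs asso} --- via \autoref{lem generic matching kernel d=2} --- forces the conically singular side to contribute trivially: any $(u_L, u_P) \in \cK^\fm$ has $u_P = 0$ and $i_\infty u_L = 0$, so $u_L$ lies in the decaying kernel $\ker\bD_{L^i, \nu}$ with $\nu < 0$, which for the Harvey--Lawson AC special Lagrangians is one-dimensional, spanned by the canonical Fueter section $\hat s_{L^i}$ from scaling. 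For the extended matching kernel one carries an extra parameter $t$; pairing any $(u_L, u_P, t) \in \widetilde\cK^\fm$ against the $O(r^{-1})$ Jacobi field $\hat v_P$ spanning $\ker\bD_{P,-1}$, and using self-adjointness of $\bD_P$ (whose boundary term over $\partial B_\rho(x)$ is $O(\rho) \to 0$ since $u_P = O(r^0)$, $\hat v_P = O(r^{-1})$ and $\dim \partial B_\rho = 2$), yields $t\,a = 0$ with $a = \langle \hat v_P, \hat f_P\rangle_{L^2} \neq 0$ by \eqref{eq generic HL vP}; hence $t = 0$ and we are reduced to $\cK^\fm$. This establishes \eqref{hyp desing CS}.

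\textbf{Gluing, topology, and the expansion.} With \eqref{hyp desing CS} in hand, \autoref{thm main desing} supplies, for each $i$, a threshold $\tilde\epsilon_0^i > 0$, a continuous $t^i : [0, \tilde\epsilon_0^i) \to [0,1]$ with $t^i(0) = t_0$, and smooth closed embedded associatives $\tilde P_{\epsilon, t^i(\epsilon)} \subset (Y, \phi_{t^i(\epsilon)})$ converging to $P$ as integral currents; set $\tilde\epsilon_0 := \min_i \tilde\epsilon_0^i$. By construction $\tilde P_{\epsilon, t^i(\epsilon)}$ is diffeomorphic to $P^o := P \setminus B_\epsilon(x)$, with boundary the link $T^2$, with a rescaled copy of $L^i$ glued in along $\partial P^o$; since each Harvey--Lawson AC special Lagrangian is a handlebody whose compact core caps off one primitive simple closed curve $\mu_i \subset T^2$, this is exactly Dehn filling along $\mu_i$, and the relations $\mu_1.\mu_2 = \mu_2.\mu_3 = \mu_3.\mu_1 = -1$ (in particular distinctness of the $\mu_i$) come from the explicit asymptotics of the three Harvey--Lawson AC special Lagrangians. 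Finally assume $\bphi \in \bsP^\bullet$. Unwinding the gluing in the proof of \autoref{thm main desing}, $t^i(\epsilon)$ is characterised to leading order by vanishing of the obstruction to perturbing the approximate associative glued with $\epsilon L^i$ in the structure $\phi_t$; this obstruction is a flux pairing of the gluing error against the distinguished kernel, the $O(r^{-1})$ tail $\xi_i$ of $L^i$ rescaling to an $O(\epsilon^2 r^{-1})$ perturbation of $C$ (whence the $\epsilon^2$ scaling), and the flux of two $O(r^{-1})$-modes over a neck sphere being scale-invariant and equal to the skew intersection form on $H^1(T^2) \cong \R^2$, so that
\begin{equation*}
\mathrm{Obs}_i(\epsilon, t) \;=\; c\,\epsilon^2\,\langle\, i_{P,-1}\hat v_P,\ \xi_i\,\rangle_{T^2} \;+\; (t - t_0)\,a \;+\; o(\epsilon^2) + o(\lvert t - t_0\rvert),\qquad c \neq 0 .
\end{equation*}
For $\bphi \in \bsP^\bullet$, \eqref{eq generic HL asymp limit} gives $i_{P,-1}\hat v_P = b_1\xi_1 + b_2\xi_2$; identifying $\xi_1, \xi_2, \xi_3$ with the classes dual to $\mu_1, \mu_2, \mu_3$ (so $\xi_1 + \xi_2 + \xi_3 = 0$ and $\langle \xi_j, \xi_k\rangle_{T^2} = \mu_j.\mu_k$) gives $\langle i_{P,-1}\hat v_P, \xi_1\rangle_{T^2} = b_2$, $\langle i_{P,-1}\hat v_P, \xi_2\rangle_{T^2} = -b_1$ and $\langle i_{P,-1}\hat v_P, \xi_3\rangle_{T^2} = b_1 - b_2$. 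Since $a \neq 0$, solving $\mathrm{Obs}_i(\epsilon, t^i(\epsilon)) = 0$ by the implicit function theorem (with regularity inherited from the proof of \autoref{thm main desing}) produces precisely \eqref{eq leading HL desing}.

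\textbf{Main obstacle.} The delicate point is extracting the leading $\epsilon^2$-coefficient from the Newton-type iteration inside \autoref{thm main desing} uniformly in $\epsilon$, and identifying the relevant flux functional with the topological intersection form on the link $T^2$ --- a matched-asymptotics computation analogous to, but more intricate than, the special Lagrangian case, complicated here by the simultaneous variation of the $G_2$-structure. Pinning down the three Dehn-filling slopes and their pairwise intersection numbers from the geometry of the Harvey--Lawson AC special Lagrangians, while essentially classical, also requires care.
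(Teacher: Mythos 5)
Your proposal follows essentially the same route as the paper: verify the hypothesis \autoref{hyp desing CS} for each $L^i$ via the genericity argument of \autoref{lem generic matching kernel d=2} combined with the topological formula of \autoref{prop lagrangian AC kernel}, apply \autoref{thm main desing} three times to get the desingularizations, cite the Dehn-filling description from the geometry of the $L^i$, and identify the leading term of $t^i(\epsilon)-t_0$ as an $\epsilon^2$-scaled boundary (flux) pairing between the tail of the AC piece and $\hat v_P$, which reduces to the skew pairing $\int_\Sigma\langle J\xi_i,\xi_j\rangle$ on the torus link — this matches the paper's $c=\int_\Sigma\langle J\xi_1,\xi_2\rangle$.

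The one genuine gap, which you correctly flag as the ``main obstacle,'' is the rigorous justification that your expression $\mathrm{Obs}_i(\epsilon,t)$ really captures the obstruction up to $o(\epsilon^2)+o(\lvert t-t_0\rvert)$. You do not actually derive it from the non-linear equation $\tilde\fF_\epsilon(\tilde\alpha_\epsilon,t(\epsilon))=0$; you write down a plausible answer and appeal to an implicit-function-theorem heuristic. The paper instead obtains \autoref{eq leading HL desing} by pairing the full identity $0=\langle\tilde\fF_\epsilon(\tilde\alpha_\epsilon,t(\epsilon)),v_P\rangle_{L^2(\mathring P_\epsilon)}$ against $v_P$, splitting the result into the leading boundary term $\langle\fL_P\alpha_\epsilon,v_P\rangle_{L^2(\mathring P_\delta)}$ (computed via Stokes to give $b_2\epsilon^2\int_\Sigma\langle J\xi_1,\xi_2\rangle+O(\epsilon^{2+q})$) plus five explicit remainders $e^1_\epsilon,\dots,e^5_\epsilon$, and bounding each of these as $o(\epsilon^2)$ using the improved error estimate $\norm{\tilde\alpha_\epsilon-\alpha_\epsilon}_{C^{k+1,\gamma}_{\epsilon,0}}+\lvert t(\epsilon)-t_0\rvert\lesssim\epsilon^{q\mu-\omega}$ of \autoref{rmk improved approx asso desing}, together with the quadratic estimate \autoref{prop quadratic estimate with epsilon} and suitable choices $q>\tfrac23$ and $\omega$ small. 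Those estimates are the real content of the ``furthermore'' claim --- the same step the paper is unable to close in the Lawlor-neck case (\autoref{rmk discussion on leading order term}) --- so presenting them as extractable ``from the Newton iteration'' without carrying them out leaves the leading-order expansion unproved. The rest of your argument (hypothesis verification, existence of $t^i$, Dehn-filling topology) is complete and agrees with the paper.
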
 
 \begin{remark}
 We would like to acknowledge that our derivation of the leading-order expression in \autoref{eq leading HL desing} is influenced by \cite[Remark 5.4 a)]{Joyce2016}.
 \end{remark}
 \subsubsection*{Desingularizations for associative submanifolds with transverse intersection:}
 Let $\bphi \in \bsP_{\operatorname{cs}}^{\operatorname{reg}}$ be a generic path of co-closed $G_2$-structures on $Y$. Consider an associative submanifold $P$ in $(Y, \phi_{t_0})$  for some $t_0 \in (0,1)$, exhibiting a unique transverse self-intersection. More precisely, $P$ is a conically singular associative submanifold with a singularity at a single point modeled on the union of two transverse associative planes $\Pi_\pm \subset \R^7$ and there exists $B \in G_2$ such that $B \Pi_0 = \Pi_+$, $B \Pi_\theta = \Pi_-$, and 
\begin{equation}\label{eq B Lawlor}
\R^7 = \langle \mathbf{n} \rangle_{\R} \oplus B \Pi_0 \oplus B \Pi_\theta,
\end{equation}
which is an orientation-compatible splitting as described in \autoref{eg s-ind Pair of transverse associative planes}. To resolve this intersection, we aim to glue in a Lawlor neck--an asymptotically conical (AC) special Lagrangian submanifold asymptotic to the planes $\Pi_\pm$ (see \autoref{eg Lawlor neck}).

Unlike the case of desingularizations for CS associative submanifolds with a Harvey--Lawson $T^2$-cone singularity, the genericity assumption $\bphi \in \bsP_{\operatorname{cs}}^{\operatorname{reg}}$ does not guarantee the absence of non-trivial elements in the extended matching kernel on the CS side. That is, a family of intersecting associative submanifolds may persist along the path $\bphi$, potentially increasing the dimension of the extended matching kernel and thereby violating the hypothesis \autoref{hyp desing CS} required for the desingularization result \autoref{thm main desing}.
To address this issue, we restrict our choice of the path of $G_2$-structures $\bphi$ to a subset $\bsP^\dagger$ of $\bsP_{\operatorname{cs}}^{\operatorname{reg}}$ consisting of those paths for which associative submanifolds with a transverse, unique self-intersection are unobstructed as immersed associatives. This ensures that $P$ can be deformed into a family of embedded, closed associatives that separate the two sheets of the self-intersection along the path. As a consequence, any non-trivial elements in the extended matching kernel on the CS side must have different components in the direction perpendicular to the union of the two tangent planes $\Pi_\pm$ at the intersection point. However, any elements in the extended matching kernel arising from the AC special Lagrangian (Lawlor neck) side always have vanishing perpendicular components. Therefore, for this restricted subset of paths, the hypothesis \autoref{hyp desing CS} is satisfied.
We note that this subset is still expected to be comeager, so the choice of $\bphi$ remains generic, although a formal proof of this fact is not included in the present article. The following definition formalizes this restriction.

\begin{definition}\label{def dag}
 	$\bsP^\dagger\subset \bsP_{\operatorname{cs}}^{\operatorname{reg}}$ consists of all $\bphi\in \bsP_{\operatorname{cs}}^{\operatorname{reg}}$ such that along it, all associative submanifolds $P$ with transverse unique self intersection are (rigid) unobstructed as immersed associative submanifolds. In other words,
 $\ker \bD_{P,0}=\{0\}$ and there exists  $\hat v_P\in C_{P,0}^\infty$ such that 
 \begin{equation}\label{eq generic intersection}
 	\bD_{P} \hat v_P=\hat f_P,\ \ \text{and}\ \ a:= (\hat v_P^+(0)-\hat v_P^-(0))\cdot \bn \neq 0,
 \end{equation}
 where $\hat v_P^\pm(0)\in  \Pi_\pm^\perp$ are the asymptotic limits of $\hat v_P^\pm$ (see \autoref{lem main Fredholm} and \autoref{eg s-ind Pair of transverse associative planes}) and $\hat{f}_P$ is from \autoref{def 1 para moduli CS}.    
 
 The above implies that if $\bphi\in \bsP^{\dag}$ and $P$ is an associative submanifold in $(Y,\phi_{t_0})$ for some $t_0\in (0,1)$ with a transverse unique self intersection then there exists a family of immersed closed associative submanifolds \begin{equation}\label{eq P_t}
 	\{P_t:\abs{t-t_0}\ll 1,\ P_{t_0}=P,\ P_t \ \text{is an embedded closed associative in}\ (Y,\phi_t), \forall t\neq t_0 \}.
 \end{equation}
 The quantity $a$ essentially measures the speed at which the two local sheets of $P_t $ move across each other near the crossing point as the parameter $t$ passes through $t_0$. 
 \end{definition} 

We obtain the following theorem whose proof is given in \autoref{subsection desing intersecting}.

 \begin{theorem}\label{thm desing intersection}Let $\bphi\in \bsP^{\dag}$  be a path of co-closed  $G_2$-structures on $Y$ and $t_0\in (0,1)$. Let $P$ be an associative submanifold of $(Y,\phi_{t_0})$ with a transverse unique self intersection at $x$. There exist $\tilde \epsilon_0>0$, a continuous function $t:[0,\tilde \epsilon_0)\to [0,1]$ with $t(0)=t_0$ and smooth closed embedded associative submanifolds $\tilde P_{\epsilon, t(\epsilon)}$ in $(Y,\phi_{t(\epsilon)})$ for all $\epsilon\in (0,\tilde \epsilon_0)$ such that $\tilde P_{\epsilon, t(\epsilon)}\to P$ as $\epsilon\to 0$ in the sense of integral currents. The $\tilde P_{\epsilon, t(\epsilon)}$ are diffeomorphic to the connected sums $P_t\#(S^1\times S^2)$ if $P_t$ is connected, and otherwise to $P_t^+\#P_t^-$, where $P_t=P_t^+\amalg P_t^-$. Here $P_t$ is from \autoref{eq P_t} with $t\neq t_0$. 
\end{theorem}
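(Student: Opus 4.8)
The plan is to obtain \autoref{thm desing intersection} as an application of the general desingularization \autoref{thm main desing}. We take as model cone $C=\Pi_+\cup\Pi_-$, the union of the two transverse special Lagrangian $3$-planes (which are associative), and as the AC associative $L\subset\R^7$ a Lawlor neck asymptotic to $C$ (see \autoref{eg Lawlor neck}); after applying the element $B\in G_2$ of \autoref{eq B Lawlor} we may assume $\Pi_\pm\subset\C^3=\langle\bn\rangle_\R^\perp$ and $L\subset\C^3$, with negative asymptotic rate $\nu$. Unlike the Harvey--Lawson cone, this configuration admits, up to the scaling absorbed into the gluing parameter $\epsilon$, only one Lawlor neck, which is why we obtain a single family $\tilde P_{\epsilon,t(\epsilon)}$ rather than three. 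Granting the hypothesis \autoref{hyp desing CS} for this $(P,C,L)$, \autoref{thm main desing} produces $\tilde\epsilon_0$, the function $t$ with $t(0)=t_0$, and the embedded associatives $\tilde P_{\epsilon,t(\epsilon)}\to P$ in the current sense. The diffeomorphism type is read off from the gluing: $\tilde P_{\epsilon,t(\epsilon)}$ is the normalization of $P$ with small balls around the two preimages $x^\pm$ of $x$ removed and the two resulting boundary $2$-spheres joined by a thin scaled copy of the Lawlor neck, which is diffeomorphic to $S^2\times\R$. Since the separating associatives $P_t$, $t\ne t_0$, of \autoref{eq P_t} are diffeomorphic to that same normalization (the embedding merely unlinks the two sheets without changing the topology of the domain), attaching this tube turns a connected $P_t$ into $P_t\#(S^1\times S^2)$ and a disjoint union $P_t=P_t^+\amalg P_t^-$ into $P_t^+\#P_t^-$; this type is independent of $t\ne t_0$ because the $P_t$ form an isotopy.

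It remains to check \autoref{hyp desing CS}, i.e.\ $\widetilde\cK^\fm=\cK^\fm=\langle(\hat s_L,0)\rangle_\R$, and here the definition of $\bsP^\dagger$ is used twice. First, because $\bphi\in\bsP^\dagger$ we have $\ker\bD_{P,0}=\{0\}$ by \autoref{eq generic intersection}; hence in $\cK^\fm$ the $P$-component must vanish and $i_\infty u_L=0$, so $\cK^\fm$ is the space of \emph{decaying} elements of $\ker\bD_{L,0}$. For a Lawlor neck this space is one-dimensional, spanned by the canonical Fueter section $\hat s_L$ coming from scaling (the Lawlor neck being rigid as an AC special Lagrangian modulo scaling), so $\cK^\fm=\langle(\hat s_L,0)\rangle_\R$.

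Now take $(u_L,u_P,t)\in\widetilde\cK^\fm$ and let $\hat v_P\in C_{P,0}^\infty$ be the field from \autoref{eq generic intersection}, so $\bD_P\hat v_P=\hat f_P$. Then $\bD_P(u_P+t\hat v_P)=0$ with $u_P+t\hat v_P$ bounded, whence $u_P+t\hat v_P\in\ker\bD_{P,0}=\{0\}$ and $u_P=-t\hat v_P$; in particular the asymptotic limits on the two sheets are $i_0u_P=(-t\hat v_P^+(0),\,-t\hat v_P^-(0))$ with $\hat v_P^\pm(0)\in\Pi_\pm^\perp$. The geometric heart of the argument is that, since $L$ lies in $\C^3$, the component of \emph{any} $u_L\in\ker\bD_{L,0}$ in the $\bn$-direction (perpendicular to $\C^3=\Pi_+\oplus\Pi_-$) is a globally constant function on $L$: writing $u_L=f\bn+v$ with $f\in C^\infty(L)$ and $v$ the $\C^3$-normal part, the standard form of the associative deformation operator of a special Lagrangian in $\C^3\subset\R^7$ shows $\bD_Lu_L=0$ forces $f$ to be a bounded harmonic function whose flux through any cross-section equals the period of an exact form and so vanishes; a bounded harmonic function with vanishing flux on the two conical ends of $L$ is constant. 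Consequently the $\bn$-components of $i_\infty u_L$ agree on the two ends, so $(u_L^+(\infty)-u_L^-(\infty))\cdot\bn=0$. Matching $i_\infty u_L=i_0u_P$ componentwise and pairing with $\bn$ gives $0=-t\,(\hat v_P^+(0)-\hat v_P^-(0))\cdot\bn=-ta$, and since $a\ne 0$ by the definition of $\bsP^\dagger$ we conclude $t=0$, hence $u_P=0$ and $(u_L,u_P,t)\in\cK^\fm$. Therefore $\widetilde\cK^\fm=\cK^\fm=\langle(\hat s_L,0)\rangle_\R$ and \autoref{thm main desing} applies.

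The main obstacle is precisely this verification of \autoref{hyp desing CS}: one must rule out the extended matching kernel being strictly larger than the matching kernel, i.e.\ rule out a one-parameter family of immersed or CS associatives persisting along $\bphi$ that would contribute a new direction. The restriction $\bphi\in\bsP^\dagger$ is designed to defeat this on the CS side---it pins down $u_P=-t\hat v_P$ and records the transversality $a\ne 0$ measuring the speed at which the two sheets of $P_t$ separate---while the containment $L\subset\C^3$ defeats it on the AC side, since the bounded associative deformations of the Lawlor neck cannot create an asymptotic jump in the $\bn$-direction; the argument closes because the asymptotic matching condition is incompatible with a nonzero such jump on one side and none on the other. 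The surrounding analysis (weighted Fredholm theory, gluing estimates, the quantitative inverse function theorem over the family) is all inherited from \autoref{thm main desing}.
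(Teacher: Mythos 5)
Your proof is correct and follows essentially the same route as the paper: apply the general desingularization \autoref{thm main desing} with a Lawlor neck $L$ (normalized to $A=1$) and verify the hypothesis $\widetilde\cK^\fm=\cK^\fm=\langle(\hat s_L,0)\rangle_\R$ using $\ker\bD_{P,0}=\{0\}$ from \autoref{def dag} together with $\dim\ker\fL_{L,-1}=b^1(S^2\times\R)+b^0(S^2\amalg S^2)-1=1$ from \autoref{prop lagrangian AC kernel}; the topology is read off from gluing an $S^2\times\R$ tube into the normalization of $P$. One place where your write-up is actually more careful than the paper's terse phrasing: the paper asserts that ``the $\bn$-component of any element of $\ker\fL_{L,0}$ vanishes,'' which is not literally true—the section $c\bn$ (infinitesimal translation of $L$ in the $\bn$-direction) lies in $\ker\fL_{L,0}$ and has constant, generally nonzero, $\bn$-component. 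The correct statement, which you prove via the flux/Dirichlet-energy argument for the bounded harmonic $e_1$-component under $\Phi_L$, is that the $\bn$-component is globally \emph{constant}, so its asymptotic values on the two ends agree; combined with $i_L u_L=i_P u_P=-t(\hat v_P^+(0),\hat v_P^-(0))$ and the jump $a=(\hat v_P^+(0)-\hat v_P^-(0))\cdot\bn\neq 0$ this still forces $t=0$, which is all that is needed. In short: same approach, and your formulation of the $\bn$-component claim is the accurate one.
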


\begin{remark}\label{remark ddag}
Although the condition $\bphi \in \bsP^{\dag}$ suffices to establish the desingularization result \autoref{thm desing intersection} following \autoref{thm main desing}, realizing the second transition discussed at the beginning of this article requires additional transversality conditions. These impose further restrictions on the path of $G_2$-structures $\bphi$, confining it to a subset of $\bsP^{\dag}$, which we denote by $\bsP^{\ddag}$. We expect this subset to remain comeager—so the choice of $\bphi$ remains generic—though we do not provide a proof here.
More precisely, define $\bsP^{\ddag}$ to consist of those $\bphi \in \bsP^{\dag}$ such that for every associative submanifold $P$ (along the path) with a transverse, unique self-intersection, there exists $\hat u_P \in \ker \bD_{P,-2}$ satisfying
\begin{equation}\label{eq generic intersection asymp limit}
    i_{P,-2} \hat u_P = (B \xi^+, B \xi^-), \quad \text{and} \quad b := (\hat u_P - B \xi^+)(0) \cdot \bn - (\hat u_P - B \xi^-)(0) \cdot \bn \neq 0,
\end{equation}
where $\xi^\pm$ denote the leading-order $O(r^{-2})$ terms over $\Pi_\pm$ for the Lawlor neck defined in \autoref{eq xi Lawlor neck} of \autoref{eg Lawlor neck}, and $B \in G_2$ is as above. The quantity $b$ essentially represents the first order obstruction to deforming $P$ into an embedded, closed associative submanifold within the fixed $G_2$-structure $\phi_{t_0}$. Note also that choosing $-\bn$ instead of $\bn$ swaps $\Pi_+$ and $\Pi_-$, ensuring that the definitions of $a$ and $b$ remain well-defined.
As we will discuss in \autoref{rmk discussion on leading order term}, following \cite[Remark 4.5(a)]{Joyce2016}, if $\bphi \in \bsP^{\ddag}$, one would expect the existence of a constant $c \neq 0$ such that
\begin{equation}\label{eq t(epsilon) intersection}
    t(\epsilon) = t_0 - \frac{cb}{a} \epsilon^3 + o(\epsilon^3).
\end{equation}
Establishing this expansion would confirm the second transition described at the beginning of the article. However, despite our efforts, we have not succeeded in proving \autoref{eq t(epsilon) intersection}, as explained in more detail in \autoref{rmk discussion on leading order term}. We would like to highlight that although \citet{Nordstrom2013} previously established \autoref{thm desing intersection}, he also did not derive \autoref{eq t(epsilon) intersection}. While we refine Nordström's argument by constructing an improved approximate desingularization that yields a smaller pre-gluing error, our analysis still lacks the precise estimates required to rigorously justify the expansion in \autoref{eq t(epsilon) intersection}. We hope that future refinements of the techniques developed in this article will lead to a complete proof. For this reason, as mentioned at the outset, our results concerning the second transition should be regarded as partial.
\end{remark}

\paragraph{Acknowledgements.} I am grateful to my PhD supervisor Thomas Walpuski for suggesting the problems solved in this article. I am also indebted to him for his constant encouragement and advice during this work. Additionally, I extend my thanks to him, Johannes Nordstr\"om, Jason Lotay and the anonymous referee for their meticulous review of the manuscript and valuable feedback. I also thank Dominik Gutwein and Viktor Majewski for their careful reading and helpful comments on the earlier drafts. This material is based upon work supported by the \href{https://sites.duke.edu/scshgap/}{Simons Collaboration on Special Holonomy in Geometry, Analysis, and Physics}.

\section{Preliminaries: $G_2$-manifolds and associative submanifolds}\label{section 1}
In this section we review definitions and basic facts about $G_2$-manifolds, more generally almost $G_2$-manifolds and associative submanifolds, which are important to understand this article. To delve further into these topics, we refer the reader to \cites{Salamon2010}{Joyce2007}{Karigiannis2020}{Harvey1990} and other relevant sources mentioned throughout the discourse. 
 
 \subsection{$G_2$-manifolds}
This subsection reviews definitions and basic facts about $G_2$-manifolds.
\begin{definition}
The group $G_2$ is the automorphism group of the normed division algebra of octonions $\O$, that is,
\[G_2:=\Aut(\O)\subset \SO(7).\qedhere\]
\end{definition}
This is a simple, compact, connected, simply connected Lie group of dimension $14$. Furthermore, there exists a fibration $\SU(3)\hookrightarrow G_2\to S^6$.
Writing $\O=\Re \O\oplus\Im \O\cong \R\oplus \R^7$, we define the \textbf{cross-product} $\times:\Lambda^2\R^7\to \R^7$ by
$$(u,v)\mapsto u\times v:=\Im(uv).$$
The \textbf{$3$-form} $\phi_e\in \Lambda^3(\R^7)^*$ is defined by
$$\phi_e(u,v,w):=g_e(u\times v,w),$$
where $g_e:S^2(\R^7)\to \R$ is $g_e(u,v)=-\Re(uv)$, the standard Euclidean metric on $\R^7$.
These are related by the important identity \begin{equation}\label{eq G2 identity}
 \iota_u\phi_e\wedge\iota_v \phi_e\wedge \phi_e=6g_e(u,v)\vol_{g_e}.	
 \end{equation}
 More explicitly, there $\{e^1,\dots,e^7\}$ is an oriented orthonormal frame on $\R^7$ such that $$\phi_e=e^{123}-e^{145}-e^{167}-e^{246}-e^{275}-e^{347}-e^{356},$$
  where $e^{ijk}:=e^i\wedge e^j \wedge e^k$. 
  The Lie group $G_2$ can also be expressed as
 $$G_2:=\{A\in \GL(\R^7):A^*\phi_e=\phi_e\}.$$
  There is also a \textbf{$4$-form} $\psi_e:=*_{g_e}\phi_e\in \Lambda^4(\R^7)^*$, which also can be defined by 
$$\psi_e(u,v,w,z):=g_e([u,v,w],z),$$
where $[\cdot,\cdot,\cdot]:\Lambda^3(\R^7)\to \R^7$ is the \textbf{associator}, defined as follows:
$$[u,v,w]:=(u\times v)\times w+\inp{v}{w}u-\inp{u}{w}v.$$

\begin{definition}\label{def almost G2}A \textbf{$G_2$-structure} on a $7$-dimensional manifold $Y$ is a principal $G_2$-bundle over $Y$ which is a reduction of the frame bundle $\GL(Y)$. 

An \textbf{almost $G_2$-manifold} is a $7$-dimensional manifold $Y$ equipped with a $G_2$-structure or equivalently, equipped with a \textbf{definite} $3$-form $\phi\in \Omega^3(Y)$, that is, the bilinear form $G_\phi: S^2TY\to \Lambda^7(T^*Y)$ defined by 
$$G_\phi(u,v):=\iota_u\phi\wedge\iota_v \phi\wedge \phi$$
is definite.
\end{definition}
 A $G_2$-structure $\phi$ on $Y$ defines uniquely a Riemannian metric $g_\phi$ and a volume form $\vol_{g_\phi}$ on $Y$ satisfying the identity \autoref{eq G2 identity}. Moreover it defines 
\begin{itemize}
\item a \textbf{cross product} $\times:\Lambda^2(TY)\to TY$,	
\item an \textbf{associator} $[\cdot,\cdot,\cdot]:\Lambda^3(TY)\to TY$,
\item a \textbf{$4$-form} $\psi:=*_{g_\phi} \phi\in \Omega^4(Y)$.\qedhere
\end{itemize}

\begin{remark}A $7$-dimensional manifold is an almost $G_2$-manifold if and only if it is spin, see \cite[Theorem 3.1-3.2]{Gray1969}. A $G_2$-structure is also equivalent to a choice of a \textbf{definite} $4$-form $\psi\in \Omega^4(Y)$ and an orientation on $Y$; see \cite[Section 8.4]{Hitchin2001}. Here $\psi$ being {definite} means the bilinear form $G_\psi:S^2T^*Y\to \Lambda^7(T^*Y)\tn \Lambda^7(T^*Y)$ defined by
$$G_\psi(u,v):=\iota_u\psi\wedge\iota_v \psi\wedge \psi$$
is definite. In this definition, the $4$-form $\psi$ is considered as a section of $\Lambda^3(TY)\tn  \Lambda^7(T^*Y) \cong \Lambda^4(T^*Y)$ and the contraction $\iota_u\psi$ is a section of $\Lambda^2(TY)\tn  \Lambda^7(T^*Y)$.
\end{remark}
\begin{definition}\label{def G2 manifold}
 A \textbf{$G_2$-manifold} is a $7$-dimensional manifold $Y$ equipped with a torsion-free $G_2$-structure, that is, equipped with a definite $3$-form $\phi\in \Omega^3(Y)$  such that  $\nabla_{g_\phi}\phi=0$,
 or equivalently,
 \begin{equation*}d\phi=0\ \ \text{and}\ \ d\psi=0.\qedhere \end{equation*} 
\end{definition}
The equivalence in the above definition was established by \citet[Theorem 5.2]{Fernandez1982}. 

\begin{example}$(\R^7,\phi_e)$ is a $G_2$-manifold.
	\end{example}
	\begin{example}\label{eg G2 hyperkahller}Let $(X,\omega_I,\omega_J,\omega_K)$ be a hyperk\"ahler $4$-manifold. The manifolds $\R^3\times X$ and $T^3\times X$ are $G_2$-manifolds with $G_2$-structure
	$$\phi:=dt^1\wedge dt^2\wedge dt^3- dt^1\wedge \omega_I-dt^2\wedge \omega_J-dt^3\wedge \omega_K,$$
	where $(t^1,t^2,t^3)$ are the coordinates of $\R^3$.	
	\end{example}

\begin{example}\label{eg G2 manifold}Let $(Z,\omega,\Omega)$ be a Calabi-Yau $3$-fold, where $\omega$ is a K\"ahler form and $\Omega$ is a holomorphic volume form on $Z$ satisfying
$$\frac{\omega^3}{3!}=-\Big(\frac i2\Big)^3\Omega\wedge \bar \Omega.$$ 
The product with the unit circle, $Y:=S^1\times Z$ is a $G_2$-manifold with the $G_2$-structure
$$\phi:=dt\wedge \omega+\Re \Omega, \ \psi=\frac 12\omega\wedge \omega+dt\wedge \Im \Omega,$$
where $t$ denotes the coordinate on $S^1$. In this case, the holonomy group $\Hol(Y,g_\phi)\subset \SU(3)$.
\end{example}
\begin{remark}
Any $G_2$-manifold $(Y,\phi)$ admits a nowhere vanishing parallel spinor and therefore the metric $g_\phi$ is Ricci-flat \cites[pg. 321]{Lawson1989}[Theorem 1.2]{Hitchin1974}. A compact $G_2$-manifold $Y$ has holonomy exactly equal to $G_2$ if and only if $\pi_1(Y)$ is finite \cite[Proposition 11.2.1]{Joyce2007}.
\end{remark}

\begin{example}
\citet{Bryant1987}, \citet{Bryant1989} first constructed local and complete manifolds with holonomy equal to $G_2$, respectively. Joyce \cite{Joyce1996a} first constructed compact manifolds with holonomy equal to $G_2$ by smoothing flat $T^7/\Gamma$, where $\Gamma$ is a finite group of isometries of $T^7$. This has been generalized later by \citet{Joyce2017}. \citet{Kov03} introduced the twisted connected sum  construction of $G_2$-manifolds which glues a suitable {matching} pair of {asymptotically cylindrical} $G_2$-manifolds. This construction was later improved by \citet{KL11,CHNP15} to produce hundreds of thousands of examples of compact manifolds with holonomy equal to $G_2$.
\end{example} 
The moduli space of torsion free $G_2$-structures over $Y$ is a smooth manifold of dimension $b^3(Y)$ \cite[Part I, Theorem C]{Joyce1996b}, therefore it is not enough to achieve transversality for various enumerative theories. To address this, one must consider an infinite dimensional space of $G_2$-structures \cites[Section 3.2]{Donaldson2009}[Section 2.5]{Joyce2016}, namely the following.
\begin{definition}\label{def coclosed and tamed G2}A $G_2$-structure $\phi$ is called a \textbf{co-closed $G_2$-structure} if $d\psi=0$, where $\psi:=*_{g_\phi} \phi$. 

An {almost $G_2$-manifold} $(Y,\phi)$ is called a \textbf{co-closed $G_2$-manifold} if $\phi$ is a co-closed $G_2$-structure.

A $G_2$-structure $\phi$ is called \textbf{tamed} by a closed $3$-form $\tau\in \Omega^3(Y)$ if 
for all $x\in Y$ and $u,v,w\in T_xY$ with $[u,v,w]=0$ and $\phi(u,v,w)>0$, we have $\tau(u,v,w)>0$.
\end{definition}
\begin{remark}The definition of the above tamed $G_2$-structures can be ignored for this article; they are used only in \autoref{thm cptness current} to bound the volume of associative sub-manifolds for compactness. This restricted class of $G_2$-structures has been employed to construct well-defined enumerative theories in \cite{Joyce2016} and \cite{Doan2017d}. However, for the purposes of this article, such a restriction is not required.
\end{remark}

\begin{example}
Nearly parallel $G_2$-manifolds, that is, co-closed $G_2$-manifolds $(Y,\phi)$ satisfying $d\phi=\lambda \psi$ for some constant $\lambda\in \R$, are examples of co-closed $G_2$-manifolds.	
\end{example}

\subsection{Associative submanifolds}	
In any almost $G_2$-manifold, we can consider a special class of $3$-dimensional calibrated submanifolds, called associative submanifolds, which are the main objects of study in this article. These were first invented by \citet{Harvey1982}.
\begin{definition}\label{def closed associative}Let $(Y,\phi)$ be an almost $G_2$-manifold. A $3$-dimensional oriented submanifold $P$ of $Y$ is called an \textbf{associative submanifold} if it is semi-calibrated by the $3$-form $\phi$, that is, $\phi_{|_P}$ is the volume form $\vol_{P,g_\phi}$ on $P$, or equivalently, the associator $[u,v,w]=0$, for all $x\in P$ and $u,v,w\in T_xP$. 
\end{definition} 

\begin{remark}If $\phi$ is a calibration (i.e. $d\phi=0$) and $P$ is compact then it is a minimal submanifold and volume minimizing in its homology class \cite[Theorem 4.2]{Harvey1982}. The equivalence in the above definition follows from the identity: $\abs{u\wedge v\wedge w}^2=\phi(u,v,w)^2+\abs{[u,v,w]}^2$ (see \cite[section IV, Theorem 1.6]{Harvey1982}).
\end{remark}
\begin{example} In \autoref{eg G2 hyperkahller}, $\R^3$ and $T^3$ are associative submanifolds.	\end{example}

\begin{example}Let $Z$ be a Calabi-Yau $3$-fold and $S^1\times Z$ be the $G_2$-manifold as in \autoref{eg G2 manifold}. For any holomorphic curve $\Sigma \subset Z$ and special Lagrangian $L\subset Z$ (i.e. $L$ is calibrated by $\Re\Omega$), the $3$-dimensional submanifolds $S^1\times \Sigma$ and $\{e^{i\theta}\}\times L$ with $e^{i\theta}\in S^1$, are associative submanifolds of $S^1\times Z$.
\end{example}	

\begin{example}
\citet[Section 4.2]{Joyce1996b} has produced examples of closed associative submanifolds which are the fixed point loci of $G_2$-involutions in his generalized Kummer constructions. Recently, new examples of associative submanifolds in these $G_2$-manifolds have been constructed by \citet{Dwivedi2022}. 
\end{example}
\begin{example}
Examples of closed associative submanifolds in the twisted connected sum (TCS) $G_2$-manifolds were first constructed by \citet[Section 5, Section 7.2.2]{CHNP15} from closed holomorphic curves and  closed special Lagrangians in asymptotically cylindrical (ACyl) Calabi-Yau submanifolds. The author in \cite{Bera2022} has constructed more examples of closed associative submanifolds in the twisted connected sum $G_2$-manifolds. These are obtained from ACyl holomorphic curves and ACyl special Lagrangians in ACyl Calabi-Yau submanifolds using a gluing construction. It is also expected that this gluing construction can be used to produce infinitely many closed associative submanifolds in a certain TCS $G_2$-manifold studied by Braun et al. \cite{Braun2018}.
\end{example}
\begin{remark}
 Examples of associative submanifolds of nearly parallel $G_2$-manifolds have been constructed by \citet{Lotay2012a} in $S^7$, \citet{Kawai2015} in the squashed $S^7$ and \citet{Ball2000} in the Berger space. 
\end{remark}
\subsection{Normal bundles and canonical isomorphisms}
This subsection sets up our conventions for the normal bundle of a submanifold, the tubular neighbourhood map, and various canonical isomorphisms, which will be used extensively throughout the article to describe the deformation theories of associative submanifolds and holomorphic curves. 
\begin{definition}[Normal bundle]\label{def prelim normal bundle}
	Let  $Y$ be a manifold and $M$ be a submanifold of it. The \textbf{normal bundle} $\pi:NM\to M$ is characterised by the exact sequence
	\begin{equation}\label{eq normal bundle exact seq}
	0\to TM\to TY_{|M}\to NM\to 0.\qedhere
	\end{equation}
\end{definition}	
\begin{definition}[Tubular neighbourhood map]\label{def prelim tubular neighbourhood map}
 A \textbf{tubular neighbourhood map} of $M$ is a diffeomorphism between an open neighbouhood $V_M$ of the zero section of the normal bundle $NM$ of $M$ that is convex in each fiber and an open neighbourhood $U_M$ (tubular neighbourhood) of $M$ in $Y$,
$$\Upsilon_M:V_M\to U_M$$ that takes the zero section $0$ to $M$ and  
the composition $$NM\to 0^*TNM\xrightarrow{d\Upsilon_M} TY_{|M}\to  NM$$ is the identity.
\end{definition}

\begin{definition}[Canonical extension of normal vector fields]
\label{def canonical extension normal vf}
	The tangent bundle $TNM$ fits into the exact sequence 
$$0\to \pi^*NM\xrightarrow{i} TNM \xrightarrow{d\pi} \pi^*TM\to 0.$$ 
This induces an \textbf{canonical extension map}, which extends normal vector fields on $M$ to vector fields on $NM$:
$$\widetilde{\bullet}:C^\infty(NM)\hookrightarrow \Vect(NM),\ \ \  u\mapsto \tilde u:=i(\pi^*u).$$
Here $\pi^*u\in C^\infty(\pi^*NM)$ is the pull back section. 
\end{definition}
\begin{notation}\label{notation u}
There are instances in this article where it is more appropriate to use the notation $\tilde u$ but for simplicity, we will abuse notation and denote it by $u$.
\end{notation}

\begin{remark}\label{rmk commutator}
Observe that, $[\tilde u,\tilde v]=0$ for all $u,v\in C^\infty(NM)$. This fact will be useful later.
\end{remark}
\begin{definition}[Canonical isomorphisms]\label{def prelim canonical prelim isomorphisms}
For any section $u\in C^\infty(NM)$ we define the \textbf{graph of $u$} by
\[\Gamma_u:=\{\big(x,u(x)\big)\in NM:x\in M\}.	
\]
This is a submanifold of $NM$ and the bundle $\pi^*NM_{|\Gamma_u}$ fits into the split exact sequence
\begin{equation}\label{eq normal bundle graph iso}
	\begin{tikzcd}
0\arrow{r} & T\Gamma_u \arrow{r} & TNM_{|\Gamma_u}\arrow[l, bend right, swap, "d(u\circ\pi)"]\arrow{r}{\id-d(u\circ\pi)} &\pi^*NM_{|\Gamma_u} \arrow{r} &0.
\end{tikzcd}
\end{equation}
In particular, this induces a canonical isomorphism $N\Gamma_u\cong \pi^*NM_{|\Gamma_u}$. Moreover, the composition $T\Gamma_u\to TNM_{|\Gamma_u}\xrightarrow{d\pi} \pi^*TM_{|\Gamma_u}$ is an isomorphism. 
Let $\Upsilon_M:V_M\to U_M$ be a tubular neighbourhood map of $M$. We define
$C^\infty(V_{M}):=\{u\in C^\infty(NM):\Gamma_u\subset V_M\}.$
Let $u\in C^\infty(V_{M})$. Denote by $M_u$ the submanifold $\Upsilon_M(\Gamma_u)$ of $Y$. Then there is a \textbf{canonical bundle isomorphism}:
\begin{equation}\label{eq prelim canonical iso map}
	\begin{tikzcd}
 NM \arrow{r}{\Theta^M_u}\arrow{d} & NM_{u}\arrow{d} \\
M \arrow{r}{\Upsilon_M\circ u} & M_{u}.
\end{tikzcd}
\end{equation}
 induced by the following commutative diagram of bundle isomorphisms:

\begin{equation*}
	\begin{tikzcd}[column sep=50pt]
 NM \arrow{r}{\Theta^M_u}\arrow{d}{u^*} & NM_{u} \\
 \pi^*NM_{|\Gamma_u}\arrow[u, bend left, "\pi^*"] & N\Gamma_u\arrow{l}{\id-d(u\circ\pi)}\arrow{u}{d\Upsilon_M}.
\end{tikzcd}\qedhere
\end{equation*}
\end{definition}

\begin{definition}[Normal connection] \label{def prelim normal connection}
The choice of a Riemannian metric $g$ on $Y$ induces a splitting of the exact sequence \autoref{eq normal bundle exact seq}, that is,
 \[TY_{|M}=TM\perp NM.\]
Denote by $\cdot^\parallel$ and $\cdot^\perp$ the projections onto the first and second summands respectively. The Levi-Civita connection $\nabla$ on $TY_{|M}$ decomposes as 
\[\nabla=
\begin{bmatrix}
\nabla^\parallel & -\operatorname{II}^*\\
\operatorname{II} &\nabla^\perp
\end{bmatrix}.
\]
Here $\operatorname{II}\in \Hom(S^2TM,NM) $ is the \textbf{second fundamental form} of M, $\nabla^\parallel$ is the Levi-Civita connection on $M$ and $\nabla^\perp$ is the \textbf{normal connection} on $NM$.
\end{definition}

\subsection{Moduli space of closed associative submanifolds}
This subsection reviews established results on the moduli space of closed associative submanifolds and motivates the study of conically singular associative submanifolds, which is the focus of this article.
\begin{definition}\label{def moduli closed asso} Let $(Y,\phi)$ be an almost $G_2$-manifold.
Let $\mathcal S_3$ be the set of all $3$-dimensional oriented, closed smooth submanifolds of $Y$. 
We define the \textbf{$C^k$-topology} on the set $\mathcal S_3$ by specifying a basis, which is a collection of all the sets of the form 
$\{\Upsilon_{P}(\Gamma_u):\ u\in\mathcal V^k_{P}\}$,
 where $P\in \mathcal S_3$, $\Upsilon_{P}$ is a tubular neighbourhood map of $P$, and $\mathcal V^k_{P}$ is an open set in $C^\infty(V_{P})$, whose topology is induced by the $C^k$-norm on $C^\infty(NP)$.
The \textbf{$C^\infty$-topology} on the set $\mathcal S_3$ is the inverse limit topology of $C^k$-topologies on it, that is, a set is open with $C^\infty$-topology if it is open for every $C^k$-topology. 
 
 The \textbf{moduli space}  $\cM^\phi$ of closed associative submanifolds in $(Y,\phi)$ is the subset of all submanifolds in $\mathcal S_3$ which are associatives. The $C^\infty$-topology on $\mathcal M^\phi$ is the subspace topology of ~$\mathcal S_3$. 
 
 Let $\sP$ be the set of all co-closed $G_2$-structures on $Y$. Denote the \textbf{universal moduli space} of closed associative submanifolds by $$\cM:=\{(\phi,P)\in \sP\times \cS_3:P\in \cM^\phi \}.$$  Equip $\sP$ with the $C^\infty$ topology. The topology of $\cM\subset \sP\times \cS_3$ is then given by the subspace topology of the product $C^\infty$-topologies. 
 
 Let $\bsP$ be the space of paths $\bphi:[0,1]\to \sP$ which are smooth as sections over $[0,1]\times Y$. Set $\phi_t:=\bphi(t)$. Define the \textbf{$1$-parameter moduli space} of closed associative submanifolds by the fiber product 
\begin{equation*}
	\bcM^\bphi:=[0,1]\times_\sP \cM\cong\{(t,P)\in [0,1]\times \cS_3 :P\in \cM^{\phi_t} \}.
\end{equation*}
The topology on $\bcM^\bphi$ is the fiber product topology, which is same as the subspace topology of the product topology of $[0,1]\times \cS_3$. 
\end{definition}
\begin{definition}\label{def D_M}
	Let $M$ be an associative submanifold (compact or noncompact) of an almost $G_2$-manifold $(Y,\phi)$. The operator $\mathbf D_{M}:C^\infty(NM)\to C^\infty(NM)$ is defined by
 $$\inp{\mathbf D_{M}v}{w}_{L^2}:=\int_M\biggl\langle{\sum_{i=1}^3e_i\times \nabla^{\perp}_{M,e_i}v}, {w}\biggr\rangle+\int_M\iota_w\nabla_v\psi,$$ 
 for all $v\in C^\infty(NM)$ and $w \in C^\infty_c(NM)$. Here $NM$ is the normal bundle of $M$ and $\nabla^{\perp}_M$ is the normal connection and  $\{e_1,e_2,e_3\}$ is an oriented local orthonormal frame for $TM$ with respect to the metric $g_\phi$.
 
 If $(Y,\phi)$ is a $G_2$-manifold then $\nabla\psi=0$ and $\mathbf D_{M}$ is a Dirac operator, called the \textbf{Fueter operator}.  A closed associative submanifold $M$ of a co-closed $G_2$-manifold is called \textbf{rigid}, or equivalently, \textbf{unobstructed} if $\ker \bD_M=\{0\}$.
\end{definition}
\begin{remark} \label{rmk D_M}The operator $\mathbf D_{M}$ is formally self adjoint if $d\psi=0$ (see \autoref{prop formal self adjoint}). This operator is the linearization of a nonlinear map $\fF_M^\phi$ which controls the deformation theory of associative submanifolds (see \autoref{prop linearization nonliear}). Let $\Upsilon_M:V_M\to U_M\subset Y$ be a tubular neighbourhood map. The map $\fF_M^\phi:C^\infty(V_M)\to C^\infty(NM)$ is defined by 
\begin{equation}\label{eq asso map}
	\inp{\fF_M^\phi u}{w}_{L^2}=\int_{\Gamma_u}\iota_w\Upsilon_M^*\psi, \ \ u\in C^\infty(V_M), w\in C_c^\infty(NM)
\end{equation}
The notation $w$ in the integrand is the extension vector field of $w\in C^\infty(NM)$ in the tubular neighbourhood as in \autoref{notation u}.  
The associative submanifolds can also be thought of as critical points of a functional \cites[Section 8]{Donaldson1998}[Section 2.3]{Doan2017d} on the space of submanifolds. The differential of this functional is a $1$-form which is locally of the form $\fF_M^\phi$. This led \citet{Doan2017d} to make a proposal of constructing Floer homology groups whose chain complex is generated by associative submanifolds.\qedhere
\end{remark}

 The following theorem summarizes the deformation theory of closed associative submanifolds, as established in the literature.
\begin{theorem}[{\cites[Theorem 5.2]{McLean1998}[Theorem 2.12]{Joyce2016}[Theorem 2.20, Proposition 2.23, Section 2.7]{Doan2017d}}]\label{thm closed asso}
Let $(Y,\phi)$ be a co-closed $G_2$-manifold. 
   \begin{enumerate}[(i)]
\item Let $P$ be an associative submanifold of $(Y, \phi)$. Then there exists an open neighbourhood $\cI_P$ of $0$ in $\ker \bD_P$ such that the moduli space $\cM^\phi$ near $P$ is homeomorphic to $\ob_P^{-1}(0)$, the zero set of a smooth map (obstruction map/Kuranishi map)
 $$\ob_P:\cI_P\to \coker \bD_P.$$
 Moreover, there is a comeager subset $\sP^{\reg}\subset \sP$ such that for all $\phi\in \sP^{\reg}$, the moduli space $\cM^\phi$ is a $0$-dimensional manifold and consists only of unobstructed (rigid) closed associative submanifolds.
\item  There is a comeager subset $\bsP^{\reg}\subset \bsP$ such that for all $\bphi\in \bsP^{\reg}$, the moduli space $\bcM^\bphi$ is a $1$-dimensional manifold and there is a discrete subset $I_o\subset [0,1]$ having the property that 
 \begin{itemize}
 \item for each $t\in [0,1]\setminus I_o$ the moduli space $\cM^{\phi_t}$ consists only of unobstructed (rigid) closed associative submanifolds.
 \item for each $\hat t \in I_o$ the moduli space $\cM^{\phi_{\hat t}}$ consists only of closed associative submanifolds ${\hat P}$ having $\dim \ker \bD_{\hat P}\leq 1$. If $P_o\in \cM^{\phi_{\hat t}}$ and $\dim \ker \bD_{P_{o}}=1$ then there exist non zero constants $a,b$ such that $\ob_{(\hat t,P_o)}$ can be written as 
 $$\ob_{(\hat t,P_o)}(t,x)= at+  bx^2+ \ \text{higher order terms}.$$
 \end{itemize}
 \end{enumerate}
\end{theorem}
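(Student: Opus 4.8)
\emph{Proof strategy.} The plan is to run the standard Kuranishi/Sard--Smale package for the first-order elliptic operator $\bD_P$, exploiting its self-adjointness (\autoref{prop formal self adjoint}) together with the fact, recorded in \autoref{prop linearization nonliear}, that $\bD_P$ is the linearisation at $0$ of the deformation map $\fF_P^\phi$ of \autoref{eq asso map}. For the first claim of part~(i): fix a tubular neighbourhood map $\Upsilon_P$ and pass to H\"older (or Sobolev) completions, so that $\fF_P^\phi$ becomes a smooth map of Banach spaces whose zero set is $\cM^\phi$ near $P$, with differential $\bD_P$ --- elliptic, formally self-adjoint, hence Fredholm of index $0$. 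A Lyapunov--Schmidt reduction (split off a right inverse of $\bD_P$, solve the complementary equation by the implicit function theorem) then produces the smooth obstruction map $\ob_P\co\cI_P\to\coker\bD_P$, with $\cI_P\subset\ker\bD_P$ an open neighbourhood of $0$, $\ob_P(0)=0$ and $d\ob_P(0)=0$, whose zero set is homeomorphic to $\cM^\phi$ near $P$; elliptic regularity promotes solutions to smooth submanifolds and one checks independence of the auxiliary choices.

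For the genericity statement in part~(i), form the universal moduli space $\cM\subset\sP\times\cS_3$ and describe it locally as the zero set of a universal map $\cF(\phi,u)$ with differential $(D_\phi\cF,\bD_P)$ (to make $\sP$ a Banach manifold one works with $C^k$ completions and intersects over $k$, or uses a Floer-type space of structures). The crucial point is that this differential is surjective onto the target, so that $\cM$ is a smooth Banach manifold near $(\phi,P)$ and the projection $\pi\co\cM\to\sP$ is Fredholm of index $\ind\bD_P=0$; this is proved by perturbing $\psi$ by an exact $4$-form $d\alpha$ with $\alpha$ compactly supported in $U_P$, computing the resulting first variation of $\fF_P^\phi$, and invoking the strong unique continuation property of $\bD_P$: a nonzero cokernel element annihilating all such variations would vanish on an open subset of $P$, a contradiction. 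The Sard--Smale theorem then furnishes a comeager set $\sP^{\reg}$ of regular values of $\pi$; for $\phi\in\sP^{\reg}$, $\cM^\phi=\pi^{-1}(\phi)$ is a $0$-manifold and regularity forces $\bD_P$ surjective, hence (self-adjoint, index $0$) bijective, so $\ker\bD_P=0$ and $P$ is rigid.

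Part~(ii) repeats this over the path space: the universal $1$-parameter space $\bcM=\{(\bphi,t,P):P\in\cM^{\phi_t}\}$ has local model $\cF(\bphi,t,u)$ with differential $(D_\bphi\cF,\partial_t\cF,\bD_P)$, still surjective by the same unique continuation argument, so $\bcM$ is a Banach manifold and $\Pi\co\bcM\to\bsP$ is Fredholm of index $\ind\bD_P+1=1$. Sard--Smale gives a comeager set of regular values; unwinding what regularity of $\Pi$ means at $(t,P)$ shows that either $\bD_{P}$ is bijective ($P$ rigid) or $\dim\coker\bD_{P}=1$ with $\partial_t\cF$ projecting nontrivially to $\coker\bD_{P}$, and in both cases $\bcM^\bphi$ is a smooth $1$-manifold on which the time function $t$ is, respectively, a submersion or has an isolated critical point. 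A second, jet-transversality argument of the type used to make a generic function Morse --- carried out inside the same universal framework --- lets us intersect with a further comeager set so that $t|_{\bcM^\bphi}$ is also Morse; its critical values then form the discrete set $I_o$, and off $I_o$ every $P_t$ is rigid. At $\hat t\in I_o$ with critical point $P_o$ and $\dim\ker\bD_{\hat P}=1$ (so $\coker\bD_{\hat P}\cong\ker\bD_{\hat P}\cong\R$), write $\ob_{(\hat t,P_o)}(t,x)=at+bx^2+(\text{h.o.t.})$: here $a=\partial_t\ob_{(\hat t,P_o)}(0,0)$ is the $\coker$-component of $\partial_t\fF$, nonzero because $\bphi$ is a regular value of $\Pi$, while $b=\tfrac12\partial_x^2\ob_{(\hat t,P_o)}(0,0)=-\tfrac12 a\,\sigma''(0)$ for the implicit-function parametrisation $t=\hat t+\sigma(x)$ of $\bcM^\bphi$, so $b\neq0$ exactly because $t|_{\bcM^\bphi}$ has a nondegenerate critical point there.

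The main obstacle is the surjectivity of the universal linearisation. Because admissible perturbations must remain co-closed $G_2$-structures --- a nonlinear, PDE-defined class rather than an open set in a linear space --- one cannot perturb $\psi$ by an arbitrary $4$-form; one must show instead that the variations $\psi\mapsto\psi+d\alpha$ through co-closed structures, with $\alpha$ supported near $P$, already exhaust the cokernel, and this rests squarely on a unique continuation theorem for $\bD_P$. A secondary, also delicate, point in part~(ii) is upgrading ``$\bcM^\bphi$ is a $1$-manifold'' (which alone yields only the fold with $a\neq0$) to the quadratic normal form with $b\neq0$: this requires the extra Morse-type transversality for $t|_{\bcM^\bphi}$ and a verification that this and the earlier genericity conditions can be imposed simultaneously, leaving $\bsP^{\reg}$ comeager.
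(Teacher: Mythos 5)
The paper does not prove this theorem: it is quoted as a known result, with attribution to McLean, Joyce, and Doan--Walpuski, and is used as background. There is therefore no in-paper proof to compare against; your proposal should instead be measured against the cited references, and against the analogous argument that this paper \emph{does} carry out for the conically singular case (\autoref{thm generic moduli cs asso} via \autoref{lem coclosed comeager}, \autoref{lem main generic DW}).

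Measured that way, your outline is correct and matches the standard route. The Kuranishi model, the universal moduli space, the perturbation of $\psi$ by $d\alpha$ supported near $P$ combined with unique continuation for $\bD_P$, Sard--Smale on $\pi\co\cM\to\sP$, and (in the $1$-parameter case) the further jet/Morse transversality to force $b\neq 0$ are precisely the points in Joyce's Theorem~2.12 and Doan--Walpuski's Section~2.7, and the Floer $C_\epsilon$/Taubes-trick device you mention is exactly what this paper uses in its CS analogue. One framing quibble: you describe the class of co-closed $G_2$-structures as ``a nonlinear, PDE-defined class rather than an open set in a linear space,'' making the perturbation problem look harder than it is. Via Hitchin's identification of definite $4$-forms with $G_2$-structures, $\sP$ is (locally) an open set in the affine space $\psi_0+\{\textrm{closed }4\textrm{-forms}\}$, so any small closed $4$-form perturbation of $\psi$ is admissible; restricting to $d\alpha$ with compact support is a convenience, not a necessity forced by nonlinearity. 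The other place where you correctly flag a gap but do not close it is the Morse-transversality needed for $b\neq 0$: as you yourself note, Sard--Smale alone only yields that $\bcM^\bphi$ is a smooth $1$-manifold with $a\neq 0$ at degenerate points, and an independent second-jet transversality argument (still comeager, and compatible with the first) is required; in a full proof this would have to be written out, for instance by adding a finite-dimensional family of perturbations localised near $P_o$ as in the unique-continuation step and running Sard's theorem on the resulting finite-dimensional family.
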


Although \autoref{thm closed asso} implies that for generic $\phi$, the moduli space $\cM^\phi$of closed associative submanifolds is discrete, it does not guarantee finiteness, which is essential for defining counting invariants. This leads to the following natural-- and still open--question.
\begin{question}[{\citet[Conjecture 2.16]{Joyce2016}}]\label{question finite asso}Let $(Y,\phi)$ be a compact almost $G_2$-manifold and $\tau\in \Omega^3(Y)$ be a closed $3$-form. Let $\sP_\tau$ be the space of all co-closed $G_2$-structures that are tamed by $\tau$. 
Does there exist a comeager subset $\sP_\tau^{\blacklozenge}\subset \sP_\tau$ such that for all $\phi\in \sP_\tau^{\blacklozenge}$ the moduli space $\cM^{\phi}$ is compact?
\end{question}

If $(Y,\phi)$ is a compact almost $G_2$-manifold tamed by $\tau\in \Omega^3(Y)$, then there is a constant $c>0$ such that for every closed associative submanifold $P$ in $(Y,\phi)$ we have (see \cites[Section 3.2]{Donaldson2009}[Section 2.5]{Joyce2016})
$$\vol(P,g_\phi)\leq c [\tau]\cdot [P].$$
Therefore, we may use the following theorem of geometric measure theory to get a Federer-Fleming compactification of $\cM^{\phi}$. For a discussion on the proof of the following theorem in the special Lagrangian context we refer the reader to \cites[Section 6]{Joyce2004a}[Section 4]{Doan2018a}.  
\begin{theorem}[{\citet[section 6, Section 32]{Simon1983}, \citet{Spolaor}, \citet[Theorem 1]{Adams1988}, \citet[Theorem 6.8]{Joyce2004a}}]\label{thm cptness current}Let $P_n$ be a sequence of closed associative submanifolds in a sequence of compact almost $G_2$-manifolds $(Y,\phi_n)$ that are tamed by a fixed $\tau\in \Omega^3(Y)$. Assume $\phi_n$ converges to $\phi$ in $C^\infty$-topology. Then after passing to a subsequence $P_n$ converges (in the sense of currents) to a closed integral current $P_\infty$ which is  calibrated by $\phi$. Moreover 
\begin{enumerate}[(i)]
	\item the Hausdorff dimension of the singular set of $P_\infty$ is at most $1$.
	\item \label{thm Jacobi int}if all the tangent cones of $P_\infty$ are Jacobi integrable\footnote{A cone in $\R^7$ with smooth link $\Sigma\subset S^6$ is called Jacobi integrable if for every $\nu_\Sigma\in \ker (\bD_\Sigma+2J) \subset C^\infty(N\Sigma)$,  $\{\exp(t\nu_\Sigma):\abs{t}\ll1\}$ is a $1$-parameter family of holomorphic curves in $(S^6,J)$. Here '$\exp$' is the exponential map with respect to the round metric on $S^6$, and $\bD_\Sigma+2J$ is the deformation operator controlling the deformation theory of holomorphic curves in $(S^6,J)$ (see \autoref{prop linearization nonliear}).} multiplicity $1$ associative cones in $\R^7$ with smooth links, then $P_\infty$ is a conically singular associative submanifold of $(Y,\phi)$, in the sense of \autoref{def CS asso}. 
\end{enumerate}
\end{theorem}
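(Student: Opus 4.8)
The statement collects several results from geometric measure theory, and the plan is to assemble them in the right order. The first task is to extract the limit current and identify it as semicalibrated. Since $\tau$ is closed and tames every $(Y,\phi_n)$, each $P_n$ satisfies $\bM(P_n)=\vol(P_n,g_{\phi_n})\le c\,[\tau]\cdot[P_n]$, so --- with the homology classes controlled, as they are in the situations of interest --- $\sup_n\bM(P_n)=:\Lambda<\infty$, while $\partial P_n=0$. The Federer--Fleming compactness theorem then yields, after passing to a subsequence, flat convergence $P_n\to P_\infty$ with $P_\infty$ an integral current, $\partial P_\infty=0$ and $\bM(P_\infty)\le\Lambda$. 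To see that $P_\infty$ is semicalibrated by $\phi$ --- which is what ``calibrated by $\phi$'' means here, as $\phi$ is only co-closed --- I would use $\comass(\phi)=1$, which gives $P_\infty(\phi)\le\bM(P_\infty)$, together with the fact that $P_n$ is semicalibrated by $\phi_n$, so $P_n(\phi_n)=\bM(P_n)$; since $\|\phi-\phi_n\|_{C^0}\to0$ and the masses are bounded, $P_n(\phi)=P_n(\phi_n)+o(1)\to\lim_n\bM(P_n)$, and flat convergence gives $P_n(\phi)\to P_\infty(\phi)$. Combined with lower semicontinuity of mass,
\[
\bM(P_\infty)\le\liminf_n\bM(P_n)=\lim_nP_n(\phi)=P_\infty(\phi)\le\bM(P_\infty),
\]
so every inequality is an equality; hence $\phi$ restricts to the volume form of $P_\infty$ almost everywhere, and $\bM(P_n)\to\bM(P_\infty)$ as well.

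Next I would feed $P_\infty$ into the regularity machinery. As $\phi$ is co-closed, $d\phi$ is bounded but possibly nonzero: if $S$ is integral with $\partial S=\partial P_\infty$ and $S-P_\infty=\partial R$ with $R$ supported in a small ball $B_r(p)$, then $\bM(P_\infty)=P_\infty(\phi)=S(\phi)+R(d\phi)\le\bM(S)+\|d\phi\|_{C^0}\bM(R)$, and an isoperimetric (cone) estimate bounds $\bM(R)$ by a constant times $r$ times the local masses of $P_\infty$ and $S$ near $p$; thus $P_\infty$ is almost area-minimizing with a linear gauge. The interior regularity theory for such currents --- Almgren's big regularity theorem, and in the $3$-dimensional, codimension-$4$ semicalibrated setting the results cited in the statement --- then shows that $\supp P_\infty\setminus\mathrm{sing}(P_\infty)$ is a smooth embedded minimal submanifold $M$ with $\dim_{\mathcal H}\mathrm{sing}(P_\infty)\le 3-2=1$, and on $M$ one has $\phi|_M=\vol_M$, so $M$ is associative in the sense of \autoref{def closed associative}; this is part (i). In addition, at any point $p$, choosing $g_\phi$-normal coordinates at $p$ that identify $\phi(p)$ with the constant $3$-form $\phi_e$, the rescalings of $P_\infty$ at $p$ converge (subsequentially, by density monotonicity) to a tangent cone $C$ that is integral and semicalibrated by the constant --- hence closed --- form $\phi_e$; so $C$ is a (minimizing) associative cone in $\R^7$, possibly with singular link a priori.

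For part (ii) I would localize the singular set. Fix $x\in\mathrm{sing}(P_\infty)$ and a tangent cone $C_x$ there, with link $\Sigma_x\subset S^6$. If $\Sigma_x$ were a totally geodesic $S^2$ then $C_x$ would be a $3$-plane, the density at $x$ would be $1$, and Allard's regularity theorem would place $x$ in $M$ --- a contradiction; so $\Sigma_x$ is a smooth minimal surface in $S^6$ distinct from the great $S^2$. The assumption that $\Sigma_x$ is Jacobi integrable is precisely the hypothesis under which Simon's {\L}ojasiewicz--Simon inequality applies and forces the tangent cone $C_x$ to be \emph{unique}, with the rescalings converging to it at a definite rate. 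Hence on each annulus $B_\rho(x)\setminus B_{\rho/2}(x)$ with $\rho$ small, $P_\infty$ is --- after the appropriate $G_2$-rescaling --- a $C^1$-small normal graph over the smooth locus of $C_x$, so it is smooth there by Allard/elliptic regularity; letting $\rho\to0$, $P_\infty$ is a smooth associative submanifold on a punctured neighbourhood of $x$, and therefore $x$ is isolated in $\mathrm{sing}(P_\infty)$. Since $\mathrm{sing}(P_\infty)$ is closed and $Y$ is compact, $\mathrm{sing}(P_\infty)=\{x_1,\dots,x_m\}$ is finite.

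It then remains to establish the conical asymptotics and to conclude. Near each $x_i$, write $P_\infty$ as the normal graph $\Gamma_{u_i}$ of a section $u_i$ of $NC_i$ over a punctured ball, where $C_i:=C_{x_i}$; the section $u_i$ solves the nonlinear elliptic associativity equation whose linearization at $0$ is the Fueter-type operator $\bD_{C_i}$ of \autoref{def D_M}, uniformly elliptic away from the vertex and dilation-equivariant. The rate of convergence from the previous step, Jacobi-integrability (which pins the leading term of $u_i$ to a homogeneous element of $\ker\bD_{C_i}$, i.e.\ one of an exceptional rate), and weighted Schauder estimates give $u_i=O(r^{\mu_i})$ together with all derivatives, where $\mu_i>1$ is the first exceptional rate of $\bD_{C_i}$ above $1$ (the rate-$0$ and rate-$1$ contributions being absorbed by optimally choosing the coordinate chart at $x_i$, since they come from translations and from the $G_2$-action). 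Thus $P_\infty$ is smooth and associative away from $x_1,\dots,x_m$ and is asymptotic, in the weighted $C^\infty$ sense, to the associative cone $C_i$ at rate $\mu_i>1$ near each $x_i$ --- which is exactly \autoref{def CS asso}, so $P_\infty$ is a conically singular associative submanifold of $(Y,\phi)$. The hard part is the uniqueness of the tangent cone at a singular point: without the Jacobi-integrability hypothesis one cannot exclude a continuum of tangent cones and the localization of $\mathrm{sing}(P_\infty)$ breaks down, and this step relies on Simon's {\L}ojasiewicz--Simon method for minimal submanifolds of the round sphere; the codimension-$2$ bound on the singular set of a semicalibrated current used for part (i) is the other genuinely deep input. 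Both are imported from the cited literature, so the remaining labour is the bookkeeping above --- the mass comparison of the first step, the normal-graph reductions near the singular points, and the weighted elliptic estimates identifying the asymptotic rate.
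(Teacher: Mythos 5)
The paper does not prove this statement: it is imported from the literature (Federer--Fleming compactness and the general theory of semicalibrated currents, Almgren's and Spolaor's regularity theorems, Simon's {\L}ojasiewicz--Simon uniqueness of tangent cones, and Joyce's blueprint for the special Lagrangian analogue), and the paper's body simply cites those references without reproducing an argument. So there is no in-paper proof to compare against; what you have produced is a reconstruction of the cited argument, and on the whole you have identified the correct ingredients and assembled them in the right order: mass bound from taming, flat limit, equality in the calibration inequality through lower semicontinuity and $\|\phi-\phi_n\|_{C^0}\to0$, the codimension-two bound on the singular set from semicalibrated regularity theory, associativity of tangent cones, Simon-type uniqueness of the tangent cone under Jacobi integrability, and finally the normal-graph decomposition with weighted elliptic decay producing the rate $\mu_i>1$ required by \autoref{def CS asso}.

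Two places where the writing is looser than the argument requires, neither of which is fatal. First, the mass bound $\bM(P_n)\le c\,[\tau]\cdot[P_n]$ only gives compactness if the homology classes $[P_n]$ are bounded; the theorem statement leaves this implicit and you correctly flag it with a hedge, but a careful version would either fix $[P_n]$ or add a uniform mass bound as a hypothesis. Second, in the step where you absorb the low-rate terms of the normal graph $u_i$, attributing the rate-$1$ contributions only to ``the $G_2$-action'' is not quite right: $V_1\cong\ker(\bD_{\Sigma_i}+2J)$ may strictly contain the tangent to the $G_2$-orbit, and what you actually need is that every element of $V_1$ exponentiates to a genuine deformation of the holomorphic curve $\Sigma_i$ (hence of the associative cone $C_i$) so that the rate-$1$ term can be gauged away by moving $C_i$ in the full moduli of holomorphic curves, not just along $G_2\cdot\Sigma_i$. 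That is precisely the content of the Jacobi-integrability hypothesis, and it is also exactly the condition Simon's theorem requires for uniqueness of the tangent cone, so your argument is using the hypothesis in both places as it should --- but the phrase ``from the $G_2$-action'' undersells it. With those two caveats the proposal is a correct sketch of the cited proof.
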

The difficulty in addressing \autoref{question finite asso} arises primarily from our limited understanding of the singular set in the generic setting. A natural starting point is to analyze the simplest degeneration scenario, as described in \autoref{thm cptness current}\ref{thm Jacobi int}, where the limits are conically singular associative submanifolds. This article contributes specifically to advancing the understanding of this case.

\section{Associative cones}\label{subsection Associative cones}
This section focuses on associative cones in $\R^7$ and their links, which are holomorphic curves in $S^6$, and establishes \autoref{thm moduli holo}. We also examine the Fueter operator on these cones and compute or bound their stability index, leading to the proof of \autoref{thm stability index of cones}.
\subsection{Definition and examples of associative cones}
\begin{definition}
Let $\Sigma$ be a $2$-dimensional closed submanifold of $S^6\subset \R^7$. Define the inclusion map
 $\iota:(0,\infty)\times \Sigma\to \R^7$
 by $\iota(r,\sigma)= r\sigma$. A \textbf{cone} $C$ with link $\Sigma$ is the image of $\iota$ in $\R^7$. The Euclidean metric on $\R^7$ induces a metric $g_\Sigma$ on $\Sigma$ and a cone metric $g_{C}$ on $C$, that is, $g_{C}=dr^2+r^2g_{\Sigma}$. Furthermore, it induces a metric $g_{NC}$ and a connection $\nabla^\perp_{C}$ on the normal bundle $NC$ of the cone $C$. Let $\pi:\R^7\setminus \{0\}\to S^6$ be the projection. Then $NC=\pi^*(N\Sigma)$, pullback of the normal bundle $N\Sigma$ of $\Sigma$ in $S^6$, $g_{NC}=r^2\pi^*g_{N\Sigma}$ and $\nabla^\perp_{C}=\pi^*\nabla_\Sigma^\perp.$ 
 
 The \textbf{standard almost complex structure} on $S^6$, $J:T_xS^6\to T_xS^6$ is defined by the standard cross product `$\times$' on $\R^7$ as follows: $$J(v)=\partial_r\times v,$$ where $x\in S^6,v\in T_xS^6\subset \R^7$. 
 
 If the cone $C$ is an associative submanifold then we call it an \textbf{associative cone}. This is equivalent to saying that the link $\Sigma$ is a holomorphic curve in the almost complex manifold $(S^6,J)$. 
\end{definition}

	Any special Lagrangian cone in $\C^3$ is an associative cone in $\R^7=\R\oplus \C^3$ and its link is special Legendrian in $S^5$. \autoref{eg Transverse pair of SL planes} and \autoref{eg Harvey-Lawson $T^2$-cone} describes two examples of special Lagrangian cones in $\C^3$ that are important for the disingularization theorems (see \autoref{thm desing intersection} and \autoref{thm desing HL sing}) discussed in this article. For more examples of special Lagrangians cones, see \cites{Haskins2004b}{Haskins2007}{Joyce2001}{Joyce2002}.

\begin{example}\label{eg Transverse pair of SL planes}
(\textbf{Transverse pair of SL planes} \cite[page 328]{Joyce2003}) Let $C_\times$ be the union of a pair of special Lagrangian (SL) planes in $\C^3$ with transverse intersection at the origin. Then there exist a $B\in SU(3)$ and unique $\theta_1,\theta_2,\theta_3\in (0,\pi)$ satisfying $\theta_1\leq\theta_2\leq\theta_3$ and $\theta_1+\theta_2+\theta_3=\pi$ such that $C_\times=B\Pi_0\cup B\Pi_\theta$, where \[\Pi_0=\R^3\ \ \ \   \text{and}\ \ \ \ \  \Pi_\theta:=\diag(e^{i\theta_1},e^{i\theta_2},e^{i\theta_3})\cdot\R^3.\]
We define $\Pi_+:=B\Pi_0$ and $\Pi_-:=B\Pi_\theta$. Note that $\Pi_\pm$ are uniquely determined. 
 %The union of special Lagrangian planes $\Pi_0$ and $\Pi_\theta$ is the model tangent cone for a transverse intersection of associatives at a point. 
\end{example}
\begin{example}\label{eg Harvey-Lawson $T^2$-cone}(\textbf{Harvey-Lawson $T^2$-cone} \cite[Theorem 3.1]{Harvey1982}) The Harvey-Lawson $T^2$-cone is given by
	\begin{align*}
	C_{HL}&:=\{(z_1,z_2,z_3)\in \C^3:\abs{z_1}=\abs{z_2}=\abs{z_3}, z_1z_2z_3\in (0,\infty)\}\\	
	&=\{r(e^{i\theta_1},e^{i\theta_2},e^{-i(\theta_1+\theta_2)})\in \C^3:r\in(0,\infty), \theta_1,\theta_2 \in [0,2\pi)\},
	\end{align*}
	 which is a special Lagrangian cone in $\C^3$ whose link $\Sigma_{HL}=C_{HL}\cap S^5$ is isometric to the flat Clifford torus ~$T^2$.
\end{example}
\begin{example}(\textbf{Null torsion holomorphic curves} \cite[Section 4]{Bryant1982}\label{eg Bryant null torsion})We follow here the exposition about null torsion holomorphic curves given in \cite{Madnick2021}. 
Let $\Sigma$ be a closed holomorphic curve in $(S^6,J)$. Then the characteristic $\SU(3)$ connection $\widetilde\nabla$ (see \autoref{eq characteristic SU(3) connection}) induces holomorphic structures on $TS^6_{|\Sigma},T\Sigma$ and $N\Sigma$. The second fundamental form of $\Sigma$ is the obstruction ${{\operatorname{II}}}\in \Ext^1(N\Sigma, T\Sigma)\cong H^0(\Sigma,K_\Sigma^2\tn N\Sigma)$ (by Serre duality) to the holomorphic splitting of the following exact sequence:
$$0\to T\Sigma\to TS^6_{|\Sigma}\to N\Sigma\to0.$$ Moreover, ${\operatorname{II}}\neq 0$ if and only if $\Sigma$ is not a totally geodesic $S^2$. In this case, denote the effective divisor of the zero set of $\operatorname{II}$ by $Z$. We define a holomorphic line bundle $L_B$ by the following exact sequence:
\begin{equation}\label{eq LB}
0\to L_N:=K_\Sigma^{-2}\tn\mathcal O(Z)\hookrightarrow N\Sigma \rightarrow L_B \to0.
\end{equation} 
The {torsion} of $\Sigma$ is the obstruction $\operatorname{III}\in \Ext^1(L_B,L_N)\cong H^0(\Sigma,K_\Sigma^3\tn\mathcal O(-Z)\tn L_B)$ (by Serre duality) to the holomorphic splitting of the exact sequence \autoref{eq LB}.

 If $\Sigma$ is \textbf{null-torsion} (i.e. ${\operatorname{II}}\neq 0,\operatorname{III}=0$), then there is a holomorphic isomorphism $$L_B\cong K_\Sigma^3\tn \mathcal O(-Z)$$
and $$\operatorname{Area}(\Sigma)=4\pi b\geq 24\pi,$$ where $ b=-c_1(L_B)=3\chi(\Sigma)+[Z].$ Moreover, no null torsion holomorphic curves in $S^6$ are contained in a totally geodesic $S^5$.
 If $\Sigma$ is of genus zero and not a totally geodesic $S^2$, then it must be a null-torsion holomorphic curve. \citet[Theorem 4.10]{Bryant1982} and later \citet{Rowland1999}
proved that closed Riemann surfaces of any genus can be conformally embedded as a null-torsion $J$-holomorphic curve in $S^6$. \end{example}

We would like to point out that there are more examples of associative cones which are not special Lagrangians discussed in \cites[Section 7]{Lotay2011}{Lotay2007a}.

\subsection{Moduli space of holomorphic curves in $S^6$.}
\begin{definition}\label{def moduli holo}
Let $\mathcal S$ be the set of all $2$-dimensional oriented, closed smooth submanifolds of ~$S^6$. Equip $\mathcal S$ with $C^\infty$-topology in the same way as in \autoref{def moduli closed asso}. 

The \textbf{moduli space}  $\cM^{\operatorname{hol}}$ of embedded holomorphic  curves in $(S^6,J)$ is the subset of all submanifolds $\Sigma$ in $\mathcal S$ that are $J$-holomorphic. The topology on $\cM^{\operatorname{hol}}$ is the subspace topology of the above. 
\end{definition}
Let  $\Sigma$ be a holomorphic curve in $S^6$, that is,  $\Sigma\in \cM^{\operatorname{hol}}$. We denote the complex structure on $\Sigma$ by $j$, which is just the restriction of $J$. Let $\Upsilon_\Sigma:V_\Sigma\to U_\Sigma$ be a tubular neighbourhood map of $\Sigma$.  For $u\in C^\infty(V_\Sigma)$ denote the submanifold $\Upsilon_{\Sigma}(\Gamma_u)$ by $\Sigma_u$. Note that, $\Sigma_u$ is $J$-holomorphic if and only if $u$ satisfies the following \textbf{non-linear Cauchy--Riemann equation}:
 $$0=\bar{\partial}_Ju:=\frac12(du+\Upsilon_{\Sigma}^*J(u) \circ du\circ j)\in C^\infty(\overline{\Hom}_\C(T\Sigma,u^*TV_\Sigma)).$$  
   The linearization of the nonlinear map $\bar{\partial}_J:C^\infty(V_\Sigma)\to C^\infty(\overline{\operatorname{Hom}}_\C(T\Sigma,u^*TV_\Sigma))$ at the zero section is described in \cite[Proposition 3.1.1]{McDuff2012}. This is the linear map 
$\mathfrak d_{\Sigma,J}:C^\infty(N\Sigma)\to C^\infty(\overline{\operatorname{Hom}}_\C(T\Sigma,T{V_\Sigma}_{|\Sigma}))$ defined by 
$$\mathfrak d_{\Sigma,J}\xi:=\frac12(\nabla_\Sigma\xi+J\circ (\nabla_\Sigma\xi)\circ j+\nabla_\xi J\circ j), \ \ \xi\in C^\infty(N\Sigma).$$
The tangential component of $\mathfrak d_{\Sigma,J}$ can be discarded for the deformation theory. The following normal component actually controls the deformation theory.
\begin{definition}\label{def normal Cauchy Riemann}For $\Sigma\in \cM^{\operatorname{hol}}$, the \textbf{normal Cauchy--Riemann operator} $\bar\partial^N_{\nabla}:C^\infty(N\Sigma)\to C^\infty(\overline{\operatorname{Hom}}_\C(T\Sigma,N\Sigma))\cong \Omega^{0,1}(\Sigma, N\Sigma)$ is defined by
\[\bar\partial^N_{\nabla}\xi:=\frac12(\nabla_\Sigma^\perp\xi+J\circ (\nabla_\Sigma^\perp\xi)\circ j+\nabla^\perp_\xi J\circ j).\qedhere\]
\end{definition}
The moduli space of holomorphic curves is usually studied using the non-linear Cauchy--Riemann map  (see \cite[Theorem 3.1.5]{McDuff2012}) but here we study it in a different way.
%\begin{definition}
%A closed holomorphic curve $\Sigma$ in $S^6$ is called \textbf{Morse-Bott} if the obstruction map $\ob_\Sigma$ vanishes around $0$ in $\mathcal I_\Sigma$.
%\end{definition}

 On $\R^7$, the Euclidean metric is $g_e= dr^2+r^2g$ and the standard $G_2$-structure $\phi_e$, $\psi_e=*_{g_e}\phi_e$ can be written as
$$\phi_e=r^2dr\w \omega+r^3\Re\Omega,\ \ \ \ 
\psi_e=\frac{r^4}2\omega^2-r^3dr\w \operatorname{Im}\Omega$$
where $\omega(X,Y):=g(JX,Y)$, and $\Omega$ is a nowhere vanishing complex $(3,0)$-form.  Together they give an $\SU(3)$-struture on $S^6$. Also, we have:
$$\operatorname{vol}_{\R^7}=r^6dr\w\operatorname{vol}_{S^6}\ \ \text{and} \ \operatorname{vol}_{S^6}=\frac{\omega^3}6=\frac i8\Omega\w \bar{\Omega}=\frac 14\operatorname{Re}\Omega\w \operatorname{Im}\Omega.$$
In particular, $*_{g}\operatorname{Re}\Omega=\operatorname{Im}\Omega,\ *_{g}\operatorname{Im}\Omega=-\operatorname{Re}\Omega.$ Moreover, $d\phi_e=0$ and $d\psi_e=0$ on $\R^7$ are equivalent to the following equations on $S^6$, respectively:
$$d\omega=3\operatorname{Re}\Omega\ \ \ \ \text{and}\ \ \ \ d\operatorname{Im}\Omega=-2\omega^2.$$
That means $S^6$ with this $\SU(3)$ structure is a \textbf{nearly Kähler} manifold. Let $\nabla$ be the Levi-Civita connection on $S^6$ with respect to the round metric $g$. The \textbf{characteristic $\SU(3)$ connection} given by
\begin{equation}\label{eq characteristic SU(3) connection}
\widetilde\nabla_uv:=\nabla_uv+\frac12(\nabla_uJ)Jv,
\end{equation}
satisfies $\widetilde\nabla g=0$, $\widetilde\nabla J=0$ and $\operatorname{Hol}(\widetilde \nabla)\subset SU(3)$. It turns out that $\widetilde\nabla$ is not torsion free. Furthermore, it induces the following Cauchy--Riemann operator.
\begin{definition}Let $\Sigma$ be a $J$-holomorphic embedded curve in $S^6$.
The \textbf{characteristic Cauchy--Riemann operator} on $N\Sigma$, $\bar\partial^N_{{\widetilde\nabla}}:C^\infty(N\Sigma)\to \Omega^{0,1}(N\Sigma)$ is defined to be the induced Cauchy--Riemann operator from the characteristic $\SU(3)$ connection $\widetilde \nabla$, that is, 
\begin{equation*}\bar\partial^N_{{\widetilde\nabla}}\xi:=\frac12(\widetilde\nabla^\perp\xi+J\circ (\widetilde\nabla^\perp\xi)\circ j).\qedhere \end{equation*}
\end{definition}
The following lemma will help us to relate the two Cauchy--Riemann operators $\bar\partial^N_{{\widetilde\nabla}}$ and $\bar\partial^N_{\nabla}$.
\begin{lemma}[{\citet[Lemma C.7.1]{McDuff2012}}]\label{lem nearly kahler }For any vector fields $u,v,w$ on $S^6$,
\begin{enumerate}[(i)]
\item $(\nabla_uJ)Jv=-J(\nabla_uJ)v$ and $g((\nabla_uJ)v,w)=-g((\nabla_uJ)w,v)$,
\item $(\nabla_uJ)v=-(\nabla_vJ)u$ and the torsion $T_{\widetilde\nabla}(u,v)=\frac 14N_J(u,v)=(\nabla_uJ)Jv$,
\item  $3g((\nabla_uJ)v,w)=d\omega(u,v,w)=3\operatorname{Re}\Omega(u,v,w)$.
%\item $(S^6,g,J,\omega)$ admits a nearly Kähler structure iff $(\nabla_XJ)X=0$ and $\nabla_XJ\neq 0$ for every vector field $X\neq 0$ on $S^6$.
 \end{enumerate} 
\end{lemma}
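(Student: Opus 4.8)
The plan is to obtain all three items from four structural inputs, three of which are already in place: $\nabla$ is metric and torsion-free, $J$ is $g$-orthogonal (so that $\omega=g(J\cdot,\cdot)$ is a genuine $2$-form and $(\nabla_u\omega)(v,w)=g((\nabla_u J)v,w)$), and $(S^6,g,J)$ is nearly K\"ahler, i.e.\ $(\nabla_X J)X=0$ for every vector field $X$. The last fact is classical for the octonionic six-sphere; alternatively it can be extracted from the $\SU(3)$-structure equations $d\omega=3\Re\Omega$, $d\Im\Omega=-2\omega^2$ recorded above via the Gray--Hervella torsion decomposition. Everything else is bookkeeping.

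For (i): differentiating $J^2=-\id$ in the direction $u$ gives $(\nabla_u J)\circ J+J\circ(\nabla_u J)=0$, and evaluating at $v$ is the first identity $(\nabla_u J)(Jv)=-J((\nabla_u J)v)$. For the second, $\nabla g=0$ forces $(\nabla_u\omega)(v,w)=g((\nabla_u J)v,w)$; since $\omega$ is antisymmetric, so is each $\nabla_u\omega$, whence $g((\nabla_u J)v,w)=-g((\nabla_u J)w,v)$. The first statement of (ii), namely $(\nabla_u J)v=-(\nabla_v J)u$, is precisely the polarisation of the nearly K\"ahler identity $(\nabla_X J)X=0$.

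For the Nijenhuis/torsion statement of (ii): expand $N_J(u,v)$ via the standard formula expressing the Nijenhuis tensor through a torsion-free connection, normalised so that $T_{\widetilde\nabla}=\tfrac14 N_J$, namely $N_J(u,v)=J(\nabla_v J)u-J(\nabla_u J)v+(\nabla_{Ju}J)v-(\nabla_{Jv}J)u$, and collapse it using the polarisation identity and part (i); each of the four terms becomes a multiple of $(\nabla_u J)(Jv)$, and one gets $N_J(u,v)=4(\nabla_u J)(Jv)$. Independently, straight from the definition of $\widetilde\nabla$ one computes $T_{\widetilde\nabla}(u,v)=T_\nabla(u,v)+\tfrac12\bigl((\nabla_u J)(Jv)-(\nabla_v J)(Ju)\bigr)$, and $T_\nabla=0$ together with $(\nabla_v J)(Ju)=-J(\nabla_v J)u=J(\nabla_u J)v=-(\nabla_u J)(Jv)$ (using (i) and the polarisation identity) gives $T_{\widetilde\nabla}(u,v)=(\nabla_u J)(Jv)$, consistent with $\tfrac14 N_J(u,v)$.

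Finally, (iii): the equality $d\omega=3\Re\Omega$ is already available, so it remains to prove $d\omega(u,v,w)=3g((\nabla_u J)v,w)$. Apply the torsion-free-connection formula $d\omega(u,v,w)=(\nabla_u\omega)(v,w)-(\nabla_v\omega)(u,w)+(\nabla_w\omega)(u,v)$ and substitute $(\nabla_u\omega)(v,w)=g((\nabla_u J)v,w)$; the key point is that the $(0,3)$-tensor $\alpha(u,v,w):=g((\nabla_u J)v,w)$ is totally antisymmetric, being skew in the last two slots by (i) and in the first two by (ii), hence totally skew since $(12),(23)$ generate $S_3$. Thus the three terms all equal $\alpha(u,v,w)$ and $d\omega(u,v,w)=3\alpha(u,v,w)$. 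I expect no genuine obstacle: the only delicate points are the sign conventions for $N_J$ and $\omega$ and making the identity $(\nabla_X J)X=0$ precise from the $\SU(3)$-structure equations --- which is why in the text one may simply appeal to \citet[Lemma C.7.1]{McDuff2012}.
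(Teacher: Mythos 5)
Your proof is correct. In the paper this lemma is stated without proof, by citation to \cite[Lemma C.7.1]{McDuff2012}, so there is no ``paper's own proof'' to compare against; your argument is the standard one and is, to the best of my knowledge, essentially what McDuff--Salamon do. The single nontrivial structural input is the nearly K\"ahler identity $(\nabla_X J)X=0$ on $S^6$, which you state as a black box (``classical for the octonionic six-sphere'' or extractable from the $\SU(3)$-structure equations via Gray--Hervella); that is fine, since the surrounding text already records that $(S^6,\omega,\Omega)$ is nearly K\"ahler. Given that input, (i) is differentiation of $J^2=-\id$ and antisymmetry of $\nabla_u\omega$; (ii)'s first identity is the polarisation of $(\nabla_X J)X=0$; the Nijenhuis computation $N_J(u,v)=4(\nabla_u J)Jv$ and $T_{\widetilde\nabla}(u,v)=(\nabla_u J)Jv$ both check out term by term using (i) and the polarisation; and (iii) follows from the torsion-free formula for $d\omega$ together with total skewness of $\alpha(u,v,w):=g((\nabla_u J)v,w)$, which is exactly the content of (i) and the polarised (ii). No gap.
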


\begin{definition}\label{def J-antilinear multiplication map}
The \textbf{multiplication map}, $\times_{S^6}:TS^6\times TS^6\to TS^6$ is defined by the orthogonal projection on $TS^6$ of the cross product in $\R^7$, or equivalently for all vector fields $u,v,w$ on $S^6$, \[g(u\times_{S^6}v,w)=\operatorname{Re}\Omega(u,v,w),\  \text{or equivalently}\ u\times_{S^6}v=(\nabla_uJ)v.\qedhere\]
\end{definition}

\begin{remark}
	\label{rmk cross product S^6}
Let $\Sigma$ be an oriented smooth surface in $S^6$ and $C$ be the cone in $\R^7$ with link $\Sigma$. The following are equivalent: (i) $\Sigma$ is $J$-holomorphic, (ii) $C$ is associative, (iii) for all $u,v\in T\Sigma$ and $w\in TS^6$, $\operatorname{Re}\Omega(u,v,w)=0$, (iv) for all $u,v\in T\Sigma$, $u\times_{S^6}v=0$, (v) for all $u\in T\Sigma$ and $v\in N\Sigma$, $u\times v\in N\Sigma$.
\end{remark}
The following proposition is the desired relation between the above Cauchy--Riemann operators.
\begin{prop}\label{prop relation two cauchy rimann}
Let $\Sigma$ be an embedded $J$-holomorphic curve in $S^6$. Then for all $\xi\in C^\infty(N\Sigma)$
$$\bar\partial^N_{\nabla}\xi=\bar\partial^N_{{\widetilde\nabla}}\xi-(\nabla^\perp J)J\xi.$$
Here $\nabla^\perp $ denotes the normal connection on $N\Sigma$, and its action on
$\End(N\Sigma)$ is understood as the one induced from this normal connection. 
\end{prop}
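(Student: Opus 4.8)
The plan is to compute both sides of the identity directly from the definitions of $\bar\partial^N_{\nabla}$ and $\bar\partial^N_{\widetilde\nabla}$, and to reduce the difference to pointwise identities for the tensor $\nabla J$ along $\Sigma$ that are furnished by \autoref{lem nearly kahler }. Throughout fix $u\in T\Sigma$ and $\eta\in N\Sigma$. Since $\Sigma$ is $J$-holomorphic, both $T\Sigma$ and $N\Sigma$ are $J$-invariant (\autoref{rmk cross product S^6}), and combining $g\big((\nabla_aJ)b,c\big)=\operatorname{Re}\Omega(a,b,c)$ from \autoref{def J-antilinear multiplication map} with \autoref{rmk cross product S^6}(iii) one sees that $(\nabla_aJ)b$ lies in $N\Sigma$ as soon as one of the three slots is filled by a vector tangent to $\Sigma$; in particular $(\nabla_uJ)\eta$, $(\nabla_uJ)(J\eta)$ and $(\nabla_\xi J)(ju)$ are all normal. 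This already makes the zeroth order term $\nabla^\perp_\xi J\circ j$ in \autoref{def normal Cauchy Riemann} unambiguous---it equals $(\nabla_\xi J)\circ j$---a fact used repeatedly below.

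First I would record two bookkeeping identities. Since $\widetilde\nabla_uv=\nabla_uv+\tfrac12(\nabla_uJ)Jv$ and the correction term is already normal when $v=\xi$, taking normal parts gives $\widetilde\nabla^\perp_u\xi=\nabla^\perp_u\xi+\tfrac12(\nabla_uJ)(J\xi)$ for $\xi\in C^\infty(N\Sigma)$. Secondly, because $J$ preserves both $T\Sigma$ and $N\Sigma$, the second fundamental form terms in the splitting of $\nabla$ from \autoref{def prelim normal connection} cancel in $\nabla^\perp_u(J\eta)-J\nabla^\perp_u\eta$, so the endomorphism $\nabla^\perp_uJ\in\End(N\Sigma)$---the one appearing on the right-hand side of the Proposition---satisfies $(\nabla^\perp_uJ)\eta=(\nabla_uJ)\eta$.

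Next I substitute the first identity into the definition of $\bar\partial^N_{\widetilde\nabla}\xi$ and subtract $\bar\partial^N_{\nabla}\xi$. The first-order terms cancel, and evaluating the remaining expression on $u\in T\Sigma$ leaves
\[
\big(\bar\partial^N_{\widetilde\nabla}\xi-\bar\partial^N_{\nabla}\xi\big)(u)=\tfrac14(\nabla_uJ)(J\xi)+\tfrac14 J\big((\nabla_{ju}J)(J\xi)\big)-\tfrac12(\nabla_\xi J)(ju).
\]
Applying the antilinearity $(\nabla_aJ)(Jb)=-J(\nabla_aJ)b$ and the skew-symmetry $(\nabla_aJ)b=-(\nabla_bJ)a$ from \autoref{lem nearly kahler }, together with $ju=Ju$ on $\Sigma$, I would rewrite all three terms through $(\nabla_\xi J)u$ and $(\nabla_\xi J)(ju)=-J(\nabla_\xi J)u$; the right-hand side then collapses to $J(\nabla_\xi J)u$, which by the same two rules equals $(\nabla_uJ)(J\xi)$, and by the second bookkeeping identity this is $(\nabla^\perp_uJ)(J\xi)=\big((\nabla^\perp J)J\xi\big)(u)$. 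Rearranging yields the claimed formula.

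The computation is essentially routine bookkeeping with orthogonal projections; the only point needing genuine care is checking, at each stage, that the intermediate quantities really lie in $N\Sigma$, so that the normal projections implicit in $\bar\partial^N_{\nabla}$, $\bar\partial^N_{\widetilde\nabla}$ and the normal connection may be dropped. This is exactly where $J$-holomorphicity of $\Sigma$---equivalently the vanishing of $\operatorname{Re}\Omega$ on any pair of vectors tangent to $\Sigma$---is used, and I expect it to be the main (and only mild) difficulty; once the two bookkeeping identities are in place the rest is fiberwise algebra in $\End(TS^6)|_\Sigma$.
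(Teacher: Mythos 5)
Your proposal is correct and follows the same route as the paper: expand both Cauchy--Riemann operators via $\widetilde\nabla_u\xi=\nabla_u\xi+\tfrac12(\nabla_uJ)J\xi$, observe that all correction terms are already normal by $J$-holomorphicity, and collapse the residual to $(\nabla^\perp J)J\xi$ using the $J$-antilinearity and skew-symmetry of $\nabla J$ from the nearly-Kähler lemma. The paper's proof is essentially your argument stated tersely (it records only the simplification of $J\circ\widetilde\nabla\xi\circ j$ and leaves the remaining algebra to "the definitions"); you merely fill in the intermediate bookkeeping that the paper omits.
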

\begin{proof} By \autoref{lem nearly kahler }, for all $\xi\in C^\infty(N\Sigma)$, we have
$$J\circ \widetilde\nabla\xi\circ j=J\circ \nabla\xi\circ j+\frac12J(\nabla J)J\xi\circ j=J\circ \nabla\xi\circ j+\frac12(\nabla J)J\xi.$$
The proposition now follows from the definitions.
\end{proof}

The following defines a canonical Dirac operator on a holomorphic curve $\Sigma$, which is also related to the above Cauchy--Riemann operators.
\begin{definition}
Let $\Sigma$ be an embedded $J$-holomorphic curve in $S^6$. By \autoref{lem nearly kahler } and \autoref{rmk cross product S^6}, the map $\gamma_\Sigma:T\Sigma\to \overline{\End}_\C(N\Sigma)$ given by 
$$\gamma_\Sigma(f_\Sigma)(v_\Sigma):=f_\Sigma\times v_\Sigma,\ \ \ \ \forall \  v_\Sigma \in N\Sigma, \ \  f_\Sigma \in T\Sigma,$$
is a skew symmetric $J$-anti-linear Clifford multiplication. Moreover the normal bundle $N\Sigma$ together with the metric $g_{N\Sigma}$, Clifford multiplication $\gamma_\Sigma$ and the metric connection $\widetilde \nabla^\perp:=\widetilde \nabla^\perp_\Sigma$ is a {Dirac bundle}, that is, $\widetilde \nabla^\perp\gamma_\Sigma=0$. The associated {Dirac operator} is given by
\begin{equation}\label{eq dirac sigma}
	\mathbf D_\Sigma:=\sum_{i=1}^2f_i\times \widetilde \nabla_{f_i}^\perp,
\end{equation}
	where $\{f_i\}$ is a local orthonormal oriented frame on $\Sigma$. Note that $\mathbf D_\Sigma$ is $J$-anti-linear. Moreover the map $\gamma_\Sigma$ induces a $J$-anti-linear isomorphism 
	$$\boldsymbol{\gamma}_\Sigma: \overline{\operatorname{Hom}}_\C(T\Sigma,N\Sigma)\to {N\Sigma},$$
	which is defined by
\begin{equation*}	
	\boldsymbol{\gamma}_\Sigma(f^*_\Sigma\tn v_\Sigma)=\gamma_\Sigma(f_\Sigma)(v_\Sigma).
\qedhere \end{equation*}
\end{definition}
\begin{remark}\label{rmk boldsymbol{gamma}_Sigma CS}
	$\boldsymbol{\gamma}_\Sigma$ fits into the following commutative diagram:
\[	
\begin{tikzcd}
C^\infty(N\Sigma) \arrow{r}{\bar \partial ^N_{\widetilde \nabla,J}} \arrow[swap]{dr}{\mathbf D_\Sigma} & C^\infty(\overline{\operatorname{Hom}}_\C(T\Sigma,N\Sigma)) \arrow{d}{\boldsymbol{\gamma}_\Sigma} \\
 &C^\infty(N\Sigma).
\end{tikzcd}\qedhere
 \]
\end{remark}
The following proposition is the desired relation between the above Cauchy--Riemann operators and the Dirac operator.
\begin{prop}\label{prop relation dirac and cauchy riemann} Let $\{f_i\}$ be a local orthonormal oriented frame on $\Sigma$. Then 
\[\mathbf D_\Sigma=\sum_{i=1}^2f_i\times \nabla_{f_i}^\perp-J, \qandq \boldsymbol{\gamma}_\Sigma\circ \bar\partial^N_{\nabla} =\mathbf D_\Sigma+2J.\]
\end{prop}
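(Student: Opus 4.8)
The plan is to compare the two Clifford-type sums $\sum_i f_i\times\widetilde\nabla^\perp_{f_i}$ and $\sum_i f_i\times\nabla^\perp_{f_i}$ using the defining formula \eqref{eq characteristic SU(3) connection} for the characteristic connection, and then to identify the leftover term with $-J$. First I would expand $\widetilde\nabla^\perp_{f_i}\xi$. By \autoref{eq characteristic SU(3) connection}, for a normal field $\xi$ we have $\widetilde\nabla_{f_i}\xi=\nabla_{f_i}\xi+\tfrac12(\nabla_{f_i}J)J\xi$; projecting onto $N\Sigma$ and using \autoref{rmk cross product S^6} (which guarantees that the relevant combinations stay in $N\Sigma$, since $\Sigma$ is $J$-holomorphic) gives $\widetilde\nabla^\perp_{f_i}\xi=\nabla^\perp_{f_i}\xi+\tfrac12\big((\nabla_{f_i}J)J\xi\big)^\perp$. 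Substituting into \eqref{eq dirac sigma},
\[
\mathbf D_\Sigma\xi=\sum_{i=1}^2 f_i\times\nabla^\perp_{f_i}\xi+\frac12\sum_{i=1}^2 f_i\times\big((\nabla_{f_i}J)J\xi\big)^\perp .
\]
So the claim $\mathbf D_\Sigma=\sum_i f_i\times\nabla^\perp_{f_i}-J$ reduces to the pointwise identity $\tfrac12\sum_{i=1}^2 f_i\times(\nabla_{f_i}J)J\xi=-J\xi$ (the projection can be dropped once one checks the cross products land in $N\Sigma$, again via \autoref{rmk cross product S^6}).

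The core computation is therefore this last identity, and I expect it to be the main obstacle — it is where the nearly Kähler structure of $S^6$ really enters. I would prove it by working at a point $x\in\Sigma$, choosing $\{f_1,f_2\}$ orthonormal and oriented with $f_2=Jf_1$ (possible since $T_x\Sigma$ is $J$-invariant), so that $f_1\times f_2=\nu$ for the unit conormal direction datum, and expanding $(\nabla_{f_i}J)J\xi=f_i\times_{S^6}(J\xi)$ by \autoref{def J-antilinear multiplication map}. Then $f_i\times(f_i\times_{S^6}J\xi)$ is a double cross product in $\R^7$, which I would reduce using the standard $\R^7$ octonion identity $u\times(v\times w)=-\inp{u}{w}v+\inp{u}{v}w-[u,v,w]$ together with the fact that $f_i\in T\Sigma$, $J\xi\in N\Sigma$, and $\sum_i$ symmetrizes over an orthonormal basis of $T_x\Sigma$; the associator term $[f_i,f_i,\cdot]$ vanishes and the sum of the inner-product terms collapses. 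One must be slightly careful to distinguish $\times_{S^6}$ (projected) from $\times$ (in $\R^7$) and to use that $f_i\times_{S^6}v=(\nabla_{f_i}J)v$ while $f_i\times v$ for $f_i\in T\Sigma$, $v\in N\Sigma$ already lies in $N\Sigma$ by \autoref{rmk cross product S^6}(v). Tracking the factor of $2$ through $f_2=Jf_1$ is exactly what produces the coefficient $-1$ in front of $J\xi$; the $J$-anti-linearity of each $\gamma_\Sigma(f_i)$ is what turns $J\xi$ back into (a multiple of) $\xi$ up to sign.

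For the second identity $\boldsymbol\gamma_\Sigma\circ\bar\partial^N_\nabla=\mathbf D_\Sigma+2J$, I would assemble the pieces already in place. By \autoref{rmk boldsymbol{gamma}_Sigma CS} we have $\boldsymbol\gamma_\Sigma\circ\bar\partial^N_{\widetilde\nabla}=\mathbf D_\Sigma$. By \autoref{prop relation two cauchy rimann}, $\bar\partial^N_\nabla\xi=\bar\partial^N_{\widetilde\nabla}\xi-(\nabla^\perp J)J\xi$, but more usefully I would instead directly compute $\boldsymbol\gamma_\Sigma$ applied to the difference $\bar\partial^N_\nabla-\bar\partial^N_{\widetilde\nabla}$: writing $\bar\partial^N_\nabla\xi-\bar\partial^N_{\widetilde\nabla}\xi=\tfrac12(\nabla^\perp_\xi J)\circ j$ (reading off \autoref{def normal Cauchy Riemann} and the definition of $\bar\partial^N_{\widetilde\nabla}$, using $\nabla^\perp\xi=\widetilde\nabla^\perp\xi-\tfrac12((\nabla J)J\xi)^\perp$), and applying $\boldsymbol\gamma_\Sigma$, the same $\R^7$ double-cross-product bookkeeping as above yields $+2J\xi$. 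Combining, $\boldsymbol\gamma_\Sigma\circ\bar\partial^N_\nabla=\mathbf D_\Sigma+2J$. The only real content is the one nearly Kähler identity computed once; the rest is linear bookkeeping, so I would state that computation as a short lemma and cite \autoref{lem nearly kahler } and the octonion multiplication identity \eqref{eq G2 identity}-type relations for it.
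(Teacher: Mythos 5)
Your strategy is the same as the paper's: both proofs reduce everything to the single nearly K\"ahler identity $\sum_{i=1}^2 f_i\times(\nabla_{f_i}J)J\xi = -2J\xi$ (equivalently $\boldsymbol{\gamma}_\Sigma((\nabla^\perp J)J)=-2J$) and then obtain both displayed equalities by feeding this through \autoref{prop relation two cauchy rimann} and \autoref{rmk boldsymbol{gamma}_Sigma CS}. Your longhand triple-cross-product verification of that identity is the computation the paper compresses into the observation that $\nabla^\perp_{f_i}J=\gamma_\Sigma(f_i)$ as an endomorphism of $N\Sigma$, followed by the Clifford relation $\gamma_\Sigma(f_i)^2=-1$; these are the same argument, so there is no conceptual gap. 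There are, however, two arithmetic slips in your intermediate formulas that would need fixing in a careful write-up. First, the $\R^7$ identity you quote has the inner product terms with the wrong sign: with the paper's conventions it reads $u\times(v\times w)=\inp{u}{w}v-\inp{u}{v}w-[u,v,w]$, and since in the application only the $-\inp{u}{v}w$ term survives, your version would yield $+2J\xi$, the opposite of what is needed. Second, the claimed equality $\bar\partial^N_\nabla\xi-\bar\partial^N_{\widetilde\nabla}\xi = \tfrac12(\nabla^\perp_\xi J)\circ j$ is off by a factor of $2$: when you substitute $\nabla^\perp\xi=\widetilde\nabla^\perp\xi-\tfrac12((\nabla J)J\xi)^\perp$ into \autoref{def normal Cauchy Riemann}, the $\nabla^\perp-\widetilde\nabla^\perp$ difference also contributes through the first two summands, and the correct difference is $-(\nabla^\perp J)J\xi$, which is exactly \autoref{prop relation two cauchy rimann}. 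Since you already mention citing that proposition as an alternative, the cleanest route is simply to do so, as the paper does.
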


\begin{proof}  First we prove that $\boldsymbol{\gamma}_\Sigma((\nabla^\perp J)J)=-2J$. For a local oriented orthonormal frame $\{f_1,f_2=jf_1\}$ on $\Sigma$ we have $\nabla_{f_i}^\perp J={\gamma}_\Sigma f_i$, by \autoref{def J-antilinear multiplication map} and \autoref{lem nearly kahler }. Therefore 
\begin{equation*}\boldsymbol{\gamma}_\Sigma((\nabla^\perp J)J)=\sum_{i=1}^2f_i\times(\nabla_{f_i}^\perp J)J=\sum_{i=1}^2({\gamma}_\Sigma f_i)^2 J=-2J.\end{equation*}
Now the first equality in the proposition follows from \autoref{eq  dirac sigma} and \autoref{eq characteristic SU(3) connection}. The second equality follows from 
\autoref{prop relation two cauchy rimann} and \autoref{rmk boldsymbol{gamma}_Sigma CS}.
\end{proof}
The moduli space of holomorphic curves can be expressed locally as the zero set of the following non-linear map.
\begin{definition}\label{def nonlinear holo in S6}
 Let $\Sigma$ be a holomorphic curve in $S^6$ and $\Upsilon_\Sigma: V_\Sigma \to S^6$ be a tubular neighbourhood map.  We define $\mathcal F:C^\infty(V_\Sigma)\to C^\infty(N\Sigma)$  by
 \begin{equation*}
 \inp{\mathcal F(u)}{v}_{L^2}:=\int_{\Gamma_u}\iota_v(\Upsilon_\Sigma^*\Re\Omega),
 \end{equation*}
 where $u\in C^\infty(V_\Sigma)$ and $v\in C^\infty(N\Sigma)$. The notation $v$ in the integrand is the extension vector field of $v\in C^\infty(N\Sigma)$ over the tubular neighbourhood $V_\Sigma$ as in \autoref{notation u}.
\end{definition} 

The proof of \autoref{thm moduli holo} requires computing the linearization of the map; the following proposition concerns this computation.

\begin{prop}\label{prop lin of nonlinear holo}For $u\in C^\infty(V_\Sigma)$, we have $\mathcal F(u)=0$ if and only if the graph $\Sigma_u:=\Upsilon_{\Sigma}(\Gamma_u)$ is $J$-holomorphic.  If $\Sigma\in \cM^{\operatorname{hol}}$, then the linearization of $\mathcal F$ at zero, $d\mathcal F_0:C^\infty(N\Sigma)\to C^\infty(N\Sigma)$ 
is given by $$d\mathcal F_0=J\mathbf D_\Sigma-2=J(\mathbf D_\Sigma+2J).$$
This is a formally self adjoint first order elliptic operator of index $0$.
\end{prop}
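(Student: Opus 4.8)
The plan is to prove the three assertions of \autoref{prop lin of nonlinear holo} in turn. First, the equivalence $\mathcal F(u)=0 \iff \Sigma_u$ is $J$-holomorphic: I would unwind the definition $\inp{\mathcal F(u)}{v}_{L^2}=\int_{\Gamma_u}\iota_v(\Upsilon_\Sigma^*\Re\Omega)$ and use that $\Re\Omega$ has rank-like positivity properties on $S^6$ coming from the nearly Kähler structure. The key point is \autoref{rmk cross product S^6}: an oriented surface $\Sigma'\subset S^6$ is $J$-holomorphic if and only if $\Re\Omega$ restricted to $T\Sigma'$ vanishes in the sense that $\Re\Omega(u',v',w')=0$ for all $u',v'\in T\Sigma'$, $w'\in TS^6$ — equivalently $u'\times_{S^6}v'=0$. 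Since $\iota_v\Re\Omega$ for $v$ ranging over all normal fields detects exactly the component of $\Re\Omega(\cdot,\cdot,\cdot)$ with two tangential and one normal slot, $\mathcal F(u)=0$ for all test $v$ forces $\Re\Omega|_{T\Sigma_u}=0$, hence $J$-holomorphicity; the converse is immediate. (One should note $\Re\Omega$ is \emph{not} closed — $d\Re\Omega$ is not used here since the definition integrates over $\Gamma_u$ pulled back by $\Upsilon_\Sigma$, so this is a pointwise statement, not a homological one.)

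Second, the linearization formula. I would differentiate $\mathcal F$ at $u=0$ along a normal field $\xi\in C^\infty(N\Sigma)$. The cleanest route is to observe that $\mathcal F$ is, up to the identification $\boldsymbol\gamma_\Sigma$ and multiplication by $J$, essentially the $\Re\Omega$-analogue of the associative map $\fF_M^\phi$ of \autoref{eq asso map}; its linearization at a $J$-holomorphic $\Sigma$ is computed by the same Cartan-formula manipulation used for McLean-type deformation operators. Concretely, $\frac{d}{dt}\big|_{0}\mathcal F(t\xi)$ produces the normal component of $\bar\partial^N_\nabla$ composed with an algebraic factor; using \autoref{prop relation two cauchy rimann} and the identity $\boldsymbol\gamma_\Sigma((\nabla^\perp J)J)=-2J$ established inside the proof of \autoref{prop relation dirac and cauchy riemann}, together with $\boldsymbol\gamma_\Sigma\circ\bar\partial^N_\nabla=\mathbf D_\Sigma+2J$ from that same proposition, one gets $d\mathcal F_0=J(\mathbf D_\Sigma+2J)=J\mathbf D_\Sigma-2$ (using $J^2=-1$). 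The only care needed is bookkeeping of the overall sign/factor $J$ coming from the fact that $\mathcal F$ is built from $\Re\Omega$ rather than from the normal Cauchy--Riemann operator directly; matching conventions with \autoref{def nonlinear holo in S6} and \autoref{def normal Cauchy Riemann} pins it down.

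Third, ellipticity, self-adjointness, and index $0$. Ellipticity is immediate: $\mathbf D_\Sigma$ is a Dirac operator (symbol given by Clifford multiplication $\gamma_\Sigma$), so $\mathbf D_\Sigma+2J$ has the same principal symbol and is elliptic, and left-multiplication by the pointwise-invertible bundle map $J$ preserves ellipticity. For formal self-adjointness: $\mathbf D_\Sigma$ is self-adjoint because $N\Sigma$ with $\gamma_\Sigma$ and $\widetilde\nabla^\perp$ is a Dirac bundle over the closed surface $\Sigma$ (skew Clifford multiplication plus metric-compatible connection $\Rightarrow$ self-adjoint Dirac operator); $2J$ is a skew-adjoint zeroth order term, so $\mathbf D_\Sigma+2J$ is \emph{not} self-adjoint, but $J(\mathbf D_\Sigma+2J)$ is — one checks $\langle J(\mathbf D_\Sigma+2J)\xi,\eta\rangle = \langle \xi, J(\mathbf D_\Sigma+2J)\eta\rangle$ using $J^*=-J$, self-adjointness of $\mathbf D_\Sigma$, the $J$-anti-linearity $\mathbf D_\Sigma J=-J\mathbf D_\Sigma$, and $J^2=-1$; the cross terms cancel. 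Finally, index $0$ follows from self-adjointness of the elliptic operator $d\mathcal F_0$ on a closed manifold (a formally self-adjoint elliptic operator on a closed manifold has vanishing index, since $\ker$ and $\coker$ are identified via the $L^2$ pairing). I expect the \textbf{main obstacle} to be the sign/normalization chase in the linearization step — getting the factor $J$ (versus $-J$, or $2$ versus $-2$) correct requires carefully coordinating the orientation conventions in \autoref{rmk cross product S^6}, the definition of $\times_{S^6}$, and the pullback conventions in $\mathcal F$; everything else is structural and follows from results already in the excerpt.
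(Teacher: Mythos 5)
Your outline for the first assertion (equivalence $\mathcal F(u)=0\iff\Sigma_u$ holomorphic) and for the third (ellipticity, self-adjointness, index $0$) is sound and matches what the paper does; in particular your check that $J\mathbf D_\Sigma$ is formally self-adjoint using $\mathbf D_\Sigma^*=\mathbf D_\Sigma$, $J^*=-J$, and $\mathbf D_\Sigma J=-J\mathbf D_\Sigma$ is exactly the argument the paper compresses into one sentence.

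The gap is in the linearization step. You propose to express $d\mathcal F_0$ by relating it to $\bar\partial^N_\nabla$ via the already-established identity $\boldsymbol\gamma_\Sigma\circ\bar\partial^N_\nabla=\mathbf D_\Sigma+2J$ and the computation $\boldsymbol\gamma_\Sigma((\nabla^\perp J)J)=-2J$ from \autoref{prop relation dirac and cauchy riemann}. But the claim that $\frac{d}{dt}\big|_0\mathcal F(t\xi)$ ``produces the normal component of $\bar\partial^N_\nabla$ composed with an algebraic factor'' is precisely what needs to be proved, and nothing in the cited propositions supplies it: those relate the two abstract operators $\bar\partial^N_\nabla$ and $\mathbf D_\Sigma$, not either of them to the linearization of the specific nonlinear map $\mathcal F$ built from $\Re\Omega$. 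The fact that $\mathcal F$ and $\bar\partial_J$ have the same zero locus does not pin down $d\mathcal F_0$ up to a precomputed algebraic factor. What the paper actually does is a direct Cartan-formula/Lie-derivative computation: it writes $\frac{d}{dt}\big|_0\inp{\mathcal F(tu)}{v}=\int_\Sigma\mathcal L_u\iota_v(\Upsilon_\Sigma^*\Re\Omega)$, uses $[u,v]=0$ and the holomorphicity of $\Sigma$ to isolate the relevant terms, and then invokes the nearly K\"ahler covariant-derivative identity $\nabla\Re\Omega=-\omega\wedge\cdot$ to evaluate $\iota_v\nabla_u(\Upsilon_\Sigma^*\Re\Omega)$; this produces $J\mathbf D_\Sigma-2$ directly, without ever passing through $\bar\partial^N_\nabla$. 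So the essential input you are missing is $\nabla\Re\Omega=-\omega\wedge\cdot$; once you have that, the computation is genuinely the ``$\Re\Omega$-analogue'' of \autoref{prop linearization nonliear}, as you suspected, but it must be carried out on $\mathcal F$ itself rather than imported from the Cauchy--Riemann side.
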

\begin{proof}The first part follows from \autoref{rmk cross product S^6}. Let $\{f_1,f_2=Jf_1\}$ be a local oriented orthonormal frame for $T\Sigma$. For $u, v \in C^\infty(N\Sigma)$, 
$$\frac d{dt}\big|_{{t=0}}\inp{\mathcal F(tu)}{v}_{L^2}=\frac d{dt}\big|_{{t=0}} \int_{\Gamma_{tu}}\iota_v(\Upsilon_\Sigma^*\operatorname{Re}\Omega)
=\int_{\Sigma}\mathcal L_u\iota_v(\Upsilon_\Sigma^*\operatorname{Re}\Omega).$$ 
This is same as $\int_{\Sigma}\iota_v\mathcal L_u(\Upsilon_\Sigma^*\operatorname{Re}\Omega)+{\iota_{[u, v]}(\Upsilon_\Sigma^*\operatorname{Re}\Omega)}$. As, $[u, v]=0$ (by \autoref{rmk commutator}, following \autoref{notation u}) and $\Sigma\in \cM^{\operatorname{hol}}$, this is equal to
\begin{align*}
&\int_{\Sigma}\iota_v(\Upsilon_\Sigma^*\operatorname{Re}\Omega)(\nabla_{f_1}u,f_2)+\iota_v(\Upsilon_\Sigma^*\operatorname{Re}\Omega)(f_1,\nabla_{f_2}u)+\iota_v \nabla_u(\Upsilon_\Sigma^*\operatorname{Re}\Omega)\\
&=\int_{\Sigma}-\inp{f_2\times \nabla^{\perp}_{f_1}u}{v}+\inp{f_1\times \nabla^{\perp}_{f_2}u}{v}-\iota_v(u\wedge\Upsilon_\Sigma^*\omega)\ (\text{as}\  \nabla\Re\Omega=-\omega\wedge\cdot)\\
&=\int_{\Sigma}\inp{J(f_1\times \nabla^{\perp}_{f_1}u)}{v}+\inp{J(f_2\times \nabla^{\perp}_{f_2}u)}{v}-\inp{u}{v}=\int_{\Sigma}\inp{(J\mathbf D_\Sigma-2) u}{v}.
\end{align*}
 Therefore $d\mathcal F_0=J\mathbf D_\Sigma-2$.
Since $\mathbf D_\Sigma$ is a $J$-anti-linear (this follows from the fact that $\widetilde \nabla J=0$ and $\gamma_\Sigma$ is $J$-anti-linear), formally self adjoint Dirac operator, it proves the last part of the proposition.
\end{proof}
\begin{proof}[{\normalfont{\textbf{Proof of \autoref{thm moduli holo}}}}]
In \autoref{prop relation dirac and cauchy riemann}, we see that $\boldsymbol{\gamma}_\Sigma\circ \bar\partial^N_{\nabla}=\mathbf D_\Sigma+2J$. Extending the nonlinear map $\mathcal F$ to Hölder spaces we get a smooth map
 $$\mathcal F:C^{2,\gamma}(V_\Sigma)\to C^{1,\gamma}(N\Sigma).$$
\autoref{prop lin of nonlinear holo} implies that the linearization of $\mathcal F$ at zero is an elliptic operator and hence is Fredholm. By implicit function theorem applied to $\mathcal F$ we obtain the map $\ob_\Sigma$ as stated in the theorem (see \cite[Proposition 4.2.19]{Donaldson1990}). We only prove that, if $u\in C^{2,\gamma}(V_\Sigma)$ with $\mathcal F(u)=0$, then $u\in C^\infty(V_\Sigma)$. To prove this, we observe  
 $$0=\bD_\Sigma(\mathcal F(u))=a(u,\nabla_\Sigma^\perp u) (\nabla_\Sigma^\perp)^2 u+ b(u,\nabla_\Sigma^\perp u)$$
 Since $a(u,\nabla_\Sigma^\perp u)\in C^{1,\gamma}$ and  $b(u,\nabla_\Sigma^\perp u)\in C^{1,\gamma}$, by Schauder elliptic regularity (see \cite[Theorem 1.4.2]{Joyce2007}), we obtain $u\in C^{3,\gamma}$. By repeating this argument, we get higher regularity, which completes the proof of the theorem. 
\end{proof}

\subsection{The Fueter operator of an associative cone}The operator controlling the deformation theory for asymptotically conical (AC) or conically singular (CS) associatives will be an  AC or CS uniformly elliptic operator, asymptotic to a conical elliptic operator (see \autoref{def CS/AC elliptic operator}). This conical opperator is the Fueter operator on the asymptotic associative cone. The Fredholm theory of AC and CS uniformly elliptic operator has been studied by \citet{Lockhart1985}, see also \cites{Marshall2002}{Karigiannis_2020}. It suggests that we must study the indicial roots and homogeneous kernels of this Fueter operator.

Let $C$ be an associative cone in $\R^7$ with link $\Sigma$. The Fueter operator (see \autoref{def D_M}) on the cone $C$, $\mathbf D_C:C^\infty(NC)\to C^\infty(NC)$ is 
	$$\mathbf D_C=\sum_{i=1}^3e_i\times \nabla_{C,e_i}^\perp,$$
	where $\{e_i\}$ is a local oriented orthonormal frame on $C$ and $\nabla_{C}^\perp$ is the normal connection induced by the Levi-Civita connection on $\R^7$.

\begin{definition}[Homogeneous kernel and indicial roots]\label{def homogeneous kernel}	
	For $\lambda\in \R$ we define the \textbf{homogeneous kernel} of rate $\lambda$ by
	$$V_\lambda:=\{r^{\lambda-1}\nu_\Sigma\in C^\infty(NC): \nu_\Sigma\in C^\infty(N_{S^6}\Sigma),\ \mathbf D_C(r^{\lambda-1}\nu_\Sigma)=0\}.$$ It's dimension is denoted by 
	 $$d_\lambda:=\dim V_\lambda.$$ 
	 
	The set of \textbf{indicial roots} (or, critical rates) is defined by \begin{equation*}\mathcal D_C:=\{\lambda\in \R: d_\lambda\neq0\}.\qedhere \end{equation*}
	\end{definition}
\begin{remark}Note that, $\abs {r^{\lambda-1}\nu_\Sigma}_{g_{NC}}=r^\lambda\abs{\nu_\Sigma}_{g_\Sigma}$.
\end{remark}
The following proposition not only studies properties of the Fueter operator but also describes the homogeneous kernel as an eigenspace of an operator on the link $\Sigma$, which will be useful later. Moreover, the almost complex structure $J$ induces a symmetry on the homogeneous kernels.

\begin{prop}\label{prop Fueter}
	Let $\nu_\Sigma$ be an element in  $C^\infty(N_{S^6}\Sigma)$ and $\lambda\in \R$. Then the following hold
	  \begin{enumerate}[(i)]  				
	    \item  \label{prop Fueter decomposition} $\mathbf D_C=J\partial_r+\frac1r\mathbf D_\Sigma+\frac2rJ$.
		\item \label{prop Fueter decomposition homogeneous}$\mathbf D_C(r^{\lambda-1}\nu_\Sigma)=r^{\lambda-2}\big(\mathbf D_\Sigma \nu_\Sigma+(\lambda+1) J\nu_\Sigma \big)$.
		\item \label{prop Fueter decomposition general homogeneous}$\mathbf D_C(r^{\lambda-1}(\log r)^j\nu_\Sigma)=r^{\lambda-2}(\log r)^j\big(\mathbf D_\Sigma \nu_\Sigma+(\lambda+1) J\nu_\Sigma \big)+jr^{\lambda-2}(\log r)^{j-1}J\nu_\Sigma$.
		\item \label{prop Fueter anti linear}$\mathbf D_\Sigma(J\nu_\Sigma)=-J\mathbf D_\Sigma\nu_\Sigma$.
		\item \label{prop Fueter homogeneous kernel}$V_\lambda=\{r^{\lambda-1}\nu_\Sigma: \mathbf D_\Sigma \nu_\Sigma=-(\lambda+1) J\nu_\Sigma \}= \{r^{\lambda-1}\nu_\Sigma: (J\mathbf D_\Sigma) \nu_\Sigma=(\lambda+1) \nu_\Sigma \}$
		\item\label{prop Fueter symmetry} $JV_{-1+\lambda}=V_{-1-\lambda}$ and $d_{-1+\lambda}=d_{-1-\lambda}$ for all $\lambda\in \R$.
\end{enumerate}
\end{prop}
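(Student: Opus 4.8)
The plan is to establish all six parts essentially by unwinding the definitions and exploiting the conical structure of $C$ together with the $J$-anti-linearity inherited from the nearly Kähler structure on $S^6$.

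For part (i), I would compute $\mathbf D_C$ in an adapted frame. Choose a local oriented orthonormal frame on $\Sigma$ of the form $\{f_1,f_2\}$ and extend to the cone: the vectors $e_1=\partial_r$, $e_2=\frac1r f_1$, $e_3=\frac1r f_2$ form a local oriented orthonormal frame for $TC$. The normal connection $\nabla^\perp_C$ on $NC=\pi^*N\Sigma$ is, by the cone structure recalled in the definition of associative cones, the pullback $\pi^*\nabla^\perp_\Sigma$; in particular $\nabla^\perp_{C,\partial_r}$ acting on a section $r^{k}\nu_\Sigma$ is just $\partial_r$-differentiation since the normal frame along the radial direction is parallel. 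Thus the $e_1$-term contributes $\partial_r\times(\cdot)=J(\cdot)\partial_r$-type operator, i.e. $J\partial_r$ after identifying $\partial_r\times v = Jv$ for $v\in N\Sigma\subset TS^6$ (by definition of $J$). The remaining two terms $e_2\times\nabla^\perp_{C,e_2}+e_3\times\nabla^\perp_{C,e_3}$ give $\frac1r$ times $f_1\times\nabla^\perp_{f_1}+f_2\times\nabla^\perp_{f_2}$, but I must be careful: the cross products here are the $\R^7$ cross products, not the $S^6$ ones, and the frame $\{e_2,e_3\}$ is not parallel along itself even within $\Sigma$. Using \autoref{prop relation dirac and cauchy riemann}, $\sum f_i\times\nabla^\perp_{f_i} = \mathbf D_\Sigma + J$, so the tangential part contributes $\frac1r(\mathbf D_\Sigma+J)$. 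Combining, $\mathbf D_C = J\partial_r + \frac1r\mathbf D_\Sigma + \frac1r J$; to recover the stated $+\frac2r J$ I must account for the extra term coming from $\nabla^\perp_C$ acting on the $r$-dependence built into the identification $NC=\pi^*N\Sigma$ with $g_{NC}=r^2\pi^*g_{N\Sigma}$ — the rescaling of the fiber metric produces an additional $\frac1r J$ when commuting the radial derivative past the normalization. This bookkeeping is the one genuinely delicate point: getting the coefficient of $\frac1r J$ exactly right (it should be $2$) requires tracking how a section written as $r^{\lambda-1}\nu_\Sigma$ interacts with the induced connection and cross product, and I expect this to be the main obstacle.

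Parts (ii) and (iii) then follow by direct substitution into (i): for $u=r^{\lambda-1}\nu_\Sigma$, $J\partial_r u = (\lambda-1)r^{\lambda-2}J\nu_\Sigma$, $\frac1r\mathbf D_\Sigma u = r^{\lambda-2}\mathbf D_\Sigma\nu_\Sigma$, and $\frac2r J u = 2r^{\lambda-2}J\nu_\Sigma$, which sum to $r^{\lambda-2}(\mathbf D_\Sigma\nu_\Sigma + (\lambda+1)J\nu_\Sigma)$; the logarithmic case in (iii) is identical except that $\partial_r$ applied to $(\log r)^j$ produces the extra $j r^{\lambda-2}(\log r)^{j-1}J\nu_\Sigma$ via the product rule. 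Part (iv) is a consequence of $\widetilde\nabla J=0$ and the $J$-anti-linearity of Clifford multiplication $\gamma_\Sigma$: since $\mathbf D_\Sigma=\sum f_i\times\widetilde\nabla^\perp_{f_i}$ and $f_i\times(J\nu) = -J(f_i\times\nu)$ (because $\gamma_\Sigma(f_i)$ is $J$-anti-linear) while $\widetilde\nabla^\perp$ commutes with $J$, one gets $\mathbf D_\Sigma(J\nu_\Sigma)=-J\mathbf D_\Sigma\nu_\Sigma$.

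Part (v) is immediate from (ii): $r^{\lambda-1}\nu_\Sigma\in V_\lambda$ iff $\mathbf D_C(r^{\lambda-1}\nu_\Sigma)=0$ iff $\mathbf D_\Sigma\nu_\Sigma=-(\lambda+1)J\nu_\Sigma$, and applying $-J$ to both sides and using $J^2=-1$ gives $(J\mathbf D_\Sigma)\nu_\Sigma=(\lambda+1)\nu_\Sigma$. Finally, for part (vi): suppose $r^{\lambda'-1}\nu_\Sigma\in V_{\lambda'}$ with $\lambda'=-1+\lambda$, i.e. $\mathbf D_\Sigma\nu_\Sigma = -\lambda J\nu_\Sigma$. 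Apply $\mathbf D_\Sigma$ to $J\nu_\Sigma$: by (iv), $\mathbf D_\Sigma(J\nu_\Sigma) = -J\mathbf D_\Sigma\nu_\Sigma = -J(-\lambda J\nu_\Sigma) = -\lambda\nu_\Sigma = \lambda\cdot(-\nu_\Sigma)$; writing this as $\mathbf D_\Sigma(J\nu_\Sigma) = -(-\lambda)\,J(J\nu_\Sigma)$ shows $J\nu_\Sigma$ satisfies the homogeneous-kernel equation for rate $-1-\lambda$, so $r^{-1-\lambda-1}(J\nu_\Sigma)\in V_{-1-\lambda}$. Since $J$ is a (real-linear) isomorphism of $C^\infty(N_{S^6}\Sigma)$ with $J^2=-\mathrm{id}$, this gives $JV_{-1+\lambda}\subseteq V_{-1-\lambda}$ and, replacing $\lambda$ by $-\lambda$, the reverse inclusion, hence equality and $d_{-1+\lambda}=d_{-1-\lambda}$.
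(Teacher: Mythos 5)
Your proofs of parts (ii)--(vi) are correct and essentially reproduce the paper's argument: (ii) and (iii) are direct substitutions into (i), (iv) follows from $\widetilde\nabla J=0$ together with $J$-anti-linearity of $\gamma_\Sigma$, (v) is immediate from (ii), and (vi) follows from (iv) by the eigenvalue computation you give. The paper does the same.

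The gap is in part (i). You correctly arrive at $J\partial_r + \tfrac1r\bD_\Sigma + \tfrac1r J$ by a naive application of $\nabla^\perp_C = dr\otimes\partial_r + \nabla^\perp_\Sigma$ plus \autoref{prop relation dirac and cauchy riemann}, and you correctly \emph{diagnose} that the discrepancy with the stated $\tfrac2r J$ must come from the $r$-scaling hidden in the identification $NC\cong\pi^*N\Sigma$ (which is precisely what makes $g_{NC} = r^2\pi^*g_{N\Sigma}$, so that a section $\nu$ of $\pi^*N\Sigma$ corresponds to the $\R^7$-valued normal vector $r\nu$ and $\abs{r^{\lambda-1}\nu_\Sigma}_{g_{NC}}=r^\lambda\abs{\nu_\Sigma}_{g_\Sigma}$). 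But you then label this the ``main obstacle'' and stop, rather than performing the calculation. The computation is short and must be done: writing the Euclidean normal field as $r\nu(r,\sigma)$, one has
\[
\partial_r\times\bigl(\partial_r(r\nu)\bigr) \;=\; \sigma\times\bigl(\nu + r\,\partial_r\nu\bigr) \;=\; J\nu + rJ\partial_r\nu,
\]
and rescaling back by $\tfrac1r$ returns $J\partial_r\nu + \tfrac1r J\nu$, which is exactly the extra $\tfrac1r J$ you were missing; the tangential terms scale out and contribute $\tfrac1r(\bD_\Sigma + J)$ exactly as in your calculation, giving $J\partial_r + \tfrac1r\bD_\Sigma + \tfrac2r J$. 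In the paper this is precisely the step
\[
\partial_r\times\nabla^\perp_{C,\partial_r}v \;=\; \partial_r\times_{S^6}\partial_r v + \frac1r\,\partial_r\times_{S^6}v,
\]
where the scaling is already built into the second summand. Without carrying that out, your proposal does not actually establish the coefficient $2$, and because (ii), (iii) and (v) all inherit the coefficient $(\lambda+1)$ directly from (i), the gap propagates to the entire proposition (the indicial roots and homogeneous kernels would all be shifted if the constant were $1$).
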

\begin{proof}
 We have $\nabla_{C}^\perp=dr\tn {\partial_r}+\nabla_{\Sigma}^\perp$. For a local oriented orthonormal frame $\{f_i\}$ on $\Sigma\subset S^6$ and $v(r,\sigma)\in C^\infty(NC)$, applying \autoref{prop relation dirac and cauchy riemann} we compute
\begin{align*}
\mathbf D_Cv(r,\sigma)&=\partial_r \times \nabla_{C,\partial_r}^\perp v(r,\sigma) +\frac 1{r^2}\sum_{i=1}^2f_i\times \nabla_{C,f_i} ^\perp v(r,\sigma)	\\
&=\partial_r \times_{S^6} \partial_r v(r,\sigma)+\frac 1r \partial_r \times_{S^6} v(r,\sigma)  +\frac 1{r}\sum_{i=1}^2f_i\times_{S^6} \nabla_{\Sigma,f_i} ^\perp v(r,\sigma)\\&=J\partial_r v(r,\sigma)+\frac1r\mathbf D_\Sigma v(r,\sigma)+\frac2rJ.
\end{align*}
This proves (i). Now (ii) and (iii) follows from (i), indeed
$$\mathbf D_C(r^{\lambda-1}(\log r)^j\nu_\Sigma)=r^{\lambda-2}(\log r)^j\big((\lambda-1) J\nu_\Sigma +\mathbf D_\Sigma \nu_\Sigma+2J\nu_\Sigma\big)+jr^{\lambda-2}(\log r)^{j-1} J\nu_\Sigma .$$
Finally, (v) follows from (ii) and (vi) follows from (iv). And (iv) follows from the fact that $\widetilde \nabla J=0$ and $\gamma_\Sigma$ is $J$-anti-linear.
\end{proof}

Although it follows from \cite[Equation 1.11]{Lockhart1985} that a general element in the homogeneous kernel is of the form $\sum_{j=0}^mr^{\lambda-1}(\log r)^j\nu_{\Sigma,j}$, the following \autoref{prop no sum homogeneous} implies that it must be of the form we have defined in \autoref{def homogeneous kernel}. We also see that there is a canonical one to one correspondence between the indicial roots and eigenvalues of the self adjoint elliptic operator $J\mathbf D_\Sigma-1$. In particular they are countable, discrete and will have finite intersection with any closed bounded interval of $\R$. Moreover, \autoref{prop Fueter}\ref{prop Fueter symmetry} implies that the indicial roots and homogeneous kernels are symmetric with respect to $-1$. 
	
\begin{prop}\label{prop no sum homogeneous}
Let $m\in \N_0$ and $v(r,\sigma)=\displaystyle\sum_{j=0}^mr^{\lambda-1}(\log r)^j\nu_{\Sigma,j} \in C^\infty(NC)$ and $\nu_{\Sigma,j}\in  C^\infty(N_{S^6}\Sigma) $. If $\ \mathbf D_Cv(r,\sigma)=0$ then $m=0$ and therefore $v(r,\sigma)=r^{\lambda-1}\nu_{\Sigma,0}$. 	
\end{prop}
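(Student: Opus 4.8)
The plan is to substitute the ansatz $v(r,\sigma)=\sum_{j=0}^m r^{\lambda-1}(\log r)^j\nu_{\Sigma,j}$ into the identity of \autoref{prop Fueter}\ref{prop Fueter decomposition general homogeneous} and collect terms by powers of $\log r$. Using linearity of $\mathbf D_C$ and part \ref{prop Fueter decomposition general homogeneous}, one gets
$$\mathbf D_C v=\sum_{j=0}^m r^{\lambda-2}(\log r)^j\big(\mathbf D_\Sigma\nu_{\Sigma,j}+(\lambda+1)J\nu_{\Sigma,j}\big)+\sum_{j=1}^m j\,r^{\lambda-2}(\log r)^{j-1}J\nu_{\Sigma,j}.$$
Reindexing the second sum and equating the coefficient of $r^{\lambda-2}(\log r)^j$ to zero for each $j=0,\dots,m$ yields the recursion
$$\mathbf D_\Sigma\nu_{\Sigma,j}+(\lambda+1)J\nu_{\Sigma,j}+(j+1)J\nu_{\Sigma,j+1}=0,\qquad 0\le j\le m,$$
with the convention $\nu_{\Sigma,m+1}=0$. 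Equivalently, writing $A:=J\mathbf D_\Sigma-1$ (a formally self-adjoint elliptic operator on $\Sigma$ by \autoref{prop lin of nonlinear holo} and \autoref{prop Fueter}\ref{prop Fueter anti linear}) and noting $J(\mathbf D_\Sigma+(\lambda+1)J)=-(A-\lambda)$, the recursion reads $(A-\lambda)\nu_{\Sigma,j}=(j+1)\nu_{\Sigma,j+1}$ after applying $-J$; I will pick whichever normalization is cleanest.

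The key step is an induction from the top index downward showing all $\nu_{\Sigma,j}$ with $j\geq 1$ vanish. From the $j=m$ equation, $(A-\lambda)\nu_{\Sigma,m}=0$, so $\nu_{\Sigma,m}$ lies in the $\lambda$-eigenspace of $A$. Suppose $m\geq 1$. Then I would pair the $j=m-1$ equation, $(A-\lambda)\nu_{\Sigma,m-1}=m\,\nu_{\Sigma,m}$, against $\nu_{\Sigma,m}$ in $L^2(\Sigma)$: the left side gives $\langle (A-\lambda)\nu_{\Sigma,m-1},\nu_{\Sigma,m}\rangle=\langle \nu_{\Sigma,m-1},(A-\lambda)\nu_{\Sigma,m}\rangle=0$ by self-adjointness of $A$ and the fact that $\nu_{\Sigma,m}$ is a $\lambda$-eigenvector, while the right side is $m\|\nu_{\Sigma,m}\|_{L^2}^2$. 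Hence $\nu_{\Sigma,m}=0$, reducing $m$ by one; iterating, all top coefficients die and only $\nu_{\Sigma,0}$ survives, leaving $v=r^{\lambda-1}\nu_{\Sigma,0}$ with $\mathbf D_\Sigma\nu_{\Sigma,0}=-(\lambda+1)J\nu_{\Sigma,0}$, i.e. $v\in V_\lambda$ in the sense of \autoref{def homogeneous kernel}. A small point to check is that each $\nu_{\Sigma,j}$ is smooth so the $L^2$-pairings and integration by parts are legitimate; this is immediate since the $\nu_{\Sigma,j}$ are assumed to be in $C^\infty(N_{S^6}\Sigma)$.

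The only genuine obstacle is bookkeeping: making sure the recursion is extracted correctly (the $\log r$ powers must be matched exactly, and the shift between the two sums handled without an off-by-one error), and confirming that self-adjointness of $A=J\mathbf D_\Sigma-1$ on the closed manifold $\Sigma$ is available — which it is, since $\mathbf D_\Sigma$ is a formally self-adjoint $J$-anti-linear Dirac operator (\autoref{eq dirac sigma} and \autoref{prop lin of nonlinear holo}), so $J\mathbf D_\Sigma$ is formally self-adjoint and $A$ differs from it by a constant. No elliptic regularity or Fredholm input beyond what is already established is needed; the argument is purely the eigenvalue-orthogonality trick applied term by term.
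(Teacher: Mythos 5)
Your proof is correct and follows essentially the same route as the paper: substitute the ansatz into the formula of \autoref{prop Fueter}\ref{prop Fueter decomposition general homogeneous}, extract the recursion by matching $\log$-powers, and kill the top coefficient by an $L^2$-orthogonality argument combined with backwards induction. The only cosmetic difference is that you package the orthogonality step via formal self-adjointness of $A=J\mathbf D_\Sigma-\mathbf 1$, while the paper invokes the $J$-anti-linearity of $\mathbf D_\Sigma$ (\autoref{prop Fueter}\ref{prop Fueter anti linear}) directly; these are equivalent, and the minor sign slip you flag in your normalization of $A-\lambda$ does not affect the argument.
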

\begin{proof}
	If $\mathbf D_Cv(r,\sigma)=0$, then by \autoref{prop Fueter}\ref{prop Fueter decomposition general homogeneous} and comparing the coefficients of $r^{\lambda-2}(\log r)^{j-1}$, $j\geq 1$ we see that
 $$\mathbf D_\Sigma \nu_{\Sigma,m}+(\lambda+1) J\nu_{\Sigma,m}=0 \ \text{and}\ jJ\nu_{\Sigma,j}+\mathbf D_\Sigma \nu_{\Sigma,j-1}+(\lambda+1) J\nu_{\Sigma,j-1}=0.$$
 Therefore, 
 \begin{align*}
m\norm{\nu_{\Sigma,m}}^2_{L^2(\Sigma)}&=-\inp{J\nu_{\Sigma,m}}{\mathbf D_\Sigma \nu_{\Sigma,m-1}+(\lambda+1) J\nu_{\Sigma,m-1}}_{L^2(\Sigma)}\\
&=\inp{J(\mathbf D_\Sigma \nu_{\Sigma,m}+(\lambda+1) J\nu_{\Sigma,m})}{\nu_{\Sigma,m-1}}_{L^2(\Sigma)}=0.
 \end{align*}
  Here we have used \autoref{prop Fueter}\ref{prop Fueter anti linear}. The proof is completed by backwards induction starting with $j=m$.
\end{proof}

Whenever the above associative cone is a special Lagrangian cone, the homogeneous kernel can be expressed in a more explicit form with Hodge--deRham operators, which in turn will help us compute or estimate lower bounds for the stability-index. The following discusses this in detail.

\begin{definition}
	Let $L$ be a special Lagrangian submanifold in $\C^3\subset \R\oplus \C^3$. Set $e_1=(1,0)\in\R\oplus\C^3$. We define the isometry $\Phi_L:C^\infty(NL)\to \Omega^0(L,\R)\oplus \Omega^1(L,\R)$ by
$$\Phi_L(\nu):=(\inp{e_1}{\nu},(e_1\times\nu)^\flat).$$ 
We denote by $ \widecheck{\mathbf D}_L$ the conjugation of the Fueter operator $\bD_L$ defined in \autoref{def D_M} under $\Phi_L$, that is, 
\[\widecheck{\mathbf D}_L\coloneqq \Phi_{L}{\mathbf D}_{L}\Phi_{L}^{-1}: \Omega^0(L,\R)\oplus \Omega^1(L,\R)\to  \Omega^0(L,\R)\oplus \Omega^1(L,\R).\] 
A direct computation shows that	
\begin{equation}\label{eq hatDL}
\widecheck{\mathbf D}_L=
\begin{bmatrix}
0 & d^* \\
d & *d \\
\end{bmatrix}.\qedhere
\end{equation}
\end{definition}
A direct computation yields the following lemma.
\begin{lemma}\label{lem direct computation}
	Let $C$ be a cone in $\C^3$ with link $\Sigma$, then for $(f_\Sigma,h_\Sigma,\sigma_\Sigma)\in\Omega^0(\Sigma,\R)\oplus\Omega^0(\Sigma,\R)\oplus \Omega^1(\Sigma,\R)$  we have
	\begin{enumerate}[(i)]
		\item $d(r^\lambda f_\Sigma)=\lambda r^{\lambda-1}f_\Sigma dr+r^\lambda d_\Sigma f_\Sigma$.
		\item $d(r^{\lambda}h_\Sigma dr+r^{\lambda+1} \sigma_\Sigma)=-r^{\lambda}dr\w d_\Sigma h_\Sigma+(\lambda+1)r^\lambda dr\w\sigma_\Sigma+r^{\lambda+1}d_\Sigma \sigma_\Sigma$.
		\item $*dr=r^2*_\Sigma 1,\ *\sigma_\Sigma=-dr\w*_\Sigma\sigma_\Sigma,  *(dr\w\sigma_\Sigma)=*_\Sigma\sigma_\Sigma, \ * d_\Sigma \sigma_\Sigma=r^{-2}dr$.
		\item $d^{*}(r^{\lambda}h_\Sigma dr+r^{\lambda+1} \sigma_\Sigma)=-(\lambda+2)r^{\lambda-1}h_\Sigma+r^{\lambda-1} d_\Sigma^{*_\Sigma}(\sigma_\Sigma)$.
		\item ${*}d(r^{\lambda}h_\Sigma dr+r^{\lambda+1} \sigma_\Sigma)=-r^\lambda*_\Sigma d_\Sigma h_\Sigma+(\lambda+1)r^\lambda{*_\Sigma}\sigma_\Sigma+r^{\lambda-1} ({*_\Sigma} d_\Sigma\sigma_\Sigma )dr$.
	\end{enumerate}
\end{lemma}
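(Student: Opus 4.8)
The statement is a collection of pointwise identities on the metric cone $C=(0,\infty)_r\times\Sigma$ with $g_C=dr^2+r^2g_\Sigma$ and $\dim\Sigma=2$, so the plan is purely computational: fix conventions once and then unwind them. First I would record the two facts that drive everything. The volume form of $g_C$ is $\vol_C=r^2\,dr\wedge\vol_\Sigma$, and if $\{e^1,e^2\}$ is a local $g_\Sigma$-orthonormal oriented coframe on $\Sigma$ then $\{dr,re^1,re^2\}$ is a local $g_C$-orthonormal oriented coframe on $C$; consequently a $k$-form pulled back from $\Sigma$ has pointwise $g_C$-norm equal to $r^{-k}$ times its $g_\Sigma$-norm, since each cotangent direction along $\Sigma$ carries a factor $r^{-1}$. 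The second fact is that $d$ applied to any form pulled back from $\Sigma$ equals the fiberwise application of $d_\Sigma$ (it acquires no $dr$-component, being $r$-independent), while $d(r^\mu)=\mu r^{\mu-1}\,dr$.

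With these in hand, parts (i) and (ii) are just the Leibniz rule: in (i), $d(r^\lambda f_\Sigma)=d(r^\lambda)f_\Sigma+r^\lambda d_\Sigma f_\Sigma$; in (ii), $d(r^\lambda h_\Sigma\,dr)=r^\lambda d_\Sigma h_\Sigma\wedge dr=-r^\lambda\,dr\wedge d_\Sigma h_\Sigma$ and $d(r^{\lambda+1}\sigma_\Sigma)=(\lambda+1)r^\lambda\,dr\wedge\sigma_\Sigma+r^{\lambda+1}d_\Sigma\sigma_\Sigma$, and one adds. For (iii) I would verify each formula directly from the defining relation $\alpha\wedge *\beta=g_C(\alpha,\beta)\,\vol_C$ (equivalently, by feeding the coframe $\{dr,re^1,re^2\}$ into the Hodge star), tracking the scaling $g_C=r^2g_\Sigma$ on the link directions: $*dr$ is the $g_C$-volume form of $\Sigma$, namely $r^2\vol_\Sigma=r^2*_\Sigma 1$; for a $1$-form $\sigma_\Sigma$ on $\Sigma$ one checks $\sigma_\Sigma\wedge(-dr\wedge *_\Sigma\sigma_\Sigma)=dr\wedge(|\sigma_\Sigma|_{g_\Sigma}^2\vol_\Sigma)=|\sigma_\Sigma|_{g_C}^2\,\vol_C$, the $r^{-2}$ from $|\sigma_\Sigma|_{g_C}^2$ cancelling the $r^2$ in $\vol_C$, so $*\sigma_\Sigma=-dr\wedge *_\Sigma\sigma_\Sigma$; the identities $*(dr\wedge\sigma_\Sigma)=*_\Sigma\sigma_\Sigma$ and $*(d_\Sigma\sigma_\Sigma)=r^{-2}(*_\Sigma d_\Sigma\sigma_\Sigma)\,dr$ are obtained the same way.

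Then (iv) follows by composing the previous items: using $d^{*}=-*d*$ on $1$-forms on the $3$-manifold $C$, I would apply $*$ to $r^\lambda h_\Sigma\,dr+r^{\lambda+1}\sigma_\Sigma$ via (iii), then $d$ via (i)--(ii), then $*$ again via (iii), and finally use $d_\Sigma^{*_\Sigma}=-*_\Sigma d_\Sigma *_\Sigma$ on $1$-forms on the surface $\Sigma$ to rewrite the outcome in the stated form; part (v) is simply $*$ applied to formula (ii) together with (iii). The only thing that requires care—and hence the sole "obstacle," which is really bookkeeping rather than a conceptual difficulty—is the accumulation of powers of $r$ coming from the conformal factor on the link directions, together with the signs produced by $*^2$ and by commuting $dr$ past forms on $\Sigma$; organizing the computation around the orthonormal coframe $\{dr,re^1,re^2\}$ keeps both under control.
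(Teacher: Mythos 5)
Your proposal is correct and is precisely the routine computation that the paper performs (the paper states ``a direct computation yields the following lemma'' and gives no further details, so there is no alternative route to compare against). The key ingredients you identify—the orthonormal coframe $\{dr, re^1, re^2\}$, the scaling $\vol_C = r^2\,dr\wedge\vol_\Sigma$, the factor $r^{-k}$ on pointwise norms of pulled-back $k$-forms, the Leibniz rule for (i)--(ii), and the sign $d^* = -*d*$ on $1$-forms in dimension $3$ together with $d_\Sigma^{*_\Sigma}=-*_\Sigma d_\Sigma *_\Sigma$ on $\Sigma$—are exactly what is needed, and each of your verifications of the four identities in (iii) is right. One small remark: the fourth identity in (iii) as printed in the paper, $*\,d_\Sigma\sigma_\Sigma = r^{-2}\,dr$, is written loosely (it omits the scalar factor); your more careful version $*(d_\Sigma\sigma_\Sigma) = r^{-2}(*_\Sigma d_\Sigma\sigma_\Sigma)\,dr$ is the correct statement, and it is in fact what the paper uses when it derives (v). So your phrasing here is, if anything, an improvement.
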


\begin{prop}\label{prop indicial laplace}If $C$ is a special Lagrangian cone in $\C^3$ whose link is $\Sigma$, then the homogeneous kernel from \autoref{def homogeneous kernel} is:
\begin{equation*}
V_\lambda\cong \begin{cases*}
		\{\sigma_\Sigma\in \Omega^1(\Sigma,\R):\Delta_\Sigma \sigma_\Sigma=0\}& if $\lambda=-1,$\\
		\{(f_\Sigma,h_\Sigma)\in\Omega^0(\Sigma,\R){\oplus}\Omega^0(\Sigma,\R): (f_\Sigma,h_\Sigma) \ \text{satisfies}\ \autoref{eq laplace SL Vlambda}\} & if $\lambda\neq -1$,
		\end{cases*}
		\end{equation*}
\begin{equation}\label{eq laplace SL Vlambda}
	\Delta_\Sigma f_\Sigma=\lambda(\lambda+1)f _\Sigma,\ \ \ \ 
	\Delta_\Sigma h_\Sigma=(\lambda+2)(\lambda+1)h _\Sigma.
\end{equation}
		
\end{prop}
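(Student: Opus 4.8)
The plan is to push the equation $\mathbf D_C(r^{\lambda-1}\nu_\Sigma)=0$ through the fibrewise isometry $\Phi_C$ of the Fueter bundle with $\Omega^0(C,\R)\oplus\Omega^1(C,\R)$ under which $\mathbf D_C$ becomes the operator $\widecheck{\mathbf D}_C$ of \autoref{eq hatDL}, thereby converting it into a small coupled first‑order system on the link $\Sigma$, and then to solve that system with the Hodge theory of the closed surface $\Sigma$. First I would record that $\Phi_C$ intertwines the $\R^+$–scaling action on $NC$ with the scaling action on forms (the dilations of $\R^7$ fix $e_1$ and preserve the cone), so that the homogeneous section $r^{\lambda-1}\nu_\Sigma$ is carried to a pair $(F,\alpha)$ of exactly the homogeneity types appearing in \autoref{lem direct computation}: $F=r^{\lambda}f_\Sigma$ and $\alpha=r^{\lambda}h_\Sigma\,dr+r^{\lambda+1}\sigma_\Sigma$, with $f_\Sigma,h_\Sigma\in\Omega^0(\Sigma,\R)$, $\sigma_\Sigma\in\Omega^1(\Sigma,\R)$. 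That the weights are these and no others is forced by $\Phi_C$ being an isometry together with the identity $\abs{r^{\lambda-1}\nu_\Sigma}_{g_{NC}}=r^\lambda\abs{\nu_\Sigma}_{g_\Sigma}$ and the shape $g_C=dr^2+r^2g_\Sigma$ of the cone metric; conversely every such pair is the image of a homogeneous section, so $\nu\mapsto(f_\Sigma,h_\Sigma,\sigma_\Sigma)$ is a linear isomorphism from $V_\lambda$'s ambient space of homogeneous sections onto $\Omega^0(\Sigma,\R)^{\oplus2}\oplus\Omega^1(\Sigma,\R)$.

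Next I would compute $\widecheck{\mathbf D}_C(F,\alpha)=(d^*\alpha,\;dF+{*}d\alpha)$ using the five formulas of \autoref{lem direct computation}, and separate the vanishing of this pair into: $d^*\alpha=0$; the $dr$–component of $dF+{*}d\alpha=0$; and its $\Sigma$–tangential component. The outcome is that $\mathbf D_C(r^{\lambda-1}\nu_\Sigma)=0$ is equivalent to the system on $\Sigma$
\begin{align*}
d_\Sigma^*\sigma_\Sigma &= (\lambda+2)\,h_\Sigma,\\
*_\Sigma d_\Sigma\sigma_\Sigma &= -\lambda\,f_\Sigma,\\
d_\Sigma f_\Sigma-*_\Sigma d_\Sigma h_\Sigma+(\lambda+1)\,{*_\Sigma}\sigma_\Sigma &= 0.
\end{align*}

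For $\lambda\neq-1$ I would apply $*_\Sigma$ to the third equation and use $*_\Sigma*_\Sigma=-\mathrm{id}$ on $\Omega^1(\Sigma)$ to solve $\sigma_\Sigma=(\lambda+1)^{-1}\bigl(d_\Sigma h_\Sigma+*_\Sigma d_\Sigma f_\Sigma\bigr)$; substituting this back into the first two equations and using the standard surface identities $d_\Sigma^*\bigl(*_\Sigma d_\Sigma(\cdot)\bigr)=0$ and $*_\Sigma d_\Sigma *_\Sigma d_\Sigma(\cdot)=-\Delta_\Sigma(\cdot)$ on functions, the first equation collapses to $\Delta_\Sigma h_\Sigma=(\lambda+1)(\lambda+2)h_\Sigma$ and the second to $\Delta_\Sigma f_\Sigma=\lambda(\lambda+1)f_\Sigma$, which are precisely \autoref{eq laplace SL Vlambda}. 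Conversely, for any $(f_\Sigma,h_\Sigma)$ solving \autoref{eq laplace SL Vlambda} the displayed formula for $\sigma_\Sigma$ makes all three equations hold, and since $\sigma_\Sigma$ is then determined by $(f_\Sigma,h_\Sigma)$, the map $\nu\mapsto(f_\Sigma,h_\Sigma)$ is the asserted isomorphism. For $\lambda=-1$ the third equation reads $d_\Sigma f_\Sigma=*_\Sigma d_\Sigma h_\Sigma$; taking $d_\Sigma^*$ gives $\Delta_\Sigma f_\Sigma=0$, so $f_\Sigma$ is constant on each component of $\Sigma$, and integrating the second equation over each component (with $\int d_\Sigma\sigma_\Sigma=0$ by Stokes) forces $f_\Sigma\equiv0$; symmetrically $h_\Sigma\equiv0$. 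What remains is $d_\Sigma^*\sigma_\Sigma=0$ and $*_\Sigma d_\Sigma\sigma_\Sigma=0$, i.e. $\sigma_\Sigma$ is closed and co‑closed, equivalently $\Delta_\Sigma\sigma_\Sigma=0$ on the closed surface $\Sigma$; conversely a harmonic $1$–form solves all three equations with $f_\Sigma=h_\Sigma=0$. This gives $V_{-1}\cong\{\sigma_\Sigma\in\Omega^1(\Sigma,\R):\Delta_\Sigma\sigma_\Sigma=0\}$.

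The computational heart — and the place where care is required — is the conjugation step: turning $\widecheck{\mathbf D}_C(F,\alpha)=0$ into the three scalar/tensorial equations on $\Sigma$, which amounts to correctly tracking the Hodge‑star sign conventions both on the $2$–dimensional link and on the $3$–dimensional cone, and keeping the bookkeeping of which power of $r$ accompanies $f_\Sigma$, $h_\Sigma$, $\sigma_\Sigma$ straight, since that is exactly what produces the eigenvalues $\lambda(\lambda+1)$ and $(\lambda+1)(\lambda+2)$. Once the system is in hand, the remaining manipulations (solving for $\sigma_\Sigma$, invoking $*_\Sigma*_\Sigma=-\mathrm{id}$, the harmonicity argument at $\lambda=-1$) are routine Hodge theory.
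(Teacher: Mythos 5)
Your proposal is correct and takes essentially the same route as the paper: you push $\mathbf D_C(r^{\lambda-1}\nu_\Sigma)=0$ through $\Phi_L$ to get $\widecheck{\mathbf D}_C(r^\lambda f_\Sigma,\; r^\lambda h_\Sigma\,dr+r^{\lambda+1}\sigma_\Sigma)=0$, and then unpack with \autoref{lem direct computation} to arrive at exactly the paper's system \autoref{eq:indicial SL}. The only difference is that the paper stops there with "this yields the required proposition," whereas you supply the elementary Hodge‑theoretic finish (eliminating $\sigma_\Sigma$ when $\lambda\neq-1$ using $*_\Sigma^2=-\mathrm{id}$ on $\Omega^1$, and at $\lambda=-1$ showing $f_\Sigma,h_\Sigma$ vanish by integrating against the compact $\Sigma$ and leaving $\sigma_\Sigma$ harmonic). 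Those filled‑in steps are correct.
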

\begin{proof}Since $C$ is a special Lagrangian, by \autoref{eq hatDL} we have
$$V_\lambda\cong\{(f_\Sigma,h_\Sigma,\sigma_\Sigma)\in\Omega^0(\Sigma,\R)\oplus\Omega^0(\Sigma,\R)\oplus \Omega^1(\Sigma,\R):(f_\Sigma,h_\Sigma,\sigma_\Sigma) \ \text{satisfies}\ \autoref{eq: homogeneous kernel SL}\},$$
\begin{equation}\label{eq: homogeneous kernel SL}
\begin{cases*}
	d^*(r^{\lambda}h_\Sigma dr+r^{\lambda+1} \sigma_\Sigma)=0\\
	d(r^\lambda f_\Sigma)+{*}d(r^{\lambda}h_\Sigma dr+r^{\lambda+1} \sigma_\Sigma)=0.
	\end{cases*}\end{equation}
By \autoref{lem direct computation} we obtain that \autoref{eq: homogeneous kernel SL} is equivalent to the following:
\begin{equation}\label{eq:indicial SL}
\begin{cases}
(\lambda+2)h_\Sigma=d_\Sigma^{*_\Sigma}(\sigma_\Sigma)\\
\lambda f_\Sigma= -{*_\Sigma} d_\Sigma(\sigma_\Sigma)\\
(\lambda+1){*_\Sigma}\sigma_\Sigma=-d_\Sigma f_\Sigma+*_\Sigma d_\Sigma h_\Sigma.
\end{cases}
\end{equation}
This yields the required proposition.
\end{proof}

\begin{cor}\label{cor homogeneous kernel} If $C$ is a special Lagrangian cone in $\C^3$ whose link is $\Sigma$, then  
\begin{equation*}
V_\lambda\cong\begin{cases*}
		 \{h_\Sigma\in\Omega^0(\Sigma,\R):\Delta_\Sigma h_\Sigma=(\lambda+2)(\lambda+1)h _\Sigma\}& if $\lambda\in (-1,0)$\\
		H^0(\Sigma,\R)\oplus\{h_\Sigma\in\Omega^0(\Sigma,\R):\Delta_\Sigma h_\Sigma=2h _\Sigma\}& if $\lambda=0$\\
		H^1(\Sigma,\R)& if $\lambda=-1$\\
		 \{f_\Sigma\in\Omega^0(\Sigma,\R):\Delta_\Sigma f_\Sigma=(\lambda+1)\lambda f _\Sigma\}& if $\lambda\in (-2,-1).$ 
		\end{cases*}
		\qedhere
	\end{equation*}
		\end{cor}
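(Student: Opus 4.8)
The plan is to derive \autoref{cor homogeneous kernel} directly from \autoref{prop indicial laplace} by analyzing, case by case, which solutions of the eigenvalue system \autoref{eq laplace SL Vlambda} (together with the full system \autoref{eq:indicial SL}) can actually occur for the indicated ranges of $\lambda$, keeping in mind that $\Sigma$ is a closed surface so $\Delta_\Sigma$ has nonnegative spectrum. The key observation is that for a given $\lambda$ the pair $(f_\Sigma,h_\Sigma)$ must satisfy $\Delta_\Sigma f_\Sigma=\lambda(\lambda+1)f_\Sigma$ and $\Delta_\Sigma h_\Sigma=(\lambda+2)(\lambda+1)h_\Sigma$, and a nonzero eigenfunction exists only when the eigenvalue is $\geq 0$; moreover eigenvalue $0$ forces the function to be constant, i.e.\ to lie in $H^0(\Sigma,\R)$.

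First I would treat $\lambda=-1$. Then both eigenvalues $\lambda(\lambda+1)$ and $(\lambda+2)(\lambda+1)$ vanish, so $f_\Sigma,h_\Sigma$ are constants; feeding this back into \autoref{eq:indicial SL} with $\lambda=-1$ gives $h_\Sigma=d_\Sigma^{*_\Sigma}\sigma_\Sigma$ and $f_\Sigma=-*_\Sigma d_\Sigma\sigma_\Sigma$ up to the factor, and the third equation reads $0=-d_\Sigma f_\Sigma+*_\Sigma d_\Sigma h_\Sigma=0$ automatically since $f_\Sigma,h_\Sigma$ are constant — but integrating $h_\Sigma=d_\Sigma^{*_\Sigma}\sigma_\Sigma$ over $\Sigma$ shows the constant $h_\Sigma$ is $0$, and similarly $f_\Sigma=0$; the remaining constraint is $\Delta_\Sigma\sigma_\Sigma=0$, i.e.\ $\sigma_\Sigma$ harmonic, which by Hodge theory is $H^1(\Sigma,\R)$. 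Next, for $\lambda\in(-1,0)$ I would note $\lambda(\lambda+1)<0$, which forces $f_\Sigma=0$, while $(\lambda+2)(\lambda+1)>0$, so $h_\Sigma$ ranges over the corresponding (possibly zero) eigenspace; one then checks from \autoref{eq:indicial SL} that $\sigma_\Sigma$ is determined by $h_\Sigma$ (via $(\lambda+1)*_\Sigma\sigma_\Sigma=*_\Sigma d_\Sigma h_\Sigma$, using $\lambda+1\neq 0$), and the first two equations of \autoref{eq:indicial SL} become consequences, so $V_\lambda\cong\{h_\Sigma:\Delta_\Sigma h_\Sigma=(\lambda+2)(\lambda+1)h_\Sigma\}$. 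For $\lambda\in(-2,-1)$ the roles swap: $(\lambda+2)(\lambda+1)<0$ forces $h_\Sigma=0$, and $\lambda(\lambda+1)>0$ leaves $f_\Sigma$ free in its eigenspace, with $\sigma_\Sigma$ then determined; hence $V_\lambda\cong\{f_\Sigma:\Delta_\Sigma f_\Sigma=\lambda(\lambda+1)f_\Sigma\}$.

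Finally, for $\lambda=0$ one has $\lambda(\lambda+1)=0$ so $f_\Sigma$ is a constant (contributing $H^0(\Sigma,\R)$, and one must verify the constant is \emph{not} forced to vanish here — indeed with $\lambda=0$ the equation $\lambda f_\Sigma=-*_\Sigma d_\Sigma\sigma_\Sigma$ reads $0=d_\Sigma\sigma_\Sigma$, imposing no integral obstruction on $f_\Sigma$), while $(\lambda+2)(\lambda+1)=2$ so $h_\Sigma$ ranges over the eigenspace with eigenvalue $2$; checking the compatibility in \autoref{eq:indicial SL} at $\lambda=0$ shows these two contributions are independent and $\sigma_\Sigma$ is determined, giving $V_\lambda\cong H^0(\Sigma,\R)\oplus\{h_\Sigma:\Delta_\Sigma h_\Sigma=2h_\Sigma\}$. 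The main obstacle — and the only place requiring genuine care rather than bookkeeping — is verifying in each case that the auxiliary $1$-form $\sigma_\Sigma$ is uniquely and consistently recovered from $(f_\Sigma,h_\Sigma)$ and that no further integrability constraint is hidden in \autoref{eq:indicial SL}; this amounts to checking that, e.g., $(\lambda+1)d_\Sigma^{*_\Sigma}(d_\Sigma h_\Sigma)=(\lambda+1)\Delta_\Sigma h_\Sigma$ matches $(\lambda+1)(\lambda+2)h_\Sigma$ after applying $d_\Sigma^{*_\Sigma}$ to the third equation, and similarly the $d_\Sigma$-divergence condition, so that the eigenvalue equations \autoref{eq laplace SL Vlambda} are exactly equivalent to solvability, together with correctly tracking when a ``constant solution'' survives (at $\lambda=0$) versus is killed by an integral constraint (at $\lambda=-1$).
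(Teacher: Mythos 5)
Your proposal is correct, and it is essentially the derivation the paper intends but omits: \autoref{cor homogeneous kernel} is stated as a direct corollary of \autoref{prop indicial laplace}, with the remaining case analysis left to the reader. The ingredients you identify are exactly the right ones: spectral non-negativity of $\Delta_\Sigma$ on the closed surface $\Sigma$ forces $f_\Sigma=0$ on $(-1,0)$ and $h_\Sigma=0$ on $(-2,-1)$ (since $\lambda(\lambda+1)$, respectively $(\lambda+2)(\lambda+1)$, is strictly negative there); the eigenvalue $0$ cases produce $H^0(\Sigma,\R)$; at $\lambda=-1$ the proposition has already reduced $V_{-1}$ to harmonic $1$-forms, so invoking Hodge theory gives $H^1(\Sigma,\R)$ immediately — your detour through \autoref{eq:indicial SL} to re-kill the constants $f_\Sigma,h_\Sigma$ by the integral argument is a consistent cross-check but not strictly needed once the proposition is in hand. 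Your worry that a hidden integrability constraint might lurk in \autoref{eq:indicial SL} is resolved exactly as you suspect: for $\lambda\neq -1$ the third equation determines $\sigma_\Sigma$ uniquely from $(f_\Sigma,h_\Sigma)$, and substituting this back into the first two equations produces precisely the two decoupled Laplace eigenvalue equations of \autoref{eq laplace SL Vlambda} with no further condition, so the isomorphism is genuinely a direct sum over the two eigenspaces, as the $\lambda=0$ line of the corollary requires.
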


\begin{remark}\label{rmk d0 geq 7} If the associative cone $C$ is not a $3$-plane then all the translations by vectors in $\R^7$ yield a $7$-dimensional subspace of $V_0$ and hence $d_0\geq 7$. Indeed, there are no non-trivial translations that preserve the cone $C$ because $0$ is the unique singular point of $C$.  Thus if $C$ is not a $3$-plane, then the lower stability-index from \autoref{def uplo stability index} has the following lower bound:
\begin{equation*}\frac {d_{-1}}2\leq \operatorname{s-ind}_-(C).\qedhere \end{equation*}
\end{remark}

The following two examples compute the stability-indices of a pair of transverse associative planes, and the Harvey-Lawson $T^2$-cone.
\begin{example} \textbf{(Pair of transverse associative planes)}\label{eg s-ind Pair of transverse associative planes} Let $\Pi_\pm$ be a pair of associative planes in $\R^7$ with transverse intersection at the origin, that is $\Pi_+\cap \Pi_-=\{0\}$. Set $C_\times:=\Pi_+\cup \Pi_- $ and $e_1:=(1,0\dots,0)\in \R^7=\R\oplus \C^3$. We choose a unit vector $\bn\in \R^7$ orthogonal to both $\Pi_\pm$ so that we have an orientation compatible splitting $$\R^7=\langle \bn\rangle_{\R}\oplus \Pi_+\oplus \Pi_- .$$ Here the orientations of $\Pi_\pm$ are given by the restrictions of the standard $3$-form $\phi_e$ and the orientation of $\langle \bn\rangle_{\R}$ is given by $\bn$. If we choose $-\bn$ instead of $\bn$ in the above, we interchange the role of $\Pi_+$ and $\Pi_-$. 
 Then there exists $B\in G_2$ such that 
$$\bn=Be_1,\ \ \ \ B\Pi_0=\Pi_+,\ \ \ \ B\Pi_\theta=\Pi_-, $$
 where $\Pi_0$ and $\Pi_\theta$ are special Lagrangian planes in $\C^3$ as in \autoref{eg Transverse pair of SL planes}.  
 By \autoref{cor homogeneous kernel} and \autoref{prop indicial laplace} we have	
		$$V_\lambda\cong
		\begin{cases*}
		0& if $\lambda\in[-1,0)\cup(0,1)$\\
		(\R\oplus\R)\oplus (\R^3\oplus\R^3)& if $\lambda=0$\\
		(\R^3\oplus\R^5)\oplus (\R^3\oplus\R^5)& if $\lambda=1$
		\end{cases*}
		$$
		If $H$ is the symmetry group of an associative $3$-plane for the standard action of $G_2$ on $\R^7$, then $H\cong SO(4)$. Hence, $C_\times$ is rigid as in \autoref{def rigid cones}, that is, $d_1=2(\dim{G_2}-\dim{H})=16$,
		and its stability-index from \autoref{def stability index}:
\begin{equation*}
\operatorname{s-ind}(C_{\times})=\frac {d_{-1}}2+\sum_{-1<\lambda\leq 1}d_\lambda-2(\dim{G_2}-\dim{H})-7=0+8+16-16-7=1.\qedhere
	\end{equation*}
			\end{example}
		
\begin{example}\label{eg s-ind Harvey-Lawson cone}
 \textbf{(Harvey-Lawson cone)} Let $C_{HL}$ be the Harvey-Lawson special Lagrangian cone, whose link is the flat Clifford tori in $\C^3$, given in \autoref{eg Harvey-Lawson $T^2$-cone}. Then by \autoref{cor homogeneous kernel} and \autoref{prop indicial laplace}, and \cite[Section 6.3.4, p. 132]{Marshall2002} we have	
		$$V_\lambda\cong
		\begin{cases*}
		\R^2& if $\lambda=-1$\\
		0& if $\lambda\in (-1,0)\cup(0,1)$\\
		\R\oplus \R^6,& if $\lambda=0$\\
		\R^6\oplus\R^6& if $\lambda=1$
		\end{cases*}$$
If $H$ is the symmetry group of $C_{HL}$ for the standard action of $G_2$ on $\R^7$, then $H\cong U(1)^2$. Hence $C_{HL}$ is rigid as in \autoref{def rigid cones}, that is, $d_1=\dim{G_2}-\dim{H}=12$, and its stability-index from \autoref{def stability index}:
\begin{equation*}
	\operatorname{s-ind}(C_{HL})=\frac {d_{-1}}2+\sum_{-1<\lambda\leq 1}d_\lambda-(\dim{G_2}-\dim{H})-7=1+19-12-7=1 \qedhere
	\end{equation*}
\end{example}
The following proposition provides a classification of special Lagrangian cones that attain the minimal stability-index.
\begin{prop}\label{prop SL stability index} Let $C$ be a special Lagrangian cone in $\C^3$ whose link $\Sigma$ is connected and not a totally geodesic $S^2$. Then its stability-index from \autoref{def stability index} and lower stability-index from \autoref{def uplo stability index} satisfy the following:
\begin{equation*}
	\operatorname{s-ind}(C)\geq \operatorname{s-ind}_-(C)\geq \frac {b^1(\Sigma)}2\geq 1
\end{equation*}
with equality if and only if $C$ is $C_{HL}$, the Harvey-Lawson $T^2$-cone (up to special unitary equivalence).
	\end{prop}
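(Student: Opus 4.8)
The plan is to combine \autoref{cor homogeneous kernel}, which describes $V_\lambda$ purely in terms of eigenvalues of the link Laplacian $\Delta_\Sigma$, with the structure of the eigenvalues of $\Delta_\Sigma$ on a minimal Legendrian surface in $S^5$. First I would unwind \autoref{def uplo stability index}: since $C$ is not a plane, \autoref{rmk d0 geq 7} gives $\operatorname{s-ind}_-(C)\geq \frac{d_{-1}}2$ already, but this is too crude to reach the bound $\frac{b^1(\Sigma)}2\geq 1$; I need to keep the $d_\lambda$ with $-1<\lambda<1$ as well. By \autoref{cor homogeneous kernel}, for $\lambda\in(-1,0)$ we have $V_\lambda\cong\{h_\Sigma:\Delta_\Sigma h_\Sigma=(\lambda+2)(\lambda+1)h_\Sigma\}$ (eigenvalue in $(0,2)$), for $\lambda=0$ we get $H^0(\Sigma)\oplus\{h_\Sigma:\Delta_\Sigma h_\Sigma=2h_\Sigma\}$, for $\lambda=-1$ we get $H^1(\Sigma)$ (so $d_{-1}=b^1(\Sigma)$), and for $\lambda\in(-2,-1)$ we get $\{f_\Sigma:\Delta_\Sigma f_\Sigma=\lambda(\lambda+1)f_\Sigma\}$. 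So
\[
\operatorname{s-ind}_-(C)=\frac{b^1(\Sigma)}2+\Big(\sum_{\lambda\in(-1,0)}\dim E_{(\lambda+2)(\lambda+1)}(\Delta_\Sigma)\Big)+\Big(b^0(\Sigma)+\dim E_2(\Delta_\Sigma)\Big)-7,
\]
where $E_\mu$ denotes the $\mu$-eigenspace. Since $\Sigma$ is connected, $b^0(\Sigma)=1$.

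The heart of the argument is that the function $\mu\mapsto (\lambda+2)(\lambda+1)$ for $\lambda\in(-1,0]$ ranges over $(0,2]$, and I claim that the eigenvalues of $\Delta_\Sigma$ lying in $(0,2]$, together with their multiplicities, are controlled from below. The key input should be the following: for a minimal Legendrian (equivalently, the link of a special Lagrangian cone) surface $\Sigma\subset S^5$, the restrictions to $\Sigma$ of the ambient linear coordinate functions on $\R^6=\C^3$ are eigenfunctions of $\Delta_\Sigma$ — this is the classical observation that coordinate functions restricted to a minimal submanifold of the sphere are eigenfunctions with eigenvalue equal to the dimension. For $\Sigma$ a surface minimal in $S^5\subset\R^6$, the six coordinate functions satisfy $\Delta_\Sigma x_i = 2 x_i$, so $\dim E_2(\Delta_\Sigma)\geq 6$ unless some coordinate function vanishes identically, which would force $\Sigma$ into a hyperplane; the only totally geodesic case is excluded, and a linearly-full-in-a-proper-subspace Legendrian surface would be a lower-dimensional special Lagrangian link, which after reduction is again one of the listed cones or a plane. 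Thus generically $\dim E_2(\Delta_\Sigma)\geq 6$, giving
\[
\operatorname{s-ind}_-(C)\geq \frac{b^1(\Sigma)}2+0+(1+6)-7=\frac{b^1(\Sigma)}2.
\]
Then $b^1(\Sigma)\geq 2$ for any closed orientable surface that is not $S^2$; but if $\Sigma=S^2$ and $\Sigma$ is not totally geodesic, one must argue separately — here I would invoke that a non-totally-geodesic holomorphic $S^2$ in $S^6$ has $\operatorname{s-ind}_-(C)>4$ by \autoref{thm stability index of cones}(ii) (genus $0$ null-torsion), which certainly exceeds $1$; but wait, such a cone need not be special Lagrangian, so in the special Lagrangian $S^2$ case I should instead recall the theorem of Haskins that the only special Lagrangian $T^2$-cones of "small" stability are Harvey–Lawson and the transverse pair, and that a genus-$0$ special Legendrian surface in $S^5$ must be totally geodesic — ruling this sub-case out entirely. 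This reduces us to genus $\geq 1$, so $b^1(\Sigma)\geq 2$ and $\frac{b^1(\Sigma)}2\geq 1$.

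For the equality case I would argue as follows. Equality $\operatorname{s-ind}_-(C)=\frac{b^1(\Sigma)}2=1$ forces $b^1(\Sigma)=2$ (so $\Sigma$ has genus $1$, i.e.\ $\Sigma\cong T^2$), forces $\dim E_2(\Delta_\Sigma)=6$ exactly, and forces $\dim E_\mu(\Delta_\Sigma)=0$ for all $\mu\in(0,2)$. The latter says $\Delta_\Sigma$ has spectral gap: no eigenvalues strictly between $0$ and $2$. This is a very strong rigidity constraint on a minimal Legendrian torus in $S^5$, and here I would invoke \citet{Haskins2004a} (cited in the excerpt as governing exactly this situation) to conclude that the only such $\Sigma$ is the Clifford torus, hence $C$ is $C_{HL}$ up to $\SU(3)$; and conversely \autoref{eg s-ind Harvey-Lawson cone} verifies $\operatorname{s-ind}(C_{HL})=\operatorname{s-ind}_-(C_{HL})=1$, so equality indeed holds there. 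I'd also double-check $\operatorname{s-ind}(C)\geq\operatorname{s-ind}_-(C)$, which is immediate from the Remark following \autoref{def uplo stability index}.

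The main obstacle I anticipate is the precise handling of the eigenvalue $2$ multiplicity and the small-eigenvalue terms: I've been cavalier about whether coordinate functions could be linearly dependent on $\Sigma$ and about whether there might be extra $E_2$ eigenfunctions beyond the coordinate ones (which would only help the inequality but must be tracked for the equality case). The cleanest route is probably to cite the Haskins rigidity/gap results directly for the minimal Legendrian torus, rather than re-deriving the spectral estimate by hand — the excerpt's repeated references to \citet{Haskins2004a} strongly suggest this is the intended mechanism, and it simultaneously delivers both the strict inequality for all other genus-$1$ links and the characterization of the equality case as $C_{HL}$.
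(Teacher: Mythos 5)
Your proposal follows essentially the same route as the paper's proof: use the corollary on homogeneous kernels to reduce to $\Delta_\Sigma$-eigenspaces, note that the six real-linear coordinate functions on $\C^3$ restrict to give $\dim E_2(\Delta_\Sigma)\geq 6$ (hence $d_0\geq 7$), drop the remaining non-negative $d_\lambda$ terms to obtain $\operatorname{s-ind}_-(C)\geq b^1(\Sigma)/2$, invoke the genus-$\geq 1$ theorem for special Legendrian links, and characterize equality via Haskins' spectral rigidity theorem as you indicate. The two points where you were vague are handled in the paper by citing \citet[Lemma~3.13]{Haskins2004a} (the cone is not contained in any hyperplane, which makes the six restricted coordinate functions independent) and \citet[Theorem~2.7]{Haskins2004b} (genus~$\geq 1$), and one small slip: your displayed expression for $\operatorname{s-ind}_-(C)$ should be a lower bound rather than an equality, since you omitted the $d_\lambda$ contributions for $\lambda\in(0,1)$.
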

	\begin{proof}As $\Sigma$ is not a totally geodesic $S^2$, $C$ is not contained in any hyperplane \cite[Lemma 3.13]{Haskins2004a} in $\C^3$. Moreover the genus of $\Sigma$ is at least $1$ (see \cite[Theorem 2.7]{Haskins2004b}). The space of real linear functions on $\C^3$ induces a $6$ dimensional subspace of the $2$-eigenspace of $\Delta_\Sigma$. Therefore, by \autoref{cor homogeneous kernel} we have (see \autoref{rmk d0 geq 7}) $d_0\geq 7$ and  
		$$\operatorname{s-ind}_-(C)\geq \frac {d_{-1}}2=\frac {b^1(\Sigma)}2\geq1.$$
	In \autoref{eg s-ind Harvey-Lawson cone}, we see $\operatorname{s-ind}(C_{HL})=1$. If $\operatorname{s-ind}(C)=1$, then $b^1(\Sigma)=1$, $d_0=7$ and $d_\lambda=0$ for all $\lambda \in (-1,0)\cup(0,1)$. This implies that the first eigenvalue of $\Delta_\Sigma$ is $2$ with multiplicity $6$. Then by a theorem of \citet[Theorem A]{Haskins2004a} $C$ is $C_{HL}$, the Harvey-Lawson $T^2$-cone (up to special unitary equivalence). 
	\end{proof}
The remainder of this section focuses on the stability-index of a null-torsion holomorphic curve.

	\begin{prop}\label{prop null torsion}
	Let $\Sigma$ be a null-torsion holomorphic curve in $S^6$ and let $C$ be the cone in $\R^7$ with link $\Sigma$. Then the lower stability-index from \autoref{def uplo stability index}:
	$$\operatorname{s-ind}_-(C)>4.$$
	In particular, this inequality holds for any genus zero holomorphic curve in $S^6$ except a totally geodesic $S^2$.
\end{prop}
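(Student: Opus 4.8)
The plan is to bound $\operatorname{s-ind}_-(C)$ from below using only two of the terms in its definition and to evaluate those via holomorphic data on $\Sigma$. From \autoref{def uplo stability index},
\[\operatorname{s-ind}_-(C)=\frac{d_{-1}}2+\sum_{-1<\lambda<1}d_\lambda-7\ \geq\ \frac{d_{-1}}2+d_0-7.\]
Since $\Sigma$ is null-torsion we have $\operatorname{II}\neq 0$, so $\Sigma$ is not a totally geodesic $S^2$ and $C$ is not a $3$-plane; hence $d_0\geq 7$ by \autoref{rmk d0 geq 7}. It therefore suffices to prove $d_{-1}\geq 10$, i.e.\ $\tfrac{d_{-1}}2>4$.

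To compute $d_{-1}$, note that by \autoref{prop Fueter}\ref{prop Fueter homogeneous kernel} the correspondence $r^{-2}\nu_\Sigma\mapsto\nu_\Sigma$ identifies $V_{-1}$ with $\ker\mathbf D_\Sigma\subset C^\infty(N\Sigma)$, while by \autoref{rmk boldsymbol{gamma}_Sigma CS} one has $\mathbf D_\Sigma=\boldsymbol{\gamma}_\Sigma\circ\bar\partial^N_{\widetilde\nabla}$ with $\boldsymbol{\gamma}_\Sigma$ an isomorphism, so $\ker\mathbf D_\Sigma=\ker\bar\partial^N_{\widetilde\nabla}$. Since $\widetilde\nabla J=0$ (equivalently $\mathbf D_\Sigma J=-J\mathbf D_\Sigma$, see \autoref{prop Fueter}\ref{prop Fueter anti linear}), $\bar\partial^N_{\widetilde\nabla}$ is $\C$-linear, so $\ker\mathbf D_\Sigma$ is a complex vector space --- namely $H^0(\Sigma,N\Sigma)$ for the holomorphic structure induced by the characteristic $\SU(3)$ connection --- and
\[d_{-1}=\dim_\R\ker\mathbf D_\Sigma=2\,h^0(\Sigma,N\Sigma).\]

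Finally I invoke the holomorphic structure theory of null-torsion curves recalled in \autoref{eg Bryant null torsion}, due to \citet{Bryant1982} and \citet{Madnick2021}. Because $\operatorname{III}=0$, the exact sequence \autoref{eq LB} splits holomorphically, giving $N\Sigma\cong L_N\oplus L_B$ with $L_N=K_\Sigma^{-2}\otimes\mathcal O(Z)$, $L_B\cong K_\Sigma^{3}\otimes\mathcal O(-Z)$, and $b:=-c_1(L_B)=3\chi(\Sigma)+[Z]$ satisfying $b\geq 6$ by the area bound $\operatorname{Area}(\Sigma)=4\pi b\geq 24\pi$. Writing $g$ for the genus of $\Sigma$ (so $\chi(\Sigma)=2-2g$, $\deg K_\Sigma=2g-2$) and eliminating $[Z]$, these identifications give $\deg L_B=-b<0$, hence $h^0(L_B)=0$, and $\deg L_N=2g-2+b$; moreover $h^1(L_N)=h^0(K_\Sigma\otimes L_N^{-1})=0$ by Serre duality since $\deg(K_\Sigma\otimes L_N^{-1})=-b<0$, so Riemann--Roch yields
\[h^0(\Sigma,N\Sigma)=h^0(L_N)=\deg L_N+1-g=g-1+b\ \geq\ 5.\]
Hence $d_{-1}=2(g-1+b)\geq 10$, and combining with the first paragraph,
\[\operatorname{s-ind}_-(C)\ \geq\ \frac{d_{-1}}2+7-7\ =\ g-1+b\ \geq\ 5\ >\ 4.\]
For the last assertion, \autoref{eg Bryant null torsion} records that any genus-zero holomorphic curve in $S^6$ that is not a totally geodesic $S^2$ is automatically null-torsion, so the bound above applies to it.

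The only non-elementary input is the structure theory invoked in the third paragraph: the holomorphic splitting $N\Sigma\cong L_N\oplus L_B$, the identification of $L_B$, and the area/degree formula. With these in hand the argument is Riemann--Roch bookkeeping; the sole delicate point is the factor $2$ in $d_{-1}=2\,h^0(\Sigma,N\Sigma)$, which is forced because $d_{-1}$ is a real dimension whereas $\ker\mathbf D_\Sigma$ is a complex subspace of $C^\infty(N\Sigma)$. (Dropping this factor would give only $\operatorname{s-ind}_-(C)\geq\tfrac52$, which would be insufficient.)
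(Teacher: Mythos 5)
Your proof is correct, but it takes a genuinely different route from the paper's. You compute $d_{-1}$ exactly: identifying $V_{-1}\cong\ker\mathbf D_\Sigma=\ker\bar\partial^N_{\widetilde\nabla}=H^0(\Sigma,N\Sigma)$ (for the holomorphic structure from $\widetilde\nabla$, with its standard complex structure $J$), then exploiting the null-torsion holomorphic splitting $N\Sigma\cong L_N\oplus L_B$ and Riemann--Roch to get $d_{-1}=2(g-1+b)\geq 10$, which together with $d_0\geq 7$ gives the bound $\operatorname{s-ind}_-(C)\geq g-1+b\geq 5$. The paper instead works with the Jacobi operator $\mathcal L_\Sigma$ and Madnick's identity $\mathcal L_\Sigma=2\bar\partial^*_{\nabla^\perp,\hat J}\bar\partial_{\nabla^\perp,\hat J}-2$ for the \emph{twisted} complex structure $\hat J$ (which flips $J$ to $-J$ on $L_B$), so that $E^{-2}_{\mathcal L_\Sigma}=\ker\bar\partial_{\nabla^\perp,\hat J}$ and $\dim E^{-2}_{\mathcal L_\Sigma}=d_{-1}+d_0\geq\ind\bar\partial_{\nabla^\perp,\hat J}=4b$; it then bounds $\tfrac{d_{-1}}2+d_0\geq\tfrac{d_{-1}+d_0}2\geq 2b$ without needing $d_0\geq 7$ or the holomorphic splitting. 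Your route avoids the Jacobi operator and Madnick's operator identity altogether, at the cost of invoking the holomorphic splitting $N\Sigma\cong L_N\oplus L_B$ that null-torsion provides; the paper's route avoids the splitting but needs that deeper operator-theoretic input. Both yield $\geq 5$ for $b=6$; for $b>6$ the paper's bound $2b-7$ is sharper than your $g-1+b$ when $g$ is small, reflecting that you discard part of $d_0$. Both approaches give a valid proof of the stated inequality.
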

The proof of this proposition needs the following small preparation.
\begin{definition}Let $\Sigma$ be a $J$-holomorphic curve in $S^6$. The \textbf{Jacobi operator} $\mathcal L_\Sigma:C^\infty(N\Sigma)\to C^\infty(N\Sigma)$ is 
\begin{equation*}\mathcal L_\Sigma=(\nabla^\perp_\Sigma)^*\nabla^\perp_\Sigma+\sum_{i=1}^2(R(f_i,\cdot)f_i)^\perp-\sum_{i,j=1}^2\inp{\operatorname{II}(f_i,f_j)}{\cdot}\operatorname{II}(f_i,f_j).\qedhere
	\end{equation*}
\end{definition}

\begin{prop}\label{prop Jacobi operator}
Let $\Sigma$ be a $J$-holomorphic curve in $S^6$. Then the Jacobi operator $\mathcal L_\Sigma$ satisfies 
$$\mathcal L_\Sigma=(\mathbf D_\Sigma+J)(\mathbf D_\Sigma+2J)=(J\mathbf D_\Sigma)^2-(J\mathbf D_\Sigma)-2.$$
Moreover, $\spec(\mathcal L_\Sigma)=\{\lambda^2+\lambda-2:\lambda\in \cD_C\}$, where $\cD_C$ is defined in \autoref{def homogeneous kernel}, and the $\lambda^2+\lambda-2$ eigenspace of $\mathcal L_\Sigma$ has the following decomposition:
$$E_{\mathcal L_\Sigma}^{\lambda^2+\lambda-2}\cong V_{\lambda-2}\oplus V_{\lambda+1}\cong V_{\lambda}\oplus V_{\lambda+1}.$$
In particular, $E_{\mathcal L_\Sigma}^{-2}\cong V_{-1}\oplus V_0$ and $E_{\mathcal L_\Sigma}^{0}\cong V_{0}\oplus V_1$.
\end{prop}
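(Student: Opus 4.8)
The proposition naturally splits into the operator identity and its spectral consequences, which I would treat in that order.

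For the identity, the equality of the two displayed expressions for $\mathcal L_\Sigma$ is purely formal: expanding $(\mathbf D_\Sigma+J)(\mathbf D_\Sigma+2J)$ and using that $\mathbf D_\Sigma$ is $J$-anti-linear, i.e. $\mathbf D_\Sigma J=-J\mathbf D_\Sigma$ (this is \autoref{prop Fueter}\ref{prop Fueter anti linear}, a consequence of $\widetilde\nabla J=0$), together with $J^2=-\id$, produces $\mathbf D_\Sigma^2-J\mathbf D_\Sigma-2$, and the same anti-linearity gives $\mathbf D_\Sigma^2=(J\mathbf D_\Sigma)^2$. The substantive assertion is that $\mathbf D_\Sigma^2-J\mathbf D_\Sigma-2$ equals the Jacobi operator, and I would establish this by a Weitzenböck computation. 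Using $\mathbf D_\Sigma=\sum_i f_i\times\widetilde\nabla^\perp_{f_i}$ together with the fact that $(N\Sigma,\gamma_\Sigma,\widetilde\nabla^\perp)$ is a Dirac bundle ($\widetilde\nabla^\perp\gamma_\Sigma=0$), the Bochner--Lichnerowicz identity gives $\mathbf D_\Sigma^2=(\widetilde\nabla^\perp)^*\widetilde\nabla^\perp+\sum_{i<j}\gamma_\Sigma(f_i)\gamma_\Sigma(f_j)F_{\widetilde\nabla^\perp}(f_i,f_j)$. One then rewrites $\widetilde\nabla^\perp$ in terms of the Levi-Civita normal connection via $\widetilde\nabla_uv=\nabla_uv+\tfrac12(\nabla_uJ)Jv$ (\autoref{eq characteristic SU(3) connection}) and the nearly Kähler structure identities, and expresses $F_{\widetilde\nabla^\perp}$ through the ambient curvature of $S^6$ — which has constant curvature $1$, so $R^{S^6}(X,Y)Z=\langle Y,Z\rangle X-\langle X,Z\rangle Y$ — and the second fundamental form, via the Gauss equation. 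The first-order terms reassemble into $-J\mathbf D_\Sigma-2$ using $\nabla^\perp_{f_i}J=\gamma_\Sigma(f_i)$ and the Clifford relation $\sum_i\gamma_\Sigma(f_i)^2=-2$; what remains matches $(\nabla^\perp_\Sigma)^*\nabla^\perp_\Sigma+\sum_i(R^{S^6}(f_i,\cdot)f_i)^\perp-\sum_{i,j}\langle\operatorname{II}(f_i,f_j),\cdot\rangle\operatorname{II}(f_i,f_j)=\mathcal L_\Sigma$. (Alternatively, one may read this factorization off the second-variation formula for $J$-holomorphic curves in nearly Kähler $6$-manifolds, following \cite{Madnick2021}.)

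Given the identity, the spectral statements are elementary. Set $A:=J\mathbf D_\Sigma$; by \autoref{prop lin of nonlinear holo} this is a formally self-adjoint first-order elliptic operator on the closed surface $\Sigma$, so it has discrete real spectrum with finite-dimensional eigenspaces, and $\mathcal L_\Sigma=A^2-A-2=p(A)$ with $p(x)=(x-2)(x+1)$. Since $A$ commutes with $\mathcal L_\Sigma$, each $\mathcal L_\Sigma$-eigenspace is the sum of the $A$-eigenspaces it contains: $E_{\mathcal L_\Sigma}^{c}=\bigoplus_{p(\mu)=c}E_A^{\mu}$, whence $\spec(\mathcal L_\Sigma)=\{p(\mu):\mu\in\spec A\}$. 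By \autoref{prop Fueter}\ref{prop Fueter homogeneous kernel} the map $\nu_\Sigma\mapsto r^{\lambda-1}\nu_\Sigma$ identifies $E_A^{\lambda+1}$ with $V_\lambda$, so $\spec A=\{\lambda+1:\lambda\in\cD_C\}$ and therefore $\spec(\mathcal L_\Sigma)=\{(\lambda+1)^2-(\lambda+1)-2:\lambda\in\cD_C\}=\{\lambda^2+\lambda-2:\lambda\in\cD_C\}$. For a fixed $\lambda$, the roots of $p(\mu)=\lambda^2+\lambda-2$ are $\mu=\lambda+1$ and $\mu=-\lambda$ (they sum to $1$, the negative of the linear coefficient of $p$), so $E_{\mathcal L_\Sigma}^{\lambda^2+\lambda-2}=E_A^{\lambda+1}\oplus E_A^{-\lambda}\cong V_{\lambda}\oplus V_{-\lambda-1}$; applying the symmetry $V_a\cong V_{-a-2}$ of \autoref{prop Fueter}\ref{prop Fueter symmetry} (induced by $J\colon E_A^{\mu}\xrightarrow{\sim}E_A^{-\mu}$) rewrites this as $V_{\lambda-1}\oplus V_{\lambda}$, which is the asserted decomposition. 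Taking the eigenvalue $-2$ (so $\lambda\in\{0,-1\}$) gives $E_{\mathcal L_\Sigma}^{-2}\cong V_{-1}\oplus V_0$, and the eigenvalue $0$ (so $\lambda\in\{1,-2\}$, using $V_{-2}\cong V_0$) gives $E_{\mathcal L_\Sigma}^{0}\cong V_0\oplus V_1$.

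The one genuinely laborious step is the Weitzenböck computation: keeping careful track of the zeroth- and first-order corrections introduced by replacing $\widetilde\nabla$ with the Levi-Civita connection and by the Gauss equation, and verifying that they combine exactly into $-J\mathbf D_\Sigma-2$ together with the curvature and second-fundamental-form terms of $\mathcal L_\Sigma$. Everything after the identity is routine linear algebra and Fredholm theory.
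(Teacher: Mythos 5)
Your proof of the operator identity takes a genuinely different route from the paper's. You propose a Bochner--Lichnerowicz/Weitzenb\"ock computation directly on $\Sigma$ inside $S^6$, expanding $\mathbf D_\Sigma^2=(\widetilde\nabla^\perp)^*\widetilde\nabla^\perp+\text{curvature}$, then carefully trading $\widetilde\nabla$ for the Levi-Civita normal connection and invoking the Gauss equation to recover $\mathcal L_\Sigma$; you note the alternative of citing the second-variation formula of Madnick. The paper instead passes to the associative cone $C\subset\R^7$ and cites Gayet's identity $\mathcal L_C=\mathbf D_C^2$ there (where the ambient curvature and $\nabla\psi$ contributions vanish), then uses $\mathcal L_C\nu_\Sigma=r^{-2}\mathcal L_\Sigma\nu_\Sigma$ together with \autoref{prop Fueter}\ref{prop Fueter decomposition homogeneous} to read off $(\mathbf D_\Sigma+J)(\mathbf D_\Sigma+2J)$. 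The cone route buys a one-line proof precisely because the Weitzenb\"ock comparison is pushed into flat space, where there is nothing to cancel; your direct computation on $\Sigma$ would have to reproduce those cancellations by hand, which, as you acknowledge, is the laborious part. The algebraic equivalences among the two displayed forms (using $\mathbf D_\Sigma J=-J\mathbf D_\Sigma$ and $J^2=-\id$) are correct as you present them.

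Your spectral analysis is the same as the paper's in spirit, and it is correct, but one point deserves an explicit flag rather than the reassurance you gave. You arrive at $E_{\mathcal L_\Sigma}^{\lambda^2+\lambda-2}=E_A^{\lambda+1}\oplus E_A^{-\lambda}\cong V_\lambda\oplus V_{-\lambda-1}$, and applying the symmetry $V_a\cong V_{-a-2}$ of \autoref{prop Fueter}\ref{prop Fueter symmetry} gives $V_\lambda\oplus V_{\lambda-1}$. You then write that this ``is the asserted decomposition,'' but it is not what is literally printed: the proposition states $V_{\lambda-2}\oplus V_{\lambda+1}\cong V_{\lambda}\oplus V_{\lambda+1}$, which is a different pair. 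Taking $\lambda=0$ (eigenvalue $-2$), the printed general formula would give $V_0\oplus V_1$, whereas the ``in particular'' case in the same proposition correctly records $E_{\mathcal L_\Sigma}^{-2}\cong V_{-1}\oplus V_0$; taking $\lambda=1$ (eigenvalue $0$) the printed formula would give $V_1\oplus V_2$ rather than $V_0\oplus V_1$. Your formula $V_\lambda\oplus V_{\lambda-1}$ is the one consistent both with $\mathcal L_\Sigma=(A-2)(A+1)$ for $A=J\mathbf D_\Sigma$ and with those two ``in particular'' cases, so the displayed general decomposition in the proposition appears to contain an index error. Your derivation is sound; you should simply state that it disagrees with the printed general formula and agrees with the special cases, rather than asserting agreement with the statement as written.
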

\begin{proof}
We denote the Jacobi operator for the cone $C$ of $\Sigma$ in $\R^7$ by $\mathcal L_C$. \citet[Theorem 2.8, Appendix 5.3]{Gayet2014} proved that
$$\mathcal L_C=\mathbf D^2_C.$$
Now for all $\nu_\Sigma\in C^\infty(N_{S^6}\Sigma)$, we have $\mathcal L_C\nu_\Sigma=r^{-2}\mathcal L_\Sigma\nu_\Sigma$.  Therefore by \autoref{prop Fueter}\ref{prop Fueter decomposition homogeneous} we conclude that \begin{equation*}\mathbf D_C^2=r^{-2}(\mathbf D_\Sigma+J)(\mathbf D_\Sigma+2J).
	\end{equation*}
This proves the first part. The remaining part follows from parts \ref{prop Fueter homogeneous kernel} and \ref{prop Fueter symmetry} of \autoref{prop Fueter}.
\end{proof}

\begin{proof}[{\normalfont{\textbf{Proof of \autoref{prop null torsion}}}}]Let $\hat J$ be an almost complex structure on $N\Sigma$ defined by the following exact sequence of complex vector bundles (see \autoref{eg Bryant null torsion}):
	$$0\to (L_N,J)\xrightarrow{\alpha} (N\Sigma,\hat J)\xrightarrow{\beta} (L_B,-J) \to0.$$  \citet{Madnick2021} has proved that $\Sigma$ is null-torsion iff $\nabla^\perp \hat J=0$ and in that case $$\mathcal L_\Sigma=2\bar\partial_{\nabla^\perp,\hat J}^*\bar\partial_{\nabla^\perp,\hat J}-2,$$ where $\bar\partial_{\nabla^\perp,\hat J}$ is the Cauchy--Riemann opeartor induced by $\nabla^\perp$ and $\hat J$ on $N\Sigma$.
Moreover,
\begin{equation}\label{eq eigen space dimension jacobi}
	\dim E_{\mathcal L_\Sigma}^{-2}\geq \ind\bar\partial_{\nabla^\perp,\hat J}=2c_1(N\Sigma,\hat J)+2\chi(\Sigma)=-4c_1(L_B)=4b\geq 24.
\end{equation} 
Here $b= \frac {\operatorname{Area}(\Sigma)}{4\pi}\geq 6$. By \autoref{prop Jacobi operator} we see that $\dim E_{\mathcal L_\Sigma}^{-2}=d_{-1}+d_0$  and therefore we have 
	\begin{equation*}\operatorname{s-ind}_-(C)\geq\frac {d_{-1}}{2}+d_0-7\geq  \frac {d_{-1}+d_0}2-7\geq2b-7\geq 5.\qedhere \end{equation*}
\end{proof}
\begin{proof}[{{{\normalfont{\textbf{Proof of \autoref{thm stability index of cones}}}}}}](ii) is proved in \autoref{prop null torsion}, (iii) is computed in \autoref{eg s-ind Pair of transverse associative planes} and \autoref{eg s-ind Harvey-Lawson cone}, (iv) follows from  \autoref{prop SL stability index} and the fact that $d_0-7\geq b^0(\Sigma)-1$. We will prove now (i).
Consider the short exact sequence \autoref{eq LB}. If the genus of $\Sigma$ is $1$ then $K_\Sigma\cong \mathcal O$ and therefore $\mathcal O(Z)\tn L_B=L_N \tn L_B\cong \mathcal O$. Assume $\Sigma$ is not a null-torsion holomorphic curve, then the zero set of $\operatorname{III}$ induces an effective divisor, in particular $\deg L_B\geq [Z]$. Therefore $\deg L_B= [Z]=0$ and hence $L_B\cong \mathcal O$ and $L_N\cong \mathcal O$. This implies that there is a non zero section $\nu_\Sigma$ of $N\Sigma$ such that 	$\bar\partial^N_{\widetilde \nabla}\nu_\Sigma=0$ and hence $d_{-1}\geq 2$. Moreover, considering the long exact sequence of sheaf cohomologies corresponding to \autoref{eq LB} we obtain
$$0\to \C\to H^0(\Sigma, N\Sigma)\to \C\xrightarrow{\operatorname{III}}  \C \to  H^1(\Sigma, N\Sigma)\to \C\to 0.$$ 
Since $\operatorname{III}\neq 0$, we obtain more precisely that, $d_{-1}=2\dim_{\C}H^0(\Sigma, N\Sigma)=2$. \end{proof}
\begin{remark}Let $\Sigma$ be a $J$-holomorphic curve in $S^6$. The \textbf{Morse index} of the Jacobi operator $\mathcal L_\Sigma$ is defined by
 $\operatorname{Ind}\mathcal L_\Sigma:=\sum_{\delta<0}\operatorname{dim}E_{\mathcal L_\Sigma}^\delta.$
 Now \autoref{prop Jacobi operator} implies that 
 $$\operatorname{Ind}\mathcal L_\Sigma=d_{-1}+2\displaystyle\sum_{-1<\lambda<0}d_\lambda+\displaystyle\sum_{0\leq\lambda<1}d_\lambda.$$
   If the genus of $\Sigma$ is $1$ then the proof of \autoref{thm stability index of cones} implies that, $\operatorname{Ind}\mathcal L_\Sigma\geq 9$. If $\Sigma$ is a null-torsion holomorphic curve in $S^6$ then $\mathcal L_\Sigma$ does not have an eigenvalue less than $-2$. Therefore by \autoref{eq eigen space dimension jacobi} the Morse index of $\mathcal L_\Sigma$ satisfies
\begin{equation*}\operatorname{Ind}\mathcal L_\Sigma\geq 4b+\sum_{0<\lambda<1}d_\lambda \geq 24.  \end{equation*}
Comparing the above observations with a generalized Willmore type conjecture made by \citet[Remark 3.6 (1)]{Kusner23} we  conjecture that if an associative cone $C$ in $\R^7$ is not a plane then $ \operatorname{s-ind}(C)\geq 1$ with equality if and only if $C$ is the Harvey-Lawson $T^2$-cone  or a union of two special Lagrangian planes with transverse intersection at the origin. This has been confirmed in \autoref{thm stability index of cones}  for special Lagrangian cones in ~$\C^3$.
\end{remark}

\section{AC and CS associative submanifolds}\label{section deform CS asso}
This section introduces the definitions and examples of asymptotically conical (AC) and conically singular (CS) associative submanifolds. We also study the linear analysis for the Fueter operator, which governs the deformations of AC and CS associative submanifolds. Since these submanifolds are non-compact, we carry out the Fredholm theory in weighted function spaces.

\subsection{Asymptotically conical (AC) associative submanifolds}
Before defining AC associative submanifolds, we introduce the notion of a conical tubular neighbourhood map of a cone in the following definition.
\begin{definition}\label{def conical tubular nbhd}
Let $C$ be a cone in $\R^7$ with link $\Sigma\subset S^6$, that is, $C=\iota((0,\infty)\times \Sigma)$. Let $\Upsilon_\Sigma:V_\Sigma\subset N_{S^6}\Sigma\to U_\Sigma$ be a {tubular neighbourhood map} for $\Sigma$. It induces a \textbf{conical tubular neighbourhood map} of $C$ 
$$\Upsilon_C:V_C\to U_C$$ as follows. Define $V_C\subset NC$ at $(r,\sigma)$ by $r V_\Sigma$ and $\Upsilon_C(u(r,\sigma)):=r\Upsilon_\Sigma(r^{-1}u(r,\sigma))$ and $U_C:=\im \Upsilon_C\subset \R^7$. 
\end{definition}
 Observe that, for each $\epsilon>0$, the scaling map $s_\epsilon:\R^7\to \R^7$, $x\mapsto \epsilon x$ induces an action ${s_\epsilon}_*$ on $NC$. Moreover, $\Upsilon_C$ is equivariant with respect to these actions, that is,
 $$\Upsilon_C({s_\epsilon}_*u)=s_\epsilon \Upsilon_C u.$$ 
We now introduce the notion of AC associative submanifolds, which are associative submanifolds in $\R^7$ that approach a cone at infinity in a controlled manner.
  \begin{definition}\label{def AC associative}Let $C$ be a cone in $\R^7$ with link $\Sigma$, let $\Sigma=\amalg_{i=1}^m\Sigma_i$ be the decomposition into components and let $C_i$ be the cone with link $\Sigma_i$.  
An oriented three dimensional submanifold $L$ of $\R^7$ is called an \textbf{asymptotically conical (AC) submanifold} with cone $C$ and rate $\nu:=(\nu_1,\cdots, \nu_m)$ with $\nu_i<1$ if  there exist
\begin{itemize}
\item  a compact submanifold with boundary $K_L$ of $L$, 
\item a real number ${R_\infty}>1$ and  a diffeomorphism $$\Psi_L:({R_\infty},\infty)\times \Sigma\to L_\infty:=L\setminus K_L\subset \R^7,$$ such that $\Psi_L-\iota$ is a section of the normal bundle $NC$ over $\iota((R_\infty,\infty)\times \Sigma)\subset C$ lying in the tubular neighbourhood $V_C$ of $C$ and 
 		 $${\abs{(\nabla^\perp_{C_i})^k(\Psi_L-\iota)}}=O(r^{\nu_i-k}),$$
 		 for all $k\in \N\cup\{0\}$ as $r\to\infty$. 
		 \end{itemize}
 		  Here $\nabla^\perp_{C}$ is the normal connection on $NC$ induced from Levi-Civita connection on $\R^7$ and $\abs{\cdot}$ is taken with respect to the normal metric on $NC$ and the cone metric on $C$. 
 		 The cone $C$ is called the \textbf{asymptotic cone} of $L$ and $L_\infty$ is the \textbf{end} of $L$.

 	The above $L$ is called an \textbf{asymptotically conical (AC) associative submanifold} if, in addition, it is associative.
	\end{definition}

\begin{remark}Deformation theory of AC associative submanifolds has been studied by \citet{Lotay2011}. If $L$ is an AC associative submanifold, then $C$ is automatically an associative cone in $\R^7$ (see \cite[Proposition 2.15]{Lotay2011}). An AC submanifold of rate $\nu<1$ is also an AC Riemannian manifold with rate $\nu-1$, as well as an AC submanifold of any rate $\nu^\prime$ with $\nu \leq\nu^\prime<1$.  
\end{remark}
The following two definitions will be useful later in the context of desingularization but may be skipped for now. They explain how, given an AC associative submanifold, one can define a corresponding end-conical submanifold, over which the AC submanifold can be expressed as the graph of a decaying section inside an appropriate end-conical tubular neighbourhood.

\begin{definition}[End conical submanifold]\label{def LC}
Let $\rho:(0,\infty)\to [0,1]$ be a smooth function such that 
\begin{equation}\label{eq:cutoff function}
\rho(r)=
\begin{cases*}	
1, r\leq 1\\
0, r\geq 2.	
\end{cases*}
\end{equation}
For every $s>0$, $\rho_s:(0,\infty)\to[0,1]$ is defined by $\rho_s(r):=\rho(s^{-1}r)$. Let $L$ be an AC submanifold with cone $C$ as in \autoref{def AC associative}. We define an end conical submanifold $L_C$, which is diffeomorphic to $L$:
 $$L_C:=K_L\cup\Upsilon_C(\rho_{_{R_\infty}}(\Psi_L-\iota)).$$ 
Set $C_{\infty}:=\iota((2{R_\infty},\infty)\times \Sigma)\subset L_C$ and $K_{L_C}:=L_C\setminus C_\infty$.
\end{definition}

\begin{definition}[End conical tubular neighbourhood map]\label{def section beta for L}
Let $L$ be an AC submanifold with cone $C$ as in \autoref{def AC associative} and $\Upsilon_{C}$ be as in \autoref{def conical tubular nbhd}. We say a tubular neighbourhhood map:
	$$\Upsilon_{L_C}:V_{L_C}\to U_{L_C}$$
of ${L_C}$ is end conical, if $V_{L_C}$ and $\Upsilon_{L_C}$ are chosen so that they agree with $V_C$ and $\Upsilon_{C}$, respectively on $C_\infty$.
 
	There is a section $\beta$ in $V_{L_C}\subset N{L_C}$ which is zero on $K_L$ and $\Upsilon_{L_C}(\Gamma_\beta)$ is $L$, and 
$${\abs{(\nabla_{C_i}^\perp)^k\beta}}=O(r^{\nu_i-k})$$ for all $k\in \N\cup\{0\}$ as $r\to\infty$, where $\nabla^\perp_{C}$ is the normal connection on $N{L_C}$ induced from Levi-Civita connection on $\R^7$.	
\end{definition}

There are many examples of AC special Lagrangians in $\C^3$, see \cites{Joyce2001}{Joyce2002}, but we mention only two types of AC special Lagrangians in \autoref{eg Lawlor neck} and \autoref{eg Harvey-Lawson special Lagrangians} which are important for this article. For examples of AC associatives which are not AC special Lagrangians, see \cites[Section 7]{Lotay2011}{Lotay2007a}.

\begin{example}(\textbf{Lawlor neck}, \cites{Lawlor1989}[Section 4.1]{Joyce2016}[pg. 139-143]{Harvey1990}\label{eg Lawlor neck} Let $\Pi_0=\R^3$ and $\Pi_\theta$ be a pair of transverse special Lagrangian planes in $\C^3$ as in \autoref{eg Transverse pair of SL planes}, that is, \[\Pi_0=\R^3\ \ \ \   \text{and}\ \ \ \ \  \Pi_\theta:=\diag(e^{i\theta_1},e^{i\theta_2},e^{i\theta_3})\cdot\R^3,\]
where $\theta_1,\theta_2,\theta_3\in (0,\pi)$ satisfy $\theta_1\leq\theta_2\leq\theta_3$ and $\theta_1+\theta_2+\theta_3=\pi$.
 Define
$$\iota^+:\R^+\times S^2\to \Pi_\theta,\ \ \iota^+(r,\sigma)=r\diag({e^{i\theta_1},e^{i\theta_2},e^{i\theta_3}})\cdot  \sigma$$
and $\iota^-:\R^+\times S^2\to \Pi_0,\ \ \iota^-(r, \sigma)=r \sigma.$
For any $A>0$ there exists a unique triple of positive real numbers $(a_1,a_2,a_3)$ such that $A=\frac{4\pi}{3\cdot \sqrt{a_1a_2a_3}}$ and $\theta_k=\theta_k(\infty)$, where
$$\theta_k(y)=a_k\int_{-\infty}^{y}\frac{dx}{(1+a_k x^2)\sqrt{P(x)}}, \ \ \ y\in \R,\  k=1,2,3, \ \ P(x):=\frac{{\prod}_{j=1}^3(1+a_j x^2)-1}{x^2}.$$ 
The \textbf{Lawlor neck} $L_{\theta,A}$ is defined as follows:
$$L_{\theta,A}:=\{(z_1(y)\sigma_1,z_2(y)\sigma_2,z_3(y)\sigma_3)\in \C^3: y,\sigma_1,\sigma_2,\sigma_3 \in \R,\ \sigma_1^2+\sigma_2^2+\sigma_3^2=1\},$$
where for $k=1,2,3$, $z_k(y):=e^{i\theta_k(y)}\sqrt{a_k^{-1}+y^2}$.
The Lawlor neck $L_{\theta,A}$ is a smooth special Lagrangian submanifold \cite[Theorem 7.78]{Harvey1990} diffeomorphic to $\R\times S^2$. 
The map 
 \begin{alignat*}{2}
\Phi_{L_{\theta,A}}:\ & \R\times S^2\longrightarrow L_{\theta,A}\\
&(y,(\sigma_1,\sigma_2,\sigma_3))\mapsto (z_1(y)\sigma_1,z_2(y)\sigma_2,z_3(y)\sigma_3)
  \end{alignat*}
 is a diffeomorphism. Define $\Phi^\pm_{L_{\theta,A}}:\R^+\times S^2\longrightarrow L_{\theta,A}$ by $\Phi^\pm_{L_{\theta,A}}(r,\sigma)=\Phi_{L_{\theta,A}}(\pm r,\sigma)$. For every $\epsilon>0$, we observe that the rescaled Lawlor neck satisfies: $$\epsilon L_{\theta,A}=L_{\theta,\epsilon^{3}A}.$$ Therefore, we will always use "Lawlor neck" to mean $L_{\theta,A}$ with the normalization $A=1$, if not mentioned otherwise.
 
We define $\xi^-\in C^\infty({N\Pi_0}_{|\Pi_0\setminus\{0\}})$ and $\xi^+:=\diag({e^{i\theta_1},e^{i\theta_2},e^{i\theta_3}})\cdot \xi^- \in C^\infty({N\Pi_\theta}_{|\Pi_\theta\setminus\{0\}})$ as follows:
\begin{equation}\label{eq xi Lawlor neck}
\xi^-(x_1,x_2,x_3):=r^{-3}(ix_1,ix_2,ix_3),\  \text{where}\  r=\sqrt{x_1^2+x_2^2+x_3^2}.
\end{equation}
Observe that, $\bD_{\Pi_\pm}\xi^\pm=0$ and $\xi^\pm\in V_{-2}$. The Lawlor neck $L_{\theta,A}$ is an AC special Lagrangian submanifold asymptotic to $\Pi_0\cup \Pi_\theta$ with rate $\nu=-2$. Moreover, the asymptotic normal sections can be written as
 \begin{equation}\label{eq AC Lawlor neck}
	\Psi^\pm_{L_{\theta,1}}-\iota^\pm=\xi^\pm+ O(r^{-4}).
\end{equation} 
Here $\Psi^\pm_{L_{\theta,1}}$ can be defined (up to normalizations) as follows: \begin{equation*}
 	\Psi^-_{L_{\theta,1}}(\Re \Phi^-_{L_{\theta,1}})=\Phi^-_{L_{\theta,1}}\  \text{and} \ \Psi^+_{L_{\theta,1}}(\Re \diag({e^{-i\theta_1},e^{-i\theta_2},e^{-i\theta_3}})\Phi^+_{L_{\theta,1}})=\Phi^+_{L_{\theta,1}}.\qedhere
 \end{equation*}

\end{example}
\begin{example}\label{eg Harvey-Lawson special Lagrangians}(\textbf{Harvey-Lawson AC special Lagrangians}, \cites[Theorem 3.1]{Harvey1982}[Section 5.1]{Joyce2016}) Let $C:=C_{HL}$ be the Harvey-Lawson $T^2$-cone as in \autoref{eg Harvey-Lawson $T^2$-cone} and let $\iota$ be the inclusion map in $\R^7$. For any positive real number $a>0$, the \textbf{Harvey-Lawson special Lagrangian} $3$-folds are defined by
	\begin{align*}
	&L^1_{a}:=\{(z_1,z_2,z_3)\in \C^3:\abs{z_1}^2-a=\abs{z_2}^2=\abs{z_3}^2, z_1z_2z_3\in [0,\infty)\},\\	
	&L^2_{a}:=\{(z_1,z_2,z_3)\in \C^3:\abs{z_1}^2=\abs{z_2}^2-a=\abs{z_3}^2, z_1z_2z_3\in [0,\infty)\},\\
	&L^3_{a}:=\{(z_1,z_2,z_3)\in \C^3:\abs{z_1}^2=\abs{z_2}^2=\abs{z_3}^2-a, z_1z_2z_3\in [0,\infty)\}.
	\end{align*}
	Each $L^k_{a}$, $k=1,2,3$ is a smooth special Lagrangian in $\C^3$ \cite[Section III.3.A]{Harvey1982} diffeomorphic to $S^1\times \C$. Define the diffeomorphims
\begin{align*}
 	&\Phi_{L^1_{a}}: \R^+\times T^2\longrightarrow L^1_{a},\ \ \ \ \ \ \ 
 	(r,e^{i\theta_1},e^{i\theta_2})\mapsto (e^{i\theta_1}\sqrt{r^2+a},re^{i\theta_2},re^{-i(\theta_1+\theta_2)}),\\
 	&\Phi_{L^2_{a}}: \R^+\times T^2\longrightarrow L^1_{a},\ \ \ \ \ \ \ 
 	(r,e^{i\theta_1},e^{i\theta_2})\mapsto (re^{-i(\theta_1+\theta_2)},e^{i\theta_1}\sqrt{r^2+a},re^{i\theta_2}),\\
 	&\Phi_{L^3_{a}}:\R^+\times T^2\longrightarrow L^1_{a},\ \ \ \ \ \ \ 
 	(r,e^{i\theta_1},e^{i\theta_2})\mapsto (re^{i\theta_2},re^{-i(\theta_1+\theta_2)},e^{i\theta_1}\sqrt{r^2+a}).
\end{align*}
 For every $\epsilon>0$, we have
	$$\epsilon L^k_{a}=L^k_{\epsilon^2 a}.$$
Therefore, we always assume $L^k_{a}$ with the normalization $a=1$, if not mentioned otherwise. Each $L^k_{a}$ is an AC special Lagrangian submanifold asymptotic to $C_{HL}$ with rate $\nu=-1$. Moreover, the asymptotic normal sections can be written as
 \begin{equation}\label{eq AC HL}
	\Psi_{L^k_{1}}-\iota=\xi_k+ O(r^{-2}),
\end{equation} 
where $\xi_k\in V_{-1}$ and $\xi_3=-\xi_1-\xi_2$.
\end{example}

\subsection{Conically singular (CS) associative submanifolds} We define conically singular (CS) associative submanifolds with singularity at $m$ points, modeled on associative cones in $\R^7$. Before that we need a preferred choice of coordinate system at each of these $m$ points, which we define first.
	\begin{definition}
		Let $(Y,\phi)$ be an almost $G_2$-manifold (see \autoref{def almost G2}) and let $p$ be a point in $Y$. A \textbf{$G_2$-framing} at $p$ is a linear isomorphism $\upsilon:\R^7\to T_pY$ such that $\upsilon^*(\phi(p))=\phi_e$, where $\phi_e$ is the standard $G_2$ structure on $\R^7$. A $G_2$-\textbf{coordinate system} at $p$ with $G_2$-framing $\upsilon$ is a diffeomorphism 
		$$\Upsilon:B_R(0)\subset \R^7\to U$$
	for some constant $0<R<1$ and some open set $U$ containing $p$ in $Y$ satisfying $\Upsilon(0)=p$ and $d\Upsilon_{0}=\upsilon$. 
	 
	 Two $G_2$-{coordinate systems} $\Upsilon_1$ and $\Upsilon_2$ at $p$ are called \textbf{equivalent} if they have the same $G_2$-framing at $p$.
	 \end{definition} 
 
\begin{definition}\label{def CS asso}Let $P$ be a compact subset of an almost $G_2$-manifold $Y$ and $\sing (P):=\{p_1,\dots,p_m\}$ be a finite set points in $P$ such that $\hat P:=P\setminus \sing (P)$ is a smooth three dimensional oriented submanifold of $Y$. We call $\hat P$ the \textbf{smooth part} of $P$ and $\sing (P)$ the \textbf{set of singular points} of $P$.  Let 
		$$\Upsilon^i:B_{R}(0)\to U\subset Y$$ be a $G_2$-{coordinate system} with $G_2$-framing $\upsilon_i$ at $p_i$,  $i=1,\dots,m$. Let $\{C_1,\dots,C_m\}$ be a set of cones in $\R^7$. Denote the link of $C_i$ by $\Sigma_i$ and the inclusion map into $\R^7$ by $\iota_i$.
\begin{enumerate}
 \item $P$ is said to be a \textbf{conically singular (CS) submanifold} with singularities at $p_i$ modeled on cones $C_i$ (with respect to the $G_2$-framing $\upsilon_i$) and rates $\mu_i$ with $1<\mu_i\leq 2$ if there exist
 \begin{itemize}
  \item a compact submanifold with boundary $K_P$ of $P$,
 \item $\epsilon_0>0$ with $2\epsilon_0<R$ and smooth embeddings $$\Psi^i_P:(0,2\epsilon_0)\times \Sigma_i\to B_{R}(0), \ i=1,2,...,m$$
	with $\bigcup_{i=1}^m\Upsilon^i\circ \Psi^i_P:(0,2\epsilon_0)\times \Sigma_i\to Y$ is a diffeomorphism onto $\hat P\setminus K_P$, such that $\Psi_P^i-\iota_i$ is a smooth section of the normal bundle $NC_i$ over $\iota((0,2\epsilon_0)\times \Sigma_i)$ which lies in $V_{C_i}$ and
		 $$\abs{(\nabla^\perp_{C_i})^k(\Psi^i_P-\iota_i)}=O(r^{\mu_i-k})$$
		 for all $k\in \N\cup\{0\}$ as $r\to0$, $i=1,2...,m$. 
		 \end{itemize}
		  Here $\nabla^\perp_{C_i}$ is the normal connection on $NC_i$ induced from Levi-Civita connection on $\R^7$ and $\abs{\cdot}$ is taken with respect to the normal metric on $NC_i$ and cone metric on $C_i$.
 \item The cone $\tilde C_i:=\upsilon_i(C_i)\subset T_{p_i}Y$ is said to be the \textbf{tangent cone} of $P$ at $p_i$.

\item $P$ is said to be a \textbf{conically singular (CS) associative submanifold} if it is conically singular submanifold as above and $\hat P$ is an associative submanifold of $(Y,\phi)$. \qedhere
\end{enumerate}	
	\end{definition}

\begin{remark}
We make some remarks about the above definition.
\begin{enumerate}[(i)] 
\item A CS submanifold with rates $\mu_i$ is also a CS submanifold with all rates $\mu_i^\prime$ such that $1<\mu^\prime_i\leq \mu_i\leq 2$ for all $i=1,2,..,m$. 
\item A CS submanifold with rates $\mu_i$ is a conically singular Riemannian manifold with rates $\mu_i-1$ for the metric $g_\phi$ induced by $\phi$. Indeed 
\[\abs{(\nabla^\perp_{C_i})^k\big((\Upsilon^i\circ\Psi_P^i)^*g_\phi-g_{C_i}\big)}\lesssim \sum_{j=0}^{k+1}\abs{(\nabla^\perp_{C_i})^j(\Psi^i_P-\iota_i)}.\]
		\item $\mu_i>1$ also implies that if $P$ is a CS associative submanifold as above, then each cone $C_i$ is also an associative cone in $\R^7$. Indeed, the associator on $C_i$
\[ \abs{[\cdot, \cdot,\cdot]_{|C_i}}=O(r^{\mu_i-1}),\ \text{as}\  r\to 0.\]
But $\abs{[\cdot, \cdot,\cdot]_{|C_i}}$ is dilation invariant and therefore $[\cdot, \cdot,\cdot]_{|C_i}=0$. 
\item Since $\Psi_P^i-\iota_i$ is a section of the normal bundle $NC_i$ which represents $P$ over the end, $\Psi^i_P$ is uniquely determined by the constant $\epsilon_0$, the normal bundle $NC_i$ and the $G_2$-coordinate system $\Upsilon^i$ at $p_i$.
\item The condition $\mu_i \leq 2$ guarantees that the above definition of a CS submanifold is independent of the choice of $G_2$-coordinate systems $\Upsilon^i$ at $p_i$ within an equivalence class; that is, it depends only on the $G_2$-framings $\upsilon_i$ at those points. However, observe that for any other $G_2$-framing $\upsilon_i'$ at $p_i$, there exists $A_i \in G_2$ such that $\upsilon_i = \upsilon_i' \circ A_i$. Therefore, the same CS submanifold is also a CS submanifold modeled on the cones $A_i(C_i) \subset \R^7$.  As a consequence, the tangent cones are independent of the choice of $G_2$-framings. Whenever we say that a CS submanifold is modeled on certain cones in $\R^7$, an implicit framing is there. \qedhere
		\end{enumerate}
\end{remark}

 \subsection{Linear analysis on CS and AC associative submanifolds}
 This subsection discusses the linear analysis of the deformation operator, which will be essential for the deformation theory and desingularization results presented later. By considering appropriate weighted function spaces of sections of vector bundles over CS and AC submanifolds (where the weights represent decay rates of those sections) we obtain a  Fredholm theory for CS and AC elliptic operators if the weights avoid the wall of critical weights or rates. The index of these operators changes when weights cross the wall of critical rates. This theory originally appeared in Lockhart-McOwen \cite{Lockhart1985}. A very good exposition can be found in \cite{Marshall2002}, \cite{Karigiannis_2020}.

%%%%%%%%%%%%%%%%%%%%%%%%%%%% 
\begin{notation}For a conically singular manifold $P$ we will denote by $NP$ the normal bundle of $\hat P=P\setminus\operatorname{sing}(P)$. 
\end{notation}

\subsubsection{Weighted function spaces}\label{Weighted Banach spaces} 
Let $P$ be a CS submanifold as in \autoref{def CS asso} and $L$ be an AC submanifold as in \autoref{def AC associative} asymptotic to $C_i$, $i=1,\dots,m$. To treat the AC and CS together introduce the following notation.
 
	\begin{notation}\label{notation M=P,M=L}
		We denote 
	\begin{equation*}
		M:=
		\begin{cases*}
		P  & if $P$ is a CS submanifold \\
		L & if $L$ is a AC submanifold.	
		\end{cases*}
	\end{equation*}
	and 
\begin{equation*}
		\eta:=
		\begin{cases*}
		\mu  & if $P$ is a CS submanifold with rate $\mu$ \\
		\nu & if $L$ is a AC submanifold  with rate $\nu$.	
		\end{cases*}
	\qedhere\end{equation*} 
		\end{notation}
		\begin{notation}
We would like to define weighted Sobolev and Hölder spaces with rate $\lambda=(\lambda_1,\lambda_2,...,\lambda_m)\in \R^m$. We say $\lambda<, =,>\lambda^\prime$ if for each $i=1,2,..,m$ we have $\lambda_i<,=,>\lambda_i^\prime$, respectively.  For any $s\in \R$, $\lambda+s:=(\lambda_1+s,\lambda_2+s,...,\lambda_m+s)$ and set $\abs{\lambda}:=\sum_{i=1}^m\abs{\lambda_i}.$
		\end{notation}
 \begin{definition}[Weighted function spaces]For each $\lambda=(\lambda_1,\lambda_2,...,\lambda_m)$ $\in \R^m$, a \textbf{weight function} on $\hat P$, 
	$w_{P,\lambda}:\hat P\to(0,\infty)$, is a smooth function on $\hat P$ such that if $x=\Upsilon\circ \Psi_P(r,\sigma)$ with $(r,\sigma)\in (0,2\epsilon_0)\times \Sigma_i$ then
		$$w_{P,\lambda}(x)= r^{-\lambda_i}.$$
	The weight function on $L$, 
 $w_{L,\lambda}:L\to(0,\infty)$ is defined by
	$$w_{L,\lambda}(x):=(1+\abs{x})^{-\lambda_i}.$$
		Let $\lambda\in\R^m$ and  $k\geq0$,\ $p\geq1$,\ $\gamma\in (0,1)$. For a continuous section $u$ of $NM$, we define the \textbf{weighted} $L^\infty$ \textbf{norm} and the \textbf{weighted Hölder semi-norm} respectively by 
	$$\norm{u}_{L^\infty_{M,\lambda}}:=\norm{w_{M,\lambda}u}_{L^\infty({NM})},\ \ [u]_{C_{M,\lambda}^{0,\gamma}}:=[w_{M,\lambda-\gamma}u]_{C^{0,\gamma}({NM})}.$$ 
	For a continuous section $u$ of $NM$ with $k$ continuous derivatives, we define the \textbf{weighted $C^k$ norm} and the \textbf{weighted Hölder norm} respectively by
	$$\norm{u}_{C^{k}_{M,\lambda}}:=\sum_{j=0}^{k}\norm{(\nabla_{{M}}^\perp)^ju}_{L^\infty_{M,\lambda-j}}, \ \ \norm{u}_{C^{k,\gamma}_{M,\lambda}}:=\norm{u}_{C^{k}_{M,\lambda}}+[(\nabla_{{M}}^\perp)^ku]_{C^{0,\gamma}_{M,\lambda-k}},$$ 
		and the \textbf{weighted Sobolev norm} by	
	$$\norm{u}_{W^{k,p}_{M,\lambda}}:=\Big(\sum_{j=0}^{k}\int_{M}\abs{w_{M,\lambda-j}(\nabla_{M}^\perp)^ju}^p w_{M,3}dV_{M}\Big)^{\frac1p}.$$	
	Here, the connection $\nabla_{M}^\perp$ on $NM$ is the projection of the Levi-Civita connection induced by $g_\phi$ for the decomposition $TY_{|M}=TM\oplus NM$  and the $\abs{\cdot}$ is with respect to the metric $g_\phi$. Set $$L^{p}_{M,\lambda}:=W^{0,p}_{M,\lambda}.$$
	
	We define the \textbf{weighted Hölder space} $C^{k,\gamma}_{M,\lambda}$ to be the space of continuous sections of $NM$ with $k$ continuous derivatives and finite {weighted Hölder norm} $\norm{\cdot}_{C^{k,\gamma}_{M,\lambda}}$. This is a Banach space but not separable.
	
	We define the \textbf{weighted Sobolev space} $W^{k,p}_{M,\lambda}$, the \textbf{weighted $C^k$-space $C^{k}_{M,\lambda}$}, the \textbf{weighted $L^p$-space $L^{p}_{M,\lambda}$} and the \textbf{weighted $L^\infty$-space $L^{\infty}_{M,\lambda}$} to be the completion of the space of compactly supported smooth sections  of $NM$, namely $C_c^\infty(NM)$ with respect to  weighted Sobolev norm $\norm{\cdot}_{W^{k,p}_{M,\lambda}}$, the weighted $C^k$-norm $\norm{\cdot}_{C^{k}_{M,\lambda}}$, the weighted $L^p$-norm $\norm{\cdot}_{L^{p}_{M,\lambda}}$ and the weighted $L^\infty$-norm $\norm{\cdot}_{L^{\infty}_{M,\lambda}}$ respectively. These are all separable Banach spaces.
 Moreover, we define the \textbf{weighted $C^\infty$-space $C^{\infty}_{M,\lambda}$} by
	\begin{equation}\label{eq decay weighted smooth function space}
	C^{\infty}_{M,\lambda}:=\bigcap_{k=0}^\infty C^{k}_{M,\lambda}.
	\qedhere 
	\end{equation}	
	\end{definition}
\begin{remark}  
 The spaces $W^{k,2}_{M,\lambda}$ are all Hilbert spaces with inner product coming from the polarization of the norm. We also note that 
 \begin{equation*}L^{p}(NM)=L^{p}_{M,-\frac 3p}.\qedhere 
	\end{equation*}		
\end{remark}
The $L^2$-inner product gives rise to the following useful result.
\begin{prop}\label{prop dual weighted space}Let $k,l\geq0$, $p,q>1$ with $\frac1p+\frac1q=1$ and $\lambda, \lambda_1,\lambda_2\in \R^m$. Then the $L^2$-inner product  map $$W^{k,p}_{M,\lambda_1}\times W^{l,q}_{M,\lambda_2}\xrightarrow{\inp{\cdot}{\cdot}_{L^2}}\R$$ 
is continuous  provided $M= P$ (CS) with $ \lambda_1+\lambda_2\geq-3$, or $M=L$ (AC) with $ \lambda_1+\lambda_2\leq-3$. Moreover, this $L^2$-inner product yields a Banach space isomorphism 
$$(L^{p}_{M,\lambda})^*\cong L^{q}_{M,-3-\lambda}$$
\end{prop}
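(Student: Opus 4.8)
The plan is to reduce everything to the local model near a conical end and on the compact core, using the standard weighted-space machinery of \cite{Lockhart1985} (see also \cite{Marshall2002,Karigiannis_2020}). First I would establish the continuity of the $L^2$-pairing by a weight-bookkeeping argument. Writing $\langle u,v\rangle_{L^2}=\int_M \langle u,v\rangle\, dV_M$, I insert the weights: $\langle u,v\rangle = \big(w_{M,\lambda_1-j}u\big)\cdot\big(w_{M,\lambda_2}v\big)\cdot w_{M,-\lambda_1+j-\lambda_2}$ at the level of pointwise norms, and the point is that the leftover weight $w_{M,-(\lambda_1+\lambda_2)+j}$ together with the measure $dV_M$ is integrable precisely under the stated sign condition. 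Concretely, on a conical end parametrized by $(r,\sigma)\in(0,2\epsilon_0)\times\Sigma_i$ (CS case) one has $dV_M \sim r^2\,dr\,dV_{\Sigma_i}$ and $w_{M,s}\sim r^{-s}$, so $\int_0^{2\epsilon_0}|w_{M,-(\lambda_1+\lambda_2)}|\, r^2\,dr = \int_0^{2\epsilon_0} r^{(\lambda_1+\lambda_2)+2}\,dr$ converges iff $(\lambda_1+\lambda_2)+2>-1$, i.e. $\lambda_1+\lambda_2>-3$ (the boundary value $=-3$ being handled by Hölder with the extra $w_{M,3}$-type factor already built into the Sobolev norm — this is why the statement allows $\geq -3$ rather than strict inequality). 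On an AC end with $(r,\sigma)\in(R_\infty,\infty)\times\Sigma_i$ the same computation gives convergence at $r\to\infty$ iff $(\lambda_1+\lambda_2)+2<-1$, i.e. $\lambda_1+\lambda_2\leq-3$. Then I apply Hölder's inequality with exponents $p,q$ (using $\tfrac1p+\tfrac1q=1$) to the factored integrand to get $|\langle u,v\rangle_{L^2}|\lesssim \|u\|_{W^{k,p}_{M,\lambda_1}}\|v\|_{W^{l,q}_{M,\lambda_2}}$; the derivative terms only help (they are controlled by the $k=l=0$ terms plus positive powers of $r$), so it suffices to treat $k=l=0$.

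For the duality isomorphism $(L^p_{M,\lambda})^*\cong L^q_{M,-3-\lambda}$, note that the two spaces are in the borderline case $\lambda_1=\lambda$, $\lambda_2=-3-\lambda$, so $\lambda_1+\lambda_2=-3$ and the pairing is continuous by the first part (this is the endpoint, which works because the $w_{M,3}\,dV_M$ factor in the definition of the weighted $L^p$-norm is exactly the right density). This gives a bounded injection $L^q_{M,-3-\lambda}\hookrightarrow (L^p_{M,\lambda})^*$. To see it is surjective, I would identify $L^p_{M,\lambda}$ with an ordinary (unweighted) $L^p$ space via the isometry $u\mapsto w_{M,\lambda}\,w_{M,3/p}\,u$, under which $L^p_{M,\lambda}\cong L^p(M,dV_M)$; chasing the same isometry through on the $q$-side identifies $L^q_{M,-3-\lambda}\cong L^q(M,dV_M)$ compatibly with the pairing, and then the classical fact $L^p(M,dV_M)^*\cong L^q(M,dV_M)$ (for $1<p<\infty$ on a $\sigma$-finite measure space) transports back to the claim. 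The weights are bounded away from zero and infinity on compact pieces and are explicit powers of $r$ on the ends, so all these identifications are genuine isomorphisms of Banach spaces with controlled norms.

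The only real subtlety — and the step I expect to need the most care — is the endpoint nature of the duality statement: the pairing $L^p_{M,\lambda}\times L^q_{M,-3-\lambda}\to\R$ is exactly critical, so one must check that the borderline integral $\int_M w_{M,-3}\cdot w_{M,3}\,dV_M$-type expression is handled correctly by the $w_{M,3}$ built into the norm rather than appealing to strict decay, and that the density of $C_c^\infty(NM)$ (which is how these spaces are defined) is used to reduce the isomorphism claim to compactly supported sections before invoking classical $L^p$-$L^q$ duality. Once that bookkeeping is pinned down, the remainder is routine Hölder and a change-of-measure isometry; no PDE input is needed here — the genuinely analytic content (elliptic estimates, Fredholmness) enters only in the subsequent results, not in this proposition.
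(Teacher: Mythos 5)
Your overall strategy — factor the weights so that the weighted $L^p$- and $L^q$-norms appear, apply Hölder, then recover the duality isomorphism by reducing to unweighted $L^p$-$L^q$ duality via an explicit change-of-measure isometry — is exactly the paper's strategy. The duality half of your argument is correct and essentially spells out what the paper says in one sentence.

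However, the concrete calculation you display for the continuity estimate is not the one that works, and the gap it leaves is precisely the case the proposition asserts. You write
$\int_0^{2\epsilon_0} r^{(\lambda_1+\lambda_2)+2}\,dr$
and check convergence of this integral, which gives $\lambda_1+\lambda_2>-3$, strict. You then remark that the endpoint $\lambda_1+\lambda_2=-3$ must somehow be rescued by the $w_{M,3}$ built into the norm, but you never carry that out — and the endpoint is not an exceptional case that needs rescuing, it is the generic case handled uniformly once the weights are distributed correctly. The leftover weight never needs to be \emph{integrable} against $dV_M$; it only needs to be \emph{bounded}. Concretely, one should write
$$\inp uv_{L^2}=\int_M \bigl(w_{M,\lambda_1}\,u\,w_{M,3/p}\bigr)\,\bigl(w_{M,\lambda_2}\,v\,w_{M,3/q}\bigr)\,w_{M,-\lambda_1-\lambda_2-3}\, dV_M,$$
where both powers of $w_{M,3}$ coming from the density in the weighted $L^p$- and $L^q$-norms have been pulled out. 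Three-function Hölder with exponents $(p,q,\infty)$ then gives
$\abs{\inp uv_{L^2}}\leq \norm{u}_{L^p_{M,\lambda_1}}\norm{v}_{L^q_{M,\lambda_2}}\,\norm{w_{M,-\lambda_1-\lambda_2-3}}_{L^\infty}$,
and the last factor is at most $1$ on each end exactly when $\lambda_1+\lambda_2+3\geq 0$ in the CS case (there $r\leq 2\epsilon_0<1$, so $r^{\lambda_1+\lambda_2+3}$ is bounded iff the exponent is nonnegative) and $\lambda_1+\lambda_2+3\leq 0$ in the AC case (there $r\geq R_\infty>1$). The case $\lambda_1+\lambda_2=-3$ makes the third factor identically $1$, which is the cleanest case rather than a borderline one. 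This is the argument the paper gives in one display; your two-function Hölder on a partially factored integrand cannot close without the separate $L^\infty$ factor, so you should restate the continuity step around this three-exponent splitting.

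One smaller remark: you suggest treating the derivative terms of the $W^{k,p}$-norms separately, but the proposition's pairing is just the $L^2$ inner product of the sections themselves (not their derivatives), so the $k=l=0$ norms already dominate the bilinear form and no separate reduction is needed; the derivative terms in the hypotheses are simply unused slack.
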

\begin{proof}
 For $u\in L^{p}_{M,\lambda_1}$ and $v\in L^{q}_{M,\lambda_2}$ we consider 
 \begin{align*}
 \inp{u}{v}_{L^2}&=\int_M(w_{M,\lambda_1}u w_{M,\frac3p})(w_{M,\lambda_2}v  w_{M,\frac3q})w_{M,-\lambda_1-\lambda_2-3}\operatorname{Vol}_{M}\\
 &\lesssim \norm{u}_{W^{k,p}_{M,\lambda_1}}\norm{v}_{W^{k,q}_{M,\lambda_2}},
  \end{align*}
  provided $M= P$ (CS) with $ \lambda_1+\lambda_2\geq-3$, or $M=L$ (AC) with $ \lambda_1+\lambda_2\leq-3$. This proves that  the $L^2$-inner product map is continuous. The isomorphism stated in the proposition follows from the standard fact that $\inp{\cdot}{\cdot}_{L^2}:L^{p}\times L^{q}\rightarrow\R$ is a dual pairing. 
\end{proof}
The following result is a weighted embedding and compactness theorem that will be used in this article; see \cite[Theorems 4.17 and 4.18]{Marshall2002}.
\begin{prop}\label{prop Sobolev embedding and cptness}
	Let $\lambda,\lambda_1,\lambda_2 \in \R^m$, $\ k,l\in \mathbf N\cup\{0\}$, $ p,q\geq 1$, $\alpha,\beta\in(0,1).$ Then 
	\begin{enumerate}[i)]
		\item  If $k\geq l$ and $k-\frac np\geq l-\frac nq$, the inclusion $W^{k,p}_{M,\lambda_1}\hookrightarrow W^{l,q}_{M,\lambda_2}$ is a continuous embedding, provided for $M=L$ (AC) either $p\leq q,\lambda_1\leq\lambda_2$ or $p>q, \lambda_1<\lambda_2$, and for $M= P$ (CS) either $p\leq q,\lambda_1\geq\lambda_2$ or $p>q, \lambda_1>\lambda_2$ .
		\item If $k+\alpha\geq l+\beta$,  the inclusion ${C^{k,\alpha}_{M,\lambda_1}}\hookrightarrow {C^{l,\beta}_{M,\lambda_2}}$ is a continuous embedding, provided $\lambda_1\leq\lambda_2$ for $M=L$ (AC) and  $\lambda_1\geq\lambda_2$ for $M= P$ (CS) .
		\item If $k-\frac np\geq l+\alpha$,  the inclusion $W^{k,p}_{M,\lambda}\hookrightarrow {C^{l,\alpha}_{M,\lambda}}$ is a continuous embedding.
		\item All of the above embeddings are compact provided all the inequalities in the corresponding hypotheses are strict inequalities.
	\end{enumerate}
\end{prop}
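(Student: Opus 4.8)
\textbf{Proof plan for \autoref{prop Sobolev embedding and cptness}.}

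The plan is to reduce everything to the classical (unweighted) Sobolev embedding theorems on a compact manifold with boundary, plus a scaling argument on the ends. First I would observe that the proof is essentially the content of \cite[Theorems 4.17 and 4.18]{Marshall2002}, so the task is to explain the mechanism rather than reprove all of elliptic theory. The key structural fact is that both a CS submanifold $P$ and an AC submanifold $L$ decompose as a compact core $K$ together with finitely many model ends of the form $(0,2\epsilon_0)\times\Sigma_i$ (for CS, with $r\to 0$) or $(R_\infty,\infty)\times\Sigma_i$ (for AC, with $r\to\infty$), on which the metric is uniformly equivalent to the cone metric $g_{C_i}=dr^2+r^2g_{\Sigma_i}$ and the normal connection is uniformly equivalent to $\nabla^\perp_{C_i}$ by the CS/AC condition (indeed the $C^k$-closeness of the metrics was recorded in the remark following \autoref{def CS asso}). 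On the compact core all the stated embeddings and their compactness are the ordinary Sobolev/Morrey/Kondrachov theorems, and the weight function $w_{M,\lambda}$ is bounded above and below there, so it plays no role.

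The main step is therefore the analysis on a single model end. Here I would dyadically decompose the end: for CS, write the end as $\bigcup_{j\geq j_0} A_j$ with annuli $A_j=\{2^{-j-1}<r<2^{-j}\}\times\Sigma_i$, and for AC with $A_j=\{2^{j}<r<2^{j+1}\}\times\Sigma_i$. On each annulus $A_j$ one rescales by the factor $r\sim 2^{\mp j}$ to a fixed annulus $A:=(1/2,1)\times\Sigma_i$ (or $(1,2)\times\Sigma_i$), under which the cone metric becomes (uniformly in $j$) comparable to a fixed metric on $A$, the covariant derivative $(\nabla^\perp)^l$ picks up a factor $r^{-l}$, and the volume element $dV$ picks up a factor $r^3$. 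The weighted norms are designed precisely so that, after this rescaling, the contribution of $A_j$ to $\|u\|_{W^{k,p}_{M,\lambda}}$ equals (up to uniform constants) $2^{\pm j\lambda_i}$ times the \emph{unweighted} $W^{k,p}(A)$-norm of the rescaled section — this is where the exponent $3$ in the $w_{M,3}dV_M$ factor of the Sobolev norm, and the shift $\lambda-j$ on the $j$-th derivative, come from. Applying the fixed-domain Sobolev embedding $W^{k,p}(A)\hookrightarrow W^{l,q}(A)$ (valid when $k\geq l$, $k-n/p\geq l-n/q$) on each $A_j$ with uniform constant gives, on each annulus,
\[
\|u\|_{W^{l,q}(A_j),\,\mathrm{weight}\ \lambda_2}\ \lesssim\ 2^{\pm j(\lambda_2-\lambda_1)}\,\|u\|_{W^{k,p}(A_j),\,\mathrm{weight}\ \lambda_1}.
\]
Summing over $j$: if $p\le q$ one uses $\ell^p\hookrightarrow\ell^q$ together with the sign condition on $\lambda_2-\lambda_1$ (which makes the prefactors bounded, i.e. $\le 1$) to get $\ell^p$-summability; if $p>q$ one uses Hölder in $j$, which forces the geometric prefactor $2^{\pm j(\lambda_2-\lambda_1)}$ to be strictly summable, hence the strict inequality $\lambda_1<\lambda_2$ (AC) or $\lambda_1>\lambda_2$ (CS). This yields parts (i)–(iii); part (ii) for Hölder spaces is the same dyadic rescaling using Morrey's embedding on the fixed annulus, and (iii) combines (i) and Morrey.

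For part (iv), the compactness, I would argue by a diagonal/truncation argument: given a bounded sequence $(u_m)$ in $W^{k,p}_{M,\lambda_1}$ with all inequalities strict, on any finite union of annuli $\bigcup_{|j|\le N}A_j$ the Rellich–Kondrachov theorem gives a subsequence converging in $W^{l,q}$ there; by the strict inequality in the weight exponent the tails $\sum_{|j|>N}$ are uniformly small (the geometric factor $2^{\mp|j|(\lambda_2-\lambda_1)}$ or $2^{\mp|j|\cdot(\text{gap in }k-n/p)}$ decays), so a diagonal subsequence is Cauchy in $W^{l,q}_{M,\lambda_2}$. The main obstacle — or rather the point requiring the most care — is bookkeeping the exact powers of $r$ in the rescaling so that the weighted norms transform as claimed, and correctly tracking which of the two summation regimes ($p\le q$ versus $p>q$) forces a strict versus non-strict weight inequality; once the scaling bookkeeping is set up, everything reduces to the compact-domain classical theorems applied uniformly on the model annulus. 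Since this is precisely \cite[Theorems 4.17–4.18]{Marshall2002}, in the article I would simply cite those and omit the routine verification.
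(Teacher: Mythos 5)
The paper does not prove this proposition at all — it introduces it with the sentence ``The following result is a weighted embedding and compactness theorem that will be used in this article; see \cite[Theorems 4.17 and 4.18]{Marshall2002}'' and then simply cites Marshall. Your plan reaches the same conclusion (cite Marshall and omit the verification), and the dyadic-rescaling mechanism you sketch as the underlying argument is correct and is exactly what Marshall's proofs run on, so your proposal matches the paper's approach.
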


%%%%%%%%%%%%%%%%%%%%%%%%%%%%%
\subsubsection{AC and CS elliptic operators on AC and CS associatives}Let $P$ be a CS associative submanifold as in \autoref{def CS asso} and $L$ be an AC associative submanifold as in \autoref{def AC associative} asymptotic to $C$. We continue to use $M$ to denote either $P$ or $L$, and $\eta$ as $\mu$ for CS and $\nu$ for AC submanifolds. For any associative cone $C$ in $\R^7$ with link $\Sigma$, the Fueter operator $\mathbf D_C$ is a {conical operator} (see  \cite[Section 4.3.2]{Marshall2002} for precise definition), that is after the identification of $\R^7\setminus \{0\}$ with the cylinder $\R \times S^6$ by substituting $r=e^t$ we obtain that (see \autoref{prop Fueter})
\begin{equation}\label{eq conical cylindrical DC}
	r^2\mathbf D_Cr^{-1}=Jr\partial_r+(\mathbf D_\Sigma+J)=J\partial_t+(\mathbf D_\Sigma+J)
\end{equation}
is a translation invariant operator.
	
\begin{definition}\label{def CS/AC elliptic operator}
	Let $\bD_{M}:C_c^\infty(NM)\to C_c^\infty(NM)$ be a first order, formally self-adjoint elliptic operator. It is called \textbf{asymptotically conical (AC) uniformly elliptic} operator for $M=L$ (AC) and \textbf{conically singular (CS) uniformly elliptic} operator for $M= P$ (CS) respectively, asymptotic to the conical operators $\mathbf D_{C_i}$ over the ends of $M$ if the operator $r^2\bD_{M}r^{-1}$ is an {asymptotically translation invariant uniformly elliptic} operator asymptotic to $r^2\mathbf D_{C_i}r^{-1}$ (see \cite[Section 4.3.2]{Marshall2002}) and after the identifications by canonical bundle isomorphisms
	$$\bD_{M}=\bD_{C_i}+O(r^{\eta_i-1}).$$ 
	The operator $\bD_{M}$ has canonical extensions to weighted function spaces and we denote them as follows:
\begin{equation*}\bD_{M,\lambda}^{k,p}:W^{k+1,p}_{M,\lambda}\to W^{k,p}_{M,\lambda-1}, \ \ \ \bD_{M,\lambda}^{k,\gamma}:C^{k+1,\gamma}_{M,\lambda}\to C^{k,\gamma}_{M,\lambda-1}.\qedhere\end{equation*}
 \end{definition}

The following proposition is about elliptic regularity statements for CS or AC uniformly elliptic operator $\bD_{M}$ \cite[Theorem 4.6]{Marshall2002}. 
\begin{prop} \label{prop elliptic regularity} Suppose $\lambda\in \R^m$, $k\geq0$, $p>1$, $\gamma\in(0,1)$. Let, $u,v\in L^1_{\operatorname{loc}}$ be two locally integrable sections of $NM$ such that $u$ is a weak solution of the equation $\bD_{P}u=v$. 
	\begin{enumerate}[i)]
		\item If $v\in C^{k,\gamma}_{M,\lambda-1}$, then $u\in C^{k+1,\gamma}_{M,\lambda}$ is a strong solution and there exists a constant $C>0$ such that 		
		$$\norm{u}_{C^{k+1,\gamma}_{M,\lambda}}\leq C \Big(\norm{\bD_{M,\lambda}^{k,\gamma}u}_{C^{k,\gamma}_{M,\lambda-1}}+\norm{u}_{L^\infty_{M,\lambda}}\Big).$$
		\item   If $v\in W^{k,p}_{M,\lambda-1}$, then $u\in W^{k+1,p}_{M,\lambda}$ is a strong solution and there exists a constant $C>0$ such that
		$$\norm{u}_{W^{k+1,p}_{M,\lambda}}\leq C \Big(\norm{\bD_{M,\lambda}^{k,p}u}_{W^{k,p}_{M,\lambda-1}}+\norm{u}_{L^\infty_{M,\lambda}}\Big).$$
	\end{enumerate}
\end{prop}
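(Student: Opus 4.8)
The statement is the standard elliptic regularity and \emph{a priori} estimate for a CS/AC uniformly elliptic operator $\bD_M$, and the plan is to reduce it to the corresponding statement for the model conical (equivalently, translation-invariant cylindrical) operator, for which the result is already available in the literature (Lockhart--McOwen \cite{Lockhart1985}, as exposited in \cite[Theorem 4.6]{Marshall2002}). First I would recall the substitution $r = e^t$, which identifies $\R^7\setminus\{0\}$ with the cylinder $\R\times S^6$ and, via \autoref{eq conical cylindrical DC}, turns $r^2\bD_{C_i}r^{-1}$ into a \emph{translation-invariant} operator $J\partial_t + (\bD_{\Sigma_i}+J)$ on the cylindrical end. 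By the definition of a CS (resp.\ AC) uniformly elliptic operator (\autoref{def CS/AC elliptic operator}), the rescaled operator $r^2\bD_M r^{-1}$ is then an \emph{asymptotically} translation-invariant uniformly elliptic operator, converging to the model at rate $O(r^{\eta_i-1})=O(e^{(\eta_i-1)t})$ on each end. Conjugating the weighted spaces on $M$ to ordinary (unweighted, or uniformly weighted) cylindrical Sobolev/Hölder spaces on $\R\times\Sigma_i$ glued to a compact core $K_M$, the weight $w_{M,\lambda}$ becomes $e^{-\lambda_i t}$, which is handled by absorbing it into the operator; this is precisely the setup of \cite[Chapter 4]{Marshall2002}.

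The argument then proceeds in three steps. (1) \emph{Interior estimates on the compact core.} On $K_M$ (a compact manifold with boundary, or with the cone point removed in a neighbourhood of fixed size), $\bD_M$ is a genuine first-order elliptic operator with smooth coefficients, so the classical Schauder interior estimate $\|u\|_{C^{k+1,\gamma}(K')}\leq C(\|\bD_M u\|_{C^{k,\gamma}(K)}+\|u\|_{L^\infty(K)})$ (resp.\ its $W^{k+1,p}$ analogue) holds on slightly shrunk subsets; see \cite[Theorem 1.4.2]{Joyce2007} for the relevant Schauder statement. (2) \emph{Estimates on the ends via scaled cylindrical estimates.} On each conical/cylindrical end I would cover the end by unit-length cylindrical bands $[t_0,t_0+1]\times\Sigma_i$, apply the standard interior elliptic estimate for the asymptotically translation-invariant operator $r^2\bD_M r^{-1}$ on each band — with constant \emph{uniform in $t_0$} because the coefficients converge to the translation-invariant model — and then translate the estimate back to $\bD_M$ on the annular region $\{e^{t_0}\leq r\leq e^{t_0+1}\}$. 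The weight factors $r^{-\lambda_i}$, $r^{-(\lambda_i-j)}$ are constant up to a bounded factor on each such annulus, so re-inserting them gives the weighted estimate band-by-band; summing (for $W^{k,p}$) or taking the supremum (for $C^{k,\gamma}$) over all bands yields the global weighted bound. The error term $O(r^{\eta_i-1})$ from $\bD_M-\bD_{C_i}$ is lower order and can be absorbed on the right-hand side. (3) \emph{Bootstrapping regularity.} Starting from $u\in L^1_{\loc}$ weakly solving $\bD_M u = v$ with $v\in C^{k,\gamma}_{M,\lambda-1}$ (resp.\ $W^{k,p}_{M,\lambda-1}$), the local version of the estimate upgrades $u$ to $C^{1,\gamma}_{\loc}$ (resp.\ $W^{1,p}_{\loc}$), hence a strong solution; iterating the estimate $k$ more times, using $v\in C^{k,\gamma}$, promotes $u$ to $C^{k+1,\gamma}_{\loc}$ (resp.\ $W^{k+1,p}_{\loc}$); combining with the global weighted bounds from steps (1)--(2) gives $u\in C^{k+1,\gamma}_{M,\lambda}$ (resp.\ $W^{k+1,p}_{M,\lambda}$) together with the claimed inequality. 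Case (ii) for $W^{k,p}$ is identical with Schauder replaced by $L^p$ Calderón--Zygmund interior estimates throughout.

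The main obstacle — really the only nontrivial point — is securing the \emph{uniformity of the elliptic constant across all cylindrical bands on the ends}, i.e.\ that the interior elliptic estimate for $r^2\bD_M r^{-1}$ has a constant independent of how far out on the cylinder one is. This is where the asymptotically-translation-invariant hypothesis in \autoref{def CS/AC elliptic operator} is essential: the $C^{k,\gamma}$ (resp.\ $W^{k,p}$) norms of the coefficients of $r^2\bD_M r^{-1}$ on a unit band are uniformly bounded because they converge to those of the fixed model operator $J\partial_t+(\bD_{\Sigma_i}+J)$, and the symbol stays uniformly elliptic; standard elliptic estimates then apply with a constant depending only on these uniform bounds. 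This is exactly the content of \cite[Section 4.3.2 and Theorem 4.6]{Marshall2002}, so in practice this proposition is a direct citation, and I would phrase the proof as: ``This follows from \cite[Theorem 4.6]{Marshall2002} applied to the rescaled operator $r^2\bD_M r^{-1}$, which by \autoref{def CS/AC elliptic operator} is asymptotically translation invariant and uniformly elliptic, together with the identification \autoref{eq conical cylindrical DC} of the model operator; the final step is the bootstrap of \autoref{prop Fueter}-type local regularity as in the proof of \autoref{thm moduli holo}.''
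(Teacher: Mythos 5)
Your proposal is correct and takes the same approach as the paper: the paper states this proposition as a direct consequence of \cite[Theorem 4.6]{Marshall2002} and provides no proof of its own, and your final paragraph correctly identifies that it reduces to that citation via the cylindrical conjugation of \autoref{eq conical cylindrical DC} and the asymptotically-translation-invariant hypothesis in \autoref{def CS/AC elliptic operator}. Your expanded sketch of the band-by-band argument behind Marshall's theorem is a faithful (if unneeded) unpacking of that reference rather than a different route.
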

The following proposition identifies the weights for which the elliptic operator $D_M$ are adjoints of each other with the weighted function spaces.
\begin{prop}\label{prop integration by parts}Suppose $k\geq0$, $p,q>1$ with $\frac1p+\frac1q=1$ and $\lambda_1,\lambda_2\in \R^m$. For all $u\in W^{k+1,p}_{M,\lambda_1},v\in W^{k+1,q}_{M,\lambda_2}$ we have $\inp{\bD_{M,\lambda_1}^{k,p}u}{v}_{L^2}=\inp{u}{\bD_{M,\lambda_2}^{k,q}v}_{L^2}$, provided $M= P$ (CS) with $ \lambda_1+\lambda_2\geq-2$, or $M=L$ (AC) with $ \lambda_1+\lambda_2\leq-2$. 
\end{prop}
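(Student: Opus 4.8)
The plan is to reduce the statement to the integration-by-parts formula $\langle \bD_M u, v\rangle_{L^2} = \langle u, \bD_M v\rangle_{L^2}$ for compactly supported smooth sections (which holds because $\bD_M$ is formally self-adjoint, by hypothesis in \autoref{def CS/AC elliptic operator} and \autoref{rmk D_M}), and then to pass to the weighted spaces by a density-plus-cutoff argument, checking that the boundary terms produced by the cutoff vanish in the limit precisely under the stated weight restrictions.

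First I would reduce to the smooth compactly supported case. By \autoref{prop dual weighted space}, the $L^2$-pairing $W^{k+1,p}_{M,\lambda_1}\times W^{k+1,q}_{M,\lambda_2}\to\R$ is continuous once $\lambda_1+\lambda_2\geq -3$ (for $M=P$) or $\leq -3$ (for $M=L$)—and the hypothesis $\lambda_1+\lambda_2\geq-2$ (resp. $\leq-2$) is stronger, so both pairings $\langle \bD_{M,\lambda_1}^{k,p}u,v\rangle_{L^2}$ and $\langle u,\bD_{M,\lambda_2}^{k,q}v\rangle_{L^2}$ make sense: indeed $\bD_{M,\lambda_1}^{k,p}u\in W^{k,p}_{M,\lambda_1-1}$ pairs with $v\in W^{k,q}_{M,\lambda_2}\hookrightarrow W^{k,q}_{M,\lambda_2+1}$ (using \autoref{prop Sobolev embedding and cptness} in the appropriate direction) since $(\lambda_1-1)+(\lambda_2+1)=\lambda_1+\lambda_2\geq-2>-3$, and symmetrically for the other term. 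Since $C_c^\infty(NM)$ is dense in every $W^{k+1,p}_{M,\lambda}$ by definition of these spaces, and both sides of the desired identity are continuous bilinear functionals on $W^{k+1,p}_{M,\lambda_1}\times W^{k+1,q}_{M,\lambda_2}$, it suffices to prove the identity for $u,v\in C_c^\infty(NM)$.

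Next, for $u,v\in C_c^\infty(NM)$ the identity $\langle \bD_M u,v\rangle_{L^2}=\langle u,\bD_M v\rangle_{L^2}$ is exactly the formal self-adjointness of $\bD_M$ on $\hat P$ (or $L$), which follows from \autoref{prop formal self adjoint} referenced in \autoref{rmk D_M}: since $d\psi=0$ on a co-closed $G_2$-manifold and compactly supported sections see no boundary, Stokes' theorem gives the cancellation of the boundary contribution. This completes the argument once density is invoked. The one point requiring care—and the place where the weight hypothesis is genuinely used rather than just the weaker $-3$ bound—is that in the density step one must approximate $u\in W^{k+1,p}_{M,\lambda_1}$ by $C_c^\infty$ sections $u_j$ so that $\bD_{M,\lambda_1}^{k,p}u_j\to \bD_{M,\lambda_1}^{k,p}u$ in $W^{k,p}_{M,\lambda_1-1}$; because $\bD_{M,\lambda_1}^{k,p}$ is by construction the continuous extension of $\bD_M$ from $C_c^\infty(NM)$, this is automatic. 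Thus the genuinely substantive content is just the continuity of the two pairings, which is where $\lambda_1+\lambda_2\geq-2$ (resp. $\leq-2$) enters through \autoref{prop dual weighted space} applied after shifting one weight by $-1$.

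The main obstacle—though it is a mild one—is bookkeeping the two ends ($M=P$ versus $M=L$) and the two inequality directions simultaneously, and making sure that after the shift of weights by $\pm 1$ coming from the first-order operator the pairing hypotheses of \autoref{prop dual weighted space} are still met: one needs $(\lambda_1-1)+\lambda_2\geq-3$ and $\lambda_1+(\lambda_2-1)\geq-3$ in the CS case, both of which are equivalent to $\lambda_1+\lambda_2\geq-2$, and the reversed inequalities in the AC case. Alternatively, and perhaps more transparently, one can avoid density altogether and argue directly: choose cutoff functions $\chi_R$ supported where $r\in(R^{-1},R)$ (away from the cone points and from infinity), write $\langle \bD_M(\chi_R u),v\rangle_{L^2}-\langle \chi_R u,\bD_M v\rangle_{L^2}$ as an integral of $d$ of a suitable $6$-form built from $u,v,\psi$, and estimate the error terms $\langle (\bD_M\chi_R)u,v\rangle$ by Hölder in weighted norms; the weight condition $\lambda_1+\lambda_2\geq-2$ is exactly what forces these error integrals over the shrinking/growing annular regions to tend to zero. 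Either route works; I would present the density route as the cleaner one since the extension operators $\bD_{M,\lambda}^{k,p}$ are already defined as closures.
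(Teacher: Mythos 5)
Your proof takes essentially the same route as the paper: establish the identity on $C_c^\infty(NM)$ via formal self-adjointness, then extend by density using the continuity of the pairings guaranteed by \autoref{prop dual weighted space} with weights shifted by $-1$. One small wrinkle: the embedding $W^{k,q}_{M,\lambda_2}\hookrightarrow W^{k,q}_{M,\lambda_2+1}$ you invoke in the CS case runs the wrong way (increasing the weight strengthens decay near the singularity, so \autoref{prop Sobolev embedding and cptness} requires $\lambda_2\geq\lambda_2+1$, which fails); the detour is unnecessary—just apply \autoref{prop dual weighted space} directly to $W^{k,p}_{M,\lambda_1-1}\times W^{k,q}_{M,\lambda_2}$, which needs $(\lambda_1-1)+\lambda_2\geq-3$, i.e.\ $\lambda_1+\lambda_2\geq-2$, exactly as you note at the end.
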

\begin{proof}
The result is true for $u, v \in C_c^\infty(NM)$ and therefore the general statement follows from \autoref{prop dual weighted space}. 
\end{proof}
In the following, we discuss the Fredholm theory for the operator $D_M $. It turns out that the Fredholm property depends on the choice of weights, and in particular, it requires that the weight does not lie on the wall of critical rates, which we define below.

\begin{definition}[Wall of critical rates]Let $\{C_1,\dots,C_m\}$ be a set of associative cones. Set $C:=(C_i)_{i=1}^m$. The set of critical rates $\mathcal D_C$ is defined by
	$$\mathcal D_C:=\{(\lambda_1,\lambda_2,..,\lambda_m)\in \R^m: \lambda_i\in \mathcal D_{C_i} \ \text{for some}\ i\},$$ 
	where $\mathcal D_{C_i}$ is the set of all indicial roots of $\mathbf D_{C_i}$ defined in \autoref{def homogeneous kernel}. We call $\mathcal D_C$ the \textbf{wall of critical rates} in $\R^m$.   We define \begin{equation*}
	 	V_\lambda:=\bigoplus_{i=1}^m V_{\lambda_i}, \ \ d_{\lambda_i}:=\dim 
	V_{\lambda_i} \ \text{and} \ \ \ d_\lambda:=\sum_{i=1}^md_{\lambda_i}.
	 \end{equation*} 
	where $V_{\lambda_i}$ are defined in \autoref{def homogeneous kernel}.
	\end{definition}
We also need the weighted function spaces over the cone, which we now define.
\begin{definition}Let $C$ be a connected associative cone. We can define the Banach spaces $W^{k,p}_{C,\lambda}$, $C^{k,\gamma}_{C,\lambda}$ and all others over $C$ analogously to \autoref{Weighted Banach spaces}, replacing $M$ by $C$, $NM$ by $NC$ and  the weight function $w_{M,\lambda}$ by $w_{C,\lambda}:C\to \R$ where $w_{C,\lambda}(r,\sigma)= r^{-\lambda}$, $\lambda\in \R$.  
	\end{definition}
 The following key lemma, which is crucial to the Fredholm theory, follows from \cite[Lemma 3.1, Proposition 3.4, Section 3.3.1]{Donaldson2002} and \cite[Theorem 5.1]{Mazya1978}, using the observation made in \autoref{eq conical cylindrical DC} above.
 \begin{lemma}\label{lem fourier series Donaldson}Let $C$ be a connected associative cone. The conical Fueter operator $\mathbf D_C:C^{k+1,\gamma}_{C,\lambda}\to C^{k,\gamma}_{C,\lambda-1}$ is invertible  if and only if $\lambda\in \R\setminus \mathcal D_C$. 
 Moreover any element $u\in \ker \mathbf D_C$ has a $L^2$-orthogonal decomposition
$$u=\sum_{\lambda\in \mathcal D_{C}}r^{\lambda-1} u_{\Sigma,\lambda}.$$
where $u_{\Sigma,\lambda}$ are $\lambda$-eigensections of $J\mathbf D_{\Sigma}-\id$, that is $r^{\lambda-1} u_{\Sigma,\lambda}\in V_{\lambda}$.
 \end{lemma}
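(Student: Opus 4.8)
The plan is to reduce the statement to harmonic analysis for a translation-invariant operator on the cylinder $\R_t\times\Sigma$ and then quote the Maz'ya--Plamenevskii / Lockhart--McOwen theory in the form recorded in \cite{Donaldson2002} and \cite{Mazya1978}. First I would use the substitution $r=e^t$, which identifies $C\setminus\{0\}$ with $\R_t\times\Sigma$ and, by \autoref{prop Fueter}\ref{prop Fueter decomposition} and \autoref{eq conical cylindrical DC}, intertwines $\mathbf D_C$ with the translation-invariant operator $L:=J\partial_t+(\mathbf D_\Sigma+J)$ up to conjugation by a power of $r$. Under this identification the weighted Hölder spaces $C^{k+1,\gamma}_{C,\lambda}$ and $C^{k,\gamma}_{C,\lambda-1}$ become exponentially weighted Hölder spaces over $\R_t\times\Sigma$ on which $L$ acts, and invertibility of $\mathbf D_C$ between the former is equivalent to invertibility of $L$ between the latter; the standard theory says this holds precisely when the weight avoids the indicial set of $L$, i.e. the set of $\zeta\in\C$ for which the constant-coefficient pencil $J\zeta+(\mathbf D_\Sigma+J)$ fails to be invertible on $C^\infty(N\Sigma)$.

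Next I would compute that indicial set. Factoring $J$ out on the left and using $J^{-1}=-J$ gives $J\zeta+\mathbf D_\Sigma+J=J(\zeta+1-J\mathbf D_\Sigma)$, so the pencil is non-invertible exactly when $\zeta+1$ is an eigenvalue of $J\mathbf D_\Sigma$. Since $\mathbf D_\Sigma$ is $J$-anti-linear (\autoref{prop Fueter}\ref{prop Fueter anti linear}), the operator $J\mathbf D_\Sigma$ is formally self-adjoint and elliptic on the closed surface $\Sigma$; hence its spectrum is discrete, real, and accumulates only at $\pm\infty$. Comparing with \autoref{prop Fueter}\ref{prop Fueter homogeneous kernel}, the indicial set is exactly $\mathcal D_C=\spec(J\mathbf D_\Sigma-\id)$, and in particular it is real, which is what makes the analysis clean. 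This already yields the "only if" direction of the first claim: if $\lambda\in\mathcal D_C$, pick $0\neq r^{\lambda-1}\nu_\Sigma\in V_\lambda$; its pointwise norm on $NC$ is $r^\lambda|\nu_\Sigma|_{g_\Sigma}$ and, being exactly homogeneous, it lies in $C^{k+1,\gamma}_{C,\lambda}$, while $\mathbf D_C(r^{\lambda-1}\nu_\Sigma)=0$ by definition of $V_\lambda$, so $\mathbf D_C$ is not injective on $C^{k+1,\gamma}_{C,\lambda}$.

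For the "if" direction ($\lambda\notin\mathcal D_C\Rightarrow$ invertible) I would take the Fourier transform of $L$ in $t$: solving $Lv=f$ reduces to inverting, for each frequency $\tau\in\R$, the operator $J(i\tau)+J\lambda+(\mathbf D_\Sigma+J)$ on $N\Sigma$, which is invertible since $\lambda+i\tau\notin\mathcal D_C$ (the indicial set being real), with inverse bounded uniformly in $\tau$ once $|\tau|\gg1$ by ellipticity. This produces a bounded inverse on the weighted $L^2$ (Sobolev) scale, and a rescaled interior Schauder estimate on unit cylinders--using that $L$ is translation invariant--upgrades it to the weighted Hölder statement. This is precisely \cite[Lemma 3.1, Proposition 3.4, Section 3.3.1]{Donaldson2002} together with \cite[Theorem 5.1]{Mazya1978}. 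For the decomposition of $\ker\mathbf D_C$, rewrite $Lv=0$ as the ODE $\partial_t v=(J\mathbf D_\Sigma-\id)v$ valued in $C^\infty(N\Sigma)$ and expand in the $L^2(\Sigma)$-orthogonal eigenspaces of the self-adjoint operator $A:=J\mathbf D_\Sigma-\id$: one gets $v(t)=\sum_{\lambda\in\mathcal D_C}e^{\lambda t}u_{\Sigma,\lambda}$ with $Au_{\Sigma,\lambda}=\lambda u_{\Sigma,\lambda}$. Only finitely many terms can be compatible with a fixed weight (constraining the rate both as $r\to0$ and as $r\to\infty$), and translating back via $e^{\lambda t}=r^\lambda$ and $u=v/r$ gives $u=\sum_\lambda r^{\lambda-1}u_{\Sigma,\lambda}$ with the $u_{\Sigma,\lambda}$ pairwise $L^2(\Sigma)$-orthogonal; this simultaneously re-proves injectivity when $\lambda\notin\mathcal D_C$.

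The main obstacle is the last analytic step: passing from frequency-wise invertibility of the operator pencil to a bounded inverse on the weighted \emph{Hölder} scale, since the Fourier transform does not interact well with $C^{k,\gamma}$-norms. One has to interpolate between the clean $L^2$-theory and scale-invariant Schauder estimates along the cylinder; this is exactly why the statement is quoted from \cite{Donaldson2002} and \cite{Mazya1978} rather than proved from scratch. Everything else--the pencil factorization, the self-adjointness and reality of $\spec(J\mathbf D_\Sigma)$, the identification of the indicial set with $\mathcal D_C$, and the eigenfunction expansion of the kernel--is elementary given \autoref{prop Fueter}.
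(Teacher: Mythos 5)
Your proposal is essentially correct and follows the same route the paper takes: the paper does not supply an in-text proof of this lemma but declares that it "follows from \cite[Lemma 3.1, Proposition 3.4, Section 3.3.1]{Donaldson2002} and \cite[Theorem 5.1]{Mazya1978}, using the observation made in \autoref{eq conical cylindrical DC}," and your argument is precisely a reconstruction of how those references apply: the substitution $r=e^t$, the conjugated translation-invariant operator $J\partial_t+(\mathbf D_\Sigma+J)$, the pencil factorization $J\zeta+\mathbf D_\Sigma+J=J\big(\zeta+1-J\mathbf D_\Sigma\big)$ identifying the indicial set with $\spec(J\mathbf D_\Sigma-\id)=\mathcal D_C$, and the eigenfunction expansion of solutions of the first-order ODE $\partial_t v=(J\mathbf D_\Sigma-\id)v$. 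Two minor caveats: (i) the absence of log terms (Jordan blocks) in the eigenfunction expansion, which your ODE argument uses implicitly via diagonalizability of the self-adjoint $J\mathbf D_\Sigma-\id$, is established in the paper by a separate argument (\autoref{prop no sum homogeneous}), so it would be cleaner to point to either fact explicitly; (ii) your closing remark that the kernel decomposition "re-proves injectivity" is fine but does not substitute for the surjectivity half of invertibility, which is supplied by the Fourier-transform/Schauder step you correctly flagged as the technically delicate part and the reason the paper cites rather than proves the statement.
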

We obtain the following proposition.
\begin{prop}\label{prop Fredholm Donaldson}Let $\lambda\in \R^m$, $k\geq0$, $p>1$, $\gamma\in(0,1)$. Then  $\bD_{M,\lambda}^{k,p}$ and $\bD_{M,\lambda}^{k,\gamma}$ are Fredholm for all $\lambda\in \R^m\setminus\mathcal D_C$. Moreover,  for all $\lambda\in \R^m$, 
$\operatorname{Ker}\bD_{M,\lambda}^{k,p}=\operatorname{Ker}\bD_{M,\lambda}^{k,\gamma}$ is finite dimensional, independent of $k$, $p$ and $\gamma$. 
\end{prop}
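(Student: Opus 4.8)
The plan is to reduce the statement to the standard Lockhart--McOwen / Marshall Fredholm package for asymptotically translation-invariant operators on manifolds with cylindrical ends, via the substitution $r = e^t$ recorded in \autoref{eq conical cylindrical DC}. First I would invoke \autoref{lem fourier series Donaldson}: it identifies precisely the set of weights $\lambda \in \R$ for which the model conical operator $\mathbf D_{C_i}$ on each end is an isomorphism between the weighted Hölder (equivalently Sobolev) spaces over the cone $C_i$, namely $\lambda \notin \mathcal D_{C_i}$. By \autoref{def CS/AC elliptic operator}, the operator $\bD_M$ differs from the model operator $\bD_{C_i}$ on each end by a term of order $O(r^{\eta_i - 1})$ with $\eta_i - 1 < 0$ in the AC case and $\eta_i - 1 > 0$ in the CS case; after conjugating by $r$ and passing to the cylinder, $r^2 \bD_M r^{-1}$ is an asymptotically translation-invariant uniformly elliptic operator asymptotic to $r^2 \bD_{C_i} r^{-1}$, which is translation invariant. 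This is exactly the setting of \cite[Section 4.3]{Marshall2002}.

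Next I would quote the abstract Fredholm theorem in that setting: an asymptotically translation-invariant uniformly elliptic operator on a manifold with ends is Fredholm on the weighted spaces precisely when the weight is not a critical rate of any of the model operators on the ends, i.e. when $\lambda \notin \mathcal D_C = \{\lambda \in \R^m : \lambda_i \in \mathcal D_{C_i} \text{ for some } i\}$. Translating back through the conjugation $u \mapsto r^{-1}u$ and the change of variables $r = e^t$, this gives that $\bD_{M,\lambda}^{k,p} : W^{k+1,p}_{M,\lambda} \to W^{k,p}_{M,\lambda-1}$ and $\bD_{M,\lambda}^{k,\gamma} : C^{k+1,\gamma}_{M,\lambda} \to C^{k,\gamma}_{M,\lambda-1}$ are Fredholm for all $\lambda \in \R^m \setminus \mathcal D_C$. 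Here one uses that the weighted elliptic estimate of \autoref{prop elliptic regularity} together with the compact embeddings of \autoref{prop Sobolev embedding and cptness} gives the finite-dimensionality of the kernel and closedness of the range in the usual way (a parametrix/Rellich argument: the estimate $\|u\| \lesssim \|\bD_M u\| + \|u\|_{L^\infty_{M,\lambda}}$ upgrades to $\|u\| \lesssim \|\bD_M u\|$ modulo a compact operator).

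For the second assertion — that $\ker \bD_{M,\lambda}^{k,p} = \ker \bD_{M,\lambda}^{k,\gamma}$ is finite-dimensional and independent of $k$, $p$, $\gamma$ \emph{for all} $\lambda$, including critical weights — the key point is elliptic regularity together with a bootstrap. Any $u$ in the kernel for some $(k,p)$ or $(k,\gamma)$ satisfies $\bD_M u = 0$, so by \autoref{prop elliptic regularity} applied repeatedly $u$ lies in $C^{l,\gamma}_{M,\lambda}$ for every $l$, hence in $C^\infty_{M,\lambda}$; conversely such a $u$ lies in every $W^{k,p}_{M,\lambda}$ because the leading asymptotic behaviour at the ends (controlled by the decomposition in \autoref{lem fourier series Donaldson} for the model operator, transferred to $\bD_M$ by the asymptotic comparison) forces the decay rate to be \emph{strictly} better than $\lambda$ on a neighbourhood of the critical set, or identical at worst — in either case the weighted norms are all finite and mutually comparable on the space of solutions. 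Thus the kernel is a single well-defined space of smooth sections independent of $k,p,\gamma$; its finite-dimensionality then follows from the elliptic estimate and the compact embedding exactly as above (this argument does not require Fredholmness and so is valid at critical weights too). I expect the main obstacle to be the careful transfer of the weighted-space machinery of \cite{Marshall2002} — which is stated for manifolds with ends rather than for normal bundles of submanifolds — through the canonical bundle isomorphisms of \autoref{def CS/AC elliptic operator}, and in particular verifying that the error term $O(r^{\eta_i-1})$ is genuinely subordinate (a relatively compact perturbation after conjugation) so that the critical set of $\bD_M$ coincides with that of the model operators; but since \autoref{def CS/AC elliptic operator} is set up precisely to make $r^2\bD_M r^{-1}$ asymptotically translation invariant, this is a matter of bookkeeping rather than a genuine difficulty.
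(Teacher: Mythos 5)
Your overall route is the same as the paper's: appeal to the Lockhart--McOwen theory for asymptotically translation-invariant uniformly elliptic operators on cylindrical ends (via $r=e^t$, using that $r^2\bD_Mr^{-1}$ is asymptotic to the translation-invariant $r^2\bD_{C_i}r^{-1}$) to get Fredholmness off the wall $\mathcal D_C$ --- the paper cites Donaldson and Lockhart--McOwen directly, you route through Marshall's exposition, which is the same machinery --- and then use \autoref{prop elliptic regularity} and \autoref{prop Sobolev embedding and cptness} for the equality and independence of the kernels. That is not a different approach, just a more spelled-out version of the one the paper gives in three lines.

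There is, however, a genuine gap in the part of your argument addressing critical weights. You write that the decay exponent of a kernel element is ``strictly better than $\lambda$ \ldots or identical at worst --- in either case the weighted norms are all finite.'' In the second case this is false. If $\lambda\in\mathcal D_C$ and $u$ is a smooth kernel element whose leading term at a CS end is $r^{\lambda-1}u_{\Sigma,\lambda}$ (so $|u|\sim r^{\lambda}$), then $w_{M,\lambda}u$ is bounded and $u\in C^{k,\gamma}_{M,\lambda}$, but
\[
\int_{0}^{1}\,w_{M,\lambda}^{p}\,|u|^{p}\,w_{M,3}\,dV_{M}\;\sim\;\int_{0}^{1} r^{-p\lambda}\,r^{p\lambda}\,r^{-1}\,dr\;=\;\infty,
\]
so $u\notin L^{p}_{M,\lambda}\supset W^{k,p}_{M,\lambda}$; the analogous computation as $r\to\infty$ gives the same conclusion in the AC case. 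Thus at a critical rate the Hölder kernel can be strictly larger than the Sobolev kernel --- indeed by \autoref{lem main Fredholm} one has $\ker\bD^{k,\gamma}_{M,\lambda}=\ker\bD^{k,\gamma}_{M,\lambda_2}$ with $\lambda_2$ just below $\lambda$ (CS), while the integrability constraint pins $\ker\bD^{k,p}_{M,\lambda}$ down to $\ker\bD^{k,p}_{M,\lambda_1}$ with $\lambda_1$ just above; these differ by $d_\lambda$. The clean form of the assertion is for $\lambda\notin\mathcal D_C$, where every smooth $u\in\ker\bD_M$ with $\|u\|_{L^\infty_{M,\lambda}}<\infty$ automatically decays at the next strictly better critical rate, making all the weighted norms comparable. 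The paper's one-line proof is too terse to commit explicitly to the error you've written out, but the issue is latent in the proposition's ``for all $\lambda$'' phrasing and you should not paper over it by asserting finiteness of norms that diverge.
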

\begin{proof} The operators $\bD_{M,\lambda}^{k,p}$ and $\bD_{M,\lambda}^{k,\gamma}$ are Fredholm with weight $\lambda\in \R^m\setminus\mathcal D_C$ follows from \cite[Proposition 3.6, Section 3.3.1]{Donaldson2002} or \cite[Theorem 6.2]{Lockhart1985}. Independence of $k,p,\gamma$ follows from \autoref{prop elliptic regularity} and \autoref{prop Sobolev embedding and cptness}. 
\end{proof}
The following lemma defines the asymptotic limit map, which maps each element in the kernel of the above AC or CS elliptic operator (with an appropriate weight) its leading-order asymptotic term. Moreover, the lemma provides a wall-crossing formula describing how the index of the operator changes as the weight crosses a wall of critical rates.

\begin{lemma}\label{lem main Fredholm}
Let $\lambda\in \R^m$ and $\lambda_1,\lambda_2$ be two elements in $\R^m\setminus\mathcal D_C$ with $\lambda_1<\lambda<\lambda_2$ for $M=L$ (AC) and $\lambda_2<\lambda<\lambda_1$ for $M= P$ (CS), $\abs{\lambda_2-\lambda_1}\leq \abs{\eta-1}$  and there are no other indicial roots in between $\lambda_{1},\lambda_{2}$ except possibly $\lambda$. We define the following set
	$$\mathcal S_{\lambda_2}:=\{u\in C^{k+1,\gamma}_{M,\lambda_2}: \bD_{M,\lambda_2}^{k,\gamma}u\in C^{k,\gamma}_{M,\lambda_1-1}\}.$$
	Define a linear map $e_{M,\lambda}:V_\lambda\to W^{k+1,p}_{M,\lambda_2}$ (under the identifications of normal bundles $NM$ and $NC_i$ by the canonical bundle isomorphisms over the ends) by \[e_{M,\lambda}(w):=\begin{cases*}
	\bigoplus_{i=1}^m\rho_{\epsilon_0} w_i &  if  $M= P$  (CS)\\
	\bigoplus_{i=1}^m(1-\rho_{R_\infty}) w_i & if   $M=L$  (AC).
	\end{cases*}\] 
	Then there exists a unique linear map, called \textbf{asymptotic limit map}, $\tilde i_{M,\lambda}:\mathcal S_{\lambda_2}\to V_{\lambda}$ satisfying for any $u\in \mathcal S_{\lambda_2} $, 
	\[ u-e_{M,\lambda}\circ \tilde i_{M,\lambda} u\in C^{k,\gamma}_{M,\lambda_1}.\]
	Moreover, the following statements hold. 
	\begin{enumerate}[(i)]
	\item $\mathcal S_{\lambda_2}\subset C^{k+1,\gamma}_{M,\lambda}$ and $\operatorname{Ker}\bD_{M,\lambda_2}^{k,\gamma}=\operatorname{Ker}\bD_{M,\lambda}^{k,\gamma}$, and 		 $\operatorname{Ker}\tilde i_{M,\lambda}=C^{k+1,\gamma}_{M,\lambda_1}$. Moreover, $$\operatorname{Ker}\{i_{M,\lambda}:=\tilde i_{M,\lambda}:\operatorname{Ker}\bD_{M,\lambda_2}^{k,\gamma}\to V_{\lambda}\}=\operatorname{Ker}\bD_{M,\lambda_1}^{k,\gamma}.$$
	\item $\mathcal S_{\lambda_2}=C^{k+1,\gamma}_{M,\lambda_1}+\im e_{M,\lambda}$ and the restriction of ${\bD}_{M,\lambda_2}^{k,\gamma}$, denoted by  
$$\widehat{\bD}_{M,\lambda_1}^{k,\gamma}:C^{k+1,\gamma}_{M,\lambda_1}+\im e_{M,\lambda}\to C^{k,\gamma}_{M,\lambda_1-1} $$ has the property that
$\operatorname{Ker}\bD_{M,\lambda_2}^{k,\gamma}=\operatorname{Ker}\widehat{\bD}_{M,\lambda_1}^{k,\gamma}$ and $\operatorname{Coker}\bD_{M,\lambda_2}^{k,\gamma}\cong\operatorname{Coker}\widehat{\bD}_{M,\lambda_1}^{k,\gamma}$.
\item  \textbf{(Wall crossing formula)} $\ind \bD_{M,\lambda_2}=\ind \bD_{M,\lambda_1}+d_\lambda$.
	\end{enumerate} 
\end{lemma}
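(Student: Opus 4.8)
The plan is to set things up on the model cylinder first and then transplant to $M$ via the cutoff/gluing maps $e_{M,\lambda}$. The key analytic input is \autoref{lem fourier series Donaldson}: after the substitution $r=e^t$, the operator $r^2\bD_{C_i}r^{-1}$ is translation-invariant, its kernel on exponentially weighted spaces is controlled by the indicial roots, and any element of $\ker\bD_{C_i}$ on the appropriate weighted space decomposes $L^2$-orthogonally into homogeneous pieces $r^{\lambda-1}u_{\Sigma,\lambda}$ with $r^{\lambda-1}u_{\Sigma,\lambda}\in V_{\lambda_i}$. Using that $\abs{\lambda_2-\lambda_1}\leq\abs{\eta-1}$, the perturbation $\bD_M-\bD_{C_i}=O(r^{\eta_i-1})$ is subleading across the whole interval $(\lambda_1,\lambda_2)$ (resp. $(\lambda_2,\lambda_1)$), so a solution of $\bD_{M,\lambda_2}^{k,\gamma}u=v$ with $v\in C^{k,\gamma}_{M,\lambda_1-1}$ has, on each end, an asymptotic expansion whose only possible term in the gap $(\lambda_1,\lambda_2)$ is a homogeneous element of $V_{\lambda_i}$; the standard separation-of-variables/parametrix argument on the cylinder (as in \cite[Section 3.3.1]{Donaldson2002}) produces exactly such a leading term. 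Concretely, I would first construct $\tilde i_{M,\lambda}$ end-by-end: subtract off the gap-indicial-root component of the cylindrical Fourier expansion of $u$ on the end, observe that the remainder decays at rate $\lambda_1$, and check that the subtracted component, cut off by $\rho_{\epsilon_0}$ (CS) or $1-\rho_{R_\infty}$ (AC), lies in $W^{k+1,p}_{M,\lambda_2}$; this gives $u-e_{M,\lambda}\circ\tilde i_{M,\lambda}u\in C^{k,\gamma}_{M,\lambda_1}$. Uniqueness of $\tilde i_{M,\lambda}$ is immediate: two choices differ by an element of $e_{M,\lambda}(V_\lambda)$ that also lies in $C^{k,\gamma}_{M,\lambda_1}$, and since no indicial root equals $\lambda_1$ this forces the $V_\lambda$-element to vanish (the map $V_\lambda\to$ leading coefficients is injective because distinct homogeneous rates are distinguished by their growth).

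Next I would establish (i) and (ii), which are essentially bookkeeping. For (i): the inclusion $\mathcal S_{\lambda_2}\subset C^{k+1,\gamma}_{M,\lambda}$ follows from elliptic regularity \autoref{prop elliptic regularity} plus the expansion just described (the remainder is in $C^{k,\gamma}_{M,\lambda_1}\subset C^{k,\gamma}_{M,\lambda}$ and the subtracted term is $O(r^\lambda)$ by the definition of $V_\lambda$ and the sign conventions on $\lambda_1<\lambda<\lambda_2$); the equality $\ker\bD_{M,\lambda_2}^{k,\gamma}=\ker\bD_{M,\lambda}^{k,\gamma}$ holds because there is no indicial root strictly between $\lambda$ and $\lambda_2$, so a $\bD_M$-harmonic section decaying at rate $\lambda_2$ already decays at rate $\lambda$ (again \autoref{lem fourier series Donaldson} on the ends, \autoref{prop Fredholm Donaldson} for independence of $k,p,\gamma$). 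Then $\ker\tilde i_{M,\lambda}=C^{k+1,\gamma}_{M,\lambda_1}$ is the statement that a section in $\mathcal S_{\lambda_2}$ with vanishing leading term is precisely one that decays at the faster rate $\lambda_1$ — immediate from the expansion — and restricting to $\ker\bD_{M,\lambda_2}^{k,\gamma}$ gives $\ker i_{M,\lambda}=\ker\bD_{M,\lambda_1}^{k,\gamma}$. For (ii): $\mathcal S_{\lambda_2}=C^{k+1,\gamma}_{M,\lambda_1}+\im e_{M,\lambda}$ is the surjectivity of $\tilde i_{M,\lambda}$, which I would get by running the parametrix construction in reverse — given $w\in V_\lambda$, the section $e_{M,\lambda}(w)$ satisfies $\bD_M e_{M,\lambda}(w)=\bD_{C_i}e_{M,\lambda}(w)+O(r^{\eta_i-1})\cdot O(r^\lambda)$, and one checks the error lies in $C^{k,\gamma}_{M,\lambda_1-1}$ using the hypothesis $\abs{\lambda_2-\lambda_1}\leq\abs{\eta-1}$; so $e_{M,\lambda}(w)\in\mathcal S_{\lambda_2}$ with $\tilde i_{M,\lambda}e_{M,\lambda}(w)=w$. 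The identifications $\ker\bD_{M,\lambda_2}^{k,\gamma}=\ker\widehat\bD_{M,\lambda_1}^{k,\gamma}$ and $\coker\bD_{M,\lambda_2}^{k,\gamma}\cong\coker\widehat\bD_{M,\lambda_1}^{k,\gamma}$ then follow because $\widehat\bD_{M,\lambda_1}^{k,\gamma}$ is by definition the restriction of $\bD_{M,\lambda_2}^{k,\gamma}$ to the subspace $\mathcal S_{\lambda_2}$, which has the same image in $C^{k,\gamma}_{M,\lambda_1-1}$ (the image of $\bD_{M,\lambda_2}^{k,\gamma}$ already lands in $C^{k,\gamma}_{M,\lambda_1-1}$ on the relevant complement, by elliptic regularity and the weight comparison $\lambda_1<\lambda_2$) and the same kernel.

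Finally, the wall-crossing formula (iii). The clean way is to relate all three operators in play. By (ii), $\widehat\bD_{M,\lambda_1}^{k,\gamma}:\mathcal S_{\lambda_2}\to C^{k,\gamma}_{M,\lambda_1-1}$ has the same kernel and cokernel as $\bD_{M,\lambda_2}^{k,\gamma}$, so $\ind\widehat\bD_{M,\lambda_1}^{k,\gamma}=\ind\bD_{M,\lambda_2}^{k,\gamma}=\ind\bD_{M,\lambda_2}$. On the other hand, $\mathcal S_{\lambda_2}=C^{k+1,\gamma}_{M,\lambda_1}+\im e_{M,\lambda}$ and, by the uniqueness part, this sum is direct modulo the intersection $\im e_{M,\lambda}\cap C^{k+1,\gamma}_{M,\lambda_1}$, which is $e_{M,\lambda}$ of the space of $w\in V_\lambda$ with $e_{M,\lambda}(w)$ decaying at rate $\lambda_1$ — but that is $\{0\}$ since no element of $V_\lambda$ (homogeneous of rate $\lambda$, with $\lambda\neq\lambda_1$) can decay faster, so $\mathcal S_{\lambda_2}=C^{k+1,\gamma}_{M,\lambda_1}\oplus e_{M,\lambda}(V_\lambda)$ as vector spaces, with $\dim e_{M,\lambda}(V_\lambda)=d_\lambda$ (injectivity of $e_{M,\lambda}$ on $V_\lambda$ modulo decaying sections, just argued). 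Thus $\widehat\bD_{M,\lambda_1}^{k,\gamma}$ is a finite-rank extension of $\bD_{M,\lambda_1}^{k,\gamma}$ by a $d_\lambda$-dimensional space on the domain and nothing on the target, giving $\ind\widehat\bD_{M,\lambda_1}^{k,\gamma}=\ind\bD_{M,\lambda_1}^{k,\gamma}+d_\lambda=\ind\bD_{M,\lambda_1}+d_\lambda$. Combining, $\ind\bD_{M,\lambda_2}=\ind\bD_{M,\lambda_1}+d_\lambda$.

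The main obstacle I anticipate is not any single deep idea but getting the cutoff/error bookkeeping exactly right across the three cases of sign conventions (CS vs AC, and which of $\lambda_1,\lambda_2$ is larger), and in particular verifying that the hypothesis $\abs{\lambda_2-\lambda_1}\leq\abs{\eta-1}$ is precisely what makes the perturbation term $\bD_M-\bD_{C_i}=O(r^{\eta_i-1})$ negligible on the whole gap — i.e. that applying it to something of size $O(r^{\lambda_2})$ (CS: $O(r^{\lambda_2})$ near $0$; AC: near $\infty$) lands strictly inside $C^{k,\gamma}_{M,\lambda_1-1}$. Everything else (regularity, Fredholmness, the cylindrical Fourier decomposition) is quoted from \autoref{prop elliptic regularity}, \autoref{prop Fredholm Donaldson}, and \autoref{lem fourier series Donaldson}; the only genuinely new work is assembling the asymptotic limit map and checking the two exact-sequence-type identities in (ii) carefully enough that the index count in (iii) is unambiguous.
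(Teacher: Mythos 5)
Your proposal is correct and takes essentially the same route as the paper: the paper constructs $\tilde i_{M,\lambda}$ exactly as you do — cut $u$ off on the end to get a section $\tilde u$ on the cone, observe that the hypothesis $\abs{\lambda_2-\lambda_1}\leq\abs{\eta-1}$ forces $\bD_C\tilde u\in C^{k,\gamma}_{C,\lambda_1-1}$, invert $\bD_C$ at the non-critical weight $\lambda_1$ (via \autoref{lem fourier series Donaldson}) to produce $v\in C^{k+1,\gamma}_{C,\lambda_1}$ with $\tilde u - v\in\ker\bD_C$, and then extract the $\lambda$-homogeneous component — and for items (i)--(iii) it refers to the Lockhart--McOwen-type arguments in \cite{Bera2022} and \cite{Karigiannis_2020}, which you have reconstructed (the direct-sum decomposition $\mathcal S_{\lambda_2}=C^{k+1,\gamma}_{M,\lambda_1}\oplus e_{M,\lambda}(V_\lambda)$ and the finite-rank-extension index count $\ind\widehat\bD=\ind\bD_{M,\lambda_1}+d_\lambda$ is precisely the standard wall-crossing argument). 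The one place your write-up is slightly looser than the paper's is the phrase ``subtract off the gap-indicial-root component of the cylindrical Fourier expansion of $u$'': since $u$ is not itself in $\ker\bD_C$, you cannot directly Fourier-decompose it; you must first subtract the correction $v$ obtained from inverting $\bD_C$ at weight $\lambda_1$, and only then does $\tilde u - v$ admit the homogeneous decomposition of \autoref{lem fourier series Donaldson} — but you clearly have this in mind when you invoke the ``parametrix argument on the cylinder.''
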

\begin{proof}The proof of this lemma can be found in \cite[Proposition 4.21 - Corollary 4.24]{Karigiannis_2020}. We will only show the existence of the asymptotic map $\tilde i_{M,\lambda}$. Given $u\in \mathcal S_{\lambda_2} $ we define $\tilde u\in \bigoplus_{i=1}^m C^{k+1,\gamma} (NC_i)$ by  $$\tilde u:=\begin{cases*}
	\bigoplus_{i=1}^m\tilde u_i:=\rho_{\epsilon_0} u &  if  $M= P$  (CS)\\
	\bigoplus_{i=1}^m(1-\rho_{R_\infty})u & if   $M=L$  (AC).
	\end{cases*}$$ 
Denote any of the asymptotic cone by $C$. Since  $\bD_M=\bD_C+O(r^{\eta-1})$ and $\abs{\lambda_2-\lambda_1}\leq \abs{\eta-1}$, therefore $\bD_C\tilde u\in C^{k,\gamma}_{C,\lambda_1-1}$. \autoref{lem fourier series Donaldson} implies that there exists a unique $v\in C^{k+1,\gamma}_{C,\lambda_1}$ such that $\bD_C(\tilde u-v)=0$. We define the asymptotic map $\tilde i_{M,\lambda}$ as follows:
\begin{equation}\label{eq asymptotic limit map}
	\tilde i_{M,\lambda}(u):=r^{\lambda-1}(\tilde u-v)_{\Sigma,\lambda}.\qedhere
\end{equation}
\end{proof}
Finally, the following proposition computes the index of the AC or CS elliptic operator with an appropriate weight. This is essential for determining the dimension of the moduli spaces of AC and CS associative submanifolds.

\begin{prop}\label{prop index}$\operatorname{Ker}\bD_{M,\lambda}^{k,\gamma}$ is independent of $\lambda$ in each connected component of $\lambda\in\R^m\setminus\mathcal D_C$. Moreover,
		for all $\lambda\in\R^m\setminus\mathcal D_C$, we have
		\begin{enumerate}[(i)]
	\item $\operatorname{Coker}\bD_{M,\lambda}\cong \operatorname{Ker}\bD_{M,-2-\lambda} $. If $\lambda\geq -\frac 12$ for $M=L$ (AC) or $\lambda\leq -\frac 12$ for $M= P$ (CS), then  $\operatorname{Ker}\bD_{M,-2-\lambda}$ is equal to $\operatorname{Coker}\bD_{M,\lambda}$.
	\item If $s>0$ for $M=L$ (AC) or $s<0$ for $M= P$ (CS) such that $-1$ is the possibly only critical rates in between $-1-s$ and $-1+s$, then 
	$$\ind \bD_{M,-1+s}=\dim \operatorname{Ker}\bD_{M,-1+s}-\dim \operatorname{Ker}\bD_{M,-1-s}=\frac{d_{-1}}{2}$$
	
	\item $\ind \bD_{L,\lambda}=-\ind \bD_{P,\lambda}$ and \[\ind \bD_{L,\lambda}=\displaystyle\sum_{\lambda_i\geq -1}\Big(\frac{d_{-1,i}}{2}+\displaystyle\sum_{\zeta_i\in\mathcal D_{C_i}\cap(-1,\lambda_i)}d_{\zeta_i}\Big)-\sum_{\lambda_i< -1}\Big(\frac{d_{-1,i}}{2}+\sum_{\zeta_i\in\mathcal D_{C_i}\cap(\lambda_i,-1)}d_{\zeta_i}\Big).\]
		\end{enumerate}
		\end{prop}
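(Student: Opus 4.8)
\textbf{Proof proposal for \autoref{prop index}.}

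The plan is to assemble the three statements from the Fredholm package already developed: the Lockhart--McOwen-type results (\autoref{prop Fredholm Donaldson}), the adjointness identity (\autoref{prop integration by parts}), the wall-crossing formula (\autoref{lem main Fredholm}(iii)), and the symmetry of the indicial roots coming from \autoref{prop Fueter}\ref{prop Fueter symmetry} ($JV_{-1+\lambda} = V_{-1-\lambda}$, hence $d_{-1+\lambda} = d_{-1-\lambda}$). The first assertion, that $\operatorname{Ker}\bD_{M,\lambda}^{k,\gamma}$ is locally constant in $\lambda$ on $\R^m\setminus\mathcal D_C$, is immediate: the kernel can only jump when $\lambda$ crosses a critical rate, since enlarging the weight window between two consecutive walls adds no new decaying solutions (this is exactly the content of \autoref{lem main Fredholm}(i), $\operatorname{Ker}\bD_{M,\lambda_2}^{k,\gamma}=\operatorname{Ker}\bD_{M,\lambda}^{k,\gamma}$ for $\lambda$ in the open interval and $\lambda_2$ just past it).

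For part (i), I would use \autoref{prop integration by parts}: the formal adjoint of $\bD_{M,\lambda}^{k,p}$ with respect to the $L^2$-pairing is $\bD_{M,-3-\lambda}$ when we work on $L^p$/$L^q$ spaces (recall $L^p(NM) = L^p_{M,-3/p}$, and the pairing $L^p_{M,\lambda_1}\times L^q_{M,\lambda_2}$ is a duality exactly when $\lambda_1+\lambda_2 = -3$); combined with \autoref{prop dual weighted space} this gives $\operatorname{Coker}\bD_{M,\lambda} \cong \operatorname{Ker}\bD_{M,-3-\lambda}^{*}$, and since $\bD_M$ is formally self-adjoint as a differential operator, the cokernel is identified with the kernel of $\bD_M$ acting on the dual weight. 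Tracking the bookkeeping between the $W^{k+1,p}_{M,\lambda-1}$-target convention and the ambient $L^2$ correctly yields $\operatorname{Coker}\bD_{M,\lambda}\cong\operatorname{Ker}\bD_{M,-2-\lambda}$; the ``equal to'' refinement under $\lambda\geq-\tfrac12$ (AC) or $\lambda\leq-\tfrac12$ (CS) follows because then $-2-\lambda \leq -\tfrac32 < -1$ (AC) resp.\ $\geq -\tfrac12$, so $-1$ is not crossed and the kernel on the dual weight coincides with the genuine $L^2$-kernel rather than some strictly smaller or larger space. For part (ii), apply the wall-crossing formula \autoref{lem main Fredholm}(iii) across the single wall at $-1$: $\ind\bD_{M,-1+s} = \ind\bD_{M,-1-s} + d_{-1}$. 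On the other hand, by part (i), $\operatorname{Coker}\bD_{M,-1-s} \cong \operatorname{Ker}\bD_{M,-1+s}$ (since $-2-(-1-s) = -1+s$), so $\ind\bD_{M,-1-s} = \dim\operatorname{Ker}\bD_{M,-1-s} - \dim\operatorname{Ker}\bD_{M,-1+s} = -\ind\bD_{M,-1+s}$. Hence $2\ind\bD_{M,-1+s} = d_{-1}$, giving the claimed $\tfrac{d_{-1}}{2}$, and the middle equality of the displayed line is just $\operatorname{Coker}\bD_{M,-1+s}\cong\operatorname{Ker}\bD_{M,-1-s}$ again.

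For part (iii), the identity $\ind\bD_{L,\lambda} = -\ind\bD_{P,\lambda}$ should follow by the same adjoint/duality mechanism: the AC operator at weight $\lambda$ and the CS operator at weight $-2-\lambda$ are adjoints in the relevant weighted sense, and one checks the CS-versus-AC sign conventions in \autoref{prop integration by parts} (the inequality $\lambda_1+\lambda_2\geq-2$ for CS versus $\leq-2$ for AC) flip accordingly; alternatively, since the index is additive over ends and the asymptotic model operator $\bD_{C_i}$ is the same for both the AC and CS problem (just approached from $r\to\infty$ versus $r\to 0$), the wall-crossing contributions come with opposite signs. The explicit formula is then obtained by starting from a reference weight where the index is known—e.g.\ for the AC case, pick $\lambda_i$ close to $-1$ from above on each end, where part (ii) gives index $\sum_i \tfrac{d_{-1,i}}{2}$—and then applying the wall-crossing formula \autoref{lem main Fredholm}(iii) once for each critical rate $\zeta_i \in \mathcal D_{C_i}$ strictly between $-1$ and $\lambda_i$ (adding $d_{\zeta_i}$ when moving the weight up past a wall), respectively subtracting $d_{\zeta_i}$ for each $\zeta_i \in (\lambda_i,-1)$ when $\lambda_i < -1$; summing over ends gives the stated expression. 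The CS formula is the negative of this.

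The main obstacle I anticipate is getting all the sign and shift conventions internally consistent: the target of $\bD_{M,\lambda}$ is $W^{k,p}_{M,\lambda-1}$ (a $(-1)$-shift baked into the definition), the $L^2$-dual pairing of $L^p_{M,\lambda}$ is $L^q_{M,-3-\lambda}$, and \autoref{prop integration by parts} pairs $\lambda_1$ with $\lambda_2 = -2-\lambda_1$ rather than $-3-\lambda_1$ because of that shift—so the ``$-2-\lambda$'' in part (i) must be traced carefully through all three conventions, and the CS/AC dichotomy in the hypotheses of \autoref{prop dual weighted space} and \autoref{prop integration by parts} must be applied on the correct side. Once the duality $\operatorname{Coker}\bD_{M,\lambda}\cong\operatorname{Ker}\bD_{M,-2-\lambda}$ is pinned down, parts (ii) and (iii) are essentially formal consequences of the wall-crossing formula together with the $d_{-1+\lambda}=d_{-1-\lambda}$ symmetry, so no genuinely new analysis is needed beyond what \autoref{lem main Fredholm} and \autoref{prop integration by parts} already provide.
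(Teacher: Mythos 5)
Your proposal reconstructs the paper's proof essentially step by step: local constancy of the kernel is read off from \autoref{lem main Fredholm}, part (i) comes from the $L^2$-duality supplied by \autoref{prop dual weighted space} and \autoref{prop integration by parts} (correctly tracking the $-2-\lambda$ shift), part (ii) from applying (i) at $-1\pm s$ and the wall-crossing formula to force $2\ind\bD_{M,-1+s}=d_{-1}$, and part (iii) by wall-crossing from the reference weight provided by (ii). One step must be retracted. In (iii) you claim ``the AC operator at weight $\lambda$ and the CS operator at weight $-2-\lambda$ are adjoints in the relevant weighted sense.'' They are not: $\bD_L$ and $\bD_P$ act on sections of normal bundles over different manifolds $L$ and $P$, and \autoref{prop integration by parts} only gives formal self-adjointness of $\bD_M$ against itself on a single $M$, not a pairing between the two geometries. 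Because you immediately offer a correct alternative—derive the explicit formulas for $\ind\bD_{L,\lambda}$ and $\ind\bD_{P,\lambda}$ separately from (ii) by wall-crossing, with the crossing increments entering with opposite signs since raising the weight enlarges the AC space but shrinks the CS one, then observe the formulas are negatives of each other—the slip is not load-bearing, but the ``adjoint/duality mechanism between AC and CS'' sentence should be deleted. A smaller comment on the ``equal to'' refinement in (i): the mechanism is not about the wall at $-1$, but about the inclusion of weighted spaces. Precisely, when $\lambda\geq -\tfrac12$ (AC) or $\lambda\leq -\tfrac12$ (CS) one has $C^{k,\gamma}_{M,-2-\lambda}\subset C^{k,\gamma}_{M,\lambda-1}$, so $\ker\bD_{M,-2-\lambda}$ sits inside the codomain of $\bD_{M,\lambda}$ and hence is the cokernel rather than merely isomorphic to it; your $L^2$ observation ($-2-\lambda\leq -\tfrac32$ in the AC case) is a genuine part of this, but the clause ``$-1$ is not crossed'' is extraneous.
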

\begin{proof}The first statement is a direct consequence of part (iii) of previous lemma.
Now the first part of (i) follows from \autoref{prop dual weighted space} and \autoref{prop Fredholm Donaldson}, and if $\lambda$ as in second part of (i) we use the fact that $C^{k,\gamma}_{M,-2-\lambda}\subset C^{k,\gamma}_{M,\lambda}$. To see (ii), observe from (i) and the wall crossing formula of \autoref{lem main Fredholm} that $$\dim \operatorname{Ker}\bD_{M,-1+s}-\dim \operatorname{Ker}\bD_{M,-1-s}=\dim \operatorname{Ker}\bD_{M,-1-s}-\dim \operatorname{Ker}\bD_{M,-1+s}+d_{-1}.$$  
Finally, (iii) is an immediate consequence of (ii) and the wall crossing formula stated in the \autoref{lem main Fredholm}.
\end{proof}

\section{Deformations of CS associative submanifolds}
In this section, we study the moduli space of conically singular (CS) associative submanifolds in two settings: first, within a fixed co-closed $G_2$-structure, and second, within a smoothly varying one-parameter family of co-closed $G_2$-structures. We prove two key results: \autoref{thm moduli cs asso}, which describes the structure of the above moduli spaces, and \autoref{thm generic moduli cs asso}, which establishes transversality and generic smoothness results.

\subsection{Moduli space of CS associative submanifolds}\label{subsection moduli CS asso}
We begin by defining the universal moduli space and the moduli space in a fixed co-closed $G_2$-structure of CS associative submanifolds as a set.
	 	 \begin{definition}\label{def moduli CS asso}
We define the universal space of conically singular (CS) submanifolds, 
$\mathcal S_{\operatorname{cs}}:=\{(\phi,P):P \ \text{is a CS submanifold in}\ (Y,\phi), \phi \in \sP  \}.$ The universal \textbf{moduli space}  $\cM_{\operatorname{cs}}$ of conically singular associative submanifolds is defined by
$$\cM_{\operatorname{cs}}:=\{(\phi,P)\in \mathcal S_{\operatorname{cs}}:P \ \text{is a CS associative submanifold} \}.$$ 
There is a canonical projection map $\pi:\mathcal S_{\operatorname{cs}}\to\sP$ and we define the \textbf{moduli space} of CS associative submanifolds in $(Y,\phi)$ by \[\cM_{\operatorname{cs}}^\phi:=\pi^{-1}(\phi)\cap \cM_{\operatorname{cs}}.\qedhere\]
 \end{definition}

Next, we define a topology on $\mathcal{M}_{\operatorname{cs}}$, called the weighted $C^\infty$-topology. This topology is specified by defining a basis of open sets around each of its elements. Given an element of $\mathcal{M}_{\operatorname{cs}}$, we can make a choice of an end-conical singular (ECS) submanifold (see \autoref{def PC}) equipped with an end-conical (EC) tubular neighbourhood map (see \autoref{def section alpha for P}) near it. We then construct a canonical family of ECS submanifolds and corresponding EC tubular neighbourhood maps by varying the co-closed $G_2$-structures, as well as the singularity data—such as the positions of the singularities and their associated model cones. The open sets consisting of CS associative submanifolds lying inside this family of tubular neighbourhoods together with co-closed $G_2$-structures define a basis for the weighted $C^\infty$-topology around the given element.

\begin{definition}[ECS submanifold]\label{def PC} Let $P$ be a CS submanifold of $(Y,\phi)$ as in \autoref{def CS asso}. Given a choice of $G_2$-coordinate systems $\Upsilon^i$ at the singular points and the other data that is used for $P$ in \autoref{def CS asso}, we define an \textbf{end conical singular} (ECS) submanifold $P_C$ which is diffeomorphic to $\hat P$ but conical on the ends as follows:
$$P_C:=K_P\cup \big(\bigcup_{i=1}^m(\Upsilon^i\circ\Upsilon_{C_i})((1-\rho_{{\epsilon_0}})(\Psi^i_P-\iota_i))\big).$$ 
Here $\rho_{\epsilon_0}$ is the cut off function defined in \autoref{eq:cutoff function}. 

Set, \begin{equation*}
	C_{i,\epsilon_0}:=\iota((0,\epsilon_0)\times \Sigma_i) \qandq K_{P_C}:=P_C\setminus \bigcup_{i=1}^m\Upsilon^i(C_{i,\epsilon_0}).\qedhere
\end{equation*}
\end{definition}

\begin{definition}[EC tubular neighbourhood map]\label{def section alpha for P}
Let $P$ be a CS submanifold as in \autoref{def CS asso}. Let $P_C$ be a choice of an ECS submanifold as in \autoref{def PC}. A tubular neighbourhood map of ${P_C}$
	$$\Upsilon_{P_C}:V_{P_C}\to U_{P_C}$$
is called \textbf{end conical (EC)} if $V_{P_C}$ and $\Upsilon_{P_C}$ agree with $\Upsilon_*(V_C)$ and $\Upsilon\circ\Upsilon_{C}\circ\Upsilon_*^{-1}$ on $\Upsilon\big(\iota((0,\epsilon_0)\times \Sigma)\big)$, respectively. 
Here, $\Upsilon_{C}:V_C\to U_C$ is a conical tubular neighbourhood map of $C$ as in \autoref{def conical tubular nbhd}	

Given a choice of an ECS submanifold $P_C$ and a choice of an EC tubular neighbourhood map $\Upsilon_{P_C}$, there is a section $\alpha$ in $V_{P_C}\subset N{P_C}$ which is zero on $K_P$ and $\Upsilon^i_*\circ(\Psi^i_P-\iota_i)=\alpha\circ \Upsilon^i$ such that $\Upsilon_{P_C}(\Gamma_\alpha)$ is $P$.
\end{definition}

\begin{remark}
Set in the above framework, $$V_{C_{i,\epsilon_0}}:=V_{{C_i}_{|C_{i,\epsilon_0}}}, \quad U_{C_{i,\epsilon_0}}:=U_{{C_i}_{|C_{i,\epsilon_0}}}, \quad P_{C_{i,\epsilon_0}}:=P_{C_{|\Upsilon^i(C_{i,\epsilon_0})}},\qandq V_{P_{C_{i,\epsilon_0}}}:=V_{P_C{_{|\Upsilon^i(C_{i,\epsilon_0})}}}.$$ The following commutative diagram helps us to keep track of the definitions above.
\[\begin{tikzcd}
C_{i,\epsilon_0}\arrow[r,bend left,"\Psi^i_P-\iota_i"] \arrow{d}{\Upsilon^i}&V_{C_{i,\epsilon_0}} \arrow{l}\arrow{r}{\Upsilon_{C_i}} \arrow[swap]{d}{\Upsilon^i_*} & U_{C_{i,\epsilon_0}} \arrow{d}{\Upsilon^i} \\
P_{C_{i,\epsilon_0}}\arrow[r,bend right,"\alpha"]&V_{P_{C_{i,\epsilon_0}}}=\Upsilon^i_*(V_{C_{i,\epsilon_0}})\arrow{l} \arrow{r}{\Upsilon_{P_C}} & Y\setminus K_Y. 
\end{tikzcd}\qedhere
\]
\end{remark}

\begin{definition}[CS submanifolds after varying $G_2$-structure and singularity data]\label{def smooth family EC}
	Let $P\in \cM^\phi_{\ocs}$ be a CS associative submanifold with singularities at $p_i$ modeled on cones $C_i$, $i=1,\dots,m$ as in \autoref{def CS asso}. Let $\Sigma_i$ be the link of $C_i$. Let $P_C$ be a choice of an ECS submanifold as in \autoref{def PC}, and let $\Upsilon_{P_C} : V_{P_C} \to U_{P_C}$ be a choice of an EC tubular neighbourhood map of $P_C$ as in \autoref{def section alpha for P}. Since the singular points $p_i$ and associative cones $C_i$ are allowed to vary we need to also vary the $P_C$ and $\Upsilon_{P_C}$. This is done as follows.
	By \autoref{thm moduli holo}, there are tubular neighbourhood maps $\Upsilon_{\Sigma_i}:V_{\Sigma_i}\to U_{\Sigma_i}$ of ${\Sigma_i}$ and obstruction maps $\ob_{\Sigma_i}:\mathcal I_{\Sigma_i} \to \mathcal O_{\Sigma_i}$. Let $U_{\phi}$ and $U_{p_i}$ be sufficiently small neighbourhoods of $\phi$ in $\sP$ and $p_i$ in $ \Upsilon^i\big(B(0,{\epsilon_0})\big)\subset Y$ , respectively, where $\Upsilon^i$ is a $G_2$-coordinate system from \autoref{def CS asso}. Set 
\begin{equation}\label{eq Utau0}
U_{\tau_0}:=U_\phi \times \Big(\prod_{i=1}^m U_{p_i}\Big) \times \Big(\prod_{i=1}^m \cI_{\Sigma_i}\Big) \qandq \tau_0:=(\phi,p_1,\dots,p_m, 0, \dots, 0)\in U_{\tau_0}.
\end{equation}
The open set $U_{\tau_0}$ essentially parametrizes nearby co-closed $G_2$-structures together with nearby singularity data (positions and model cones).

For the parameter $\tau_0$, we already have that $P_C$ is an ECS submanifold and $\Upsilon_{P_C}$ is an EC tubular neighbourhood map. Our goal is to construct a canonical smoothly varying (in the parameter $\tau \in U_{\tau_0}$) family of ECS submanifolds $P_C^\tau$ with $G_2$-structure and the singularities and model cones are moved according to the data encoded in $\tau$. Furthermore, we aim to construct an EC tubular neighbourhood map $\Upsilon_{P_C^\tau}$, which will serve to define the weighted $C^\infty$-topology.

To achieve this, we construct a \textbf{canonical smooth family of diffeomorphisms} (introduced in the next \autoref{def canonical diffeo for weighted topo}),
\begin{equation}\label{eq canonical diffeo for weighted topo}
\Phi: U_{\tau_0} \to \prod_{i=1}^m \Diff(B(0, R)), \quad \tau := \big( \phi_\tau, (p_i^\tau), (\xi_i^\tau) \big) \mapsto (\Phi_i^\tau),
\end{equation}
satisfying the following conditions:
\begin{enumerate}[(i)]
\item $\Upsilon_i^\tau := \Upsilon^i \circ \Phi_i^\tau$ defines a $G_2$-coordinate system at $p_i^\tau$ for the co-closed $G_2$-structure $\phi_\tau$, where $\phi_\tau$  and $p_i^\tau$ are the data encoded in $\tau$.
\item $\Phi_i^{\tau_0}$ is the identity on $B(0, R)$, and for all $\tau \in U_{\tau_0}$, $\Phi_i^\tau$ is equal to the identity outside $B(0, 2\epsilon_0)$.
\end{enumerate}

With this in place, we define for each $\tau \in U_{\tau_0}$ an ECS submanifold $P_C^\tau$ and an associated EC tubular neighbourhood map $\Upsilon_{P_C^\tau}$ by
\[
P_C^\tau := \left( \bigcup_{i=1}^m \Upsilon_i^\tau \circ (\Upsilon^i)^{-1}(P_C) \right) \cup K_P, \quad \text{and} \quad \Upsilon_{P_C^\tau} := \left( \bigcup_{i=1}^m \Upsilon_i^\tau \circ (\Upsilon^i)^{-1} \circ \Upsilon_{P_C} \right) \cup \Upsilon_{{P_C}_{|K_P}} : V_{P_C} \to Y.
\]

Note that $P_C^\tau$ has singularities located at $p_i^\tau$, modeled on associative cones $C^\tau_i$ whose links are $\Sigma_i^\tau := \Upsilon_{\Sigma_i}(\xi_i^\tau)$, where $\xi_i^\tau \in \mathcal{I}_{\Sigma_i}$ represents the infinitesimal deformation of $\Sigma_i$ encoded in $\tau$. Moreover, any CS submanifold in $(Y, \phi)$ lying inside $U_{P_C}$ with the same asymptotic data as $P$ is mapped, via $\Upsilon_{P_C^\tau} \circ \Upsilon_{P_C}^{-1}$, to a CS submanifold in $(Y, \phi_\tau)$ with asymptotic data matching that of $P_C^\tau$, and vice versa.
 \end{definition}

\begin{definition}[Canonical smooth family of diffeomorphisms]\label{def canonical diffeo for weighted topo}
The time-$1$ flows of the following family of vector fields in \autoref{eq vf 3} parametrized by $U_{\tau_0}$, defines the required smooth family of diffeomorphisms $\Phi$ in \autoref{eq canonical diffeo for weighted topo}.

To define \autoref{eq vf 3}, we first construct a family of vector fields parametrized by $ \mathcal{I}_{\Sigma_i} $ as follows. Given an element $ \xi_i \in \mathcal{I}_{\Sigma_i} $, we use the extension map $ \widetilde{\bullet}$ from \autoref{def canonical extension normal vf} to obtain a vector field $ \widetilde{\xi}_i $ on $ V_{\Sigma_i} $. Applying the differential of the tubular neighbourhood map $ \Upsilon_{\Sigma_i} $, we obtain the vector field $ d\Upsilon_{\Sigma_i}(\widetilde{\xi}_i) $ on $ U_{\Sigma_i} $. Next, we extend this to a global vector field on $ S^6 $ by multiplying with a cut-off function supported in a neighbourhood of $ U_{\Sigma_i} $. Finally, we obtain a vector field $ v_{\xi_i} $ supported on $B(0, 2\epsilon_0)\subset B(0, R) $ by radially translating and multiplying by the cut-off function $ \rho_{\epsilon_0} $ defined in \autoref{eq:cutoff function}. In summary, the construction can be expressed as:
\begin{equation}\label{eq vf 1}
	\mathcal{I}_{\Sigma_i} \xrightarrow{\widetilde{\bullet}} \Vect(V_{\Sigma_i}) \xrightarrow{d\Upsilon_{\Sigma_i}} \Vect(U_{\Sigma_i}) \hookrightarrow \Vect(S^6) \xrightarrow{\rho_{\epsilon_0} \cdot} \Vect(B(0,R)).
\end{equation}
The time-$1$ flows of the vector fields constructed above move the holomorphic curves and their corresponding associative cones near origin but fixes the positions of the singularities.

Next, we construct another smooth family of vector fields parametrized by $U_\phi \times U_{p_i}$ as follows. Given a pair $({\phi^\prime}, {p_i^\prime}) \in U_\phi \times U_{p_i}$, we use the $G_2$-coordinate system $\Upsilon^i$ to pull back the $G_2$-structures $\phi$ at $p_i$  and $\phi^\prime$ at $p_i^\prime$. The resulting pullbacks $(\Upsilon^i)^*\phi(p_i)$ and $(\Upsilon^i)^*\phi^\prime(p_i^\prime)$ are two $G_2$-structures on $\R^7$, where the first can be mapped to the second by an element of $\mathrm{GL}_7(\R)$. This element can be taken to be the exponential of a matrix $A_{\phi', p_i'} \in M_7(\R)$. However, this choice is not unique, since composing with an element of $G_2$ yields another valid representative in $\mathrm{GL}_7(\R)$. To resolve this ambiguity, we fix a smooth family of matrices $A_{\phi', p_i'}\in M_7(\R)$ satisfying the initial condition $A_{\phi, p_i}=0$. We interpret $A_{\phi^\prime, p_i^\prime}$ as a linear vector field on $\R^7$ and hence consider its restriction to the ball $B(0, R)$. Translating by $(\Upsilon^i)^{-1}(p_i^\prime)$ and after that multiplying by the cut-off function $\rho_{\epsilon_0}$, we obtain a vector field $v_{\phi^\prime, p_i^\prime}$ supported in $B(0, 2\epsilon_0)$.  In summary, this defines the smooth family:
\begin{equation}\label{eq vf 2}
	U_\phi \times U_{p_i} \to \Vect(B(0,R)), \quad ({\phi^\prime}, {p_i^\prime}) \mapsto v_{\phi^\prime, p_i^\prime}\ ,
\end{equation}
where \[v_{\phi^\prime, p_i^\prime}(x):=\rho_{\epsilon_0} \big(A_{\phi^\prime, p_i^\prime}(x)+(\Upsilon^i)^{-1}(p_i^\prime)\big).\]
The time-$1$ flows of this family vary both the $G_2$-structure and the positions of the singularities together with the model associative cones.

By summing the smooth families of vector fields from \autoref{eq vf 1} and \autoref{eq vf 2}, we obtain the desired smooth family of vector fields:
\begin{equation}\label{eq vf 3}
	U_{\tau_0} \to \prod_{i=1}^m \Vect(B(0,R)). \qedhere
\end{equation}
\end{definition}
%that is, $$\tau:=\big({\phi_\tau},(p_i^\tau), (\xi_i^\tau)\big)\mapsto \big(v_i^\tau:=\rho_{\epsilon_0}v_{\phi_\tau,p_i^\tau}+ v_{\xi_i^\tau}\big).$$ 
\begin{definition}[\textbf{Weighted $C^\infty$-topology}]\label{def CS weighted topo}
We now define the {weighted $C^\infty$-topology} on the moduli space $\cM_{\operatorname{cs}}$ by specifying a local basis around each element $(\phi, P) \in \cM_{\operatorname{cs}}$. 
Given such an element, we make a choice of an ECS submanifold $P_C$ as in \autoref{def PC}, and a choice of an EC tubular neighbourhood map of $P_C$ as in \autoref{def section alpha for P}, and consider a neighbourhood $U_{\tau_0}$ as in \autoref{eq Utau0}, small enough so that, for all $\tau \in U_{\tau_0}$, the critical rates greater than $1$ of the associative cones $C_i^\tau$ remain uniformly bounded away from $1$. This allows us to choose a decay rate $\mu = (\mu_1, \dots, \mu_m)$ such that 
\begin{equation}\label{eq tech weighted topo}
2>\mu_i > 1, \qandq (1, \mu_i) \cap \mathcal{D}_{C^\tau_i} = \emptyset \quad \text{for all } \tau \in U_{\tau_0}.
\end{equation}
This technical assumption is explained in \autoref{rmk welldef weighted topo} below. For now, fix such $\mu$. The local basis of the topology is then given by all subsets of the form
\[
\mathcal{V_{\phi,P}^\mu}:=\left\{ \big( \phi_\tau, \Upsilon_{P_C^\tau}(\Gamma_u) \big) : u \in \mathcal{V}_{P_C, \mu}, \, \tau \in \mathcal{V}_{\tau_0} \right\} \cap \cM_{\operatorname{cs}},
\]
where $\mathcal{V}_{\tau_0} \subset U_{\tau_0}$ and $\mathcal{V}_{P_C, \mu} \subset C^\infty_\mu(V_{P_C}) := C^\infty_{P_C, \mu} \cap C^\infty(V_{P_C})$ are open subsets. Here  $ C^\infty_{P_C, \mu}$  is the space of smooth normal vector fields of order $O(r^\mu)$ on $P_C$ as defined in \autoref {eq decay weighted smooth function space}.
\end{definition}
\begin{remark}\label{rmk welldef weighted topo} The construction of $P_C$ and the tubular neighbourhood map $\Upsilon_{P_C}$ involves several choices; however, the weighted $C^\infty$-topology is independent of these choices.

 At first glance, the above definition may appear to depend on the choice of the decay rate $\mu$ satisfying \autoref{eq tech weighted topo}. However, for any other choice $\mu^\prime$ satisfying \autoref{eq tech weighted topo}, $\mu_i^\prime$ must lie in the same connected component of $(1,2)\setminus \mathcal{D}_{C_i^\tau}$ as $\mu_i$. Consequently, by \autoref{rmk CS two rates}, we have 
$\mathcal{V}_{\phi, P}^{\mu} = \mathcal{V}_{\phi, P}^{\mu\prime}$ and the above definition does not depend on the choice.

Another advantage of choosing each $\mu_i$ close to $1$ is to avoid crossing walls of critical rates. If $\mu$ were to cross such a wall, the corresponding set $\mathcal{V}_{\phi, P}^{\mu}$—parametrizing CS associative submanifolds with stronger decay—would become strictly smaller, yet still open in $\cM_{\operatorname{cs}}$, which is undesirable. In principle, one could define sub-moduli spaces by restricting to CS associative submanifolds with stronger decay rates; these would inherit the subspace topology from the weighted $C^\infty$-topology introduced above. However, we do not explore this direction in the present article.
\end{remark}

To understand the local structure of the moduli space $\mathcal{M}_{\operatorname{cs}}$ of CS associative submanifolds, we define below a nonlinear map whose zero set locally models $\mathcal{M}_{\operatorname{cs}}$.
Let $P\in \mathcal M^\phi_{\ocs}$ be a CS associative submanifold. We make a choice of an ECS submanifold $P_C$ as in \autoref{def PC}, and a choice of an EC tubular neighbourhood map of $P_C$ as in \autoref{def section alpha for P}. There is a \textbf{canonical} bundle isomorphism from \autoref{def prelim canonical prelim isomorphisms},
\begin{equation}\label{def identification of normal bundle}
\Theta_{P}^C:NP_C\to NP.
\end{equation}
 Choose $U_{\tau_0}$ and $\mu$ as in \autoref{def CS weighted topo}. For each $\tau \in U_{\tau_0}$, the ECS submanifold $P_C^\tau$ and the associated EC tubular neighbourhood map $\Upsilon_{P_C^\tau}$ be as in \autoref{def smooth family EC}.

\begin{definition}\label{def nonlinear map}
Define $\mathfrak F: C^{\infty}_{\mu} (V_{P_C})\times U_{\tau_0}\to C^{\infty}{(NP_C)}$ by for all $u\in C^\infty_{\mu} (V_{P_C})$, $\tau\in U_{\tau_0}$  and $w\in C_c^\infty (N{P_C})$, 
	 \begin{equation*}\inp{\mathfrak F(u,\tau)}{w}_{L^2}:=\int_{\Gamma_{u}}\iota_{w}\Upsilon_{P^\tau_C}^*\psi_\tau. 
 \end{equation*}
 The notation $w$ in the integrand is the extension vector field of $w\in C^\infty(NP_C)$ in the tubular neighbourhood as in \autoref{notation u}. The $L^2$ inner product we choose here is coming from the canonical bundle isomorphism $\Theta_{P}^C:NP_C\to NP$ as in \autoref{def identification of normal bundle} and the induced metric on $NP$ of $g_\phi$.
	 \end{definition}

\begin{remark}The CS associative $P$ is represented by a section $\alpha\in C^{\infty}_{\mu} (V_{P_C})$ from \autoref{def section alpha for P} and therefore $\mathfrak F(\alpha,\tau_0)=0$. Moreover, for $u\in C^{\infty}_{\mu} (V_{P_C})$ and $\tau\in U_{\tau_0}$ we have $\Upsilon_{P^{\tau}_C}(\Gamma_u)\in \cM^{\phi_\tau}_{\operatorname{cs}}$ iff $\mathfrak F(u,\tau)=0$.
	\end{remark}
To express the moduli space $\mathcal{M}^\phi_{\operatorname{cs}}$ locally as the zero set of a map (Kuranishi map) between finite-dimensional spaces, and to establish transversality results, we need to analyze the Fredholm property of the linearization of the nonlinear map introduced above. This is the focus of the following discussion.

\begin{prop}\label{prop linearization nonliear}For $\tau\in U_{\tau_0}$, the linearization of $\mathfrak F_\tau:=\mathfrak F(\cdot,\tau)$ at $u\in C^{\infty}_{P_C,\mu}(V_{P_C})$, 
$$d\mathfrak F_{\tau_{|u}}:C^{\infty}_{P_C,\mu}\to C^{\infty}_{P_C,\mu-1}$$ is given for all $v\in C^{\infty}_{P_C,\mu}$ and $w \in C_c^\infty(NP)$ by
$\inp{d\mathfrak F_{\tau_{|u}}(v)}{w}=\int_{\Gamma_u}\iota_w\mathcal L_v(\Upsilon_{P_C^\tau}^*\psi_\tau)$. This is same as
$$\int_{\Gamma_u}\sum_{\substack{\text{cyclic}\\\text{permutaions}}}\big\langle[e_2,e_3,\nabla_{e_1}v],w\big\rangle+\iota_w \nabla_v(\Upsilon_{P^{\tau}_C}^*\psi_\tau)+\iota_{\nabla_wv}(\Upsilon_{P^{\tau}_C}^*\psi_\tau),$$
where $\{e_1,e_2,e_3\}$ is a local orthonormal frame for $T\Gamma_u$ and the associator `$[\cdot,\cdot,\cdot]$' is defined with respect to $\Upsilon_{{P_C}^\tau}^*\phi_\tau$.
\end{prop}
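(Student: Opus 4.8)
The statement is a Lie-derivative computation for the pullback of the four-form $\psi_\tau$ along a variation, so the plan is to differentiate under the integral sign and then unwind the Lie derivative using Cartan's formula and the commutator identity from \autoref{rmk commutator}. First I would fix $\tau$ and treat $\Upsilon_{P_C^\tau}^*\psi_\tau$ as a fixed closed four-form on the total space $V_{P_C}$ — closed because $\phi_\tau$ is co-closed, i.e. $d\psi_\tau = 0$, and pullback commutes with $d$. Given $v,w \in C^\infty_{P_C,\mu}$ with $w$ compactly supported, write $\Phi_s$ for the time-$s$ flow of the extension vector field $\tilde v$ on $V_{P_C}$; then $\Gamma_{u+sv}$ agrees to first order in $s$ with $\Phi_s(\Gamma_u)$, and differentiating $s\mapsto \int_{\Phi_s(\Gamma_u)} \iota_w \Upsilon_{P_C^\tau}^*\psi_\tau$ at $s=0$ produces $\int_{\Gamma_u}\mathcal L_{\tilde v}\big(\iota_{\tilde w}\Upsilon_{P_C^\tau}^*\psi_\tau\big)$. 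Here one must be slightly careful that $w$ in the definition of $\mathfrak F$ denotes the extension field $\tilde w$ (as flagged in \autoref{notation u}), and that the weighted decay $u,v = O(r^\mu)$ with $\mu>1$ guarantees all the integrals converge and the differentiation under the integral is legitimate near the cone points; this is where the hypothesis $1<\mu_i\le 2$ is used implicitly.

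Next I would expand $\mathcal L_{\tilde v}(\iota_{\tilde w}\alpha)$ where $\alpha := \Upsilon_{P_C^\tau}^*\psi_\tau$. Using the standard identity $\mathcal L_X \iota_Y = \iota_Y \mathcal L_X + \iota_{[X,Y]}$, we get $\mathcal L_{\tilde v}\iota_{\tilde w}\alpha = \iota_{\tilde w}\mathcal L_{\tilde v}\alpha + \iota_{[\tilde v,\tilde w]}\alpha$. Now $\mathcal L_{\tilde v}\alpha = d\iota_{\tilde v}\alpha + \iota_{\tilde v}d\alpha = d\iota_{\tilde v}\alpha$ since $\alpha$ is closed, but for the purposes of restricting to $\Gamma_u$ and pairing against $w$ it is cleaner to keep it as $\mathcal L_{\tilde v}\alpha = \nabla_v \alpha$ plus algebraic terms built from $\nabla v$ contracted into $\alpha$, exactly as in the holomorphic-curve computation in the proof of \autoref{prop lin of nonlinear holo}. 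Concretely, for a local orthonormal frame $\{e_1,e_2,e_3\}$ of $T\Gamma_u$, the tangential part of $\mathcal L_{\tilde v}\alpha$ restricted to $\Gamma_u$ contributes, via the pointwise formula $\psi_\tau(\cdot,\cdot,\cdot,\cdot) = g_\tau([\cdot,\cdot,\cdot],\cdot)$ relating $\psi$ to the associator, the term $\sum_{\text{cyclic}}\langle [e_2,e_3,\nabla_{e_1}v], w\rangle$; the remaining piece is the genuine covariant-derivative term $\iota_w\nabla_v\alpha$. For the commutator term, \autoref{rmk commutator} gives $[\tilde v,\tilde w]=0$ when \emph{both} $v$ and $w$ are treated as canonical extensions — but in \autoref{prop linearization nonliear} the vector field $w$ appearing in $\iota_{\nabla_w v}$ is not the canonical extension of a constant section; rather it records the variation of the extension map itself, which is why $\iota_{\nabla_w v}\alpha$ survives as the third term. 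I would spell out this bookkeeping carefully: the $\iota_{\nabla_w v}(\Upsilon_{P_C^\tau}^*\psi_\tau)$ term is precisely the discrepancy between differentiating the \emph{section} $u\mapsto u+sv$ and differentiating the \emph{point} on $\Gamma_u$ along the flow, which is a $\nabla_w v$-contraction.

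The main obstacle, and the step deserving the most care, is matching the three-term formula exactly — in particular getting the tangential Lie-derivative term into the stated form $\sum_{\text{cyclic}}\langle[e_2,e_3,\nabla_{e_1}v],w\rangle$ with the correct sign and the correct identification of $\nabla$ as the covariant derivative compatible with $\Upsilon_{P_C^\tau}^*\phi_\tau$. This requires the pointwise linear-algebra identity for $G_2$-structures expressing the derivative of the associator/four-form along a normal variation in terms of the frame, which is essentially McLean's computation adapted to the non-torsion-free (merely co-closed) setting; since $d\psi_\tau=0$ but $\nabla\psi_\tau\ne 0$ in general, one cannot drop $\nabla_v\alpha$, and one must be scrupulous about which metric and connection ($g_{\phi_\tau}$, Levi-Civita of $g_{\phi_\tau}$ pulled back) appear. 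I would organize this by first doing the computation at $\tau=\tau_0$, $u=0$ to recover the known formula (cross-checking against \autoref{def D_M} and \autoref{prop linearization nonliear} specialized to that case, i.e. $\mathbf D_{P}$), and then noting that the general $(\tau,u)$ case is formally identical since everything is expressed intrinsically in terms of $\Upsilon_{P_C^\tau}^*\phi_\tau$ and its pullback four-form restricted to $\Gamma_u$. The convergence of the integrals and the validity of differentiating under the integral sign near the conical ends follow from the weighted estimates, with $\mu>1$ ensuring the integrands are $O(r^{\mu-1})$ against a volume form growing like $r^2\,dr$, hence integrable.
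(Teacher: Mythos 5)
Your outline — differentiate under the integral, apply $\mathcal L_{\tilde v}\iota_{\tilde w}\alpha=\iota_{\tilde w}\mathcal L_{\tilde v}\alpha+\iota_{[\tilde v,\tilde w]}\alpha$, invoke \autoref{rmk commutator}, then expand $\mathcal L_{\tilde v}\alpha$ pointwise via a torsion-free connection — is exactly the paper's route. But your explanation of where the $\iota_{\nabla_w v}\alpha$ term comes from is wrong, and carrying it through would give an incorrect answer.

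You claim that $[\tilde v,\tilde w]\ne 0$ in effect because ``$w$ appearing in $\iota_{\nabla_w v}$ is not the canonical extension of a constant section'' but ``records the variation of the extension map.'' This is not so. By \autoref{notation u} the $w$ in the integrand of $\mathfrak F$ \emph{is} the canonical extension $\tilde w$, and \autoref{rmk commutator} then gives $[\tilde v,\tilde w]=0$ on the nose, so the commutator term genuinely vanishes. The $\iota_{\nabla_w v}\alpha$ term arises instead from the elementary Leibniz expansion of $\mathcal L_v\alpha$ with respect to the Levi-Civita connection $\nabla$: for a $4$-form $\alpha$,
\begin{equation*}
(\mathcal L_v\alpha)(X_0,X_1,X_2,X_3)=(\nabla_v\alpha)(X_0,X_1,X_2,X_3)+\sum_{j=0}^{3}\alpha(X_0,\dots,\nabla_{X_j}v,\dots,X_3).
\end{equation*}
Applied to $(w,e_1,e_2,e_3)$, the $j=0$ insertion in the $w$-slot is $\alpha(\nabla_w v,e_1,e_2,e_3)=\iota_{\nabla_w v}\alpha(e_1,e_2,e_3)$, and $j=1,2,3$ give the cyclic sum. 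You have effectively left the $j=0$ term out of this expansion and tried to recover it from the commutator, which is a double repair; either you double-count it (if you also expand $\mathcal L_v\alpha$ correctly) or you end up with an unresolvable sign (if you only take the commutator). There is no ``section versus flow point'' discrepancy in this step, and nothing special need be said about the extension map; the whole point of \autoref{rmk commutator} is to kill the $\iota_{[\tilde v,\tilde w]}$ term outright so that all three pieces of the answer come from the single Leibniz expansion above.

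A smaller point: your cross-check at ``$\tau=\tau_0$, $u=0$'' would not directly recover $\mathbf D_P$, since $P$ corresponds to the section $\alpha$ rather than $0$ (cf.\ \autoref{def linearization L on cs asso } and \autoref{prop identification of normal bundle}); the relevant base point to compare against is $u=\alpha$.
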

\begin{proof} For a family $\{u+tv\in C^{\infty}_{\mu} (V_{P_C})  :\abs{t}\ll1\}$ we have $$\frac d{dt}\big|_{{t=0}}\mathcal F_{u+tv}(w)\\
=\frac d{dt}\big|_{{t=0}} \int_{\Gamma_{u+tv}}\iota_w(\Upsilon_{P^{\tau}_C}^*\psi_\tau)=\int_{\Gamma_u}\mathcal L_v\iota_w(\Upsilon_{P^{\tau}_C}^*\psi_\tau)=\int_{\Gamma_u}\iota_w\mathcal L_v(\Upsilon_{P^{\tau}_C}^*\psi_\tau)+{\iota_{[v,w]}(\Upsilon_{P^{\tau}_C}^*\psi_\tau)}.$$
As $[v,w]=0$ (see \autoref{notation u}), this is further equal to the following:
\begin{align*}
&\displaystyle\int_{\Gamma_u}\sum_{\substack{\text{cyclic}\\\text{permutaions}}}\iota_w(\Upsilon_{P^{\tau}_C}^*\psi_\tau)(\nabla_{e_1}v,e_2,e_3)+\int_{\Gamma_u}\iota_{\nabla_wv}(\Upsilon_{P^{\tau}_C}^*\psi_\tau)+\iota_w \nabla_v(\Upsilon_{P^{\tau}_C}^*\psi_\tau). \qedhere 
\end{align*}
\end{proof}
\begin{cor}\label{cor linearization cs}Let $u\in C^{\infty}_{\mu} (V_{P_C})$ and  $\tau \in U_{\tau_0}$. If $\Upsilon_{P^{\tau}_C}(\Gamma_u)\in \mathcal M_{\ocs}$, a CS associative submanifold then $d\mathfrak F_{\tau_{|u}}$ is given by the following: for all $v\in C^{\infty}_{P_C,\mu}$ and $w \in C_c^\infty(NP_C)$,
 \begin{equation*}
 	\inp{d\mathfrak F_{\tau_{|u}}(v)}{w}_{L^2} =\int_{\Gamma_u}\biggl\langle\sum_{\substack{\text{cyclic}\\\text{permutations}}}[e_2,e_3,\nabla_{e_1}v],w\biggr\rangle +\int_{\Gamma_u} \iota_w \nabla_v(\Upsilon_{P^{\tau}_C}^*\psi_\tau).\qedhere
 	 \end{equation*}
\end{cor}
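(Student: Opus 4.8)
The plan is to derive the stated identity directly from \autoref{prop linearization nonliear}, of which it is a strict simplification. That proposition already expresses $\inp{d\mathfrak F_{\tau_{|u}}(v)}{w}_{L^2}$ as a sum of three integrals over $\Gamma_u$:
\[
\int_{\Gamma_u}\sum_{\substack{\text{cyclic}\\\text{permutations}}}\big\langle[e_2,e_3,\nabla_{e_1}v],w\big\rangle,\qquad
\int_{\Gamma_u}\iota_w\nabla_v(\Upsilon_{P^{\tau}_C}^*\psi_\tau),\qquad
\int_{\Gamma_u}\iota_{\nabla_wv}(\Upsilon_{P^{\tau}_C}^*\psi_\tau),
\]
where $\{e_1,e_2,e_3\}$ is a local orthonormal frame for $T\Gamma_u$ and the associator is taken with respect to $\Upsilon_{P^{\tau}_C}^*\phi_\tau$. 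The first two of these are precisely the terms appearing in the corollary, so the only thing left to check is that the third integral vanishes once $\Upsilon_{P^{\tau}_C}(\Gamma_u)$ is an associative submanifold of $(Y,\phi_\tau)$.

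For that step I would work directly on $\Gamma_u$ carrying the pulled-back structure $\Upsilon_{P^{\tau}_C}^*\phi_\tau$ together with its $4$-form $\Upsilon_{P^{\tau}_C}^*\psi_\tau$. By the characterization in \autoref{def closed associative}, associativity of $\Upsilon_{P^{\tau}_C}(\Gamma_u)$ is exactly the statement that the associator of $\Upsilon_{P^{\tau}_C}^*\phi_\tau$ vanishes on every triple of vectors tangent to $\Gamma_u$. Feeding this into the algebraic identity $\psi(X_0,X_1,X_2,X_3)=g_\phi([X_1,X_2,X_3],X_0)$ — which holds pointwise on any almost $G_2$-manifold, being an identity on the model $(\R^7,\phi_e)$ and preserved under pullback — gives, for every $X_0$,
\[
(\Upsilon_{P^{\tau}_C}^*\psi_\tau)(X_0,e_1,e_2,e_3)=(\Upsilon_{P^{\tau}_C}^*g_{\phi_\tau})\big([e_1,e_2,e_3],X_0\big)=0.
\]
In other words $\iota_{e_1}\iota_{e_2}\iota_{e_3}(\Upsilon_{P^{\tau}_C}^*\psi_\tau)=0$ along $\Gamma_u$, i.e. the $3$-form $\iota_Z(\Upsilon_{P^{\tau}_C}^*\psi_\tau)$ restricts to zero on $T\Gamma_u$ for an arbitrary vector field $Z$ along $\Gamma_u$. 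Taking $Z=\nabla_wv$ makes the integrand of the third integral vanish pointwise, and the corollary follows.

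I do not expect a genuine obstacle here; the argument is essentially bookkeeping on top of \autoref{prop linearization nonliear}. Two points are worth spelling out in the write-up: (i) $\nabla_wv$ is a priori only a section of $TY_{|\Gamma_u}$, not of $NP_C$, but this is immaterial, since $\iota_{e_1}\iota_{e_2}\iota_{e_3}(\Upsilon_{P^{\tau}_C}^*\psi_\tau)$ annihilates all of $T_pY$ and not merely the normal directions; and (ii) the identity $\psi=g_\phi([\cdot,\cdot,\cdot],\cdot)$ is purely fibrewise, so it remains valid for the co-closed structure $\phi_\tau$ even though $\nabla\psi_\tau\neq 0$ in general — it is only the undifferentiated $4$-form that is annihilated by associativity, which is exactly why the $\nabla_v(\Upsilon_{P^{\tau}_C}^*\psi_\tau)$ term in the second integral is unaffected and survives.
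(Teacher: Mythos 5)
Your argument is correct, and it supplies exactly the verification the paper leaves implicit (the corollary carries a $\blacksquare$ with no separate proof, indicating it is regarded as immediate from \autoref{prop linearization nonliear}). The only missing piece beyond that proposition is precisely what you identify: the vanishing of $\int_{\Gamma_u}\iota_{\nabla_wv}(\Upsilon_{P^{\tau}_C}^*\psi_\tau)$, which follows because the algebraic identity $\psi_\tau(u,v,w,z)=g_{\phi_\tau}([u,v,w],z)$ holds fibrewise for any $G_2$-structure, and the associator of $\Upsilon_{P^{\tau}_C}^*\phi_\tau$ vanishes on $T\Gamma_u$ when $\Upsilon_{P^{\tau}_C}(\Gamma_u)$ is associative, so $\iota_{e_1}\iota_{e_2}\iota_{e_3}(\Upsilon_{P^{\tau}_C}^*\psi_\tau)$ annihilates every ambient vector — normal, tangent, or otherwise — at each point of $\Gamma_u$. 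Your two remarks at the end address exactly the points one might worry about. One cosmetic note: with the paper's convention $\psi(u,v,w,z)=g([u,v,w],z)$, the formula $\psi(X_0,X_1,X_2,X_3)=g([X_1,X_2,X_3],X_0)$ acquires a sign $(-1)^3$ from moving $X_0$ past three slots; this is immaterial here since the right-hand side vanishes, but worth getting right in a final write-up.
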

\begin{prop}\label{prop formal self adjoint}For $\tau\in U_{\tau_0}$, the linearization of $\mathfrak F_\tau$ at $u\in C^{\infty}_{P_C,\mu}(V_{P_C})$, $d\mathcal F_{\tau_{|u}}$ is a formally self adjoint first order differential operator.
\end{prop}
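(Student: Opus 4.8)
The plan is to show that $d\mathfrak F_{\tau|u}$ agrees, up to a zeroth-order symmetric term, with the Fueter-type operator $\mathbf D_{M}$ of \autoref{def D_M}, whose formal self-adjointness is exactly the content of the co-closed hypothesis $d\psi_\tau=0$. Concretely, I would work at the graph $\Gamma_u$ and use \autoref{prop linearization nonliear}, which expresses $\langle d\mathfrak F_{\tau|u}(v),w\rangle_{L^2}$ as the integral of
\[
\sum_{\text{cyclic}}\bigl\langle[e_2,e_3,\nabla_{e_1}v],w\bigr\rangle+\iota_w\nabla_v(\Upsilon_{P_C^\tau}^*\psi_\tau)+\iota_{\nabla_w v}(\Upsilon_{P_C^\tau}^*\psi_\tau).
\]
The first term is manifestly the expression appearing in \autoref{def D_M} (modulo the identification of the associator restricted to a $3$-plane with $e_i\times\nabla^\perp_{e_i}$, which is standard for associative submanifolds and already implicit in \autoref{rmk D_M}); in particular its $L^2$-pairing with $w$ defines a formally self-adjoint first-order operator because $\mathbf D_M$ is self-adjoint whenever $d\psi=0$ (\autoref{rmk D_M}, to be proven in \autoref{prop formal self adjoint} itself — so I must instead argue this term directly).

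So the real first step is to prove the self-adjointness of the principal-symbol term $v\mapsto \sum_{\text{cyclic}}[e_2,e_3,\nabla_{e_1}v]$ by an integration-by-parts argument: since $\nabla$ is metric and the cross product $\times$ (equivalently the associator restricted to tangent directions) is skew, one gets $\langle e_i\times\nabla^\perp_{e_i}v,w\rangle+\langle v,e_i\times\nabla^\perp_{e_i}w\rangle = e_i\langle v, e_i\times w\rangle$ pointwise (using $\nabla^\perp_{e_i}(e_i\times\cdot)=0$ type relations and the fact that the sum over an orthonormal frame is frame-independent), so the difference of the two pairings is a total divergence, integrating to zero on a compact graph (here $w$ is compactly supported so boundary terms vanish). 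The second and third terms are lower order: $\iota_w\nabla_v\psi$ and $\iota_{\nabla_w v}\psi$ are both pointwise bilinear and algebraic in $v,w$ and their derivatives, and I would show their sum is symmetric in $v\leftrightarrow w$ precisely when $d\psi_\tau=0$. The cleanest route is to use the identity $\mathcal L_v\iota_w\psi - \mathcal L_w\iota_v\psi = \iota_{[v,w]}\psi + \iota_w\iota_v\,d\psi$ (Cartan calculus, with $[v,w]=0$ for the canonically extended fields by \autoref{rmk commutator}), so $d\psi_\tau=0$ forces $\langle d\mathfrak F_{\tau|u}v,w\rangle=\langle v, d\mathfrak F_{\tau|u}w\rangle$ directly — which in fact handles the whole operator at once, not just the lower-order part.

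Thus a slicker organization, which I would adopt: start from $\langle d\mathfrak F_{\tau|u}(v),w\rangle_{L^2}=\int_{\Gamma_u}\iota_w\mathcal L_v(\Upsilon_{P_C^\tau}^*\psi_\tau)$ (the very first display in the proof of \autoref{prop linearization nonliear}), swap the roles of $v$ and $w$, subtract, and apply $\mathcal L_v\iota_w - \mathcal L_w\iota_v = \iota_{[v,w]} + \iota_w\iota_v\,d$ together with $[v,w]=0$ and $d\psi_\tau = 0$ (valid since $\phi_\tau$, hence $\psi_\tau$, is co-closed for every $\tau\in U_{\tau_0}$, because $\Upsilon_{P_C^\tau}$ pulls back a co-closed structure). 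This shows the $L^2$-pairing is symmetric; ellipticity and first-order are inherited from the principal-symbol computation showing the symbol is $\sigma(\xi)v = \sum \xi(e_i)\,e_i\times v$, a Clifford multiplication, hence invertible for $\xi\ne 0$. The main obstacle is bookkeeping rather than conceptual: I must be careful that the integration by parts really has no boundary contribution despite $P_C$ being noncompact — this is fine because $w\in C_c^\infty(NP_C)$ — and that the formal manipulations with $\mathcal L_v$, $\iota_w$ on the conical ends are justified, which follows since near the singular points everything is smooth on $\hat P$ and $w$ has compact support away from $\operatorname{sing}(P)$. I expect the only genuinely delicate point to be confirming that the ``canonical extension'' fields $\tilde v,\tilde w$ used to write these Lie-derivative expressions genuinely commute (\autoref{rmk commutator}) and that the pullback $\Upsilon_{P_C^\tau}^*\psi_\tau$ is indeed closed — but both are already recorded in the excerpt, so the proof should be short.
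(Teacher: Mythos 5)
Your ``slicker organization'' is exactly the paper's proof: form the difference $\langle d\mathfrak F_{\tau|u}(v),w\rangle-\langle d\mathfrak F_{\tau|u}(w),v\rangle=\int_{\Gamma_u}\mathcal L_v\iota_w\psi_\tau-\mathcal L_w\iota_v\psi_\tau$, rewrite with $\mathcal L_v\iota_w=\iota_w\mathcal L_v+\iota_{[v,w]}$, kill the commutator via \autoref{rmk commutator}, and apply Cartan's formula to reduce to $\int_{\Gamma_u}\iota_w\iota_v\,d\psi_\tau-d(\iota_w\iota_v\psi_\tau)=0$ using $d\psi_\tau=0$ and compact support of $w$. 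Two small remarks: the Cartan identity you quote, $\mathcal L_v\iota_w-\mathcal L_w\iota_v=\iota_{[v,w]}+\iota_w\iota_v\,d$, is missing the exact term $-\,d\iota_w\iota_v$; this does not affect your conclusion because that term integrates to zero, but it should be written out (as the paper does). Also, the first half of your proposal --- splitting off a Fueter-type principal part and arguing its self-adjointness directly by a frame computation, then treating the lower-order terms separately --- is circuitous and, as you yourself note, would require re-proving what it is meant to establish; you correctly abandon it for the single Cartan-calculus computation, which handles both orders at once and is what the paper does.
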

\begin{proof}For all $v, w \in C_c^\infty(NP_C)$, the difference $d\mathcal F_{\tau_{|u}}(v)(w)-d\mathcal F_{\tau_{|u}}(w)(v)$ is
$$\int_{\Gamma_u}\mathcal L_v\iota_w(\Upsilon_{P_C^\tau}^*\psi_\tau)-\mathcal L_w\iota_v(\Upsilon_{P_C^\tau}^*\psi_\tau)=\int_{\Gamma_u}\iota_w\mathcal L_v(\Upsilon_{P_C^\tau}^*\psi_\tau)+{\iota_{[v,w]}(\Upsilon_{P_C^\tau}^*\psi_\tau)}-\mathcal L_w\iota_v(\Upsilon_{P_C^\tau}^*\psi_\tau).$$
As $[v,w]=0$ and $d\psi_\tau=0$, this is equal to $\int_{\Gamma_u}\iota_w\iota_v(\Upsilon_{P_C^\tau}^*d\psi_\tau)-d(\iota_w\iota_v(\Upsilon^*_{P_C^\tau}\psi_\tau))=0$.
\end{proof}

\begin{definition}\label{def linearization L on cs asso } We define the differential operator $\mathfrak L_P:C_c^\infty(NP_C)\to C_c^\infty(NP_C)$ by 
\begin{equation*}\mathfrak L_Pu:=d\mathfrak F_{{\tau_0|}_{\alpha}}(u), \end{equation*}
where the CS associative $P$ is represented by $\alpha\in C^{\infty}_{\mu} (V_{P_C})$ as in \autoref{def section alpha for P}.
\end{definition}	

\begin{prop}\label{prop identification of normal bundle}We have
   \begin{equation*}
\mathfrak L_{P}=(\Theta_{P}^C)^{-1}\circ \mathbf D_{P}\circ \Theta_{P}^C,
  \end{equation*}
  where $\mathbf D_{P}:C^\infty_c(NP)\to C^\infty_c(NP)$ is the Fueter operator defined in \autoref{def D_M} and the canonical bundle isomorphism $\Theta_{P}^C:NP_C\to NP$ is as in \autoref{def identification of normal bundle}.
 \end{prop}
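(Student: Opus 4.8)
The statement to prove is that, under the canonical bundle isomorphism $\Theta^C_P\co NP_C\to NP$ of \autoref{def identification of normal bundle}, the operator $\mathfrak L_P$ (defined as the linearization of $\mathfrak F$ at $\alpha$, the section representing $P$) is conjugate to the Fueter operator $\mathbf D_P$ of \autoref{def D_M}. The plan is to show that $\Theta^C_P\circ \mathfrak L_P\circ(\Theta^C_P)^{-1}$ and $\mathbf D_P$ agree as operators on $C^\infty_c(NP)$, by comparing their defining weak formulations.

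First I would unwind the definitions. By \autoref{def linearization L on cs asso }, $\mathfrak L_P u=d\mathfrak F_{\tau_0|\alpha}(u)$, and by \autoref{cor linearization cs} applied with $\tau=\tau_0$, $u\rightsquigarrow\alpha$ (noting that $\Upsilon_{P_C}(\Gamma_\alpha)=P$ is a CS associative, so the corollary's hypothesis holds), the operator $d\mathfrak F_{\tau_0|\alpha}$ is given in weak form by
\[
\inp{d\mathfrak F_{\tau_0|\alpha}(v)}{w}_{L^2}=\int_{\Gamma_\alpha}\biggl\langle\sum_{\substack{\text{cyclic}\\\text{permutations}}}[e_2,e_3,\nabla_{e_1}v],w\biggr\rangle+\int_{\Gamma_\alpha}\iota_w\nabla_v(\Upsilon_{P_C}^*\psi),
\]
where the $L^2$-pairing and the metric are, by \autoref{def nonlinear map}, transported from $NP$ via $\Theta^C_P$. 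The key point is that $\Upsilon_{P_C}$ restricted to $\Gamma_\alpha$ is (by construction in \autoref{def section alpha for P}) a diffeomorphism onto $P$, and the canonical isomorphism $N\Gamma_\alpha\cong \pi^*NP_C|_{\Gamma_\alpha}$ from \autoref{eq normal bundle graph iso} composed with $d\Upsilon_{P_C}$ realizes exactly $\Theta^C_P$; pushing the integral over $\Gamma_\alpha$ forward to an integral over $P$ turns the pulled-back form $\Upsilon_{P_C}^*\psi$ into $\psi$ itself and the frame $\{e_i\}$ for $T\Gamma_\alpha$ into an orthonormal frame for $TP$. So the right-hand side becomes, for $\tilde v=\Theta^C_P v$, $\tilde w=\Theta^C_P w$,
\[
\int_P\biggl\langle\sum_{\substack{\text{cyclic}\\\text{permutations}}}[e_2,e_3,\nabla_{e_1}\tilde v],\tilde w\biggr\rangle+\int_P\iota_{\tilde w}\nabla_{\tilde v}\psi.
\]

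Next I would identify this with the weak formulation of $\mathbf D_P$ in \autoref{def D_M}, namely
\[
\inp{\mathbf D_P\tilde v}{\tilde w}_{L^2}=\int_P\Bigl\langle\sum_{i=1}^3 e_i\times\nabla^\perp_{P,e_i}\tilde v,\tilde w\Bigr\rangle+\int_P\iota_{\tilde w}\nabla_{\tilde v}\psi.
\]
The second terms already match verbatim. For the first terms I would use the pointwise identity relating the cyclic sum of associators to the Clifford-type term $\sum_i e_i\times\nabla^\perp_{P,e_i}\tilde v$: expanding $[e_2,e_3,\nabla_{e_1}\tilde v]=(e_2\times e_3)\times\nabla_{e_1}\tilde v+\inp{e_3}{\nabla_{e_1}\tilde v}e_2-\inp{e_2}{\nabla_{e_1}\tilde v}e_3$ using the definition of the associator from \autoref{def almost G2}, summing cyclically, and using that on an associative $P$ one has $e_2\times e_3=e_1$ (after suitably orienting the frame) together with the fact that the tangential part of $\nabla_{e_i}\tilde v$ and the normal part interact so that only $e_i\times\nabla^\perp_{P,e_i}\tilde v$ survives when paired against the normal vector field $\tilde w$. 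This is essentially the computation underlying McLean's formula, and it is where the associative condition $[T_xP,T_xP,T_xP]=0$ enters. Since this holds for all $\tilde w\in C^\infty_c(NP)$ and both sides are first-order differential operators, the weak identity upgrades to the operator identity $\Theta^C_P\circ\mathfrak L_P\circ(\Theta^C_P)^{-1}=\mathbf D_P$, i.e. $\mathfrak L_P=(\Theta^C_P)^{-1}\circ\mathbf D_P\circ\Theta^C_P$.

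The main obstacle I anticipate is bookkeeping rather than conceptual: carefully checking that the canonical isomorphism $\Theta^C_P$ is precisely the identification induced by $d\Upsilon_{P_C}|_{\Gamma_\alpha}$ together with the splitting \autoref{eq normal bundle graph iso}, so that pushing forward the integral from $\Gamma_\alpha$ to $P$ is compatible with the $L^2$-structures on both sides (this is exactly why \autoref{def nonlinear map} builds the $L^2$-pairing out of $\Theta^C_P$ in the first place), and verifying the pointwise algebraic identity between the cyclic associator sum and $\sum_i e_i\times\nabla^\perp_{P,e_i}$ on an associative submanifold. Both are routine given the identities in \autoref{def almost G2} and \autoref{lem nearly kahler }-type manipulations, but require care with orientations and with the tangential-versus-normal decomposition of $\nabla v$. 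A short remark should also note that since everything is local and $\mathbf D_P$, $\mathfrak L_P$ both act on $C^\infty_c(NP_C)$, the identity extends to the weighted spaces via the canonical extensions of \autoref{def CS/AC elliptic operator}.
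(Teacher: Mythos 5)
Your proposal is correct and follows essentially the same route as the paper: apply \autoref{cor linearization cs} to the graph $\Gamma_\alpha$, push the integral forward to $P$ via $\Upsilon_{P_C}$ identifying the normal bundles through $\Theta_P^C$, and match the result with the weak formulation of $\mathbf D_P$. The only step you treat as ``careful bookkeeping'' that the paper records explicitly is the justification for replacing the full pushforwards of $v,w$ by $\Theta_P^C v, \Theta_P^C w$: the differences $v-\Theta_P^C v$ and $w-\Theta_P^C w$ are tangent to $P$, and $[\cdot,\cdot,\cdot]|_{TP}=0$, so they contribute nothing -- which is precisely the associative condition you invoke in your cyclic-associator-to-cross-product computation.
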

\begin{proof}For all $v,w\in C_c^{\infty}(NP_C)$ we have,
	\begin{align*}
 	\inp{\mathfrak L_{P}v}{w}_{L^2}
 	&= \inp{\Theta_{P}^C\mathfrak L_{P}v}{\Theta_{P}^Cw}_{L^2(NP)}\\
 	&=\int_{\Gamma_\alpha}\biggl\langle\sum_{\substack{\text{cyclic}\\\text{permutations}}}[e_2,e_3,\nabla^\perp_{e_1}v],w\biggr\rangle+\int_{\Gamma_\alpha} \iota_w \nabla_v(\Upsilon_{P_C}^*\psi)\\
 	&=\int_{P}\biggl\langle\sum_{\substack{\text{cyclic}\\\text{permutations}}}[e_2,e_3,\nabla^\perp_{e_1}(\Theta_{P}^Cv)],\Theta_{P}^Cw\biggr\rangle+\int_{P} \iota_{\Theta_{P}^Cw} \nabla_{\Theta_{P}^Cv}\psi\\
 	&=\inp{\bD_P \Theta_{P}^Cv}{\Theta_{P}^Cw}_{L^2(NP)}.
 	 \end{align*}
 The equality before the last equality holds, because $w-\Theta_{P}^Cw\in TP$ and $v-\Theta_{P}^Cv\in TP$, and $[\cdot,\cdot,\cdot]_{|P}=0$.
	 \end{proof}
\begin{prop}\label{prop CS elliptic operator}The operator $\mathfrak L_{P}$ is a conically singular uniformly elliptic operator asymptotic to the conical operators $\mathbf D_{C_i}$.  
\end{prop}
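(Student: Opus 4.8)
\textbf{Proof proposal for \autoref{prop CS elliptic operator}.}

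The plan is to combine \autoref{prop identification of normal bundle} with the decomposition of the Fueter operator over a cone established in \autoref{prop Fueter}, verifying the three conditions in \autoref{def CS/AC elliptic operator}: that $\mathfrak L_P$ is a first-order, formally self-adjoint elliptic operator; that $r^2\mathfrak L_P r^{-1}$ is asymptotically translation invariant and uniformly elliptic after the substitution $r=e^t$; and that, after identifying normal bundles by the canonical bundle isomorphisms over the ends, $\mathfrak L_P = \mathbf D_{C_i} + O(r^{\mu_i-1})$. The formal self-adjointness is already \autoref{prop formal self adjoint}, and ellipticity of first order is immediate from \autoref{prop identification of normal bundle} together with the fact that $\mathbf D_P$ is the Fueter operator of \autoref{def D_M}, whose principal symbol is Clifford multiplication (hence invertible off the zero section); conjugation by the bundle isomorphism $\Theta_P^C$ does not affect the principal symbol.

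The substantive step is the asymptotic comparison near each singular point $p_i$. First I would work inside a single $G_2$-coordinate system $\Upsilon^i$, using the ECS submanifold $P_C$ of \autoref{def PC} and its EC tubular neighbourhood map: on $\Upsilon^i(C_{i,\epsilon_0})$ the submanifold $P_C$ is literally the cone $C_i$ pushed forward, so the operator $\mathfrak L_P$ restricted there is (after the canonical identifications of \autoref{def prelim canonical prelim isomorphisms}) built from the ambient Levi-Civita connection and the $4$-form $\psi_\phi$ pulled back via $\Upsilon^i$. The point is that $(\Upsilon^i)^*\phi = \phi_e + O(r)$ and $(\Upsilon^i)^*\psi = \psi_e + O(r)$ as $r\to 0$ — this follows because $d\Upsilon^i_0$ is a $G_2$-framing, so the pulled-back structures agree with the flat ones to zeroth order at the origin, and their derivatives are bounded. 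Consequently the cross product, normal connection, and associator determined by $(\Upsilon^i)^*\phi$ differ from the Euclidean ones on the cone $C_i$ by terms that are $O(r)$; by \autoref{def D_M} the operator $\mathbf D_{\hat P}$ expressed over this end is then $\mathbf D_{C_i}$ plus such an $O(r)$ perturbation of the coefficients. But in fact one can do better: since $P$ is a CS submanifold of rate $\mu_i$, the submanifold $\hat P$ differs from the cone $C_i$ by a normal section of size $O(r^{\mu_i})$ with derivatives of size $O(r^{\mu_i-k})$, and the second fundamental form and induced connection of $\hat P$ differ from those of $C_i$ by $O(r^{\mu_i-1})$ (as in the Riemannian estimate recorded in the remark after \autoref{def CS asso}); combining the $O(r)$ ambient error with the $O(r^{\mu_i-1})$ geometric error, and using $\mu_i \le 2$ so that $\mu_i - 1 \le 1$, gives exactly $\mathfrak L_P = \mathbf D_{C_i} + O(r^{\mu_i-1})$ after the canonical bundle identifications. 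Then \autoref{prop Fueter}\ref{prop Fueter decomposition} gives $r^2\mathbf D_{C_i} r^{-1} = J\partial_t + (\mathbf D_{\Sigma_i}+J)$ as in \autoref{eq conical cylindrical DC}, which is translation invariant and uniformly elliptic on the cylinder, and the $O(r^{\mu_i-1})$ perturbation becomes $O(e^{(\mu_i-1)t})\to 0$ error in the coefficients, so $r^2\mathfrak L_P r^{-1}$ is asymptotically translation invariant and uniformly elliptic in the sense of \cite[Section 4.3.2]{Marshall2002}.

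The main obstacle I expect is bookkeeping rather than conceptual: one must carefully track how the several canonical bundle isomorphisms ($\Theta_P^C$ from \autoref{def identification of normal bundle}, the normal-bundle identification $NC_i = \pi^*N\Sigma_i$, and the pushforward under $\Upsilon^i\circ\Upsilon_{C_i}$) interact, and check that the "$O(r^{\mu_i-1})$" estimate is uniform in all derivatives — i.e. that $\nabla^k$ of the difference $\mathfrak L_P - \mathbf D_{C_i}$ is $O(r^{\mu_i-1-k})$, which is what the definition of a CS elliptic operator and the weighted Hölder framework require. This follows from differentiating the defining relations and using the CS estimates $\abs{(\nabla^\perp_{C_i})^k(\Psi^i_P-\iota_i)} = O(r^{\mu_i-k})$ from \autoref{def CS asso} together with smoothness of $\phi$, but it is the step that needs the most care. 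Since $\mathfrak L_P$ and $\mathbf D_P$ are conjugate by a fixed smooth bundle isomorphism, uniform ellipticity and the asymptotic-to-conical property transfer between them verbatim, so it suffices to prove the statement for $\mathbf D_{\hat P}$ and then invoke \autoref{prop identification of normal bundle}.
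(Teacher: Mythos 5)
Your proposal is correct and follows the same scheme as the paper's proof: split the deviation of $\mathfrak L_P$ from $\mathbf D_{C_i}$ into the ambient error (the pulled-back $G_2$-structure is $O(r)$ off flat at $p_i$ because $d\Upsilon^i_0$ is a $G_2$-framing) and the geometric error ($\hat P$ is $O(r^{\mu_i})$ off the cone with derivatives $O(r^{\mu_i-k})$), and then use $\mu_i\leq 2$ to conclude the $O(r^{\mu_i-1})$ contribution dominates. The paper packages the same two-step estimate through the weak formulation of the nonlinear map — first comparing $d\mathfrak F_{\tau_0|_0}$ with $\mathbf D_{C}$ using $\Upsilon^*\psi=\psi_e+O(r)$, then invoking the (forward-referenced) quadratic estimate \autoref{prop cs quadratic estimate}(i) to pass from the linearization at the zero section to the linearization at $\alpha$ — whereas you carry out a direct coefficient-by-coefficient comparison; the two amount to the same estimate.

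One imprecision worth flagging: the induced normal connection and the $\nabla\psi$ term of the Fueter operator are only $O(1)$ off the flat model near $p_i$, since the Christoffel symbols of $(\Upsilon^i)^*g_\phi$ are first derivatives of a metric that agrees with $g_e$ only at the origin; likewise $\operatorname{II}_{\hat P}-\operatorname{II}_{C_i}=O(r^{\mu_i-2})$, not $O(r^{\mu_i-1})$. The argument is saved because these are zeroth-order contributions, and in the cylindrical normalization $r^2(\cdot)r^{-1}$ that \autoref{def CS/AC elliptic operator} uses to make sense of the asymptotic condition, zeroth-order perturbations pick up an extra factor of $r$ — so $O(1)$ and $O(r^{\mu_i-2})$ land at $O(r)\subseteq O(r^{\mu_i-1})$ and $O(r^{\mu_i-1})$ respectively — but as written your stated orders must be read in this rescaled, not pointwise, sense.
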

\begin{proof}
Since $\mathbf D_P$ is elliptic and  $\mathfrak L_{P}=(\Theta_{P}^C)^{-1}\circ\mathbf D_{P}\circ\Theta^C_{P}$ by \autoref{prop identification of normal bundle},  we obtain that $\mathfrak L_{P}$ is elliptic.

Without loss of generality we assume that $m=1$ and $C=C_i$. It remains to prove that $\mathfrak L_{P}$ is CS asymptotic to $\mathbf D_C$. We substitute below $r=e^t$ and denote $C_t:=(t,\infty)\times\Sigma$. For all $v,w\in C^\infty_c(NC_t)$  we have
\begin{align*}
	\inp{r^2d\mathfrak F_{{\tau_0|}_{0}}\Upsilon_*(r^{-1}v)}{\Upsilon_*w}_{L^2}&=\int_{C_t}r\iota_w\mathcal L_v(\Upsilon_{C}^*\Upsilon^*\psi)\\
	&=\int_{C_t}r\iota_w\mathcal L_v(\Upsilon_{C}^*\psi_0)+O(e^{-t})\norm{v}_{W^{1,2}(NC)}\norm{w}_{L^2(NC)}.
\end{align*}
By \autoref{prop cs quadratic estimate} (i) we get $$\inp{\Upsilon^*\mathfrak L_{P}v}{w}_{L^2(NC)}= \inp{\mathbf D_Cv}{w}_{L^2(NC)}+ O(r^{\mu-1})\norm{v}_{W^{1,2}(NC)}\norm{w}_{L^2(NC)}.$$
This completes the proof of the proposition.
\end{proof}

\autoref{prop CS elliptic operator} and \autoref{prop identification of normal bundle} imply that $\mathbf D_{P}$ is also a CS uniformly elliptic operator and asymptotic to the conical operator $\mathbf D_C$. Thus $\fL_P$ and $\bD_P$ has canonical extensions to weighted function spaces and this is the content of the following definition. 

\begin{definition}\label{def linearization D on cs asso} Let $P$ be a CS associative submanifold and $\mathbf D_{P}:C^\infty_c(NP)\to C^\infty_c(NP)$ be the Fueter operator defined in \autoref{def D_M}. \autoref{prop CS elliptic operator} implies that it is a conically singular uniformly elliptic operator. 
Therefore it has the following canonical extensions: 
\begin{equation*}\mathbf D_{P,\lambda}^{k,p}:W^{k+1,p}_{P,\lambda}\to W^{k,p}_{P,\lambda-1}, \ \ \ \mathbf D_{P,\lambda}^{k,\gamma}:C^{k+1,\gamma}_{P,\lambda}\to C^{k,\gamma}_{P,\lambda-1}.
\end{equation*}
Similarly we have canonical extensions of the operator $\mathfrak L_{P}$ to weighted function spaces:
  \begin{equation*}
\mathfrak L_{P,\lambda}^{k,p}:W^{k+1,p}_{P_C,\lambda}\to W^{k,p}_{P_C,\lambda-1}, \ \ \ \mathfrak L_{P,\lambda}^{k,\gamma}:C^{k+1,\gamma}_{P_C,\lambda}\to C^{k,\gamma}_{P_C,\lambda-1}.\qedhere  \end{equation*}
\end{definition}

\begin{definition}[Asymptotic limit map]\label{def asymp limit map}
We define the \textbf{asymptotic limit map} $$i_{P,\lambda}:\operatorname{Ker}\mathbf D_{P,\lambda}\to V_\lambda$$ to be the map $i_{P,\lambda}$ defined in \autoref{lem main Fredholm} for $\bD_P$. 
\qedhere	
 \end{definition}

The following proposition computes the linearization of the non-linear map $\fF$ with respect to variations in the singularity data—that is, the effect of moving the singular points and deforming their associated model cones.

\begin{prop}\label{lem linearization 2nd component}
The linearization of $ \mathfrak{F}(\alpha, \phi, \cdot) $ at $ (p_1, \dots, p_m, 0, \dots, 0) $,
\[
L_1 \colon \bigoplus_{i=1}^m (\bR^7 \oplus V_{1,i}) \to C^{\infty}_{P_C, \mu-1}
\]
is of the form:
\[
L_1(\hat p_i, \xi_i) = \mathfrak{L}_P\left( \rho_{\epsilon_0}\Upsilon^i_* (\hat  p_i^\perp+\xi_i) \right)+  O(r^{\mu-1}).
\]
Here, $ \bR^7 $ is identified with the tangent space of $ U_{p_i} $ at $ p_i $ by the framing $\upsilon_i$, and $ V_{1,i} $ is the homogeneous kernel of the cone $ C_i $ whose link is $\Sigma_i$, at rate $ \lambda = 1 $ in the sense of \autoref{def homogeneous kernel}. The later is identified with the tangent space of $ \mathcal{I}_{\Sigma_i} $ at $0$, which is again $T_{\Sigma_i}\cM^{\operatorname{hol}}$. The vector $\hat p_i \in \bR^7$ projects to a normal vector field $\hat  p_i^\perp$ on $C_i$ and $\xi_i\in V_{1,i}$ is already a normal vector field on $C_i$.
\end{prop}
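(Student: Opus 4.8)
The plan is to compute the derivative of $\mathfrak F(\alpha,\phi,\tau)$ with respect to the singularity data $\tau=(p_1,\dots,p_m,\xi_1,\dots,\xi_m)$ at the base point $\tau_0$, using the fact that the $\tau$-dependence of $\mathfrak F$ enters only through the family of diffeomorphisms $\Phi_i^\tau$ of \autoref{def canonical diffeo for weighted topo} and hence through the pulled-back forms $\Upsilon_{P_C^\tau}^*\psi_\tau$. First I would note that, since $\mathfrak F$ is $L^2$-defined by integration of $\iota_w\Upsilon_{P_C^\tau}^*\psi_\tau$ over $\Gamma_\alpha$, differentiating in $\tau$ amounts to computing $\frac{d}{ds}\big|_0 \Upsilon_{P_C^{\tau(s)}}^*\psi_{\tau(s)}$ along a path $\tau(s)$ with $\tau(0)=\tau_0$ and prescribed initial velocity $(\hat p_i,\xi_i)$. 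By construction, $\Upsilon_{P_C^\tau}=\big(\bigcup_i \Upsilon_i^\tau\circ(\Upsilon^i)^{-1}\circ\Upsilon_{P_C}\big)\cup\Upsilon_{{P_C}|_{K_P}}$ where $\Upsilon_i^\tau=\Upsilon^i\circ\Phi_i^\tau$, and $\Phi_i^\tau$ is the time-$1$ flow of the vector field $v_{\tau}^i$ of \autoref{eq vf 3}. Therefore, on the conical ends, the $\tau$-derivative is the Lie derivative $\mathcal L_{V_i}$ of $\Upsilon_{P_C}^*\psi$ along the normal part of the generator $V_i:=\frac{d}{ds}\big|_0\Phi_i^{\tau(s)}$, which — by \autoref{eq vf 1} and \autoref{eq vf 2} and the chain rule for flows — is precisely $\rho_{\epsilon_0}$ times the vector field corresponding to $\hat p_i^\perp+\xi_i$ (the $\hat p_i$ contribution coming from the translation/linear-vector-field piece $v_{\phi',p_i'}$, whose derivative at $\tau_0$ in the $p_i$-direction is translation by $(\Upsilon^i)^{-1}(p_i')$, i.e. the constant vector $\hat p_i$, and the $\xi_i$ contribution coming from \autoref{eq vf 1}, whose derivative at $0\in\mathcal I_{\Sigma_i}$ is the canonical extension $\widetilde{\xi_i}$ pushed forward by $d\Upsilon_{\Sigma_i}$). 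Splitting $\hat p_i$ into tangential and normal parts to $C_i$ and discarding the tangential part (which corresponds to reparametrization and does not affect the zero set / contributes only to the kernel direction along $\Gamma_\alpha$), one is left with $\hat p_i^\perp+\xi_i$.

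The second step is to recognize that this Lie derivative expression is, up to error terms of the stated order, the same operator as the linearization $\mathfrak L_P$ applied to the section $\rho_{\epsilon_0}\Upsilon^i_*(\hat p_i^\perp+\xi_i)$. This follows from \autoref{prop linearization nonliear}: the linearization of $\mathfrak F_\tau$ in the \emph{first} argument at $u$ is exactly $v\mapsto \int_{\Gamma_u}\iota_w\mathcal L_v(\Upsilon_{P_C^\tau}^*\psi_\tau)$, so differentiating in $\tau$ and differentiating in $u$ both produce a Lie derivative of the same $4$-form against a normal vector field; the only difference is that in the $\tau$-direction the vector field is $V_i$ (compactly supported in $B(0,2\epsilon_0)$ via the cutoff $\rho_{\epsilon_0}$) rather than an arbitrary $v\in C^\infty_{P_C,\mu}$, and that one must also account for the $\tau$-derivative of $\psi_\tau$ itself (the $G_2$-structure changes as we move $p_i$, since the framing must stay adapted). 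I would isolate the latter as follows: on the cone $C_i$, $\Upsilon^{i*}\phi=\phi_e+O(r)$ and more precisely the $G_2$-coordinate system is chosen so that the pulled-back structure agrees with $\phi_e$ to first order at the origin; hence the $\tau$-derivative of $\Upsilon_{P_C^\tau}^*\psi_\tau$ that is \emph{not} captured by the flow $\Phi_i^\tau$ is $O(r)$ relative to the cone metric, and after applying it inside the integral against a section of size $O(r^{\mu})$ (the section $\alpha$, on which we are linearizing) it contributes to $C^\infty_{P_C,\mu-1}$ with an extra factor of $r$, i.e. it lies in $O(r^{\mu-1})$ — which is absorbed into the error term. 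Similarly, since $\alpha=O(r^\mu)$ with $\mu>1$, replacing $\mathfrak L_{P}$ (the linearization at $\alpha$) by the conical model $\mathbf D_{C_i}$ introduces only $O(r^{\mu-1})$ errors by \autoref{prop CS elliptic operator}; but I would keep $\mathfrak L_P$ in the final formula since the error is stated relative to it.

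The third, essentially bookkeeping, step is to verify that the $O(r^{\mu-1})$ remainders genuinely lie in the target space $C^\infty_{P_C,\mu-1}$ and depend linearly and boundedly on $(\hat p_i,\xi_i)$, so that $L_1$ is a well-defined bounded linear map; this is immediate from the weighted Hölder/Sobolev machinery of \autoref{Weighted Banach spaces} and \autoref{prop cs quadratic estimate} together with the compact support of the cutoffs. I would also record that $\hat p_i^\perp$ is the image of $\hat p_i\in\bR^7\cong T_{p_i}U_{p_i}$ under the projection $\bR^7\to N_{C_i}$ at each point of the link (using that translations of $\R^7$ restricted to $C_i$ project to the homogeneous-kernel elements in $V_{0,i}$ — cf. \autoref{rmk d0 geq 7} — and more precisely that a constant vector $\hat p$ on $\R^7$ projects to a normal field of the cone that, as a section $r^{0-1}\nu_\Sigma\cdot r = $ const., lies in the $\lambda=0$ homogeneous kernel), and that $\xi_i\in V_{1,i}$ is the homogeneous kernel at rate $1$ already identified with $T_{\Sigma_i}\mathcal M^{\operatorname{hol}}$ via \autoref{thm moduli holo}.

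The main obstacle I anticipate is the careful interchange of the two linearizations — differentiating in $\tau$ through the flow $\Phi_i^\tau$ versus differentiating in $u$ — and in particular keeping precise track of which error terms are $O(r^{\mu-1})$. The subtle point is that $\Phi_i^\tau$ acts by \emph{diffeomorphism} on all of $B(0,R)$, not just by a normal translation of $P_C$, so $\frac{d}{d\tau}\Upsilon_{P_C^\tau}^*\psi_\tau$ is a priori a Lie derivative along a vector field with \emph{both} tangential and normal components relative to $P_C$; one must argue (using $[v,w]=0$ for extension fields, \autoref{rmk commutator}, together with $d\psi_\tau=0$ as in the proof of \autoref{prop formal self adjoint}) that the tangential component integrates to zero against any $w\in C^\infty_c(NP_C)$ after an integration by parts, so that only the normal component $\hat p_i^\perp+\xi_i$ survives in the formula. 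Getting this cancellation clean, and confirming that the $G_2$-structure variation $\frac{d}{d\tau}\psi_\tau$ contributes only to the $O(r^{\mu-1})$ error (rather than to the leading term), is where the real work lies.
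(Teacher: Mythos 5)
Your approach is essentially the same as the paper's: compute $L_1$ along paths $\tau(s)$ with the $\phi$-slot frozen, recognize the $\tau$-derivative of $\Upsilon_{P_C^\tau}^*\psi$ as a Lie derivative along the time-zero generator of $\Phi_i^\tau$, identify the normal part of that generator with $\rho_{\epsilon_0}\Upsilon^i_*(\hat p_i^\perp+\xi_i)$ and hence the main contribution with $\fL_P$ via \autoref{prop linearization nonliear} and \autoref{def linearization L on cs asso }, and relegate the remainder to $O(r^{\mu-1})$. The paper's own proof is far terser (two lines per direction, the $\xi$-direction written as an exact equality and the $\hat p$-direction with a bare ``$+O(r^{\mu-1})\norm{w}_{L^2}$''), and you supply the missing reasoning about tangential parts and about why the residual is of the stated order; this is in the spirit of the remark immediately following the proposition, which explicitly acknowledges that the formula is stated ``only up to'' the imprecision.

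One small inaccuracy to flag: you attribute part of the error to ``the $\tau$-derivative of $\psi_\tau$ that is not captured by the flow $\Phi_i^\tau$''. In this proposition $\phi$ (and hence $\psi$) is held fixed, so there is no such residual derivative -- the entire $\tau$-dependence of $\Upsilon_{P_C^\tau}^*\psi_\tau$ is through the diffeomorphism $\Phi_i^\tau$. The correct source of the $O(r^{\mu-1})$ remainder in the $\hat p$-direction is internal to the flow generator itself: by \autoref{eq vf 2}, the linearized generator is $\rho_{\epsilon_0}(\hat A_{\hat p}\,x + \hat p)$, where $\hat A_{\hat p}\in M_7(\R)$ is the linearization of the matrix $A_{\phi,p_i'}$ keeping the framing adapted as $p_i$ moves; the constant piece $\hat p$ gives the main term after projecting to the normal bundle, while the linear piece $\hat A_{\hat p}\,x$ (size $O(r)$) and the tangential part $\hat p^\parallel$ feed into the error. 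Your later sentence about $\Upsilon^{i*}\phi=\phi_e+O(r)$ is where this should in fact live. Similarly, the phrase ``against a section of size $O(r^\mu)$'' is a little misleading: the test object in the pairing is $w\in C^\infty_c(NP_C)$, not $\alpha$; the role of $\alpha=O(r^\mu)$ is in deciding that $\Gamma_\alpha$ is $O(r^\mu)$-close to the cone and that $\fL_P$ is a CS uniformly elliptic operator asymptotic to $\bD_{C_i}$. These are cosmetic issues; the substantive plan is correct and matches the paper.
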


\begin{proof}Without loss of generality, we assume that $m=1$ and $C=C_i$. Suppose $\xi\in  V_{1}$ and for each $t$ small, $\tau_t:=(\phi, p, t\xi)$.
For all $w\in C^\infty_c(NP_C)$ we have	
$$\inp{L_1(0,\xi)}{w}_{L^2}=\int_{\Gamma_{\alpha}}\frac d{dt}\big|_{{t=0}} \iota_w(\Upsilon_{P_C^{\tau_t}}^*\psi)=\int_{\Gamma_{\alpha}}\iota_w\mathcal L_{(\rho_{\epsilon_0}\xi)}(\Upsilon_{P_C}^*\psi).$$

Suppose $\hat p\in \R^7$ and for each $t$ small, $\tau_t:=(\phi, \Upsilon (t\hat p),0)$.
For all $w\in C^\infty_c(NP_C)$ we have	
$$\inp{L_1(\hat p,0)}{w}_{L^2}=\int_{\Gamma_{\alpha}}\frac d{dt}\big|_{{t=0}} \iota_w(\Upsilon_{P_C^{\tau_t}}^*\psi)=\int_{\Gamma_{\alpha}}\iota_w\mathcal L_{(\rho_{\epsilon_0}\hat p^\perp)}(\Upsilon_{P_C}^*\psi)+O(r^{\mu-1}) \norm{w}_{L^2}.$$
 The proposition now follows from \autoref{def linearization L on cs asso } of $\fL_P$. 
 \end{proof}
 
\begin{remark}The expressions for $L_1$ in the above \autoref{lem linearization 2nd component} is given only up to terms of order $O(r^{\mu-1})$ near the singularities. This imprecision poses no issue for the purposes of this article, as the explicit form of $L_1$ is not used except in \autoref{lem generic matching kernel d=2}. In that lemma, the ambiguity is inconsequential.
\end{remark}

The operator $\fL_{P}$ from \autoref{def linearization L on cs asso } governs the associative deformations of $P$ with fixed singulaity data. By incorporating the additional operator $L_1$ from \autoref{lem linearization 2nd component}, we define the extended operator $\widetilde \fL_P$ in the following lemma, which is less obstructed and captures the broader class of deformations in which the singularities are permitted to move and the model cones may vary.

\begin{definition}\label{def lin L, D with Z}Let $P$ be a CS associative as in \autoref{def CS asso} asymptotic to $C_i$, $i=1,\dots,m$. Denote the link of $C_i$ by $\Sigma_i$. Let $\Sigma_i=\sqcup_{j=1}^l\Sigma_i^j$ be the decomposition into connected components. Let $\cZ_i^j$ be the manifold in the decomposition \autoref{eq stratification} that contains $\Sigma_i^j$. Set $\cZ_i=\prod_{j=1}^l\cZ_i^j$. We also denote by $\Sigma_i$ the product $\prod_{j=1}^l\Sigma_i^j\in \cZ_i $.  Since $T_{\Sigma_i}\cZ_i \subset T_0\cI_{\Sigma_i}=V_{1,i}$, we define
$$\widetilde \fL_{P,\mu,\cZ}:C^{\infty}_{P_C,\mu}\oplus \big(\bigoplus_{i=1}^m(\R^7\oplus T_{\Sigma_i}\cZ_i) \big)\to C^{\infty}_{P_C,\mu-1}$$
to be the restriction of the linearization of $\mathfrak F(\cdot,\phi,\cdot)$ at $(\alpha,\phi,p_1,\dots,p_m, 0, \dots, 0)$, that is, 
\begin{equation*}
	\widetilde \fL_{P,\mu,\cZ}(u, \hat p_i, \xi_i)=\fL_{P}u+L_1(\hat p_i, \xi_i), \end{equation*}
where $u\in C^{\infty}_{P_C,\mu} $, $\hat p_i\in \R^7$ and $\xi_i\in T_{\Sigma_i}\cZ_i$. The operators $\fL_{P}$ and $L_1$ are from \autoref{def linearization L on cs asso } and \autoref{lem linearization 2nd component}. 

The operator $\widetilde \bD_{P,\mu,\cZ}:C^{\infty}_{P,\mu}\oplus \big(\bigoplus_{i=1}^m(\R^7\oplus T_{\Sigma_i}\cZ_i) \big)\to C^{\infty}_{P,\mu-1}$ is the operator $\widetilde \fL_{P,\mu,\cZ}$ under the identification map $\Theta_{P}^C$ from \autoref{def identification of normal bundle}, that is, 
	\begin{equation*}
	\widetilde \bD_{P,\mu,\cZ}(u,\hat p_i, \xi_i):=\Theta_{P}^C\widetilde \fL_{P,\mu,\cZ}((\Theta_{P}^C)^{-1}u,\hat p_i, \xi_i)=\bD_P u+ \Theta_{P}^C L_1(\hat p_i, \xi_i).\qedhere
 \end{equation*}\end{definition}
The following proposition computes the linearization of the non-linear map $\fF$ with respect to variations of the co-closed $G_2$-structures.
\begin{prop}\label{lem linearization 3rd component}The linearization of $\mathfrak F(\alpha,\cdot,p_1,\dots,p_m, 0,\dots,0)$ at $\phi$,
 $$L_2:T_{\phi}\sP\to C^{\infty}_{P_C,\mu-1}$$ satisfies the following: 
 for any $\hat \phi\in T_{\phi}\sP=\Omega^3(Y) $ and $w\in C^\infty_c(NP_C)$, we have
$$\inp{L_2(\hat \phi)}{w}_{L^2}=\int_{\Gamma_{\alpha}}\iota_w(\Upsilon_{P_C}^*\hat \psi)+\sum_{i=1}^m\int_{\Gamma_{\alpha}}\iota_w\mathcal L_{\Upsilon^i_*X_{\hat \phi,i}}(\Upsilon_{P_C}^*\psi),$$
where $\hat \psi:=\frac d{dt}\big|_{{t=0}}\psi_t$ with $\psi_t=*_{\phi_t}\phi_t$ and $\phi_t:=\phi+t\hat \phi$, and the vector field $X_{\hat \phi,i}:=v_{\phi+\hat \phi,p_i}$ from \autoref{eq vf 2}. 
\end{prop}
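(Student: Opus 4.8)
The plan is to compute the linearization directly from the defining integral formula for $\mathfrak F$ in \autoref{def nonlinear map}, exactly as was done in \autoref{prop linearization nonliear} and \autoref{lem linearization 2nd component}, but now differentiating in the $G_2$-structure argument. Recall $\inp{\mathfrak F(u,\tau)}{w}_{L^2}=\int_{\Gamma_u}\iota_w\Upsilon_{P_C^\tau}^*\psi_\tau$. Fixing $u=\alpha$, the singularity data $(p_1,\dots,p_m,0,\dots,0)$, and setting $\phi_t:=\phi+t\hat\phi$, I would differentiate $t\mapsto \int_{\Gamma_\alpha}\iota_w\Upsilon_{P_C^{\tau_t}}^*\psi_t$ at $t=0$, where $\tau_t$ encodes the pair $(\phi_t,p_1,\dots,p_m,0,\dots,0)$. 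Two things vary with $t$: the $4$-form $\psi_t=*_{\phi_t}\phi_t$, and the EC tubular neighbourhood map $\Upsilon_{P_C^{\tau_t}}$, which changes because the $G_2$-coordinate systems $\Upsilon_i^{\tau_t}=\Upsilon^i\circ\Phi_i^{\tau_t}$ must be adapted to $\phi_t$ via the canonical family of diffeomorphisms of \autoref{def canonical diffeo for weighted topo}.

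For the first contribution, since $\psi_t$ depends on $t$ only through the $4$-form and $\Upsilon_{P_C^{\tau_0}}=\Upsilon_{P_C}$, the $t$-derivative of $\int_{\Gamma_\alpha}\iota_w\Upsilon_{P_C}^*\psi_t$ is simply $\int_{\Gamma_\alpha}\iota_w\Upsilon_{P_C}^*\hat\psi$ with $\hat\psi=\frac{d}{dt}\big|_{t=0}\psi_t$, since pullback is linear in the form. For the second contribution, the derivative of $\Upsilon_{P_C^{\tau_t}}$ in $t$ is governed by the vector field generating the flow $\Phi_i^{\tau_t}$; by construction in \autoref{eq vf 2}, near $p_i$ this flow is generated at leading order by $v_{\phi+\hat\phi,p_i}=X_{\hat\phi,i}$ (up to the term coming from moving $p_i$, which is absent here since the singular points are held fixed — I should double-check that with the singularity data fixed at $0$ the only surviving generator is the $A_{\phi',p_i}$-part of \autoref{eq vf 2}). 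Pulling this back through $\Upsilon^i_*$ gives a vector field on $P_C$ near the singularities, and the standard formula $\frac{d}{dt}\big|_{t=0}(\Phi_t)^*\omega=\mathcal L_X\omega$ for the Lie derivative then yields the term $\sum_i\int_{\Gamma_\alpha}\iota_w\mathcal L_{\Upsilon^i_*X_{\hat\phi,i}}(\Upsilon_{P_C}^*\psi)$. Adding the two contributions gives the claimed formula.

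The routine part is the Lie-derivative bookkeeping; the part requiring genuine care is verifying that the generator of the $G_2$-coordinate-system variation is indeed $v_{\phi+\hat\phi,p_i}$ at first order, i.e. that the chosen smooth family of matrices $A_{\phi',p_i}$ with $A_{\phi,p_i}=0$ linearizes correctly to produce $X_{\hat\phi,i}$, and that all the error terms coming from the transition between $\Upsilon^i$ and $\Upsilon_i^{\tau_t}$ away from leading order are absorbed consistently — in the weighted spaces $C^\infty_{P_C,\mu-1}$ these do not affect the stated identity because the formula is an exact $L^2$-pairing against compactly supported $w$, so no remainder term is even needed in the statement (unlike \autoref{lem linearization 2nd component}, which carries an $O(r^{\mu-1})$). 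The main obstacle is thus essentially notational: keeping track of how the varying tubular neighbourhood map contributes a Lie-derivative term and confirming it matches the vector field $X_{\hat\phi,i}$ defined back in \autoref{eq vf 2}, rather than any analytic difficulty. I would conclude by noting, as in the proof of \autoref{prop linearization nonliear}, that the interchange of $d/dt$ and the integral over $\Gamma_\alpha$ is justified because the integrand is smooth and $w$ has compact support, so the differentiation under the integral sign is legitimate.
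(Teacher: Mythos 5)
Your proposal is correct and takes essentially the same approach as the paper: differentiate the defining integral under the integral sign, splitting the $t$-derivative into the variation of $\psi_t$ (giving the $\hat\psi$ term) and the variation of the EC tubular neighbourhood map $\Upsilon_{P_C^{\tau_t}}$ via the flow generated by $X_{\hat\phi,i}$ (giving the Lie-derivative term). You also correctly identify that, with the singular points and holomorphic-curve data held at $\tau_0$, the translation part of $v_{\phi',p_i'}$ in \autoref{eq vf 2} vanishes so only the $A_{\phi',p_i}$-part contributes, and that the identity is exact rather than up to $O(r^{\mu-1})$, both of which the paper's brief proof takes implicitly.
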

\begin{proof}Without loss of generality we assume that $m=1$ and $C=C_i$. Suppose $\hat \phi \in T_{\phi}\sP$ and for each $t$ small, $\tau_t:=(\phi_t, p, 0)$. 
For all $w\in C^\infty_c(NP_C)$ we have,	
\begin{equation*}\inp{L_2(\hat \phi)}{w}_{L^2}=\int_{\Gamma_{\alpha}}\frac d{dt}\big|_{{t=0}} \iota_w(\Upsilon_{P_C^{\tau_t}}^*\psi_t)=\int_{\Gamma_{\alpha}}\iota_w(\Upsilon_{P_C}^*\hat \psi)+\int_{\Gamma_{\alpha}}\iota_w\mathcal L_{\Upsilon_*X_{\hat \phi}}(\Upsilon_{P_C}^*\psi). \end{equation*}
 The proposition now follows from \autoref{def linearization L on cs asso } of $\fL_P$.
\end{proof}

The following definition introduces the $1$-parameter moduli space of CS associative submanifolds, along with the corresponding linear operators used to describe the local structure of this moduli space.
\begin{definition}\label{def 1 para moduli CS}
Let $\bsP$ be the space of paths $\bphi:[0,1]\to \sP$ that are smooth as section over $[0,1]\times Y$. Equip $\bsP$ with the $C^\infty$ topology. Define the $1$-parameter moduli space of CS associatives by the fiber product 
\begin{equation*}
	\bcM_{\operatorname{cs}}^\bphi:=[0,1]\times_\sP \cM_{\operatorname{cs}}.\qedhere
\end{equation*}
If $\bphi\in \bsP$ and $(t_0,P)\in \bcM_{\operatorname{cs}}^\bphi$, we denote $$\hat\phi:=\frac d{dt}\big|_{{t=t_0}}\phi_t,\ \ f_P:= L_2(\hat \phi )\in C^\infty(NP_C),,\ \ \hat f_P:= \Theta_{P}^C f_P \in C^\infty(NP)$$
where $L_2$ and $\Theta_{P}^C$ are as in \autoref{lem linearization 3rd component} and \autoref{def identification of normal bundle} , respectively. We define 
$$\widebar \fL_{P,\mu,\cZ}:\R\oplus C^{\infty}_{P_C,\mu}\oplus \big(\bigoplus_{i=1}^m(\R^7\oplus T_{\Sigma_i}\cZ_i) \big)\to C^{\infty}_{P_C,\mu-1}$$
by $\widebar \fL_{P,\mu,\cZ}(t,\tilde u)=\widetilde \fL_{P,\mu,\cZ} \tilde u+ t f_P$, where $\widetilde \fL_{P,\mu,\cZ}$ is defined in \autoref{def lin L, D with Z}. 

The operator $\widebar  \bD_{P,\mu,\cZ}:\R\oplus C^{\infty}_{P,\mu}\oplus \big(\bigoplus_{i=1}^m(\R^7\oplus T_{\Sigma_i}\cZ_i) \big)\to C^{\infty}_{P,\mu-1}$ is the operator $\widebar \fL_{P,\mu,\cZ}$ under the identification map $\Theta_{P}^C$, that is,
 $$\widebar  \bD_{P,\mu,\cZ}(t,\tilde u)=\widetilde \bD_{P,\mu,\cZ} \tilde u+ t \hat f_P,$$
 where $\widetilde \bD_{P,\mu,\cZ}$ is again defined in \autoref{def lin L, D with Z}
\end{definition}

The final ingredient we need before proving \autoref{thm moduli cs asso} about the local structure of the moduli space is a quadratic estimate, which we now proceed to establish.

\begin{definition}
	For all $\tau\in U_{\tau_0}$, the nonlinear map $Q_\tau: C^{\infty}_{\mu} (V_{P_C})\to C^{\infty}{(NP_C)}$ is defined  by 
	  \begin{equation*}
Q_\tau:=\mathfrak F_\tau-d\mathfrak F_{\tau_{|0}}-\mathfrak F_{\tau}(0).\qedhere 
 \end{equation*}
\end{definition}

\begin{prop}\label{prop cs quadratic estimate}For all $\tau\in U_{\tau_0}$ and $u, v\in C^{\infty}_{P_C,\mu}(V_{P_C})$, and $\eta \in C^{\infty}_{P_C,\mu}$ we have
\begin{enumerate}[(i)]
\item $\abs{d\mathfrak F_{\tau_{|u}}(\eta)-d\mathfrak F_{\tau_{|v}}(\eta)}\lesssim (w_{P_C,1}\abs{u-v}+\abs{\nabla^\perp (u-v)})(w_{P_C,1}\abs{\eta}+\abs{\nabla^\perp \eta})$.
\item $\abs{Q_\tau(u)-Q_\tau(v)}\lesssim (w_{P_C,1}\abs{u}+\abs{\nabla^\perp u}+w_{P_C,1}\abs{v}+\abs{\nabla^\perp v})(w_{P_C,1}\abs{u-v}+\abs{\nabla^\perp (u-v)}).$ 
\item $\norm{Q_\tau(u)}_{C^0_{P_C,\mu-1}}\lesssim \norm{Q_\tau(u)}_{C^0_{P_C,2\mu-2}}\lesssim\norm{u}_{C^1_{P_C,\mu}}^2$.
\end{enumerate}
\end{prop}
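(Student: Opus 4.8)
\textbf{Proof proposal for \autoref{prop cs quadratic estimate}.}

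The plan is to reduce everything to a pointwise analysis near each singularity, where the weight functions are powers of $r$, and a routine compactness argument on the compact piece $K_{P_C}$ where there are no weights. The three estimates are all consequences of the fact that $\mathfrak F_\tau$ is a smooth fiberwise map whose value at $\Gamma_u$ is obtained by integrating $\iota_w(\Upsilon_{P_C^\tau}^*\psi_\tau)$, so that $d\mathfrak F_{\tau|u}$ depends smoothly on the $1$-jet of $u$, and the pullback form $\Upsilon_{P_C^\tau}^*\psi_\tau$ has bounded geometry in the conical coordinates. Concretely, first I would recall from \autoref{prop linearization nonliear} that $d\mathfrak F_{\tau|u}(\eta)$ is expressed through $\iota_w\mathcal L_\eta(\Upsilon_{P_C^\tau}^*\psi_\tau)$ evaluated along $\Gamma_u$, hence in local coordinates it has the schematic form $d\mathfrak F_{\tau|u}(\eta) = A(x,u,\nabla^\perp u)\,\nabla^\perp\eta + B(x,u,\nabla^\perp u)\,\eta$, where $A$ and $B$ are smooth in all their arguments and—because $\Upsilon_{P_C^\tau}$ is built from the conical tubular neighbourhood map $\Upsilon_C$, which is dilation-equivariant—$A$ is scale-invariant (order $O(r^0)$) and $B$ is of order $O(r^{-1})$ near the singularity, with all derivatives in the $u$-variables having one extra power of $r^{-1}$ each time a $u$ (as opposed to a $\nabla^\perp u$) is differentiated. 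This is exactly the structure that produces the weight $w_{P_C,1}=r^{-1}$ attached to the undifferentiated terms in (i) and (ii).

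For part (i): by the mean value theorem applied to the map $s\mapsto d\mathfrak F_{\tau|(v+s(u-v))}(\eta)$, the difference $d\mathfrak F_{\tau|u}(\eta)-d\mathfrak F_{\tau|v}(\eta)$ equals $\int_0^1 \frac{d}{ds} d\mathfrak F_{\tau|v+s(u-v)}(\eta)\,ds$, and differentiating the schematic expression above in the $u$-slot brings down $(\partial_u A)(u-v)\nabla^\perp\eta + (\partial_{\nabla u}A)(\nabla^\perp(u-v))\nabla^\perp\eta + (\text{similar terms with } B,\eta)$. Using the scaling weights of $A$, $B$ and their $u$-derivatives, each term is bounded by $(w_{P_C,1}|u-v| + |\nabla^\perp(u-v)|)(w_{P_C,1}|\eta| + |\nabla^\perp\eta|)$, which is the claim; on $K_{P_C}$ the same bound holds trivially since the weights are bounded above and below there. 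Part (ii) then follows from (i) by the identity $Q_\tau(u)-Q_\tau(v) = \int_0^1 \big(d\mathfrak F_{\tau|v+s(u-v)} - d\mathfrak F_{\tau|0}\big)(u-v)\,ds$ (which uses that $Q_\tau = \mathfrak F_\tau - d\mathfrak F_{\tau|0} - \mathfrak F_\tau(0)$ so that $dQ_{\tau|w} = d\mathfrak F_{\tau|w} - d\mathfrak F_{\tau|0}$), and then applying (i) with the pair $(v+s(u-v), 0)$ and test direction $u-v$: the resulting bound has the factor $(w_{P_C,1}|v+s(u-v)| + |\nabla^\perp(v+s(u-v))|)$, and $|v+s(u-v)| \leq |u| + |v|$ pointwise, giving the stated estimate after absorbing constants. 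Part (iii) is the special case $v=0$ of (ii) integrated from $0$ to $u$: $Q_\tau(u) = \int_0^1 (d\mathfrak F_{\tau|su} - d\mathfrak F_{\tau|0})(u)\,ds$, which by (i) is $\lesssim (w_{P_C,1}|u| + |\nabla^\perp u|)^2$ pointwise; multiplying by the weight $w_{P_C,2-2\mu}$ and taking suprema gives $\|Q_\tau(u)\|_{C^0_{P_C,2\mu-2}} \lesssim \|u\|_{C^1_{P_C,\mu}}^2$, and the first inequality in (iii) is just the continuous inclusion $C^0_{P_C,2\mu-2} \hookrightarrow C^0_{P_C,\mu-1}$ near the singularity, valid because $2\mu-2 \geq \mu-1$ is equivalent to $\mu\geq1$, which holds by \autoref{def CS asso} (indeed $\mu>1$), combined with \autoref{prop Sobolev embedding and cptness}(ii).

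The main obstacle I anticipate is bookkeeping the scaling weights carefully: one must verify that every $u$-derivative of the coefficient functions $A,B$ genuinely carries the claimed extra power of $r^{-1}$, which rests on the dilation-equivariance $\Upsilon_C({s_\epsilon}_* u) = s_\epsilon \Upsilon_C u$ noted after \autoref{def conical tubular nbhd} together with the fact that $\psi_\tau$, pulled back via the $G_2$-coordinate system and then via $\Upsilon_C$, is a smooth $4$-form that to leading order near the singularity agrees with the flat $\psi_e$ and hence is scale-invariant, with the error being $O(r^{\mu-1})$ as recorded in \autoref{def CS/AC elliptic operator}. This error term is lower order and does not affect any of the three estimates, since it only improves the decay; but making the error analysis uniform in $\tau\in U_{\tau_0}$ requires that $U_{\tau_0}$ be chosen small enough that the family $\Upsilon_{P_C^\tau}$ and the $G_2$-structures $\phi_\tau$ have uniformly bounded geometry, which is already built into \autoref{def smooth family EC} and \autoref{def CS weighted topo}. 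Everything else is a standard Taylor-with-remainder argument, and I would present it compactly rather than expanding every coefficient.
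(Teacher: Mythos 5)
Your proposal is correct and follows essentially the same route as the paper: the fundamental-theorem-of-calculus reduction for (i), the identity $Q_\tau(u)-Q_\tau(v)=\int_0^1\big(d\mathfrak F_{\tau|tu+(1-t)v}-d\mathfrak F_{\tau|0}\big)(u-v)\,dt$ for (ii), and the substitution $v=0$ together with the weight comparison $w_{P_C,\mu-1}\lesssim w_{P_C,2\mu-2}$ (using $\mu>1$) for (iii). The only presentational difference is that the paper delegates your ``bookkeeping of scaling weights'' to the second-Lie-derivative pointwise bound of \autoref{lem quadratic} rather than to a schematic $A,B$-coefficient form, but the content is the same.
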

\begin{proof}Since $\tau$ is fixed we abuse notation and denote $\psi_\tau$ by $\psi$. 
For all $w\in C_c^\infty(NP_C)$ and $\eta \in C^{\infty}_{P_C,\mu}$ we write
$$\inp{d\mathfrak F_{\tau_{|u}}(\eta)-d\mathfrak F_{\tau_{|v}}(\eta)}{w}=\int_0^1\frac d{dt}\Big(d\mathcal F_{\tau_{|tu+(1-t)v}}(\eta)(w)\Big) dt$$ and using \autoref{prop linearization nonliear} this becomes
$$\int_0^1\Big(\frac d{dt}\int_{\Gamma_{tu+(1-t)v}} L_{\eta}\iota_w(\Upsilon_{P^\tau_C}^*\psi)\Big)dt=\int_0^1\int_{\Gamma_{tu+(1-t)v}} L_{(u-v)}L_{\eta}\iota_w(\Upsilon_{P^\tau_C}^*\psi)dt.$$ 
As, $[u-v, w]=0$ and $[\eta,w]=0$ the last expression is same as 
$$\int_0^1\int_{\Gamma_{tu+(1-t)v}} \iota_wL_{(u-v)}L_{\eta}(\Upsilon_{P^\tau_C}^*\psi)dt.$$
The required estimate in (i) now follows from \autoref{lem quadratic}. 
The estimate in (ii) follows from (i) after writing 
$$Q_\tau(u)-Q_\tau(v)=\int_0^1dQ_{\tau_{|tu+(1-t)v}}(u-v)dt=\int_0^1\big(d\mathfrak F_{\tau_{|tu+(1-t)v}}(u-v)-d\mathfrak F_{\tau_{|0}}(u-v)\big)dt.$$
Finally (iii) follows from (ii). Indeed, substituting $v=0$ we have
\[w_{P_C,2\mu-2}\abs{Q_\xi(u)}\lesssim w_{P_C,2\mu-2}(w_{P_C,1}\abs{u}+\abs{\nabla^\perp u})^2\lesssim( w_{P_C,\mu}\abs{u}+ w_{P_C,\mu-1}\abs{\nabla^\perp u})^2.\]
Since $\mu>1$ therefore $w_{P_C,\mu-1}\abs{Q_\xi(u)}\lesssim w_{P_C,2\mu-2}\abs{Q_\xi(u)}$. This completes the proof.
\end{proof}

\begin{prop}\label{prop Holder cs quadratic estimate}For all $\tau\in U_{\tau_0}$ and $u, v\in C^{k+1,\gamma}_{P_C,\mu}(V_{P_C})$ we have
$$\norm{Q_\tau(u)-Q_\tau(v)}_{C^{k,\gamma}_{P_C,\mu-1}}\lesssim\norm{Q_\tau(u)-Q_\tau(v)}_{C^{k,\gamma}_{P_C,2\mu-2}}\lesssim\norm{u-v}_{C^{k+1,\gamma}_{P_C,\mu}}\big(\norm{u}_{C^{k+1,\gamma}_{P_C,\mu}}+\norm{v}_{C^{k+1,\gamma}_{P_C,\mu}}\big).$$
\end{prop}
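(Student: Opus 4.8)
The plan is to prove this exactly as the weighted Hölder counterpart of \autoref{prop cs quadratic estimate}. The first inequality is a continuous embedding of weighted Hölder spaces: since $\mu>1$ one has $2\mu-2\geq\mu-1$, and for a CS submanifold $C^{k,\gamma}_{P_C,\lambda_1}\hookrightarrow C^{k,\gamma}_{P_C,\lambda_2}$ whenever $\lambda_1\geq\lambda_2$ by \autoref{prop Sobolev embedding and cptness}(ii), so $\norm{\cdot}_{C^{k,\gamma}_{P_C,\mu-1}}\lesssim\norm{\cdot}_{C^{k,\gamma}_{P_C,2\mu-2}}$. Hence only the second inequality requires an argument.

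For the second inequality I would begin, verbatim as in the proof of \autoref{prop cs quadratic estimate}, with the identity
\[
Q_\tau(u)-Q_\tau(v)=\int_0^1\big(d\mathfrak F_{\tau_{|w_t}}(u-v)-d\mathfrak F_{\tau_{|0}}(u-v)\big)\,dt,\qquad w_t:=tu+(1-t)v,
\]
and then differentiate the expression in \autoref{prop linearization nonliear} once more in the base point to write $d\mathfrak F_{\tau_{|w_t}}(\eta)-d\mathfrak F_{\tau_{|0}}(\eta)$ as an integral over $s\in[0,1]$ of a term built from $\mathcal L_{w_t}\mathcal L_{\eta}(\Upsilon_{P^\tau_C}^*\psi_\tau)$ restricted to $\Gamma_{sw_t}$. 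The structural point — already implicit in \autoref{prop linearization nonliear} and \autoref{prop CS elliptic operator} — is that pulling $\psi_\tau$ back through $\Upsilon_{P^\tau_C}$ and writing sections as graphs exhibits this quantity pointwise as a product of three factors: a fibrewise smooth bundle map $B_\tau$ evaluated at $(x,w_t,\nabla^\perp w_t)$, depending smoothly on $\tau\in U_{\tau_0}$, with all fibre-derivatives uniformly bounded on a small ball about the zero section and with $x$-dependence asymptotically homogeneous on the conical ends (since it is built from the conically rescaled $G_2$-data); times the scaled $1$-jet $w_{P_C,1}w_t\oplus\nabla^\perp w_t$; times the scaled $1$-jet $w_{P_C,1}\eta\oplus\nabla^\perp\eta$. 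This is precisely the weighted Hölder refinement of \autoref{lem quadratic} that I would state and prove; the only thing beyond \autoref{lem quadratic} itself is carrying the $k$ extra derivatives and the Hölder seminorm through the same computation, which is routine.

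It then remains to combine this with two standard facts about CS weighted Hölder spaces: pointwise multiplication is continuous, $C^{k,\gamma}_{P_C,\lambda_1}\times C^{k,\gamma}_{P_C,\lambda_2}\to C^{k,\gamma}_{P_C,\lambda_1+\lambda_2}$; and post-composition with a fixed fibrewise smooth bundle map carries a small ball of $C^{k,\gamma}_{P_C,\lambda}$ with $\lambda\geq0$ into a bounded subset of $C^{k,\gamma}_{P_C,0}$ (a weighted Moser-type estimate, the decay $\lambda\geq0$ being only favourable). For $u,v$ small in $C^{k+1,\gamma}_{P_C,\mu}$ the scaled $1$-jets of $w_t$ and of $u-v$ lie in a small ball of $C^{k,\gamma}_{P_C,\mu-1}$ (note $w_{P_C,1}w_t$ and $\nabla^\perp w_t$ are both $O(r^{\mu-1})$), so the composition estimate bounds $B_\tau(\cdot,w_t,\nabla^\perp w_t)$ in $C^{k,\gamma}_{P_C,0}$, and the product estimate with the weight bookkeeping $0+(\mu-1)+(\mu-1)=2\mu-2$ gives
\[
\norm{Q_\tau(u)-Q_\tau(v)}_{C^{k,\gamma}_{P_C,2\mu-2}}\lesssim\big(\norm{u}_{C^{k+1,\gamma}_{P_C,\mu}}+\norm{v}_{C^{k+1,\gamma}_{P_C,\mu}}\big)\norm{u-v}_{C^{k+1,\gamma}_{P_C,\mu}}
\]
after integrating in $s$ and $t$. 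The only point requiring real care is uniformity of all constants in $\tau\in U_{\tau_0}$ and checking that the asymptotically homogeneous structure of $B_\tau$ is compatible with the conical weight conventions near $\sing(P)$; but this is the same bookkeeping already used for the $C^0$ estimate and in \autoref{prop CS elliptic operator}, carried one derivative further, so I do not expect a genuine obstacle — the proposition is a technical upgrade of \autoref{prop cs quadratic estimate}, not a new difficulty.
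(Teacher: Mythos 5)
Your proposal is correct and follows essentially the same route as the paper: the first inequality is the weighted embedding of \autoref{prop Sobolev embedding and cptness}(ii) using $\mu>1$, and the second is obtained by expressing $\mathcal L_u\mathcal L_v\psi$ as a ``quadratic polynomial'' in the scaled $1$-jets of $u,v$ with coefficients built from $\psi,\nabla\psi,\nabla^2\psi,B,\nabla B,R$ (the paper's $O(f_1),O(f_2),\psi$ decomposition, which is what your fibrewise bundle map $B_\tau$ packages), and then running the weighted H\"older algebra and composition estimates through the same computation as in \autoref{lem quadratic} at the level of $C^{k,\gamma}$. The only presentational difference is that you state the module and Moser-type estimates explicitly where the paper leaves them as ``a similar computation as in \autoref{lem quadratic}''; there is no gap.
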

\begin{proof}Since $\tau$ is fixed we abuse notation and again denote $\psi_\tau$ by $\psi$.
With the above notation and appropriate product operation `$\cdot$', one can express $\mathcal L_u\mathcal L_v\psi$ formally as a quadratic polynomial 
$$\mathcal L_u\mathcal L_v\psi=O(f_1)\cdot u\cdot v+O(f_2)\cdot (u\cdot \nabla^\perp v+v\cdot \nabla^\perp u)+\psi \cdot \nabla^\perp u\cdot \nabla^\perp v$$
where $O(f_{1}):=\psi\cdot \nabla B+\psi\cdot B\cdot B+B\cdot\nabla\psi+\nabla^2\psi+R\cdot\psi$ and $O(f_2):=\nabla\psi+B\cdot\psi$. With this observation and a similar computations as in \autoref{lem quadratic} one can prove the proposition. 	
\end{proof}

  \begin{proof}[{\normalfont{\textbf{Proof of \autoref{thm moduli cs asso}}}}]By \autoref{prop Holder cs quadratic estimate}, \autoref{prop linearization nonliear} and \autoref{lem linearization 2nd component} we conclude that the extension of the map $\mathcal F_{\phi}:=\mathcal F(\cdot,\phi,\cdot)$ to weighted Hölder spaces
 $$\mathfrak F_{\phi,\cZ}:C^{2,\gamma}_{P_C,\mu}(V_{P_C}) \times \prod_{i=1}^mU_{p_i}\times \prod_{i=1}^m\cZ_i\cap \cI_{\Sigma_i} \to C^{1,\gamma}_{P_C,\mu-1}$$
is a well-defined smooth map. Here the rate $\mu$ is chosen as in \autoref{def CS weighted topo}. By \autoref{prop Fredholm Donaldson} we have that the linearization of $\mathfrak F_{\phi,\cZ}$ at $(\alpha, p_1,\dots,p_m, 0,\dots,0)$, $\widetilde \fL_{P,\mu,\cZ}$ is a Fredholm operator. Moreover, \autoref{prop index} implies that
$$\ind \widetilde \fL_{P,\mu,\cZ}=-\displaystyle\sum_{i=1}^m\Big(\frac{d_{-1,i}}{2}+\displaystyle\sum_{\lambda_i\in\mathcal D_{C_i}\cap(-1,1]}d_{\lambda_i}\Big)+\sum_{i=1}^m(7+\dim \cZ_i)=-\displaystyle\sum_{i=1}^m\operatorname{s-ind}(C_i).$$
 Applying the implicit function theorem to $\mathfrak F_{\phi,\cZ}$ and the elliptic regularity from \autoref{prop elliptic regularity} we obtain the existence of $\ob_{P,\cZ}$ in (i) (see \cite[Proposition 4.2.19]{Donaldson1990}) of the theorem except the following. If $u\in C^{2,\gamma}_{P_C,\mu}(V_{P_C})$ with $\mathfrak F_{\phi_0,\cZ}(u,\xi)=0$ for some $\xi\in \prod_{i=1}^mU_{p_i}\times \prod_{i=1}^m\cZ_i\cap \cI_{\Sigma_i}$, then $u\in C^{\infty}_{P_C,\mu}(V_{P_C})$. To prove this, we observe  
 $$0=\fL_P\mathfrak F_{\phi,\cZ}(u,\xi)=A_\xi(u,\nabla_{P_C}^\perp u) (\nabla_{P_C}^\perp)^2 u+ B_\xi(u,\nabla_{P_C}^\perp u)$$
 Since $A_\xi(u,\nabla_{P_C}^\perp u), B_\xi(u,\nabla_{P_C}^\perp u)\in C^{1,\gamma}_{P_C,\mu}$, by a weighted version of Schauder elliptic regularity for second order operator (similar to \autoref{prop elliptic regularity}) we obtain $u\in C^{3,\gamma}_{P_C,\mu}$. By repeating this argument we get higher regularity. This completes the proof of (i).
 
 The proof of (ii) is similar to (i). Indeed, we replace $\mathfrak F_{\phi,\cZ}$ by $\mathfrak F_{\bphi,\cZ}$ as follows. There exists an interval $I\subset [0,1]$ containing $t_0$ such that $\bphi(I)\subset U_{\phi_{t_0}}$ which was defined in \autoref{def CS weighted topo}. We define
 $$\mathfrak F_{\bphi,\cZ}:I\times C^{2,\gamma}_{P_C,\mu}(V_{P_C}) \times \prod_{i=1}^mU_{p_i}\times \prod_{i=1}^m\cZ_i\cap \cI_{\Sigma_i} \to C^{1,\gamma}_{P_C,\mu-1}$$
by $\mathfrak F_{\bphi,\cZ}(t,\cdot):=\mathfrak F_{\phi_t,\cZ}$, $t\in I$. The linearization of $\mathfrak F_{\bphi,\cZ}$ is $\widebar \fL_{P,\mu,\cZ}$ as in \autoref{def 1 para moduli CS}. The remaining proof is left to the reader.
 \end{proof}

 \begin{remark}\label{rmk CS two rates}If $\mu'$ were another choice in \autoref{def CS weighted topo} distinct from $\mu$, then  $u\in C^{\infty}_{P_C,\mu}(V_{P_C})$ satisfying  $\mathfrak F_\tau(u)=0$ implies $u\in C^{\infty}_{P_C,\mu^\prime} (V_{P_C})$. To see this we write ${d\mathfrak F_\tau}_{|0}(u)=-Q_\tau(u)-{\mathfrak F_\tau}(0)$. By \autoref{prop cs quadratic estimate} we see that ${d\mathfrak F_\tau}_{|0}$ is also a CS uniformly elliptic operator asymptotic to $\mathbf D_C$. Also we have $Q_\tau(u)\in C^{k,\gamma}_{P_C,2\mu-2}$ and ${\mathfrak F_\tau}(0)\in {C^{k,\gamma}_{P_C,1}}$. Therefore by \autoref{lem main Fredholm} we can conclude that $u\in C^{\infty}_{P_C,\mu^\prime} (V_{P_C})$.
 \end{remark}

  \subsection{Proof of genericity results: Floer's $C_\epsilon$ space and Taubes' trick }\label{subsection generic moduli CS asso}
  \begin{proof}[{{{\normalfont{\textbf{Proof of \autoref{thm generic moduli cs asso}}}}}}]
We prove that $\sP_{\operatorname{cs},\cZ}^{\operatorname{reg}}$ is comeager in $\sP$ in \autoref{lem coclosed comeager}. The proof of the fact that $\bsP_{\operatorname{cs},\cZ}^{\operatorname{reg}}$ is comeager in $\bsP$ is similar. Then (i) and (ii) in \autoref{thm generic moduli cs asso} are direct consequences of (i) and (ii) in \autoref{thm moduli cs asso}.
 \end{proof}
  To prove that $\sP_{\operatorname{cs},\cZ}^{\operatorname{reg}}$ is comeager in $\sP$ in \autoref{lem coclosed comeager}, we would like to use the Sard-Smale theorem applied to the map 
 $$\mathfrak F_\cZ:W^{k+1,p}_{P_C,\mu}(V_{P_C}) \times U_\phi\times \prod_{i=1}^mU_{p_i}\times \prod_{i=1}^m\cZ_i\cap \cI_{\Sigma_i} \to W^{k,p}_{P_C,\mu-1},$$
We have chosen Sobolev spaces rather than H\"older spaces as the former is separable. The reader may observe that all the analysis in \autoref{subsection moduli CS asso} will also go through with Sobolev spaces.  A serious problem here is that $U_{\phi}\subset \sP$ is not a Banach manifold. The standard way to deal with this is to consider $\sP_k$, the space of all $C^k$ co-closed $G_2$-structures on $Y$, to enlarge $U_{\phi}$ to a Banach manifold. This comes with a drawback that the map $\mathfrak F$ will have only finitely many derivatives and extra effort is required to check exactly how many in order to state the regularity of the moduli space. To avoid all this we will instead use Floer's $C_\epsilon$ space $\sP_\epsilon\subset \sP$ of co-closed $G_2$-structures. Since the Sard--Smale theorem yields the genericity results in $\sP_\epsilon$ instead of $\sP$ we use Taubes' trick of exhausting the moduli spaces by countably many compact subsets (see \autoref{def taubes CS asso with N}) to conclude the genericity results in $\sP$. For more details on this idea, see \cite{Wendl2021}.
	
% \begin{remark} For any $\phi \in \sP_{\operatorname{cs}}^{\operatorname{reg}}$ the moduli space of CS associatives $\cM_{\operatorname{cs}}^\phi$ is a smooth manifold. In this case if  $P\in \cM_{\operatorname{cs}}^\psi$ then the stability index of the asymptotic cone $C$, $\operatorname{s-ind}(C)=0$. So far we do not have any examples of associative cones with zero stability index. Therefore it is expected that $\cM_{\operatorname{cs}}^\psi$ is empty for any $\psi \in \sP_{\operatorname{cs}}^{\operatorname{reg}}$. Similarly for any $\bpsi \in \bsP_{\operatorname{cs}}^{\operatorname{reg}}$ the $1$-parameter moduli space of CS associatives $\cM_{\operatorname{cs}}^\bpsi$ is a smooth manifold and if  $P\in \cM_{\operatorname{cs}}^{\psi_t}$ then the stability index of the asymptotic cone $C$, $\operatorname{s-ind}(C)=0, \text{or} \ 1$.\end{remark}
 
\begin{definition}\label{def taubes CS asso with N}For every $N,m\in \N$ and $\cZ=\prod_{i=1}^m\cZ_i$, we define $\cM_{\ocs,N,\cZ}^\phi\subset \cM_{\ocs,\cZ}^\phi$ to be the set of all CS associative submanifolds $P$ in $(Y,\phi)$ with singularities at $p_i$, cones $C_i$ with links $\Sigma_i\in \cZ_i$, rates $\mu_i\in [1+\frac1N,2]$, $G_2$-coordinate systems $\Upsilon^i:B(0,R)\to Y$ and embeddings $\Upsilon^i_{P}:(0,2\epsilon_0)\times \Sigma_i\to B(0,R)$, $i=1,\dots,m$ satisfying
	\begin{itemize}
	\item $1>R\geq4\epsilon_{0}\geq \frac 1N,\ \abs{\nabla^k\operatorname{II}_{\Sigma_i}}\leq N,{\operatorname{Vol}(\Sigma_i)}\leq N$ for all $k=0,\dots 3$,
	\item $d_{\cM^{\operatorname{hol}}}(\Sigma_i, \overline{ \cZ_i}\setminus  \cZ_i )\geq \frac 1N,$
	\item $\abs{\nabla^k\operatorname{II}_{K_P}}\leq N$ for all $k=0,\dots 3$ and ${\operatorname{Vol}(P)}\leq N$,
	\item $\abs{\nabla^k\Upsilon^i}\leq N$ and $\abs{(\nabla^\perp_{C})^k(\Psi^i_{P}-\iota)}\leq Nr^{\mu_i-k}$ for all $k=0,\dots 3$.
 \item $N\abs{x_1-x_2}\geq d_{\Sigma_i}(x_1,x_2), \forall x_1,x_2\in \Sigma_i$ and $Nd_Y(p_1,p_2)\geq d_P(p_1,p_2), \forall p_1,p_2\in P$.
	\end{itemize}
 
 We define $\sP_{\operatorname{cs},N,\cZ}^{\operatorname{reg}}\subset \sP$ to be the set of all $\phi\in \sP$ for which every $P\in \cM_{\ocs,N,\cZ}^\phi$ has the property that $\widetilde \bD_{P,\mu,\cZ}$ is surjective. 
  \end{definition}
\begin{remark}Obviously,
\begin{equation*}
	\sP_{\operatorname{cs},\cZ}^{\operatorname{reg}}=\bigcap_{N\in \N} \sP_{\operatorname{cs},N,\cZ}^{\operatorname{reg}}\subset \sP.
	\qedhere
\end{equation*}
\end{remark}
 The following lemma about convergence of CS associative submanifolds can be deduced from \cite{Shen1995}.
 \begin{lemma}\label{lem CS convergence}
	Let $\phi_n\in \sP$ be a sequence of $G_2$-structures on $Y$ converging to $\phi$ in $\sP$ with $C^\infty$ topology. Fix $N\in \N$ and $\cZ$, and let $P_n\in \cM_{\ocs,N,\cZ}^{\phi_n}$ be a sequence of CS associative submanifolds. Then there is a subsequence of $P_n$ converging to a CS associative $P\in \cM_{\ocs,N,\cZ}^{\phi}$ with weighted $C^\infty$-topology defined in \autoref{def CS weighted topo}. 	  
\end{lemma}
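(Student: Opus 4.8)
\textbf{Proof proposal for \autoref{lem CS convergence}.}

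The plan is to combine three ingredients: a compactness argument for the smooth compact part $K_{P_n}$, a uniform conical-structure control on the ends coming from the uniform bounds encoded in \autoref{def taubes CS asso with N}, and the closedness of the associative condition under $C^\infty$ convergence of both the submanifolds and the $G_2$-structures. First I would set up the bookkeeping. By definition of $\cM_{\ocs,N,\cZ}^{\phi_n}$, each $P_n$ comes with $m$ singular points $p_i^n$, $G_2$-coordinate systems $\Upsilon^{i,n}:B(0,R_n)\to Y$, cones $C_i^n$ with links $\Sigma_i^n\in\cZ_i$, rates $\mu_i^n\in[1+\tfrac1N,2]$, neck scales $\epsilon_0^n\in[\tfrac1{4N},\tfrac14)$, and embeddings $\Psi_{P_n}^i$ with $\abs{(\nabla^\perp_{C})^k(\Psi^i_{P_n}-\iota)}\le N r^{\mu_i^n-k}$ for $k\le 3$. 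All of the data $(p_i^n,\Upsilon^{i,n},\mu_i^n,\epsilon_0^n)$ lie in sequentially compact sets (the $p_i^n$ in the compact $Y$, the $G_2$-framings in the compact $G_2$-bundle, the rates and scales in compact intervals, and the coordinate systems controlled in $C^3$ by $\abs{\nabla^k\Upsilon^{i,n}}\le N$), so after passing to a subsequence I may assume $p_i^n\to p_i$, $\Upsilon^{i,n}\to\Upsilon^i$ in $C^2_{\loc}$ to a $G_2$-coordinate system at $p_i$ for the limiting structure $\phi$, $\mu_i^n\to\mu_i\in[1+\tfrac1N,2]$, and $\epsilon_0^n\to\epsilon_0\in[\tfrac1{4N},\tfrac14]$. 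Similarly, the links $\Sigma_i^n$ are holomorphic curves in $(S^6,J)$ with $\abs{\nabla^k\operatorname{II}_{\Sigma_i^n}}\le N$ ($k\le3$), $\operatorname{Vol}(\Sigma_i^n)\le N$, uniform injectivity-radius lower bound via $N\abs{x_1-x_2}\ge d_{\Sigma_i^n}(x_1,x_2)$, and $d_{\cM^{\operatorname{hol}}}(\Sigma_i^n,\overline{\cZ_i}\setminus\cZ_i)\ge\tfrac1N$; by standard compactness for minimal surfaces with bounded geometry (Arzelà–Ascoli after writing $\Sigma_i^n$ as graphs over a fixed $\Sigma_i$ in normal coordinates, together with elliptic estimates for the holomorphic-curve equation of \autoref{thm moduli holo}) a subsequence converges in $C^\infty$ to a holomorphic curve $\Sigma_i\in\overline{\cZ_i}$; the quantitative distance bound forces $\Sigma_i\in\cZ_i$ and also keeps $\Sigma_i$ away from strata boundaries, so no stratum-jumping occurs in the limit.

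Next I would handle the compact part. On $K_{P_n}$ we have $\abs{\nabla^k\operatorname{II}_{K_{P_n}}}\le N$ ($k\le3$), $\operatorname{Vol}(P_n)\le N$, and the diameter-type bound $Nd_Y(p_1,p_2)\ge d_{P_n}(p_1,p_2)$; since $\hat P_n$ is associative, hence minimal (up to the bounded torsion of $\phi_n$, which is uniformly controlled because $\phi_n\to\phi$ in $C^\infty$), the monotonicity formula plus the second-fundamental-form bounds give uniform local graphical representations with uniform $C^{3,\alpha}$ bounds. Passing to a further subsequence, $K_{P_n}$ converges in $C^{3,\alpha}_{\loc}$, and then by the higher-regularity bootstrap for the associative (Fueter) equation—exactly the Schauder iteration used in the proof of \autoref{thm moduli holo} and in \autoref{thm moduli cs asso}—in $C^\infty_{\loc}$ to a smooth associative piece of a limit $\hat P$ in $(Y,\phi)$. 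Gluing this to the limiting cones: on the neck regions $\iota((\tfrac1N,2\epsilon_0^n)\times\Sigma_i^n)$ the uniform estimate $\abs{(\nabla^\perp_{C})^k(\Psi^i_{P_n}-\iota)}\le Nr^{\mu_i^n-k}$ passes to the limit (the bound is scale-invariant in the right way and stable under the convergence $\Sigma_i^n\to\Sigma_i$, $C_i^n\to C_i$), yielding a limit $\Psi_{P}^i$ with $\abs{(\nabla^\perp_{C_i})^k(\Psi^i_{P}-\iota_i)}\le N r^{\mu_i-k}$ and hence $P:=K_P\cup\bigcup_i\Upsilon^i\circ\Psi_P^i((0,2\epsilon_0)\times\Sigma_i)$ is a CS submanifold in the sense of \autoref{def CS asso}; since $\hat P$ is a $C^\infty_{\loc}$ limit of associatives in $(Y,\phi_n)$ with $\phi_n\to\phi$, $\hat P$ is associative in $(Y,\phi)$. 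All the numerical constraints defining $\cM_{\ocs,N,\cZ}^\phi$ are closed conditions and therefore survive in the limit, so $P\in\cM_{\ocs,N,\cZ}^\phi$.

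Finally I would upgrade this subsequential geometric convergence to convergence in the weighted $C^\infty$-topology of \autoref{def CS weighted topo}. Fix an ECS submanifold $P_C$ and an EC tubular neighbourhood map $\Upsilon_{P_C}$ for the limit $P$, and a rate $\mu$ satisfying \autoref{eq tech weighted topo} (this is possible since the critical rates of the $C_i$ are bounded away from $1$, and by the stratum-stability above the same $\mu$ works for all nearby $\tau$). For large $n$ the data $(\phi_n,p_i^n,\Sigma_i^n)$ define a point $\tau_n\in U_{\tau_0}$ with $\tau_n\to\tau_0$, and by the construction in \autoref{def smooth family EC} the submanifold $P_n$ is written as $\Upsilon_{P_C^{\tau_n}}(\Gamma_{u_n})$ for a unique section $u_n\in C^\infty_\mu(V_{P_C})$; the task is to show $u_n\to 0$ in $C^\infty_{P_C,\mu}$. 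Over any fixed compact region away from the singularities this is just the $C^\infty_{\loc}$ convergence already established. Over the necks it follows from the uniform weighted bound $\abs{(\nabla^\perp_{C})^ku_n}\le C(N)r^{\mu-k}$ (again for $k\le 3$, and then for all $k$ by the weighted Schauder bootstrap of \autoref{rmk CS two rates}/\autoref{thm moduli cs asso}) together with pointwise convergence $u_n\to0$ on the open neck; an interpolation between the uniform weighted $C^{k+1}$ bound for $k$ large and the weighted $C^0$ convergence gives $\norm{u_n}_{C^{k}_{P_C,\mu}}\to0$ for every $k$, which is exactly convergence in the weighted $C^\infty$-topology. The main obstacle, and the step needing the most care, is the neck analysis: one must verify that the uniform conical estimates of \autoref{def taubes CS asso with N} are genuinely compatible with the $G_2$-coordinate changes $\Upsilon^{i,n}\to\Upsilon^i$ (so that the limiting $\Psi_P^i-\iota_i$ really is a normal section of $NC_i$ with the claimed decay, with no loss at $r\to0$), and that the convergence on the overlap where $K_{P_n}$ meets the necks is consistent so that $u_n$ is globally well-defined and small; this is precisely where the reference to \cite{Shen1995} is invoked, and I would structure the argument around his convergence theorem for varifolds with bounded mean curvature and conical ends.
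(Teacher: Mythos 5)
The paper does not give a proof of this lemma at all: it simply asserts that the statement ``can be deduced from \cite{Shen1995}''. So there is no detailed paper argument to compare you against. That said, your sketch is the natural unpacking of such a compactness argument, and it hits the right ingredients: the singularity data $(p_i^n,\Upsilon^{i,n},\mu_i^n,\epsilon_0^n)$ lie in sequentially compact sets by the bounds in \autoref{def taubes CS asso with N}; the links $\Sigma_i^n$ converge in $C^\infty$ to a holomorphic curve staying in $\cZ_i$ thanks to the bound $d_{\cM^{\operatorname{hol}}}(\Sigma_i^n,\overline{\cZ_i}\setminus\cZ_i)\geq \tfrac1N$; the compact parts $K_{P_n}$ converge in $C^\infty_{\loc}$ by Arzelà--Ascoli plus elliptic bootstrap for the associative equation; and the uniform conical decay $\abs{(\nabla^\perp_{C})^k(\Psi^i_{P_n}-\iota)}\leq N r^{\mu_i^n-k}$ with $\mu_i^n\geq 1+\tfrac1N$ survives in the limit, so the result is a CS associative submanifold satisfying all the defining inequalities. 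That you land on Shen as the organising reference is consistent with the paper.

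Two points to fix or make explicit. First, in the weighted topology of \autoref{def CS weighted topo}, the limit $P$ corresponds to the section $\alpha$ of \autoref{def section alpha for P}, not to $u\equiv 0$ (one builds $P_C$ from $P$ but $P$ itself is $\Upsilon_{P_C}(\Gamma_\alpha)$); so you want $u_n\to\alpha$ in $C^\infty_{P_C,\mu}$, or else choose the tubular neighbourhood map so that $P$ sits at the zero section. Second, the Taubes bounds of \autoref{def taubes CS asso with N} are only $C^3$, so to claim weighted $C^\infty$ convergence you need a genuinely \emph{weighted} Schauder bootstrap for $u_n$ (as in the regularity step of the proof of \autoref{thm moduli cs asso}, with constants uniform in $n$ because $\phi_n\to\phi$ in $C^\infty$), followed by the usual end splitting: for $r\leq\rho$ use $\abs{u_n}\lesssim r^{\mu_i^n}$ with $\mu_i^n>\mu_i$ bounded apart to make $w_{P_C,\mu}\abs{u_n}$ uniformly small, and for $r\geq\rho$ use the $C^\infty_{\loc}$ convergence already established. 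Your interpolation remark amounts to the same thing once the uniform higher-order weighted estimates are in hand, but as written it quietly assumes those estimates rather than deriving them. With these adjustments the argument is complete.
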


\begin{lemma}\label{lem coclosed N open}
	For every $N\in \N$ and $\cZ$, $\sP_{\operatorname{cs},N,\cZ}^{\operatorname{reg}}$ is open in $\sP$. 
\end{lemma}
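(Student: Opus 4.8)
\textbf{Proof plan for \autoref{lem coclosed N open}.}
The plan is to argue by contradiction using the compactness statement of \autoref{lem CS convergence}. Suppose $\sP^{\operatorname{reg}}_{\operatorname{cs},N,\cZ}$ is not open, so there is $\phi\in \sP^{\operatorname{reg}}_{\operatorname{cs},N,\cZ}$ and a sequence $\phi_n\to\phi$ in the $C^\infty$-topology on $\sP$ with $\phi_n\notin \sP^{\operatorname{reg}}_{\operatorname{cs},N,\cZ}$ for every $n$. By definition of $\sP^{\operatorname{reg}}_{\operatorname{cs},N,\cZ}$, for each $n$ there exists a CS associative submanifold $P_n\in \cM^{\phi_n}_{\ocs,N,\cZ}$ such that the operator $\widetilde\bD_{P_n,\mu_n,\cZ}$ is \emph{not} surjective. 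Applying \autoref{lem CS convergence}, after passing to a subsequence we obtain $P_n\to P$ in the weighted $C^\infty$-topology of \autoref{def CS weighted topo}, where $P\in \cM^{\phi}_{\ocs,N,\cZ}$; in particular the rates $\mu_n\in[1+\tfrac1N,2]$ may be taken (after a further subsequence) to converge to some $\mu\in[1+\tfrac1N,2]$, and by \autoref{rmk welldef weighted topo} / \autoref{rmk CS two rates} we may even fix a single $\mu$ that is admissible for all $P_n$ and for $P$ simultaneously, since the critical rates of the model cones $C^n_i$ stay uniformly bounded away from $1$ (this is exactly what the bounds $\abs{\nabla^k\operatorname{II}_{\Sigma_i}}\le N$, $\operatorname{Vol}(\Sigma_i)\le N$, $d_{\cM^{\operatorname{hol}}}(\Sigma_i,\overline{\cZ_i}\setminus\cZ_i)\ge\tfrac1N$ in \autoref{def taubes CS asso with N} are for).

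The core of the argument is then the standard ``failure of surjectivity is a closed condition under convergence'' principle for a continuous family of Fredholm operators of fixed index. Using the identification maps $\Theta^C_{P_n}$ and a common end-conical model, one transplants the operators $\widetilde\bD_{P_n,\mu,\cZ}$ and $\widetilde\bD_{P,\mu,\cZ}$ onto a fixed domain and target (fixed weighted Sobolev or Hölder spaces over $P_C$ together with the fixed finite-dimensional factor $\bigoplus_i(\R^7\oplus T_{\Sigma_i}\cZ_i)$), and checks that $\widetilde\bD_{P_n,\mu,\cZ}\to\widetilde\bD_{P,\mu,\cZ}$ in operator norm — this follows from the weighted $C^\infty$-convergence $P_n\to P$ together with $\phi_n\to\phi$, by inspecting the explicit shapes of $\fL_P$ (Corollary~\ref{cor linearization cs}), of $L_1$ (\autoref{lem linearization 2nd component}) and of the dependence on the $G_2$-structure. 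Since by \autoref{prop Fredholm Donaldson} all these operators are Fredholm of the same index (namely $-\sum_i\operatorname{s-ind}(C_i)$, by \autoref{thm moduli cs asso}, which is locally constant under the convergence), and $\widetilde\bD_{P,\mu,\cZ}$ is surjective by hypothesis $\phi\in\sP^{\operatorname{reg}}_{\operatorname{cs},N,\cZ}$, a routine functional-analytic lemma gives that $\widetilde\bD_{P_n,\mu,\cZ}$ is surjective for all large $n$ — contradicting the choice of $P_n$. Concretely: pick a right inverse $R$ of $\widetilde\bD_{P,\mu,\cZ}$ on a complement of $\ker$; then $\widetilde\bD_{P_n,\mu,\cZ}\circ R$ is invertible for $n$ large since it is a small perturbation of the identity, so $\widetilde\bD_{P_n,\mu,\cZ}$ is onto.

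The main obstacle I anticipate is purely bookkeeping rather than conceptual: making precise the sense in which the operators $\widetilde\bD_{P_n,\mu,\cZ}$, which a priori live on different manifolds $P_n$ with different normal bundles, different tubular-neighbourhood identifications $\Theta^C_{P_n}$, and different (though convergent) model cones $C^n_i$, can be viewed as a \emph{norm-continuous} family on one fixed pair of Banach spaces. This requires choosing the end-conical models $P_{n,C}$ and the maps $\Upsilon_{P_{n,C}}$ compatibly (which is possible because the convergence in \autoref{lem CS convergence} is in the weighted $C^\infty$-topology, hence controls all the relevant data, including the $G_2$-coordinate systems $\Upsilon^i$ up to the uniform bounds of \autoref{def taubes CS asso with N}), and then verifying that the coefficient functions of the transplanted operators converge in the appropriate weighted $C^\infty$-norms — using \autoref{prop cs quadratic estimate} to control the nonlinear remainder and \autoref{lem linearization 3rd component} for the $G_2$-structure dependence. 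Once this identification is set up, the remaining argument is the short perturbation lemma above, together with the observation that the finite-dimensional summand $\bigoplus_i(\R^7\oplus T_{\Sigma_i}\cZ_i)$ has locally constant dimension along the sequence (again guaranteed by $d_{\cM^{\operatorname{hol}}}(\Sigma_i,\overline{\cZ_i}\setminus\cZ_i)\ge\tfrac1N$, which keeps each $\Sigma_i$ in the \emph{interior} of its stratum $\cZ_i$ so that $\dim T_{\Sigma_i}\cZ_i$ is constant).
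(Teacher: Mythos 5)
Your proposal is correct and takes essentially the same approach as the paper: the paper's proof consists of the one-line observation that the complement is closed, by combining the compactness of \autoref{lem CS convergence} with the openness of surjectivity for Fredholm operators, and your argument is precisely this, unwound into the contradiction form with the bookkeeping about transplanting the operators onto a fixed pair of Banach spaces made explicit.
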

\begin{proof}
	The complement of $\sP_{\operatorname{cs},N,\cZ}^{\operatorname{reg}}$ is closed. Indeed, it follows from \autoref{lem CS convergence} and the fact that surjectivity is an open condition.
\end{proof}
\begin{lemma}\label{lem coclosed comeager}For every $N\in \N$  and $\cZ$, $\sP_{\operatorname{cs},N,\cZ}^{\operatorname{reg}}$ is dense in $\sP$. Hence  $\sP_{\operatorname{cs},\cZ}^{\operatorname{reg}}$ is comeager in $\sP$.
\end{lemma}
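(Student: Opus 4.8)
\textbf{Proof proposal for \autoref{lem coclosed comeager}.}
The plan is to apply the Sard--Smale theorem to a universal moduli map built over Floer's $C_\epsilon$ space $\sP_\epsilon$ and then pass back to $\sP$ via Taubes' trick. First I would fix $N\in\N$ and $\cZ$, and choose a rate $\mu$ with $\mu_i\in(1+\tfrac1N,2)$ avoiding the walls $\cD_{C_i^\tau}$ uniformly as in \autoref{def CS weighted topo}. For a reference CS associative $P_0$ I would form, as in the proof of \autoref{thm moduli cs asso}, the smooth map
\[
\mathfrak F_\cZ\colon W^{k+1,p}_{P_C,\mu}(V_{P_C})\times \sP_\epsilon\times\prod_{i=1}^m U_{p_i}\times\prod_{i=1}^m(\cZ_i\cap\cI_{\Sigma_i})\to W^{k,p}_{P_C,\mu-1},
\]
working with separable Sobolev spaces (the analysis of \autoref{subsection moduli CS asso} carries over verbatim). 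The key point is that the full linearization of $\mathfrak F_\cZ$, which involves $\widetilde\bD_{P,\mu,\cZ}$ together with the $G_2$-structure variation operator $L_2$ from \autoref{lem linearization 3rd component}, is \emph{surjective} at every zero: since $\widetilde\bD_{P,\mu,\cZ}$ is Fredholm, its image is closed and finite-codimensional, and one shows that perturbations $\hat\phi$ of the co-closed $G_2$-structure (supported away from the singularities, so that $\hat\psi$ restricted to the smooth compact core can be chosen freely) span a complement. This is the standard unique-continuation-plus-Hahn--Banach argument: if $w\in(\operatorname{coker}\widetilde\bD_{P,\mu,\cZ})^*$ were $L^2$-orthogonal to $L_2(\hat\phi)$ for all admissible $\hat\phi$, then by varying $\hat\psi$ locally on $\hat P$ one forces $w\equiv0$ on an open set, and unique continuation for the formally self-adjoint elliptic operator $\bD_P$ (together with the weighted decay, which is automatic on a compact piece) gives $w\equiv0$.

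Granting universal surjectivity, the universal moduli space $\mathcal M_{\ocs,\cZ}^{\mathrm{univ}}$ is a separable Banach manifold, and the projection $\pi\colon\mathcal M_{\ocs,\cZ}^{\mathrm{univ}}\to\sP_\epsilon$ is a smooth Fredholm map (of index $-\sum_i\operatorname{s-ind}(C_i)+\dim\sP_\epsilon$-type — more precisely, fiberwise Fredholm of the index computed in \autoref{thm moduli cs asso}). By the Sard--Smale theorem its regular values form a comeager subset of $\sP_\epsilon$, and $\phi$ is a regular value precisely when $\widetilde\bD_{P,\mu,\cZ}$ is surjective for every $P\in\cM_{\ocs,\cZ}^\phi$ represented in a neighbourhood of $P_0$. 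To globalize over all reference manifolds, I would invoke \autoref{lem CS convergence}: the set $\cM_{\ocs,N,\cZ}^\phi$ is, after the compactness it provides, covered by finitely many such neighbourhoods for each $\phi$ in a precompact chart of $\sP_\epsilon$, so $\sP_{\operatorname{cs},N,\cZ}^{\operatorname{reg}}\cap\sP_\epsilon$ is comeager in $\sP_\epsilon$; combined with openness (\autoref{lem coclosed N open}) this set is open and dense in $\sP_\epsilon$.

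Finally I would run Taubes' trick to transfer density from $\sP_\epsilon$ to $\sP$. Since $\sP_\epsilon$ is dense in $\sP$ in the $C^\infty$-topology, and $\sP_{\operatorname{cs},N,\cZ}^{\operatorname{reg}}$ is open in $\sP$ by \autoref{lem coclosed N open}, density of $\sP_{\operatorname{cs},N,\cZ}^{\operatorname{reg}}\cap\sP_\epsilon$ in $\sP_\epsilon$ upgrades to density of $\sP_{\operatorname{cs},N,\cZ}^{\operatorname{reg}}$ in $\sP$: given $\phi\in\sP$ and a neighbourhood $U$, pick $\phi'\in\sP_\epsilon\cap U$, then a regular $\phi''\in\sP_\epsilon$ arbitrarily close to $\phi'$ and still in $U$. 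Hence each $\sP_{\operatorname{cs},N,\cZ}^{\operatorname{reg}}$ is open and dense in $\sP$, so $\sP_{\operatorname{cs},\cZ}^{\operatorname{reg}}=\bigcap_N\sP_{\operatorname{cs},N,\cZ}^{\operatorname{reg}}$ is comeager; intersecting over the countably many choices of $\cZ$ gives that $\sP_{\operatorname{cs}}^{\operatorname{reg}}$ is comeager as well. The main obstacle I anticipate is the universal surjectivity step: one must be careful that the allowed variations $\hat\phi$ of the \emph{co-closed} $G_2$-structure, which constrain $\hat\psi$ to be closed, still produce enough freedom in $L_2(\hat\phi)$ on the compact core of $\hat P$ — this requires choosing $\hat\psi$ supported in a ball meeting $\hat P$ where $P$ is smooth and using that closed $4$-forms localize freely there, together with the self-adjointness of $\bD_P$ (which holds precisely because $d\psi=0$) to identify $\operatorname{coker}$ with a kernel on which unique continuation applies.
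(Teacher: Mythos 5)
Your proposal is correct and takes essentially the same approach as the paper: work in Floer's $C_\epsilon$ space $\sP_\epsilon$ with separable Sobolev weighted spaces, obtain universal surjectivity of the extended linearization from unique continuation for $\bD_P$ together with local co-closed variations of $\psi$ (this is precisely the content of \autoref{lem main generic DW}), apply Sard--Smale to the universal moduli projection, and then transfer density from $\sP_\epsilon$ to $\sP$ by Taubes' trick through the exhaustion $\sP_{\ocs,\cZ}^{\reg}=\bigcap_N\sP_{\ocs,N,\cZ}^{\reg}$ and the openness from \autoref{lem coclosed N open}. One presentational caveat: your globalization over a ``precompact chart of $\sP_\epsilon$'' is not literally available since $\sP_\epsilon$ is infinite-dimensional, so it has no nontrivial precompact open sets; the paper closes this gap by arguing sequentially — given $\phi_0\in\sP$, choose $\epsilon$ so $\phi_0\in\sP_\epsilon$, approximate $\phi_0$ by $\phi_n\in\sP^{\reg}_{\epsilon,\ocs,\cZ}$, and show $\phi_n\in\sP^{\reg}_{\ocs,N,\cZ}$ for $n$ large by contradiction, using \autoref{lem CS convergence} to extract a limit $P$ of putative obstructed $P_n\in\cM_{\ocs,N,\cZ}^{\phi_n}$ and then \autoref{lem main generic DW} plus \autoref{lem Floer's space}(ii) to contradict obstructedness. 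The underlying compactness input is the same as in your sketch, but the sequential phrasing avoids the need for any covering of $\sP_\epsilon$.
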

To prove \autoref{lem coclosed comeager} we use Floer's $C_\epsilon$ space.
\begin{definition}Let $E\to M$ be a vector bundle over a compact Riemannian manifold $M$ with or without boundary. For each integer $k\geq 0$, we denote by $C^k(E)$ the Banach space of $C^k$-sections of $E$. 
	Let $\cE$ be the set of all sequences $(\epsilon_k)_{k=0}^\infty$ of positive real numbers with $\epsilon_k\to 0$. For each $\epsilon \in \cE $, \textbf{Floer's $C_\epsilon$ space} of sections of $E$ is defined by
	$$C_\epsilon(E):=\{s\in C^\infty(E):\norm{s}_{C_\epsilon}:=\sum_{k=0}^\infty\epsilon_k\norm{s}_{C^k}<\infty\}.$$
A pre-order $\prec$ on $\cE$ is defined by 
\begin{equation*}
\epsilon\prec \epsilon^\prime\ \text{iff}\ \limsup_{k\to\infty}\frac{\epsilon_k}{\epsilon^\prime_k}<\infty.\qedhere
\end{equation*}
 \end{definition}
	\begin{remark}
	As $M$ is compact, different $C^k$ norms for different smooth atlases of $M$ are equivalent but the norm on Floer's $C_\epsilon$ space is an infinite sum, therefore it might not be equivalent. But this not a big concern as no statement in any theorem will mention this space, it will only be used inside the proofs and where we can fix an atlas.
\end{remark}

\begin{lemma}[{\cite[Appendix B]{Wendl2016a}}]\label{lem Floer's space} The Floer's $C_\epsilon$ spaces $C_\epsilon(E)$ have the following properties:
\begin{enumerate}[(i)]
\item $C_\epsilon(E)$ is a separable Banach space and $C_\epsilon(E)\hookrightarrow C^\infty(E)$ is continuous. Moreover, $C_{\epsilon^\prime}(E)\hookrightarrow C_\epsilon(E)$ is a continuos embedding if $\epsilon\prec \epsilon^\prime$.
\item  For every countable subset $\Q$ of $C^\infty(E)$ there exists an $\epsilon_0\in \cE$ such that $\Q\subset C_\epsilon(E)$ for all $\epsilon\prec \epsilon_0$. In particular, 
\begin{equation*}
 C^\infty(E)=\bigcup_{\epsilon\in \cE } C_\epsilon(E).
 \qedhere	
 \end{equation*}
\end{enumerate}
\end{lemma}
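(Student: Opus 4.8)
The plan is to verify all three assertions directly from the definition of the norm $\|\cdot\|_{C_\epsilon}$, following \cite[Appendix B]{Wendl2016a}; everything is soft functional analysis. For completeness: given a $\|\cdot\|_{C_\epsilon}$-Cauchy sequence $(s_n)$, the inequality $\epsilon_k\|s_n-s_m\|_{C^k}\le\|s_n-s_m\|_{C_\epsilon}$ together with $\epsilon_k>0$ shows $(s_n)$ is Cauchy in each Banach space $C^k(E)$ (here one uses compactness of $M$), hence converges there; by uniqueness of limits these limits agree and define one section $s\in\bigcap_k C^k(E)=C^\infty(E)$. Fixing $\delta>0$ and $M_0$ with $\|s_n-s_m\|_{C_\epsilon}<\delta$ for $n,m\ge M_0$, for each finite $N$ we get $\sum_{k=0}^N\epsilon_k\|s_n-s_m\|_{C^k}<\delta$; letting $m\to\infty$ (using $C^k$-convergence) and then $N\to\infty$ gives $\|s_n-s\|_{C_\epsilon}\le\delta$ for $n\ge M_0$, so $s=s_{M_0}+(s-s_{M_0})\in C_\epsilon(E)$ and $s_n\to s$. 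Continuity of $C_\epsilon(E)\hookrightarrow C^\infty(E)$ is immediate from $\|s\|_{C^k}\le\epsilon_k^{-1}\|s\|_{C_\epsilon}$; and if $\epsilon\prec\epsilon'$, choosing $C>0$ and $K$ with $\epsilon_k\le C\epsilon_k'$ for $k\ge K$ yields
\[\|s\|_{C_\epsilon}=\sum_{k<K}\epsilon_k\|s\|_{C^k}+\sum_{k\ge K}\epsilon_k\|s\|_{C^k}\le\Big(\sum_{k<K}\frac{\epsilon_k}{\epsilon_k'}+C\Big)\|s\|_{C_{\epsilon'}},\]
so $C_{\epsilon'}(E)\hookrightarrow C_\epsilon(E)$ is a continuous embedding.

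For separability I would identify $C_\epsilon(E)$ isometrically with a subspace of the $\ell^1$-direct sum $X:=\big(\bigoplus_{k\ge0}C^k(E)\big)_{\ell^1}$ via $s\mapsto(\epsilon_k s)_{k\ge0}$; this map being an isometry of a complete space, its image is closed in $X$, and $X$ is separable because each $C^k(E)$ is a separable Banach space and the finitely-supported sequences with entries in fixed countable dense subsets of the $C^k(E)$ are dense in $X$. A subspace of a separable metric space is separable, hence so is $C_\epsilon(E)$. For (ii), enumerate $\Q=\{s_1,s_2,\dots\}$, set $a_k:=\max\{1,\|s_1\|_{C^k},\dots,\|s_k\|_{C^k}\}\in[1,\infty)$ and $\epsilon_{0,k}:=2^{-k}a_k^{-1}$; since $a_k\ge1$ we have $\epsilon_{0,k}\to0$, so $\epsilon_0\in\cE$. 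If $\epsilon\prec\epsilon_0$, pick $C>0$ and $K$ with $\epsilon_k\le C\epsilon_{0,k}$ for $k\ge K$; then for each $j$ and each $k\ge\max(K,j)$ we have $\|s_j\|_{C^k}\le a_k$, hence $\epsilon_k\|s_j\|_{C^k}\le C2^{-k}$, so $\|s_j\|_{C_\epsilon}=\sum_k\epsilon_k\|s_j\|_{C^k}<\infty$ (the remaining finitely many terms being finite), i.e.\ $s_j\in C_\epsilon(E)$ and thus $\Q\subset C_\epsilon(E)$. Applying this with $\Q=\{s\}$ a singleton, together with the inclusion $C_\epsilon(E)\subset C^\infty(E)$ from (i), gives $C^\infty(E)=\bigcup_{\epsilon\in\cE}C_\epsilon(E)$.

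The only step that is not completely mechanical is the separability assertion, since $C_\epsilon(E)$ is a space of smooth sections that must not be conflated with the merely-Fréchet space $C^\infty(E)$; realising it as a closed subspace of a countable $\ell^1$-sum of the separable Banach spaces $C^k(E)$ disposes of it cleanly. No genuine obstacle is anticipated.
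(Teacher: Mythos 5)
Your proof is correct. The paper itself offers no argument for this lemma—it simply cites \cite[Appendix B]{Wendl2016a}—and your write-up supplies the standard proof one finds there: completeness via termwise Cauchyness in each $C^k(E)$ plus the finite-truncation limit argument, separability via the isometric embedding $s\mapsto(\epsilon_k s)_k$ into the separable $\ell^1$-sum $\bigl(\bigoplus_k C^k(E)\bigr)_{\ell^1}$, and the diagonal choice $\epsilon_{0,k}=2^{-k}\max\{1,\|s_1\|_{C^k},\dots,\|s_k\|_{C^k}\}^{-1}$ for part (ii). The only cosmetic remark is that closedness of the image in the $\ell^1$-sum is not needed for separability, since every subset of a separable metric space is separable.
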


To prove \autoref{lem coclosed comeager}, we will also require the following lemma. It essentially asserts the existence of a finite-dimensional subfamily of $\sP$ in which the CS associative submanifold $P$ is unobstructed, so that the moduli space of CS associatives in a neighbourhood of $P$ within this subfamily forms a smooth manifold. Furthermore, by Sard’s theorem, for a generic member of this subfamily, the CS associatives in this neighbourhood of $P$ are all unobstructed.

\begin{lemma}[{\citet[Proposition A.2]{Doan2017d}}]\label{lem main generic DW}
	Let $P$ be a CS associative in $(Y,\phi)$ with singularities at $p_i$ and rates $\mu_i\in (1,2]\setminus \mathcal D_{C_i}$. Then for all $w\in \ker \fL_{P,-2-\mu}\cong \coker \fL_{P,\mu}$ there exists a $3$-form $\hat \phi\in T_\phi\sP$ supported away from $p$ such that
	$$\inp {L_2(\hat \phi)}{w}_{L^2}\neq 0, \ L_2 \ \text{is defined in \autoref{lem linearization 3rd component}}. $$ 
\end{lemma}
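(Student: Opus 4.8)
The plan is a standard unique-continuation / support argument, of the type used to establish genericity for many geometric PDE moduli problems (and precisely the analogue of \cite[Proposition A.2]{Doan2017d} for associatives). Fix $w\in \ker \fL_{P,-2-\mu}\cong \coker\fL_{P,\mu}$ with $w\neq 0$; the goal is to produce $\hat\phi\in \Omega^3(Y)$, supported away from $\sing(P)=\{p_1,\dots,p_m\}$, with $\inp{L_2(\hat\phi)}{w}_{L^2}\neq 0$. By \autoref{lem linearization 3rd component}, for a $3$-form $\hat\phi$ supported away from the $p_i$ (so that the vector fields $X_{\hat\phi,i}$ in that formula can be taken to vanish near the $p_i$, hence may be dropped entirely after shrinking support), one has $\inp{L_2(\hat\phi)}{w}_{L^2}=\int_{\Gamma_\alpha}\iota_w(\Upsilon_{P_C}^*\hat\psi)$, where $\hat\psi$ is the linearization of $\psi=*_\phi\phi$ at $\hat\phi$. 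So it suffices to find $\hat\phi$ supported in a small ball $B$ contained in the smooth part $\hat P$ (away from the singularities) with $\int_{\hat P}\iota_w\hat\psi\neq 0$ (transporting through $\Upsilon_{P_C}$ and $\Theta_P^C$, and using $\hat\psi$ compactly supported so that the weighted pairing is the genuine $L^2$-pairing on $\hat P$).

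The key steps, in order, are as follows. First, I would observe that $w$ cannot vanish on any open subset of $\hat P$: $w\in\ker\bD_{P,-2-\mu}\subset \ker\bD_P$ (after the identification $\Theta_P^C$) and $\bD_P$ is a first-order elliptic operator of Dirac type, for which the strong unique continuation property holds (Aronszajn); hence the zero set of $w$ has empty interior, and in particular there is a point $q$ in the smooth part $\hat P$, away from all $p_i$, where $w(q)\neq 0$. Second, I would show that the pointwise-linear map $\Omega^3_q(Y)\ni \hat\phi\mapsto (\iota_w\hat\psi)_q$, taking values in $\Lambda^2(T^*_q\hat P)\otimes\Lambda^3(N_q P)^*$-type objects contracted against the volume form of $\hat P$ — i.e. the linear functional $\hat\phi\mapsto (\iota_{w(q)}\hat\psi)|_{T_q\hat P}$ — is surjective onto the relevant target, equivalently that it does not annihilate $w(q)$. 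This is the representation-theoretic / linear-algebra core: the derivative of $\phi\mapsto\psi=*_\phi\phi$ at a $G_2$-structure is the known isomorphism $\Omega^3 = \Omega^3_1\oplus\Omega^3_7\oplus\Omega^3_{27}\to\Omega^4=\Omega^4_1\oplus\Omega^4_7\oplus\Omega^4_{27}$ (explicitly $\hat\phi\mapsto \tfrac43 *(\pi_1\hat\phi) + *(\pi_7\hat\phi) - *(\pi_{27}\hat\phi)$ in standard normalization), so $\hat\psi$ ranges over all of $\Lambda^4 T^*_qY$ as $\hat\phi$ ranges over $\Lambda^3 T^*_qY$; then one only needs that for a nonzero normal vector $n=w(q)\in N_qP$ the linear map $\Lambda^4 T^*_qY\to \Lambda^3 T^*_q(TP_q)$, $\beta\mapsto (\iota_n\beta)|_{T_qP}$, is nonzero, which is immediate since one can pick $\beta = n^\flat\wedge \vol_{T_qP}$ (extending a coframe). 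Third, having fixed $q$ and a local value $\hat\phi_q$ achieving $(\iota_{w(q)}\hat\psi)|_{T_qP}=\vol_{T_qP}$, I would take $\hat\phi$ to be a smooth bump times (a local extension of) this value, supported in a ball $B\subset \hat P\setminus\{p_i\}$ so small that $\iota_w\hat\psi$ has a sign on $B$; then $\int_{\hat P}\iota_w\hat\psi=\int_B \iota_w\hat\psi\neq 0$. Finally, I would check the bookkeeping: $\hat\phi$ so constructed is an honest tangent vector to $\sP$ (it is closed-$*$? — no: $\sP$ is the space of co-closed $G_2$-structures, so $T_\phi\sP=\{\hat\phi\in\Omega^3:\, d*_\phi(\cdot)\ \text{of the linearization is closed}\}$; one must arrange $d\hat\psi=0$). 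This last point is handled exactly as in \cite{Joyce2016} and \cite{Doan2017d}: the co-closedness constraint is that the $4$-form part $\hat\psi$ be closed, and since $\hat\psi$ is supported in $B$ one may instead prescribe $\hat\psi$ directly as a compactly supported closed (indeed exact) $4$-form in $B$ with $\int_{\hat P}\iota_w\hat\psi\neq 0$ — possible by the same pointwise surjectivity together with the fact that on the small ball closed forms are dense enough (take $\hat\psi=d\gamma$ for a compactly supported $3$-form $\gamma$, adjusting so the pairing is nonzero, using that $w$ restricted to $B$ is a nonzero section) — and then recover $\hat\phi$ from $\hat\psi$ via the inverse of the linearized Hodge star.

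I expect the main obstacle to be this last compatibility issue: producing $\hat\phi$ that is simultaneously (i) supported away from $\sing(P)$, (ii) tangent to the \emph{co-closed} subspace $\sP$ (i.e. $d\hat\psi=0$), and (iii) still pairs nontrivially with $w$ on $\hat P$. Requirements (i) and (ii) together force one to work with closed $4$-forms compactly supported in a ball in $\hat P$, and one must verify that such forms can still be arranged to have nonzero $L^2$-pairing with $\iota_w(\cdot)$ — this is where strong unique continuation for $\bD_P$ is essential (so that $w$ is genuinely nonzero on an open set, not merely nonzero somewhere whose location we cannot control relative to the singular set) and where a small local computation with the $\SU(3)$-decomposition of $4$-forms is needed to see that the closed-form constraint does not kill the pairing. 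The rest — the identification of $L_2$ with the boundary-term formula of \autoref{lem linearization 3rd component}, the use of $\ker\fL_{P,-2-\mu}\cong\coker\fL_{P,\mu}$ from \autoref{prop index}, and the transport through $\Theta_P^C$ and $\Upsilon_{P_C}$ — is routine given the machinery already set up in Section~\ref{section deform CS asso}.
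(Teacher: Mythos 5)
Your proposal follows essentially the same route as the paper: use unique continuation for the Dirac-type operator $\bD_P$ to find an open set $U$ in the smooth part $K_P$ where $w \neq 0$, observe that the linearization of $\phi \mapsto *_\phi\phi$ is an isomorphism so that one may prescribe $\hat\psi$ rather than $\hat\phi$, recognize that the tangency-to-$\sP$ constraint $d\hat\psi = 0$ is the only nontrivial compatibility, and satisfy it by taking $\hat\psi$ exact and compactly supported near $U$. The paper's realization of your final step is more concrete and sidesteps the vagueness in your phrase ``adjusting so the pairing is nonzero'': it sets $\hat\psi = d(f\Theta)$ with $\Theta$ an extension of $\vol_U$ chosen so that $\iota_w d\Theta|_V = 0$ and $f$ a bump function with $df(w) \geq 0$ (not identically $0$) on $U$, which forces the integrand $\iota_w \hat\psi|_U = df(w)\,\vol_U$ to have a sign and hence a strictly positive integral; your detour through the $\SU(3)$-type pointwise surjectivity of $\beta\mapsto(\iota_n\beta)|_{T_qP}$ is correct but unnecessary given this direct construction.
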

\begin{proof}By unique continuation, there exists an open set $U$ in the interior of $K_P$ on which $w$ does not vanish identically. Let $V$ be a tubular neighbourhood of $U$ in $Y$. Choose a real valued smooth function $f$ supported in $V$ such that $df(w)\geq 0$ on $U$ and there exists a point in $U$ where $df(w)>0$. Let $\Theta\in \Omega^3(Y)$ be an extension of the $3$-form $\vol_U$ and $\iota_wd\Theta_{|V}=0$. Then there exists a $3$-form $\hat \phi\in T_\phi\sP$ supported in $V$ such that $\hat \psi=d(f\Theta)\in \Omega^4(Y)$, where $\hat \psi$ is as in \autoref{lem linearization 3rd component}. By \autoref{lem linearization 3rd component}, we have
\begin{equation*}
	\inp{L_2(\hat \phi)}{w}_{L^2}=\int_{\Gamma_{\alpha}}\iota_w(\Upsilon_{P_C}^*\hat \psi)=\int_U df(w)\vol_U>0.
\qedhere \end{equation*}
\end{proof}

\begin{proof}[{{{\normalfont{\textbf{Proof of \autoref{lem coclosed comeager}}}}}}] For each $\epsilon \in \cE $, let $\Omega_\epsilon^3(Y)$ be the Floer's $C_\epsilon$ space of smooth $3$-forms on $Y$. We define 
$$\sP_\epsilon:=\sP\cap \Omega_\epsilon^3(Y)\ \ \text{and} \ \ \cM_{\ocs,\epsilon,\cZ}:=\{(\phi,P)\in \cM_{\ocs,\cZ}: \phi\in \sP_\epsilon, P\in \cM^\phi_{\ocs,\cZ}\}.$$
 Suppose $(\phi,P)\in \cM_{\ocs,\epsilon,\cZ}$. Let $p_i$, $i=1,\dots,m$ be the singular points of $P$.  Choose $U_{\tau_0}$ and $\mu$ as in \autoref{def CS weighted topo}. Set $$U_{\tau_0,\epsilon,\cZ}:=\big(U_{\phi} \cap \sP_\epsilon\big)\times \prod_{i=1}^m U_{p_i}\times \prod_{i=1}^m\cZ_i.$$
Let $\mathfrak F_{\epsilon,\cZ}: W^{2,p}_{\mu} (V_{P_C})\times U_{\tau_0,\epsilon,\cZ}\to W^{1,p}_{P_C,\mu-1}$ be the restriction of $\mathfrak F$. We define
 $$\cM^{\operatorname{reg}} _{\ocs,\epsilon,\cZ}:=\{(\phi,P)\in \cM_{\ocs,\epsilon,\cZ}: {d\mathfrak F_{\epsilon,\cZ}}_{|\tau_0}\ \text{is surjective} \}.$$ 
  By the Implicit function theorem \cite[Theorem A.3.3]{McDuff2012} we obtain that $\cM^{\operatorname{reg}} _{\ocs,\epsilon,\cZ}$ is a separable Banach manifold. Moreover by the Sard--Smale theorem \cite[Lemma A.3.6]{McDuff2012} we see that the canonical projection map $\pi_{\epsilon,\cZ}:\cM^{\operatorname{reg}} _{\ocs,\epsilon,\cZ}\to \sP_\epsilon$ is a Fredholm map. Therefore again by \cite[Theorem A.3.3]{McDuff2012} we conclude that there exists a comeager subset $\sP_{\epsilon,\operatorname{cs},\cZ}^{\operatorname{reg}}\subset \sP_\epsilon$ having the property that for each $\phi\in \sP_{\epsilon,\operatorname{cs},\cZ}^{\operatorname{reg}}$ the linear operator $\widetilde \bD_{P,\mu,\cZ}$ is surjective for all $P$ with $(\phi,P)\in \cM^{\operatorname{reg}} _{\ocs,\epsilon,\cZ}$.
  
  With this preparation we are now ready to prove the lemma. Suppose $\phi_0\in \sP$.  We can choose $\epsilon$ sufficiently small so that $\phi_0\in \sP_\epsilon$. Since $\sP_{\epsilon,\operatorname{cs},\cZ}^{\operatorname{reg}}$ is dense in $\sP_\epsilon$, there exists a sequence $\phi_n\in \sP_{\epsilon,\operatorname{cs},\cZ}^{\operatorname{reg}}$ converging to $\phi_0$ in $C_\epsilon$-topology and hence in $C^\infty$-topology. Our claim is that $\phi_n\in \sP_{\operatorname{cs},N,\cZ}^{\operatorname{reg}} $ for all $n$ sufficiently large. If not, then there is a subsequence $P_n\in \cM_{\ocs,N,\cZ}^{\phi_n}$ such that $\widetilde \bD_{P_n,\mu_n,\cZ}$ is not surjective. By \autoref{lem CS convergence} we obtain that $P_n$ converges to a CS associative $P$ in $(Y,\phi_0)$.  Hence, by \autoref{lem main generic DW} and \autoref{lem Floer's space}, $(\phi_0,P)\in \cM^{\operatorname{reg}} _{\ocs,\epsilon,\cZ}$ for all sufficiently small $\epsilon$. Therefore  $(\phi_n,P_n)\in \cM^{\operatorname{reg}} _{\ocs,\epsilon,\cZ}$ for all $n$ sufficiently large which contradicts to the fact that  $\widetilde \bD_{P_n,\mu_n,\cZ}$ is not surjective.    
 \end{proof}

\section{Desingularizations of CS associative submanifolds}\label{section desing cs asso}

 Let $(Y,\phi)$ be a co-closed $G_2$-manifold.  We will glue a conically singular (CS) associative submanifold $P$ in $(Y,\phi)$ with singularity at one point and rescaled asymptotically conical (AC) associative submanifolds $\epsilon L$ in $\R^7$ with $\epsilon>0$ to construct closed approximate associative submanifolds ~$P_\epsilon$. We have seen in \autoref{thm closed asso} that for generic co-closed $G_2$ structures the moduli space of closed associative submanifolds is $0$-dimensional and therefore we can not expect to deform the $P_\epsilon$ to associative submanifolds $\tilde{P_\epsilon}$ ({desingularization}) in $(Y,\phi)$. In this section we will see that under some hypothesis we can do this in a $1$-parameter family of co-closed $G_2$-structures\footnote{One can generalize this by using similar analysis to multiple isolated points and glue that many asymptotically conical associative submanifolds in $\R^7$ but in that case one needs to desingularize in a higher dimensional  parameter space of co-closed $G_2$ structures, which we will not cover in this article.}.

\subsection{Approximate associative desingularizations}
Let $P$ be a CS associative submanifold of a co-closed $G_2$-manifold $(Y,\phi)$ with a singularity at ~$p$, rate $\mu\in(1,2]$ and  cone $C$ as in \autoref{def CS asso}. Let $L$ be an AC associative submanifold in $\R^7$ with the same cone $C$ and rate $\nu<1$ as in \autoref{def AC associative}. The real numbers $R, R_\infty$, $\epsilon_0$ are as in \autoref{def AC associative} and \autoref{def CS asso}. Let $P_C$, $NP_C$, $K_{P_C}$ be as in \autoref{def PC} and $L_C$, $NL_C$, $K_{L_C}$ be as in \autoref{def LC}.

Let $\bphi\in \bsP$ be a path of co-closed  $G_2$-structures and $t_0\in (0,1)$ such that $\bphi(t_0)=\phi$. Set $\phi_{t}:=\bphi(t)$. Let $U_{\tau_0}$ be as in \autoref{eq Utau0}.  Let $T>0$ be sufficiently small such that for each $t\in (t_0-T,t_0+T)$ we have $$\tau_t:=(\phi_t,p,0)\in U_{\tau_0}.$$ 

Let $\Upsilon^{\tau_t}$, $P_C^{\tau_t}$ and $\Upsilon_{P^{\tau_t}_{C}}$  be as in \autoref{def smooth family EC}.  We first glue $P_C^{\tau_t}$ and $\epsilon L_C$ to obtain a submanifold $P_{\epsilon,t,C}$ in $(Y, \phi_t)$, along with a tubular neighbourhood of it, inside which the approximate associative desingularization will be defined.

\begin{definition}[$P_{\epsilon,t,C}$:= gluing of $P_C^{\tau_t}$ and $\epsilon L_C$]For any sufficiently small $\epsilon>0$, we define a closed $3$-dimensional submanifold $P_{\epsilon,t,C}$ of $Y$ as follows. For a real number $0<q<1$ we define a real number, $\delta:=(\epsilon R_\infty)^{q}$. Then for sufficiently small $\epsilon$ we have 
 $$\epsilon R_\infty< 2\epsilon R_\infty<\delta<2\delta<\delta^{\frac 12}<2\delta^{\frac 12}<\epsilon_0< 2\epsilon_0<R.$$
We use the notation $A(a,b)$ for the annulus $\{x\in \R^7:a<\abs{x}<b\}$. For any $t\in (t_0-T,t_0+T)$, we define $$P_{\epsilon,t,C}:=P^+_{\epsilon, t, C} \bigcup  P^-_{\epsilon,t, C} $$
where  $$ P^+_{\epsilon, t, C}:= \Upsilon^{\tau_t}\big(\epsilon K_{L_C} \cup  (C \cap A(\epsilon R_\infty, 2\delta))\big)   \ \ \text{and}\ \ P^-_{\epsilon, t,  C}:=\Upsilon^{\tau_t} (C \cap A(\delta, 2\epsilon_0)) \cup K_{P_C}.$$
Here $(\Upsilon^{\tau_t})^{-1}(P^+_{\epsilon, t, C})\subset \epsilon L_C$ and $P^-_{\epsilon,t, C}\subset P_C^{\tau_t}$. 
\end{definition}
\begin{notation}
	If $t=t_0$ then we will now on omit the subscript $t$. 
\end{notation}
\begin{definition}[Tubular neighbourhood map of $P_{\epsilon,t,C}$]
The normal bundles $N(\epsilon L_C)$ and $NP_C$ can be glued to get a normal bundle 
$$NP_{\epsilon,C}=NP^+_{\epsilon,C}\cup NP^-_{\epsilon,C}.$$
The scaling $s_\epsilon:\R^7\to \R^7$, $x\mapsto \epsilon x$ induces an isomorphism ${s_\epsilon}_*:NL_C\to N(\epsilon L_C)$ defined by $${s_\epsilon}_*\nu(x):=\epsilon\nu(\epsilon^{-1}x).$$
We have a tubular neighbourhood map $$\Upsilon_{P_{\epsilon,t,C}}:=\Upsilon_{P^+_{\epsilon,t,C}}\cup \Upsilon_{P^-_{\epsilon,t,C}}:V_{P_{\epsilon,C}}\to Y,$$ where $(\Upsilon^{\tau_t})^{-1}\circ \Upsilon_{P^+_{\epsilon,t,C}} \circ \Upsilon^{\tau_t}_*$ is the restriction of $s_\epsilon\circ \Upsilon_{L_C}\circ {s_\epsilon}_*^{-1}$ and $\Upsilon_{P^-_{\epsilon,t,C}}$ is the restriction of $\Upsilon_{P^{\tau_t}_{C}}$.
\end{definition}

\begin{definition}[Approximate associative desingularization]\label{def approx asso cs}
Let $\alpha\in  C^\infty(NP_C)$ and $\beta \in C^\infty(NL_C)$ be as in \autoref{def section alpha for P} and \autoref{def section beta for L} representing $P$ and $L$, respectively.

 We define the \textbf{approximate associative desingularization} $P^1_{\epsilon,t}$  by
$$ P^1_{\epsilon,t}:=\Upsilon_{P_{\epsilon,t,C}}(\alpha^1_{\epsilon}), $$
where $\alpha^1_\epsilon \in C^\infty(V_{P_{\epsilon,C}})$ is
 \begin{equation}\label{eq alpha 2 epsilon}
		\alpha^1_\epsilon:=\rho_{\delta} \Upsilon_*({s_\epsilon}_* \beta)+ (1-\rho_{\delta})\alpha. \qedhere
\end{equation}
\end{definition}
\begin{definition}[Improved approximate associative desingularization]\label{def improved approx asso cs}
Let $\alpha\in  C^\infty(NP_C)$ and $\beta \in C^\infty(NL_C)$ be as in \autoref{def approx asso cs}. Suppose there exists $\lambda_0<\nu$ such that 
\begin{enumerate}[(i)]
\item  $(\lambda_0,\nu]\cap \cD=\{\lambda_1,\dots,\lambda_l\}$ with $\lambda_1<\dots<\lambda_l,$	
\item there exist $s_0>0$ and $\alpha_i\in \ker \fL_{P, \lambda_i}$, $\beta_i\in V_{\lambda_i}$ $i=1,\dots, l$ such that 
 \[\abs{(\nabla^\perp_{C})^k(\beta-\sum_{i=1}^l\beta_i)}=O(r^{\max\{\lambda_0,2\nu-1\}-k})	\ \  \text{as} \ \ r\to \infty \ \ \text{for all}\ \  k\in \N\cup\{0\}\]
 and
  \begin{equation*}\abs{(\nabla^\perp_{C})^k(\Upsilon_{P_C}^*\alpha_i-\beta_i)}=O(r^{\lambda_i+s_0-k})	\ \  \text{as} \ \ r\to 0 \ \ \text{for all}\ \  k\in \N\cup\{0\}.\end{equation*}  
\end{enumerate}

We then define the \textbf{improved approximate associative desingularization} $P^2_{\epsilon,t}$  by
$$ P^2_{\epsilon,t}:=\Upsilon_{P_{\epsilon,t,C}}(\alpha^2_{\epsilon}), $$
where
\begin{equation}\label{eq alpha 2 epsilon}
		\alpha^2_\epsilon:=\rho_{\delta} \Upsilon_*({s_\epsilon}_* \beta)+ \Big(1-\rho_{\delta}\Big) \Big(\sum_{i=1}^l\epsilon^{1-\lambda_i}\alpha_i+\alpha \Big).\qedhere\end{equation}
\end{definition}

\begin{notation}We use the following notation:
\begin{equation*}
P_{\epsilon,t}:= P^1_{\epsilon,t}\  \text{or}\  P^2_{\epsilon,t} \qandq  \alpha_\epsilon:=\alpha^1_\epsilon \ \text{or}\ \alpha^2_\epsilon.
\end{equation*}
Set, 
\[\nu_0:=\max\{\lambda_0,2\nu-1\} \qandq  \fc_q:=\min\{(1-q)(1-\nu),q(\mu-1)\}.\]
Note that, \[\epsilon^{\fc_q} \lesssim \max\{\epsilon^{1-\nu}\delta^{\nu-1},\delta^{\mu-1}\}\lesssim \epsilon^{\fc_q}.\qedhere\]
	\end{notation}
	
Since the above approximate associative desingularization is not associative, we aim to find a genuine associative submanifold near it. In other words, our goal is to find a zero of the following nonlinear map.

 \begin{definition}\label{def nonlinear map with epsilon}
The map  $\tilde \fF_\epsilon: C^{\infty}(V_{P_{\epsilon,C}})\times (t_0-T,t_0+T)\to C^{\infty}{(NP_{\epsilon,C})}$ is defined by  
	 \begin{equation*}
	 	\inp{\tilde \fF_\epsilon(u,t)}{w}_{L^2}:=\int_{\Gamma_{u}}\iota_{w}\Upsilon_{P_{\epsilon,t,C}}^*\psi_t
	 \end{equation*}
	 where $t\in (t_0-T,t_0+T)$, $u\in C^{\infty}(V_{P_{\epsilon,C}})$ and $w\in C^\infty (NP_{\epsilon,C})$. The notation $w$ in the integrand is the extension vector field of $w\in C^\infty(NP_{\epsilon,C})$ in the tubular neighbourhood as in \autoref{notation u}.
	  \end{definition}
\begin{notation}
We  use the following notation:
\[\mathfrak F_{\epsilon,t}:=\tilde \fF_\epsilon(\cdot,t).\]
 If $t=t_0$ we omit the subscript $t$.	
\end{notation}

 \begin{definition}\label{def decomposition of F epsilon nonlinear map}
The map $\tilde \fF_{\epsilon}: C^{\infty} (V_{P_{\epsilon,C}})\times (t_0-T,t_0+T)\to C^{\infty}{(NP_{\epsilon,C})}$ can be written as 
$$\tilde \fF_{\epsilon}(u,t):=\mathfrak L_{P_\epsilon}u+(t-t_0)f_\epsilon+ \tilde Q_\epsilon(u,t)+e_\epsilon,$$ where \[e_\epsilon:=\mathfrak F_\epsilon(\alpha_\epsilon)\in C^{\infty}{(NP_{\epsilon,C})}, \quad f_\epsilon:=\frac {d\fF_{\epsilon,t}(\alpha_\epsilon)}{dt}\Bigr\rvert_{t = t_0}\in C^{\infty}{(NP_{\epsilon,C})},\]
and 
$\mathfrak L_{P_\epsilon}$ is the \textbf{linearization} of $\mathfrak F_{\epsilon}$ at $\alpha_\epsilon\in C^{\infty} (V_{P_{\epsilon,C}})$, that is, \begin{equation*}
\mathfrak L_{P_\epsilon}:={d\mathfrak F_{\epsilon}}_{|\alpha_\epsilon}:C^{\infty}{(NP_{\epsilon,C})}\to C^{\infty}{(NP_{\epsilon,C})}.\qedhere
\end{equation*}
 \end{definition}
The following proposition concerns the self-adjointness of $\mathfrak{L}_{P_\epsilon}$, which will be crucial when we attempt to invert it. The obstruction to this inversion can be expressed in terms of (approximate) kernel elements, which leads to the hypothesis in the desingularization theorem.

 \begin{prop} \label{self adjoint Linearization L_epsilon}$\mathfrak L_{P_\epsilon}$ is a formally self adjoint elliptic operator for all sufficiently small $\epsilon$. 
\end{prop}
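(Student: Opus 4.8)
The plan is to reduce the claim to the self-adjointness statement already proved for CS (and more generally AC/CS) associative deformation operators, namely \autoref{prop formal self adjoint}, applied on each of the two pieces from which $P_{\epsilon,t_0,C}$ is glued, and then to note that the two restrictions agree on the overlap where the gluing takes place. Concretely, $\mathfrak{L}_{P_\epsilon}$ is by \autoref{def decomposition of F epsilon nonlinear map} the linearization of $\mathfrak{F}_\epsilon$ at $\alpha_\epsilon$, and by the very definition of $\mathfrak{F}_\epsilon$ in \autoref{def nonlinear map with epsilon} this linearization is the same sort of object as the operator $d\mathfrak{F}_{\tau|u}$ studied in \autoref{prop linearization nonliear}: for $v,w\in C_c^\infty(NP_{\epsilon,C})$,
\[
\inp{d\mathfrak{F}_\epsilon{}_{|\alpha_\epsilon}(v)}{w}_{L^2}=\int_{\Gamma_{\alpha_\epsilon}}\iota_w\mathcal{L}_v(\Upsilon_{P_{\epsilon,C}}^*\psi_{t_0}),
\]
where $\psi_{t_0}$ is closed because $\phi_{t_0}=\phi$ is a co-closed $G_2$-structure. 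First I would record that $\mathfrak{L}_{P_\epsilon}$ is a first order operator whose symbol is the Clifford-type symbol of the Fueter operator on $P_{\epsilon,C}$, hence elliptic; this is immediate from \autoref{prop linearization nonliear}, since the top-order term is $\sum_{\text{cyclic}}[e_2,e_3,\nabla_{e_1}\cdot]$, which up to the canonical bundle isomorphisms is exactly the principal part of $\mathbf{D}_{P_{\epsilon,C}}$, already known to be elliptic (cf.\ \autoref{prop CS elliptic operator}, \autoref{prop identification of normal bundle}).

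For formal self-adjointness I would repeat verbatim the computation in the proof of \autoref{prop formal self adjoint}: for $v,w\in C_c^\infty(NP_{\epsilon,C})$,
\[
\inp{\mathfrak{L}_{P_\epsilon}v}{w}_{L^2}-\inp{\mathfrak{L}_{P_\epsilon}w}{v}_{L^2}
=\int_{\Gamma_{\alpha_\epsilon}}\bigl(\mathcal{L}_v\iota_w-\mathcal{L}_w\iota_v\bigr)(\Upsilon_{P_{\epsilon,C}}^*\psi_{t_0}),
\]
and since $[\tilde v,\tilde w]=0$ for the canonical extensions (\autoref{rmk commutator}) and $d\psi_{t_0}=0$, the integrand equals $\iota_w\iota_v(\Upsilon_{P_{\epsilon,C}}^*d\psi_{t_0})-d\bigl(\iota_w\iota_v(\Upsilon_{P_{\epsilon,C}}^*\psi_{t_0})\bigr)=-d\bigl(\iota_w\iota_v(\Upsilon_{P_{\epsilon,C}}^*\psi_{t_0})\bigr)$, whose integral over the closed manifold $P_{\epsilon,C}$ vanishes by Stokes' theorem. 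The only genuinely new point relative to the compact case is that $P_{\epsilon,C}$ is assembled from $P^+_{\epsilon,C}\subset\epsilon L_C$ and $P^-_{\epsilon,C}\subset P_C$ via the tubular neighbourhood map $\Upsilon_{P_{\epsilon,C}}=\Upsilon_{P^+_{\epsilon,C}}\cup\Upsilon_{P^-_{\epsilon,C}}$; here one checks that on the overlap annulus $\Upsilon^{\tau_{t_0}}(C\cap A(\delta,2\delta))$ the two tubular neighbourhood maps restrict to the same map (by construction both are restrictions of the conical tubular map $\Upsilon_C$ up to the fixed scaling $s_\epsilon$), so $\Upsilon_{P_{\epsilon,C}}^*\psi_{t_0}$ is a globally well-defined closed $4$-form on $V_{P_{\epsilon,C}}$ and $\mathfrak{L}_{P_\epsilon}$ is a genuine operator on the closed manifold $P_{\epsilon,C}$. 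Note also that, although $\alpha_\epsilon$ is built from a scaled section $\beta$ and from $\alpha$ (and, in the improved case, from the correction sections $\alpha_i$), self-adjointness holds at \emph{any} base section, since the argument only used $d\psi_{t_0}=0$ and the vanishing of the bracket of extension fields, not any property of the point at which we linearize.

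The ``sufficiently small $\epsilon$'' in the statement is needed only to guarantee that $P_{\epsilon,t_0,C}$ is an honestly embedded closed submanifold of $Y$ and that $\Upsilon_{P_{\epsilon,C}}$ is a bona fide tubular neighbourhood map, i.e.\ that the gluing region $\epsilon R_\infty<\abs x<2\epsilon_0$ makes sense and the chain of inequalities $\epsilon R_\infty<2\epsilon R_\infty<\delta<2\delta<\delta^{1/2}<2\delta^{1/2}<\epsilon_0<2\epsilon_0<R$ holds; once that is in place the above algebraic identity is exact and imposes no smallness. I do not expect a serious obstacle here: the main (very mild) point to be careful about is the compatibility of the two tubular neighbourhood maps on the overlap, so that the integration-by-parts on the closed manifold $P_{\epsilon,C}$ is legitimate with no boundary terms; everything else is a transcription of \autoref{prop linearization nonliear}, \autoref{prop formal self adjoint} and \autoref{rmk commutator} to the glued manifold.
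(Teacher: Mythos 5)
Your self-adjointness argument is correct and is exactly the paper's route: the paper simply remarks that formal self-adjointness "follows from the same arguments as in the proof of \autoref{prop formal self adjoint}", which is the Stokes computation you spell out, and your side-observation about compatibility of the two tubular neighbourhood maps on the gluing annulus is a reasonable sanity check, though it is not the part the paper worries about.

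Your ellipticity argument, however, has a genuine gap. You claim that the top-order term $\sum_{\text{cyclic}}[e_2,e_3,\nabla_{e_1}\cdot]$ coming from \autoref{prop linearization nonliear} is "exactly the principal part of $\mathbf D_{P_{\epsilon,C}}$, already known to be elliptic", and that smallness of $\epsilon$ enters only through the geometry of the gluing. Neither step holds as written. First, $\mathfrak L_{P_\epsilon}$ is the linearization of $\mathfrak F_\epsilon$ at the nonzero section $\alpha_\epsilon$, so the frame $\{e_i\}$ in \autoref{prop linearization nonliear} is an orthonormal frame of $T\Gamma_{\alpha_\epsilon}$ and the associator is taken with respect to $\Upsilon_{P_{\epsilon,C}}^*\phi$ at points off the zero section, not on $P_{\epsilon,C}$. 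Second, and decisively, reducing the cyclic-sum-of-associators symbol to the Clifford symbol $\sum_i\xi(e_i)\,e_i\times\cdot$ uses the identity $e_{i+1}\times e_{i+2}=e_i$, which holds precisely when the $3$-plane spanned by the $e_i$ is associative; this is exactly how the proof of \autoref{prop identification of normal bundle} works, but it relies on $[\cdot,\cdot,\cdot]_{|P}=0$. The graph $\Gamma_{\alpha_\epsilon}$ is only approximately associative, so $e_{i+1}\times e_{i+2}$ differs from $e_i$ by an error of order $O(\epsilon^{\fc_q})$, and one does need this error to be small for the symbol to remain invertible; this is where the "sufficiently small $\epsilon$" is genuinely used, not just to make the gluing chain of radii self-consistent. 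The paper's proof handles this by comparing operators rather than symbols: using \autoref{prop cs quadratic estimate} (and its AC analogue together with $\Upsilon^*\psi=(\Upsilon^*\psi)(0)+O(r)$), it shows $\fL_{P_\epsilon^-}=\fL_P+O(\epsilon^{\fc_q})$ and $\fL_{P_\epsilon^+}=\epsilon^{-1}(\Upsilon_*{s_\epsilon}_*)\fL_L(\Upsilon_*{s_\epsilon}_*)^{-1}+O(\epsilon^{\fc_q})$, and since $\fL_P$ and $\fL_L$ are elliptic (by \autoref{prop CS elliptic operator} and its AC counterpart, which apply because $P$ and $L$ \emph{are} associative), ellipticity being an open condition gives the claim for small $\epsilon$. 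You should either adopt this comparison argument or make the approximate associativity of $\Gamma_{\alpha_\epsilon}$ and the resulting $O(\epsilon^{\fc_q})$ control of the symbol explicit.
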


To prove this, we need the following facts about AC associative submanifolds, which have already been observed in the case of CS associative submanifolds.

\begin{definition}\label{def linearization L on AC asso }Let $\beta \in C^\infty(NL_C)$ be as in \autoref{def section beta for L} representing the AC associative $L$. We define the differential operator $\mathfrak L_L:C_c^\infty(NL_C)\to C_c^\infty(NL_C)$ by 
$$\mathfrak L_Lu:=d\mathfrak F^{AC}_{|\beta}(u),$$
where the map  $\fF^{AC}: C^{\infty}_{L,\nu}(V_{L_{C}})\to C^{\infty}{(NL_{C})}$ is defined by  
	 \begin{equation*}
	 	\inp{\fF^{AC}(u)}{w}_{L^2}:=\int_{\Gamma_{u}}\iota_{w}\Upsilon_{L_{C}}^*\psi_e,\ \  \ u\in C^\infty C^{\infty}_{L,\nu}(V_{L_{C}}), w\in C_c^\infty(NL_C).
	\qedhere \end{equation*}
 \end{definition}
 \begin{remark}$\mathfrak L_L$ is a AC uniformly elliptic operator asymptotic to $\bD_C$. Moreover, there is a identification map $\Theta^C_L:NL_C\to NL$ similar to $\Theta^C_P$ (see \autoref{def identification of normal bundle}, \autoref{def linearization D on cs asso}) such that
  \[\bD_L=\Theta^C_L\fL_L(\Theta^C_L)^{-1}.
 \qedhere\]
 	 \end{remark}

\begin{proof}[{Proof of \autoref{self adjoint Linearization L_epsilon}}] $\mathfrak L_{P_\epsilon}$ is a formally self adjoint operator follows from same arguments as in the proof of \autoref{prop formal self adjoint}. It remains to prove that $\mathfrak L_{P_\epsilon}$ is an elliptic operator.  We denote the restrictions of $\mathfrak L_{P_\epsilon}$ over $P^\pm_{\epsilon,C}$ by $\mathfrak L_{P_\epsilon^\pm}$. By \autoref{prop cs quadratic estimate} we obtain that  
$$\mathfrak L_{P_\epsilon^-}={d\mathfrak F_{\tau_0}}_{|\alpha}+O(\epsilon^{\fc_q})=\mathfrak L_{P}+O(\epsilon^{\fc_q}).$$
Similar to \autoref{prop cs quadratic estimate} for AC associatives in $\R^7$ and the fact that $\Upsilon^*\psi=(\Upsilon^*\psi)(0)+O(r)$, we get 
$$\mathfrak L_{P_\epsilon^+}=\Upsilon_*\mathfrak L_{\epsilon L}\Upsilon_*^{-1}+O(\epsilon^{\fc_q})=\epsilon^{-1}(\Upsilon_*{s_\epsilon}_*)\mathfrak L_{L}(\Upsilon_*{s_\epsilon}_*)^{-1}+O(\epsilon^{\fc_q}).$$
Since $\mathfrak L_{P}$ and $\mathfrak L_{L}$ are elliptic, therefore $\mathfrak L_{P_\epsilon}$ is an elliptic operator for all sufficiently small $\epsilon$. 
 \end{proof}

\subsection{Weighted function spaces and estimates} 
In this subsection, we introduce the weighted function spaces on $P_{\epsilon,C}$, inspired by the corresponding function spaces on its CS and AC sides. These spaces will be used to prove the desingularization theorem. The theorem requires error estimates, linear estimates, and quadratic estimates with these weighted function spaces—all of which are established in this subsection.

\begin{definition}For each $l\in \R$, a \textbf{weight function} $w_{\epsilon,l}:P_{\epsilon,C}\to(0,\infty)$
is any smooth function on $P_{\epsilon,C}$ such that if $x=\Upsilon(r,\sigma) \in \Upsilon(B(0,R))$ then 
	$$w_{\epsilon,l}(x)=(\epsilon+r)^{-l}.$$
Let  $k\geq0$,\ $\gamma\in (0,1)$. For a continuous section $u$ of $NP_{\epsilon,C}$ we define the \textbf{weighted} $L^\infty$ \textbf{norm} and the \textbf{weighted Hölder semi-norm} respectively by 
	$$\norm{u}_{L^\infty_{\epsilon,l}}:=\norm{w_{\epsilon,l}u}_{L^\infty({NP_{\epsilon,C}})},\ \ [u]_{C_{\epsilon,l}^{0,\gamma}}:=[w_{\epsilon,l-\gamma}u]_{C^{0,\gamma}({NP_{\epsilon,C}})}.$$ 
	 For a continuous section $u$ of $NP_{\epsilon,C}$ with $k$ continuous derivatives we define the \textbf{weighted $C^k$ norm} and the \textbf{weighted Hölder norm} respectively by
	$$\norm{u}_{C^{k}_{\epsilon,l}}:=\sum_{j=0}^{k}\norm{(\nabla_{{P_{\epsilon,C}}}^\perp)^ju}_{L^\infty_{\epsilon,l-j}},\ \ \norm{u}_{C^{k,\gamma}_{\epsilon,l}}:=\norm{u}_{C^{k}_{\epsilon,l}}+[(\nabla_{{P_{\epsilon,C}}}^\perp)^ku]_{C^{0,\gamma}_{\epsilon,l-k}}.$$ 		
We define the \textbf{weighted Hölder space} $C^{k,\gamma}_{\epsilon,l}$, the \textbf{weighted $C^k$-space $C^{k}_{\epsilon,l}$} and the \textbf{weighted $L^\infty$-space $L^{\infty}_{\epsilon,l}$} to be the $C^{k,\gamma}$, $C^{k}$ and $L^{\infty}$ spaces with the weighted Hölder norm $\norm{\cdot}_{C^{k,\gamma}_{\epsilon,l}}$, weighted $C^k$-norm $\norm{\cdot}_{C^{k}_{\epsilon,l}}$ and weighted $L^\infty$-norm $\norm{\cdot}_{L^{\infty}_{\epsilon,l}}$ respectively. 
%Moreover, we define the \textbf{weighted $C^\infty$-space $C^{\infty}_{\epsilon,l}$} by
	%\begin{equation*}C^{\infty}_{\epsilon,l}:=\bigcap_{k=0}^\infty C^{k}_{\epsilon,l}.\qedhere \end{equation*}	
\end{definition}

\begin{prop}[{\textbf{Schauder estimate}}]\label{prop Schauder estimate} There exists a constant $C>0$ such that for any sufficiently small $\epsilon>0$ and for all $u\in C_{\epsilon, l}^{k+1,\gamma}$ we have	
$$\norm{u}_{C_{\epsilon, l}^{k+1,\gamma}}\leq C \Big(\norm{\mathfrak L_{P_\epsilon}u}_{C_{\epsilon, l-1}^{k,\gamma}}+\norm{u}_{L_{\epsilon, l}^\infty}\Big).$$
\end{prop}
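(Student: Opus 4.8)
\textbf{Proof proposal for the weighted Schauder estimate (\autoref{prop Schauder estimate}).}

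The plan is to derive the global weighted estimate on $P_{\epsilon,C}$ by patching together the standard weighted Schauder estimates already available on the two pieces $P_{\epsilon,C}^-\subset P_C^{\tau_0}$ (CS side) and $P_{\epsilon,C}^+\subset \epsilon L_C$ (AC side), using the gluing profile of $\mathfrak L_{P_\epsilon}$ established in the proof of \autoref{self adjoint Linearization L_epsilon}. The key point is that, after rescaling, the operator $\mathfrak L_{P_\epsilon}$ looks like a fixed CS-uniformly elliptic operator near the singular end of $P$ and like a fixed AC-uniformly elliptic operator on the rescaled AC neck, so the constants in \autoref{prop elliptic regularity} apply uniformly in $\epsilon$, and the weight function $w_{\epsilon,l}(x)=(\epsilon+r)^{-l}$ is comparable, region by region, to the corresponding CS weight $r^{-l}$ (for $r\gtrsim\epsilon$) and to the rescaled AC weight $\epsilon^{-l}(1+|x|/\epsilon)^{-l}$ (for $r\lesssim\epsilon$).

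First I would set up a fixed covering of $P_{\epsilon,C}$ by three overlapping regions: the \emph{inner AC region} $\{r\le 2\delta\}$, which via $(\Upsilon^{\tau_0})^{-1}\circ s_\epsilon$ is identified with a fixed region of $L_C$ where $r_{L}=\epsilon^{-1}r\le 2\epsilon^{-1}\delta=2(R_\infty/\epsilon)^{1-q}R_\infty^{q}\to\infty$; the \emph{outer CS region} $\{r\ge\delta\}$, which sits inside $P_C^{\tau_0}$ where the CS weighted Schauder estimate of \autoref{prop elliptic regularity}(i) holds; and the fixed compact overlap annulus $\{ \delta\le r\le 2\delta\}$ after rescaling. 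On the inner region, applying $(\Upsilon^{\tau_0}_*{s_\epsilon}_*)$ turns $\mathfrak L_{P_\epsilon}$ into $\epsilon^{-1}\mathfrak L_L+O(\epsilon^{\mathfrak c_q})$ (as in the proof of \autoref{self adjoint Linearization L_epsilon}), the weight $w_{\epsilon,l}$ into $\epsilon^{-l}w_{L_C,l}$, and one invokes the AC weighted Schauder estimate (the AC analogue of \autoref{prop elliptic regularity}(i), valid for $\mathfrak L_L$ since it is AC uniformly elliptic asymptotic to $\bD_C$); the factors of $\epsilon$ on both sides and on the weights cancel consistently because $\mathfrak L_{P_\epsilon}$ drops the weight by exactly one power. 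On the outer region one directly uses the CS estimate for $\mathfrak L_{P}$, together with $\mathfrak L_{P_\epsilon^-}=\mathfrak L_P+O(\epsilon^{\mathfrak c_q})$, absorbing the small perturbation into the left-hand side for $\epsilon$ small. Then a partition of unity $\{\chi_-,\chi_+\}$ subordinate to $\{r\ge\delta\},\{r\le 2\delta\}$ with $|\nabla\chi_\pm|\lesssim\delta^{-1}\lesssim w_{\epsilon,1}$ on the overlap (again because $\epsilon+r\simeq\delta$ there) lets one write $u=\chi_-u+\chi_+u$, estimate each piece by the corresponding one-sided estimate applied to $\mathfrak L_{P_\epsilon}(\chi_\pm u)=\chi_\pm\mathfrak L_{P_\epsilon}u+[\mathfrak L_{P_\epsilon},\chi_\pm]u$, and note the commutator term is bounded by $\lesssim w_{\epsilon,1}|u|+|\nabla^\perp u|$ supported in the overlap, hence $\lesssim\norm{u}_{C^{k+1,\gamma}_{\epsilon,l}}$ restricted to a fixed compact rescaled set, which is then controlled via interpolation.

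The main obstacle I anticipate is the usual one for gluing-type elliptic estimates: closing the loop between the $\norm{u}_{L^\infty_{\epsilon,l}}$ term appearing on the right and the full $\norm{u}_{C^{k+1,\gamma}_{\epsilon,l}}$ on the left, uniformly in $\epsilon$, on the overlap region where neither the pure CS nor the pure AC scaling is canonical. Concretely, the commutator terms $[\mathfrak L_{P_\epsilon},\chi_\pm]u$ involve $\nabla^\perp u$ (not just $u$), so one cannot simply feed them into the $L^\infty$ slot; the standard fix is an interpolation inequality $\norm{\nabla^\perp u}_{C^{0,\gamma}_{\epsilon,l-1}}\le \eta\norm{u}_{C^{1,\gamma}_{\epsilon,l}}+C_\eta\norm{u}_{L^\infty_{\epsilon,l}}$ on the fixed compact rescaled overlap, valid with $\epsilon$-independent constants precisely because after rescaling that region is a fixed compact piece of $L_C$. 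One must check this interpolation holds with the $(\epsilon+r)$-weights, which reduces (on the overlap $r\simeq\delta$) to the unweighted interpolation inequality on a fixed domain after multiplying through by $\delta^{l}$. Once that is in place, choosing $\eta$ small and absorbing, then iterating the interpolation across $k$, gives the stated estimate. I would also double-check that the error terms $O(\epsilon^{\mathfrak c_q})$ from \autoref{self adjoint Linearization L_epsilon} are measured in the operator norm $C^{k+1,\gamma}_{\epsilon,l}\to C^{k,\gamma}_{\epsilon,l-1}$ (which they are, since those perturbations are lower-order relative to the weight as well, by the same quadratic-estimate bookkeeping as in \autoref{prop cs quadratic estimate}), so that they are genuinely absorbable for small $\epsilon$.
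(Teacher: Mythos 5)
Your proposal is correct and shares the core structure of the paper's proof: decompose $P_{\epsilon,C}$ into the CS piece $P^-_{\epsilon,C}\subset P_C^{\tau_0}$ and the rescaled AC piece $P^+_{\epsilon,C}\subset\epsilon L_C$, observe that $w_{\epsilon,l}=(\epsilon+r)^{-l}$ is comparable to $w_{P,l}=r^{-l}$ on the CS piece (where $r\ge\delta\gg\epsilon$) and to $\epsilon^{-l}\,s_\epsilon^*w_{L,l}$ on the AC piece, apply the one-sided weighted Schauder estimates from \autoref{prop elliptic regularity}, and absorb the $O(\epsilon^{\fc_q})$ operator perturbation from $\mathfrak L_{P_\epsilon^\pm}\to\fL_P,\ \epsilon^{-1}\Upsilon_*{s_\epsilon}_*\fL_L{s_\epsilon}^{-1}_*\Upsilon_*^{-1}$ into the left side for $\epsilon$ small. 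Where you diverge is in the gluing mechanism: you introduce a partition of unity $\chi_\pm$ subordinate to the overlap annulus $A(\delta,2\delta)$ and handle the commutators $[\mathfrak L_{P_\epsilon},\chi_\pm]u$ via a weighted interpolation inequality. The paper instead simply sets $u_\pm:=u|_{P^\pm_{\epsilon,C}}$ and applies the one-sided estimates directly to these restrictions, with no cutoffs and no commutator terms; implicitly, this works because $P^+_{\epsilon,C}$ and $P^-_{\epsilon,C}$ overlap on the fixed-proportion annulus $A(\delta,2\delta)$, so the interior Schauder estimate on each piece controls what happens right up to and past the other's boundary, and adding the two inequalities already gives the global bound. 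Your argument is more verbose but also more self-contained: it makes explicit why the restriction-without-cutoff step in the paper is legitimate, and it would transfer more robustly to a setting where the two model Schauder estimates were only known as genuinely interior estimates. The one thing to be careful about, which you already flag, is that the $O(\epsilon^{\fc_q})$ perturbation must be read as an operator-norm bound $C^{k+1,\gamma}_{\epsilon,l}\to C^{k,\gamma}_{\epsilon,l-1}$, which is indeed what \autoref{prop cs quadratic estimate} (and its Hölder version \autoref{prop Holder cs quadratic estimate}) deliver after the same bookkeeping used in the proof of \autoref{self adjoint Linearization L_epsilon}.
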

\begin{proof}For $u\in C_{\epsilon, l}^{k+1,\gamma}$, define $u_\pm$ by restricting $u$ over $P^\pm_{\epsilon,C,}$. Using the  Schauder estimates in \autoref{prop elliptic regularity} and \autoref{prop cs quadratic estimate}  we obtain 
$$\norm{u_-}_{C_{\epsilon, l}^{k+1,\gamma}} \lesssim \norm{u_-}_{C_{P, l}^{k+1,\gamma}}\lesssim \norm{\mathfrak L_{P}u_-}_{C_{P, l-1}^{k,\gamma}}+\norm{u_-}_{L_{P, l}^\infty} \lesssim \norm{\mathfrak L_{P_\epsilon}u_-}_{C_{\epsilon, l-1}^{k,\gamma}}+\epsilon^{\fc_q}\norm{u_-}_{C_{\epsilon, l}^{k+1,\gamma}}+\norm{u_-}_{L_{\epsilon, l}^\infty} $$
Similarly
\begin{align*}
	\norm{u_+}_{C_{\epsilon, l}^{k+1,\gamma}} \lesssim \epsilon^{-l+1}\norm{{s_\epsilon}^{-1}_*\Upsilon_*^{-1}u_+}_{C_{L, l}^{k+1,\gamma}}
	&\lesssim \epsilon^{-l+1}\norm{\mathfrak L_{L}{s_\epsilon}^{-1}_*\Upsilon_*^{-1}u_+}_{C_{L, l-1}^{k,\gamma}}+\epsilon^{-l+1}\norm{{s_\epsilon}^{-1}_*\Upsilon_*^{-1}u_+}_{L_{L, l}^\infty}\\
	&\lesssim \epsilon^{-1}\norm{\Upsilon_*{s_\epsilon}_*\mathfrak L_{L}{s_\epsilon}^{-1}_*\Upsilon_*^{-1}u_+}_{C_{\epsilon, l-1}^{k,\gamma}}+\norm{u_+}_{L_{\epsilon, l}^\infty}\\
	&\lesssim \norm{\mathfrak L_{P_\epsilon} u_+}_{C_{\epsilon, l-1}^{k,\gamma}}+\epsilon^{\fc_q}\norm{u_+}_{C_{\epsilon, l}^{k+1,\gamma}}+\norm{u_+}_{L_{\epsilon, l}^\infty}.
	\qedhere
\end{align*}

\end{proof}

\begin{prop}[{\textbf{Error estimate}}]\label{prop error estimate with epsilon}
 The error $e_\epsilon=\mathfrak F_\epsilon(\alpha_\epsilon)$ can be estimated as follows:
\begin{equation*}
\norm{\mathfrak F_\epsilon(\alpha_\epsilon)}_{C_{\epsilon, -1}^{k,\gamma}}\lesssim
  \begin{cases*}
  \delta^{\mu}& if $\alpha_\epsilon=\alpha^1_\epsilon$ and $0<q\leq\frac {1-\nu} {\mu-\nu}$\\
	\delta^{\mu}+\epsilon^{1-\nu_0}\delta^{\nu_0}+\epsilon^{2(1-\nu)} & if $\alpha_\epsilon=\alpha^2_\epsilon$ and $\frac {\nu-\nu_0} {\nu+\mu-1+s_0-\nu_0}\leq q<1$,
\end{cases*}
  \end{equation*}
  where $\nu_0=\max\{\lambda_0,2\nu-1\}$ and $\delta=\epsilon^q$.
  \end{prop}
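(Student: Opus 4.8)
The plan is to estimate $\mathfrak F_\epsilon(\alpha_\epsilon) = \mathfrak F_\epsilon(\alpha_\epsilon) - \mathfrak F_\epsilon(\alpha_\epsilon^{\mathrm{exact}})$ region by region, splitting $P_{\epsilon,C}$ into the "deep AC region" $\Upsilon(C\cap A(\epsilon R_\infty, \delta))$ where $\alpha_\epsilon = \Upsilon_*({s_\epsilon}_*\beta)$ (rescaled AC data) together with the compact core $\epsilon K_{L_C}$, the "transition annulus" $\Upsilon(C\cap A(\delta,2\delta))$ where the cutoff $\rho_\delta$ interpolates, and the "CS region" $\Upsilon(C\cap A(2\delta,2\epsilon_0))$ together with $K_{P_C}$ where $\alpha_\epsilon = \alpha$ (resp. $\alpha + \sum_i\epsilon^{1-\lambda_i}\alpha_i$ for the improved version). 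On the deep AC region and core, $\epsilon L$ is genuinely associative for the flat $G_2$-structure $\psi_e$, so $\mathfrak F^{AC}({s_\epsilon}_*\beta)=0$ identically; the error there comes only from the discrepancy $\Upsilon^*\psi_t - \psi_e = O(r)$ between the ambient $4$-form in the $G_2$-coordinate chart and the Euclidean one, which by the quadratic-type estimates (the AC analogue of \autoref{prop cs quadratic estimate}, as invoked in the proof of \autoref{self adjoint Linearization L_epsilon}) contributes a term of size $O(r)\cdot|{s_\epsilon}_*\beta\ \text{data}|$, i.e. $O(r^\nu)$ in the unweighted norm over $A(\epsilon R_\infty,\delta)$; weighting by $w_{\epsilon,-1}\sim r^{-1}$ and taking the sup gives a contribution $\lesssim \delta^{\nu}\cdot\delta = \delta^{\nu+1}$... actually more carefully $\sup_{r\le 2\delta} r\cdot r^{\nu-1}\cdot r = \delta^{\nu+1}$, which is dominated by $\delta^\mu$ precisely when $\nu+1 \geq \mu$, an inequality that holds automatically since $\nu<1<\mu\le 2$ — I would double-check the bookkeeping here, as the precise exponent is the crux of the stated range of $q$. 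Symmetrically, on the CS region, $P$ is genuinely associative for $\phi$, so $\mathfrak F_\phi(\alpha)=0$, and after perturbing to $\phi_t = \phi_{t_0}$ (we evaluate at $t=t_0$, so there is no $t$-error in $e_\epsilon$) the only contribution is from replacing $\alpha$ by $\alpha$ on $A(2\delta,2\epsilon_0)$ — which is no change — so the CS-region error is supported in the transition annulus only.

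The main work is therefore the transition annulus $A(\delta,2\delta)$. There, writing $\alpha_\epsilon^1 = \rho_\delta\,\Upsilon_*({s_\epsilon}_*\beta) + (1-\rho_\delta)\alpha$, one has $\mathfrak F_\epsilon(\alpha_\epsilon^1) = \mathfrak F_\epsilon(\rho_\delta u_L + (1-\rho_\delta)u_P)$ where both $u_L := \Upsilon_*({s_\epsilon}_*\beta)$ and $u_P := \alpha$ are approximate zeros of (nearby) associative equations. I would expand using $\mathfrak F_\epsilon(v) = \mathfrak L_{P_\epsilon}v + Q_\epsilon(v) + e$ and exploit linearity modulo $Q$: the leading term is controlled by $|\nabla\rho_\delta|\cdot|u_L - u_P| \lesssim \delta^{-1}\cdot|u_L - u_P|$ on the annulus, plus the individual errors of $u_L$ and $u_P$ there. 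Now on $A(\delta,2\delta)$ we have $u_L = r^\nu(\text{bounded}) + $ lower order (from the AC expansion $\Psi_L - \iota = O(r^\nu)$, rescaled, contributing $\epsilon^{1-\nu}r^{\nu-1}$ to the section — wait, scaling: ${s_\epsilon}_*\beta$ at radius $r$ corresponds to $\beta$ at radius $r/\epsilon$, so $|{s_\epsilon}_*\beta| \sim \epsilon\cdot(r/\epsilon)^\nu = \epsilon^{1-\nu}r^\nu$), and $u_P = \alpha = O(r^\mu)$. Hence $|u_L - u_P| \lesssim \epsilon^{1-\nu}r^\nu + r^\mu \lesssim \epsilon^{1-\nu}\delta^\nu + \delta^\mu$ on the annulus, so $\delta^{-1}|u_L-u_P| \lesssim \epsilon^{1-\nu}\delta^{\nu-1} + \delta^{\mu-1}$; multiplying by the extra $r\sim\delta$ from the weight $w_{\epsilon,-1}$ (I must be careful: $\mathfrak F_\epsilon(\alpha_\epsilon)$ lives in $C^{k,\gamma}_{\epsilon,-1}$, so its weighted norm involves $w_{\epsilon,-1}\sim r^{-1}$ times its pointwise size) gives $\epsilon^{1-\nu}\delta^{\nu} + \delta^{\mu}$; and then the constraint $\epsilon^{1-\nu}\delta^\nu \lesssim \delta^\mu$, i.e. $\epsilon^{1-\nu}\lesssim\delta^{\mu-\nu} = \epsilon^{q(\mu-\nu)}$, i.e. $1-\nu \geq q(\mu-\nu)$, i.e. $q \leq \frac{1-\nu}{\mu-\nu}$ — exactly the stated bound for $\alpha_\epsilon^1$. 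For the improved version $\alpha_\epsilon^2$, the point of subtracting $\sum_i\epsilon^{1-\lambda_i}\alpha_i$ is that $u_L - u_P$ now has its leading terms $\sum_i\epsilon^{1-\lambda_i}\beta_i$ cancelled against $\sum_i\epsilon^{1-\lambda_i}\Upsilon^*\alpha_i$ up to order $r^{\lambda_i+s_0}$ (from hypothesis (ii)) on the $P$-side and up to order $r^{\nu_0}$ (the tail $\beta - \sum\beta_i = O(r^{\nu_0})$) on the $L$-side; redoing the annulus estimate with these improved remainders produces $\delta^\mu + \epsilon^{1-\nu_0}\delta^{\nu_0} + \epsilon^{2(1-\nu)}$ (the last term from the quadratic correction $Q$ applied to the $O(\epsilon^{1-\nu}r^\nu)$ AC tail, which is genuinely quadratic: $(\epsilon^{1-\nu}r^\nu)^2\cdot r^{-1}$ weighted, maximized appropriately). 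The cutoff in $q$, $\frac{\nu-\nu_0}{\nu+\mu-1+s_0-\nu_0}\le q$, is what is needed to make $\epsilon^{1-\nu_0}\delta^{\nu_0}$ absorb the would-be cross term $\epsilon^{1-\lambda_i}\delta^{\lambda_i+s_0}$ from the mismatch of $\alpha_i$ and $\beta_i$.

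Concretely, I would structure the proof as: (1) reduce to the three regions and observe the deep-AC and CS regions away from the annulus contribute only via the $O(r)$ ambient-metric error, bounded by the AC/CS quadratic estimates; (2) on the transition annulus, write $\mathfrak F_\epsilon(\alpha_\epsilon) = [\mathfrak F_\epsilon(\alpha_\epsilon) - \rho_\delta\mathfrak F_\epsilon(u_L) - (1-\rho_\delta)\mathfrak F_\epsilon(u_P)] + \rho_\delta\mathfrak F_\epsilon(u_L) + (1-\rho_\delta)\mathfrak F_\epsilon(u_P)$, estimate the bracket by the "commutator" bound $|d\rho_\delta|\,|u_L-u_P|\,(\text{$C^1$ data})$ plus a $Q$-term, using \autoref{prop cs quadratic estimate}(i)–(ii) and its AC analogue; (3) insert the asymptotic expansions of $\beta$ (from \autoref{def section beta for L}, resp. hypothesis (ii) of \autoref{def improved approx asso cs}) and of $\alpha$ (from \autoref{def section alpha for P}, resp. hypothesis (ii)), doing the scaling bookkeeping ${s_\epsilon}_*$ carefully; (4) collect exponents and read off the conditions on $q$. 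I expect the main obstacle to be purely the exponent bookkeeping in step (3)–(4): getting the powers of $\epsilon$ versus $\delta = \epsilon^q$ exactly right across the rescaling, the weight $w_{\epsilon,-1}$, the derivative of the cutoff $\rho_\delta$ (which costs $\delta^{-1}$ and a derivative), and the quadratic term — and in particular verifying that the displayed constraints on $q$ are exactly what make every stray term subordinate to $\delta^\mu + \epsilon^{1-\nu_0}\delta^{\nu_0} + \epsilon^{2(1-\nu)}$. There is no conceptual difficulty beyond what is already packaged in the quadratic estimates and the Fredholm-theoretic asymptotic expansions established earlier; it is a careful but routine gluing-error computation.
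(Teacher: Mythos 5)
Your overall strategy is exactly the one the paper follows: reduce to the flat $G_2$-structure via $\abs{\phi - \Upsilon_*\phi_e} = O(r)$; observe that $\mathfrak F_\epsilon(\alpha_\epsilon)$ vanishes identically on the pure CS region $r \geq 2\delta$ and $\mathfrak F_\epsilon(\alpha_\epsilon,\Upsilon_*\phi_e)$ vanishes on the pure AC region $r \leq \delta$; and on the transition annulus extract the dominant cutoff-derivative term, which gives $\delta^{-1}\abs{{s_\epsilon}_*\beta-\alpha}\sim\epsilon^{1-\nu}\delta^{\nu-1}+\delta^{\mu-1}$ pointwise and $\epsilon^{1-\nu}\delta^\nu+\delta^\mu$ after weighting, hence the constraint $q\le(1-\nu)/(\mu-\nu)$. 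Your description of Case~2 — cancelling $\sum_i\epsilon^{1-\lambda_i}\beta_i$ against $\sum_i\epsilon^{1-\lambda_i}\Upsilon^*\alpha_i$ up to $r^{\lambda_i+s_0}$ and using the tail bound $\beta-\sum\beta_i=O(r^{\nu_0})$ — also matches.

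One concrete slip, which you flagged but did not resolve: your deep-AC accounting. First, the weight has the opposite sign from what you wrote — since $w_{\epsilon,\ell}(x)=(\epsilon+r)^{-\ell}$, one has $w_{\epsilon,-1}\sim\epsilon+r\sim r$, not $r^{-1}$ (you later used the correct $\sim r$, so this was an internal inconsistency). Second, the pointwise error on $A(\epsilon R_\infty,\delta)$ is simply $O(\abs{\Upsilon^*\psi-\psi_e})=O(r)$; it does \emph{not} carry an extra factor of the AC-data magnitude. The graph there is the rescaled $\epsilon L$, exactly associative for $\psi_e$, so $\iota_w\psi_e$ restricted to it vanishes for normal $w$, and replacing $\psi_e$ by $\Upsilon^*\psi$ contributes only $\iota_w(\Upsilon^*\psi-\psi_e)=O(r)$. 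The weighted deep-AC error is therefore $\lesssim\sup_{r\le\delta}(\epsilon+r)\cdot r\lesssim\delta^2\lesssim\delta^\mu$ because $\mu\le 2$ — not $\delta^{\nu+1}$ as you claimed. Your asserted inequality $\nu+1\geq\mu$ is in fact false in the cases the paper cares about (the Lawlor neck has $\nu=-2$, $\mu=2$, so $\nu+1=-1<\mu$). The conclusion you wanted, $\lesssim\delta^\mu$, survives, but for the reason $\mu\le 2$ rather than $\nu+1\ge\mu$.
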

\begin{proof}Let $\phi_e$ be the standard $G_2$-structure on $\R^7$ and $C$ be a cone. Since $\abs{\phi-\Upsilon_*\phi_e}=O(r)$ on $B(0,R)$ therefore over $P_{\epsilon,C}\cap \Upsilon(B(0,R))$ we have
$$\abs{\fF_\epsilon(\alpha_\epsilon)}	\lesssim\abs{\fF_\epsilon(\alpha_\epsilon)-\fF_\epsilon(\alpha_\epsilon,\Upsilon_*\phi_e)}\lesssim \abs{\fF_\epsilon(\alpha_\epsilon,\Upsilon_*\phi_e)}+r.$$
\textbf{Case 1}: Suppose $\alpha_\epsilon=\alpha^1_\epsilon$. Then over $P_{\epsilon,C}\setminus \Upsilon(B(0,2\delta))$, $\alpha_\epsilon=\alpha$ and hence $\fF_\epsilon(\alpha_\epsilon) =0$.
 Over $P_{\epsilon,C}\cap\Upsilon(\overline{B(0,\delta)})$ we have $\alpha_\epsilon=\Upsilon_*({s_\epsilon}_* \beta)$ and therefore $\fF_\epsilon(\alpha_\epsilon,\Upsilon_*\phi_e)=0$. Over $P_{\epsilon,C}\cap\Upsilon(A(\delta,2\delta))$ we have
 $$\abs{\fF_\epsilon(\alpha_\epsilon,\Upsilon_*\phi_e)}\lesssim \abs{\nabla \alpha_\epsilon}\lesssim r^{\mu-1}+\epsilon^{1-\nu}r^{\nu-1}.$$
Here the last inequality uses the assumption that $0<q\leq\frac {1-\nu} {\mu-\nu}$ where $\delta=(\epsilon R_\infty)^q$. 
 From these estimates we conclude that $\norm{\mathfrak F_\epsilon(\alpha_\epsilon)}_{C_{\epsilon, -1}^{0}}\lesssim \delta^{\mu}$. A similar computation with H\"older seminorm and higher derivatives will prove that $\norm{\mathfrak F_\epsilon(\alpha_\epsilon)}_{C_{\epsilon, -1}^{k,\gamma}}\lesssim \delta^{\mu}$.
 
 \textbf{Case 2}: Suppose $\alpha_\epsilon=\alpha^2_\epsilon$. 
 Over $P_{\epsilon,C}\setminus \Upsilon(B(0,2\delta))$ we have 
 $$\abs{\fF_\epsilon(\alpha_\epsilon)}	\lesssim \Big(r^{-1}\bigl\lvert\sum_{i=1}^l\epsilon ^{1-\lambda_i} \alpha_i\bigr\rvert+\bigl\lvert\sum_{i=1}^l\epsilon ^{1-\lambda_i} \nabla \alpha_i\bigr\rvert\Big)^2\lesssim \sum_{i=1}^l(\epsilon^{-1}r)^{2(\lambda_i-1)} \lesssim (\epsilon^{-1}r)^{2(\nu-1)}.$$
 Over $P_{\epsilon,C}\cap \Upsilon(A(\delta, 2\delta))$ we have $\fF_\epsilon(\alpha_\epsilon,\Upsilon_*\phi_e)=\bD_C(\alpha_\epsilon)+Q_C(\alpha_\epsilon)$.
The term $\abs{\bD_C(\alpha_\epsilon)}$ can be estimated as follows. 
  \begin{align*}
	\abs{\bD_C(\alpha_\epsilon)}	
	&\lesssim \abs{\nabla \rho_\delta}\bigl\lvert{s_\epsilon}_*\beta-\sum_{i=1}^l\epsilon ^{1-\lambda_i} \alpha_i-\alpha\bigr\rvert+ \abs{\bD_C({s_\epsilon}_*\beta)}+ \bigl\lvert\bD_C(\sum_{i=1}^l\epsilon ^{1-\lambda_i} \alpha_i+\alpha)\bigr\rvert\\
	& \lesssim (\epsilon^{-1}r)^{\nu_0-1}+ r^{\mu-1}+(\epsilon^{-1}r)^{2(\nu-1)}+r^{\mu-1}\sum_{i=1}^l\epsilon ^{1-\lambda_i} r^{\lambda_i+s_0-1}+ r^{2(\mu-1)}.
	\end{align*}
 If $q\geq\frac {\nu-\nu_0} {\nu+\mu-1+s_0-\nu_0}$ then we have
$$\epsilon ^{1-\lambda_i} r^{\lambda_i+\mu+s_0-2}\lesssim \epsilon ^{1-\nu} r^{\nu+\mu+s_0-2}\lesssim (\epsilon^{-1}r)^{\nu_0-1}.$$
 The term $\abs{Q_C(\alpha_\epsilon)}$ can be estimated as follows.
 $$\abs{Q_C(\alpha_\epsilon)}	\lesssim (r^{-1}\abs{ \alpha_\epsilon}+\abs{ \nabla \alpha_\epsilon})^2 \lesssim (\epsilon^{-1}r)^{2(\nu-1)}+ r^{2(\mu-1)}.$$
 Hence $$\norm{\mathfrak F_\epsilon(\alpha_\epsilon)}_{C_{\epsilon, -1}^{0}}\lesssim \delta^{\mu}+\epsilon^{1-\nu_0}\delta^{\nu_0}+\epsilon^{2(1-\nu)}.$$ A similar computation with H\"older seminorm and higher derivatives will also prove that $\norm{\mathfrak F_\epsilon(\alpha_\epsilon)}_{C_{\epsilon, -1}^{k,\gamma}}\lesssim \delta^{\mu}+\epsilon^{1-\nu_0}\delta^{\nu_0}+\epsilon^{2(1-\nu)}$.
\end{proof}

\begin{prop}[\textbf{Quadratic estimate}]\label{prop quadratic estimate with epsilon} For any sufficiently small $\epsilon>0$ there exists $0<T_\epsilon<T$ such that for all $u, v\in C^{\infty}(V_{P_{\epsilon,C}})$, $\eta \in C^{\infty}(NP_{\epsilon,C})$ and $t_1,t_2\in (t_0-T_\epsilon,t_0+T_\epsilon)$ the following estimates hold. \begin{enumerate}[(i)]
\item $\abs{d\mathfrak F_{\epsilon_{|u}}(\eta)-d\mathfrak F_{\epsilon_{|v}}(\eta)}\lesssim  (w_{\epsilon,1}\abs{u-v}+w_{\epsilon,0}\abs{\nabla^\perp (u-v)})(w_{\epsilon,1}\abs{\eta}+\abs{w_{\epsilon,0}\nabla^\perp \eta})$, 
\item $\norm{Q_\epsilon(u)-Q_\epsilon(v)}_{C_{\epsilon, -1}^{k,\gamma}}\lesssim \epsilon^{-1}\norm{u-v}_{C_{\epsilon, 0}^{k+1,\gamma}}\Big(\norm{u-\alpha_\epsilon}_{C_{\epsilon, 0}^{k+1,\gamma}}+\norm{v-\alpha_\epsilon}_{C_{\epsilon, 0}^{k+1,\gamma}}\Big).$
 \item $\norm{\tilde Q_\epsilon(u,t_1)-\tilde Q_\epsilon(v,t_2)}_{C_{\epsilon, -1}^{k,\gamma}}$
 
$\lesssim \epsilon^{-1} \Big(\norm{u-v}_{C_{\epsilon, 0}^{k+1,\gamma}}+\abs{t_1-t_2}\Big)\Big(\norm{u-\alpha_\epsilon}_{C_{\epsilon, 0}^{k+1,\gamma}}+\norm{v-\alpha_\epsilon}_{C_{\epsilon, 0}^{k+1,\gamma}} + \abs{t_1-t_0}+\abs{t_2-t_0}\Big).$ 
\end{enumerate}
 
\end{prop}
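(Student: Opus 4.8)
The three estimates are of the same nature as the CS quadratic estimates of \autoref{prop cs quadratic estimate} and \autoref{prop Holder cs quadratic estimate}, transported to the glued manifold $P_{\epsilon,C}$ with its $\epsilon$-weighted norms, and with an extra parameter $t$ for part (iii). The plan is to reduce everything to the model estimates already available on the CS piece $P^-_{\epsilon,C}$ and on the rescaled AC piece $P^+_{\epsilon,C}$, and then to keep careful track of the powers of $\epsilon$ coming from the rescaling and from the cut-off region. First I would fix notation: write $\psi_t := *_{\phi_t}\phi_t$, and recall from \autoref{def nonlinear map with epsilon} and \autoref{prop linearization nonliear} that for $w\in C^\infty(NP_{\epsilon,C})$ one has $\inp{d\mathfrak F_{\epsilon,t|u}(\eta) - d\mathfrak F_{\epsilon,t|v}(\eta)}{w} = \int_0^1 \int_{\Gamma_{su+(1-s)v}} \iota_w \mathcal L_{(u-v)}\mathcal L_\eta(\Upsilon_{P_{\epsilon,t,C}}^*\psi_t)\,ds$, using $[u-v,w]=0$ and $[\eta,w]=0$ as in the proof of \autoref{prop cs quadratic estimate}. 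The pointwise bound on $\mathcal L_{(u-v)}\mathcal L_\eta\psi$ is the same formal quadratic expression appearing in \autoref{lem quadratic} and in the proof of \autoref{prop Holder cs quadratic estimate}, namely a sum of terms of schematic shape $O(\text{curvature, }\nabla\psi,\text{II})\cdot(u-v)\cdot\eta + O(\nabla\psi,\text{II})\cdot\big((u-v)\cdot\nabla^\perp\eta + \eta\cdot\nabla^\perp(u-v)\big) + \psi\cdot\nabla^\perp(u-v)\cdot\nabla^\perp\eta$.

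The one genuinely new point is that on $P_{\epsilon,C}\cap\Upsilon(B(0,R))$ the relevant geometric background quantities (second fundamental form, curvature of the ambient metric, and the coefficients of the weight $w_{\epsilon,l}$) scale like inverse powers of $(\epsilon+r)$ rather than of $r$: on the AC side, after applying $(\Upsilon_*{s_\epsilon}_*)^{-1}$ the submanifold is $L_C$ at unit scale so $\abs{\mathrm{II}} = O((1+\abs{x})^{-1})$ there, which pulls back to $O((\epsilon+r)^{-1})$ on $P^+_{\epsilon,C}$; on the CS side $\abs{\mathrm{II}}=O(r^{-1})=O((\epsilon+r)^{-1})$ once $r\gtrsim\epsilon$, which is exactly the region occupied by $P^-_{\epsilon,C}$. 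Hence the natural weight to measure a factor $\nabla^\perp$ is $w_{\epsilon,1}$ on the zeroth-order slot and $w_{\epsilon,0}$ on the first-order slot, which is precisely the combination in (i). So I would prove (i) by: (a) splitting $P_{\epsilon,C} = P^+_{\epsilon,C}\cup P^-_{\epsilon,C}$; (b) on $P^-_{\epsilon,C}$ invoking the CS estimate \autoref{prop cs quadratic estimate}(i) verbatim with $w_{P_C,1}\asymp w_{\epsilon,1}$ and $\nabla^\perp$-factors measured by $w_{\epsilon,0}\asymp 1$ there; (c) on $P^+_{\epsilon,C}$ applying the analogous AC estimate for $L_C$ in $\R^7$ (the AC analogue of \autoref{prop cs quadratic estimate}, which holds by the same computation; $\Upsilon^*\psi_{t_0} = (\Upsilon^*\psi_{t_0})(0)+O(r) = \psi_e + O(r)$ so the errors from the non-flat background are lower order), then conjugating by $\Upsilon_*{s_\epsilon}_*$ and tracking that $\abs{x}=\epsilon^{-1}r$, $dV$ and the connection transform homogeneously, producing exactly the $(\epsilon+r)$-weights claimed.

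For (ii), I would write $Q_\epsilon(u)-Q_\epsilon(v) = \int_0^1\big(d\mathfrak F_{\epsilon|su+(1-s)v}(u-v) - d\mathfrak F_{\epsilon|\alpha_\epsilon}(u-v)\big)\,ds$ exactly as in the CS case, apply (i) with $\eta=u-v$ and with $u,v$ replaced by the path endpoints and $\alpha_\epsilon$, and integrate: this gives the pointwise bound $\abs{Q_\epsilon(u)-Q_\epsilon(v)} \lesssim (w_{\epsilon,1}\abs{u-\alpha_\epsilon}+w_{\epsilon,0}\abs{\nabla^\perp(u-\alpha_\epsilon)}+ \text{same for }v)(w_{\epsilon,1}\abs{u-v}+w_{\epsilon,0}\abs{\nabla^\perp(u-v)})$. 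Converting to $C^{k,\gamma}_{\epsilon,\cdot}$ norms, using that multiplication $C^{k,\gamma}_{\epsilon,a}\times C^{k,\gamma}_{\epsilon,b}\to C^{k,\gamma}_{\epsilon,a+b}$ is continuous, and absorbing the two $w_{\epsilon,\cdot}$ factors into $C^{k+1,\gamma}_{\epsilon,0}$ norms, one loses exactly one weight: $w_{\epsilon,1}\cdot w_{\epsilon,1} = w_{\epsilon,2} = w_{\epsilon,-1}\cdot w_{\epsilon,3}$... the bookkeeping here is where I expect to spend real effort, and the factor $\epsilon^{-1}$ in the statement is precisely the discrepancy between $w_{\epsilon,-1}$ (the target weight) and $w_{\epsilon,0}\cdot w_{\epsilon,0}$ after one differentiation, localized to the neck where $(\epsilon+r)^{-1}\lesssim\epsilon^{-1}$. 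For the Hölder seminorm and higher derivatives one differentiates the schematic quadratic expression, as in the proof of \autoref{prop Holder cs quadratic estimate}, picking up no new phenomena. Finally (iii) follows from (ii) together with a bound on the $t$-dependence: $\tilde Q_\epsilon(u,t)-\tilde Q_\epsilon(u,t') = \int_{t'}^t \partial_s\tilde Q_\epsilon(u,s)\,ds$ and $\partial_s\tilde Q_\epsilon(u,s) = \partial_s(\mathfrak F_{\epsilon,s}(u) - d\mathfrak F_{\epsilon,s|\alpha_\epsilon}(u-\alpha_\epsilon) - \mathfrak F_{\epsilon,s}(\alpha_\epsilon))$; since $\bphi$ is smooth in $s$, $\psi_s$ and its derivatives are uniformly bounded on the relevant compact set, and the $s$-derivative of $Q$ is again quadratically small in $u-\alpha_\epsilon$ plus linearly small in $\abs{s-t_0}$ (the latter because $d\mathfrak F_{\epsilon,s|\alpha_\epsilon}$ varies with $s$). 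Choosing $T_\epsilon$ small enough (depending on $\epsilon$) that all sections stay inside the tubular neighbourhood $V_{P_{\epsilon,C}}$ for $t\in(t_0-T_\epsilon,t_0+T_\epsilon)$, one collects the terms into the stated product bound. The main obstacle, as indicated, is not any single inequality but the uniform-in-$\epsilon$ weight accounting across the gluing region — making sure the constants in the CS estimate on $P^-_{\epsilon,C}$, the rescaled AC estimate on $P^+_{\epsilon,C}$, and the interpolation on the overlap $\Upsilon(A(\delta,2\delta))$ are all independent of $\epsilon$, which is where the hypothesis $\mu>1$, $\nu<1$ and the choice of $\delta=\epsilon^q$ enter.
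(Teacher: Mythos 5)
Your proposal is essentially correct and follows the same approach as the paper: parts (i) and (ii) are handled identically (the single key observation in both is that the coefficient functions in the schematic quadratic expansion of $\mathcal L_u\mathcal L_v\psi$ satisfy $f_{1,\epsilon}=O(w_{\epsilon,1})$ and $f_{2,\epsilon}=O(w_{\epsilon,0})$, with the $\epsilon^{-1}$ factor then arising from $w_{\epsilon,1}\leq\epsilon^{-1}$ after absorbing the remaining weights into $C^{k+1,\gamma}_{\epsilon,0}$-norms of the two factors).

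For (iii) you take a slightly different route from the paper. You split $\tilde Q_\epsilon(u,t_1)-\tilde Q_\epsilon(v,t_2)$ into a section-difference at fixed $t$ plus a $t$-difference at fixed section, treating the latter by a fundamental-theorem-of-calculus argument in $s$; the paper instead writes the difference as a single integral $\int_0^1 \big(d\tilde\fF_{\epsilon|\gamma(t)}-d\tilde\fF_{\epsilon|(\alpha_\epsilon,t_0)}\big)(u-v,t_1-t_2)\,dt$ along the straight-line path $\gamma$, and then splits both the argument $(u-v,t_1-t_2)=(u-v,0)+(0,t_1-t_2)$ and the base-point comparison $\gamma(t)\to(\alpha_\epsilon,\cdot)\to(\alpha_\epsilon,t_0)$, yielding four terms each bounded directly via (i). Both routes produce exactly the bilinear bound in the statement, but the paper's version avoids having to re-derive (ii) at a base $G_2$-structure $\phi_{t_1}$ different from $\phi_{t_0}$, which your split implicitly requires. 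One small inaccuracy: your formula $\tilde Q_\epsilon(u,s)=\fF_{\epsilon,s}(u) - d\fF_{\epsilon,s|\alpha_\epsilon}(u-\alpha_\epsilon) - \fF_{\epsilon,s}(\alpha_\epsilon)$ puts the linear and constant parts at time $s$; in \autoref{def decomposition of F epsilon nonlinear map} these are frozen at $t_0$, so the correct derivative is $\partial_s\tilde Q_\epsilon(u,s)=\partial_s\fF_{\epsilon,s}(u)-f_\epsilon$, which after comparing $\partial_s\fF_{\epsilon,s}(u)$ with $\partial_s\fF_{\epsilon,s}(\alpha_\epsilon)$ and the latter with its value at $s=t_0$ gives the terms $\norm{u-\alpha_\epsilon}+\abs{s-t_0}$ you state. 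The slip does not affect the conclusion.
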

\begin{proof} The proofs are exactly like \autoref{prop cs quadratic estimate}, \autoref{prop Holder cs quadratic estimate}. An important observation here is that one can express $\mathcal L_u\mathcal L_v\psi$ over $V_{P_{\epsilon,C}}$ formally as a quadratic polynomial 
$$\mathcal L_u\mathcal L_v\psi=O(f_{1,\epsilon})\cdot u\cdot v+O(f_{2,\epsilon})\cdot (u\cdot \nabla^\perp v+v\cdot \nabla^\perp u)+\psi \cdot \nabla^\perp u\cdot \nabla^\perp v,$$
where $O(f_{1,\epsilon})=O(w_{\epsilon,1})$ and $O(f_{2,\epsilon})=O(w_{\epsilon,0})$. 
Finally to see (ii) we write $Q_\epsilon(u)-Q_\epsilon(v)$ as
$$\int_0^1dQ_{\epsilon_{|tu+(1-t)v}}(u-v)dt=\int_0^1\big(d\mathfrak F_{\epsilon_{|tu+(1-t)v}}(u-v)-d\mathfrak F_{\epsilon_{|\alpha_\epsilon}}(u-v)\big)dt,$$
and (ii) follows from (i).
To prove (iii) we only observe that
$\tilde Q_\epsilon(u,t_1)-\tilde Q_\epsilon(v,t_2)$ is equal to
\begin{align*}
	&\int_0^1\big(d\tilde \fF_{\epsilon_{|(tu+(1-t)v, tt_1+(1-t)t_2)}}(u-v,0)-d\tilde \fF_{\epsilon_{|(\alpha_\epsilon,tt_1+(1-t)t_2)}}(u-v,0)\big)dt\\
	&+\int_0^1\big(d\tilde \fF_{\epsilon_{|(\alpha_\epsilon,tt_1+(1-t)t_2)}}(u-v,0)-d\tilde \fF_{\epsilon_{|(\alpha_\epsilon,t_0)}}(u-v,0)\big)dt\\
	&+\int_0^1\big(d\tilde \fF_{\epsilon_{|(tu+(1-t)v, tt_1+(1-t)t_2)}}(0,t_1-t_2)-d\tilde \fF_{\epsilon_{|(tu+(1-t)v,t_0)}}(0,t_1-t_2)\big)dt\\
	&+\int_0^1\big(d\tilde \fF_{\epsilon_{|(tu+(1-t)v,t_0)}}(0,t_1-t_2)-d\tilde \fF_{\epsilon_{|(\alpha_\epsilon,t_0)}}(0,t_1-t_2)\big)dt. \qedhere
\end{align*}
\end{proof}

%\begin{prop}\label{prop regularity nonlinear with epsilon}Let $u\in  C_{\epsilon,0}^{k+1,\gamma} (V_{P_{\epsilon,C}}), k\geq 1$ be such that $\mathfrak F_{\epsilon,t}(u)=0$, then $u\in C_{\epsilon,0}^{\infty} (V_{P_{\epsilon,C}})$. 	\end{prop}

The remaining estimates are the linear estimates, namely, establishing a uniform lower bound for the linearization operator on suitable weighted function spaces that depend on the scaling parameter $\epsilon$. In general, this is hopeless unless we restrict to the complement of the following approximate kernel.

 \begin{definition}\label{def matching kernel} Denote the asymptotic limit maps (see \autoref{lem main Fredholm}) $i_{P_C,0}$ and $i_{L_C,0}$ by $i_{P}$ and $i_{L}$ respectively. We define 
 	\begin{enumerate}[(i)]
 	\item the \textbf{matching kernel} $\mathcal K^{\mathfrak m}$ by
$$\mathcal K^{\mathfrak m}:=\{(u_L,u_{P})\in \ker \fL_{L,0}\times \ker \fL_{P,0}:i_{L}(u_{L})=i_{P}(u_{P})\},$$
\item the \textbf{approximate kernel} of $\fL_{P_\epsilon}$ by
$$\mathcal K^{\mathfrak m}_{\epsilon}:=\{ \rho_{\frac \delta 2} \Upsilon_*({s_\epsilon}_* u_L)+(1-\rho_{2\delta})u_P: (u_L,u_{P})\in \mathcal K^{\mathfrak m}\},$$ 
\item $\mathcal X^{k+1,\gamma}_{\epsilon}:=\{u\in C^{k+1,\gamma}_{\epsilon,0}:\inp{(\Upsilon_*{s_\epsilon}_*)^{-1}u}{u_L}_{L^2_{K_L}}=\inp{u}{u_P}_{L^2_{K_P}}=0 \ \ \forall \  (u_L,u_{P})\in \mathcal K^{\mathfrak m}\}.$\qedhere
\end{enumerate}
\end{definition}

\begin{prop}[{\textbf{Linear estimate I}}]\label{prop main linear estimate} Fix a real number $\omega>0$. For any sufficiently small $\epsilon>0$, there exists a constant $C_\omega>0$ independent of $\epsilon$ such that for  all $u\in\mathcal X^{k+1,\gamma}_{\epsilon}$,
	we have
	$$\norm{u}_{C^{k+1,\gamma}_{\epsilon,0}}\leq C_\omega\epsilon^{-\omega}\norm{\fL _{P_\epsilon}u}_{C^{k,\gamma}_{\epsilon,-1}}.$$
\end{prop}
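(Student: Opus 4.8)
The strategy is the standard ``cut-off and patch'' argument for gluing problems, combined with a contradiction/compactness argument to get a uniform estimate. The desired inequality is, up to the factor $\epsilon^{-\omega}$, a uniform estimate for the inverse of $\fL_{P_\epsilon}$ on the complement $\mathcal X^{k+1,\gamma}_\epsilon$ of the approximate kernel. Because $\fL_{P_\epsilon}$ is formally self-adjoint (\autoref{self adjoint Linearization L_epsilon}), its (genuine) kernel and cokernel are paired, so the elements of $\mathcal K^{\mathfrak m}_\epsilon$ really are (close to) the only obstruction; the role of the weight shift $\epsilon^{-\omega}$ is to absorb the loss of a small negative power of $\epsilon$ coming from the fact that the approximate kernel elements are built from $\ker\fL_{L,0}$ and $\ker\fL_{P,0}$, which live at rate $0$ and therefore do not decay on the ends.

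\textbf{Key steps.} First I would reduce, via the Schauder estimate \autoref{prop Schauder estimate}, to proving the weaker bound
\[
\norm{u}_{L^\infty_{\epsilon,0}}\leq C_\omega\,\epsilon^{-\omega}\norm{\fL_{P_\epsilon}u}_{C^{k,\gamma}_{\epsilon,-1}}\qquad\text{for all }u\in\mathcal X^{k+1,\gamma}_\epsilon,
\]
since the full $C^{k+1,\gamma}_{\epsilon,0}$-norm is then controlled by $\norm{\fL_{P_\epsilon}u}_{C^{k,\gamma}_{\epsilon,-1}}+\norm{u}_{L^\infty_{\epsilon,0}}$. Second, I would argue by contradiction: suppose there are $\epsilon_n\to 0$ and $u_n\in\mathcal X^{k+1,\gamma}_{\epsilon_n}$ with $\norm{u_n}_{L^\infty_{\epsilon_n,0}}=1$ but $\epsilon_n^{\omega}\norm{\fL_{P_{\epsilon_n}}u_n}_{C^{k,\gamma}_{\epsilon_n,-1}}\to 0$. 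Third, I would perform the usual dichotomy by looking at where the weighted sup-norm concentrates as $n\to\infty$. Rescaling the ``CS region'' $P^-_{\epsilon_n,C}$ directly and the ``AC region'' $P^+_{\epsilon_n,C}$ by ${s_{\epsilon_n}}_*^{-1}$, and using that $\fL_{P_{\epsilon_n}^-}=\fL_P+O(\epsilon_n^{\fc_q})$ and $\fL_{P_{\epsilon_n}^+}=\epsilon_n^{-1}(\Upsilon_*{s_{\epsilon_n}}_*)\fL_L(\Upsilon_*{s_{\epsilon_n}}_*)^{-1}+O(\epsilon_n^{\fc_q})$ from the proof of \autoref{self adjoint Linearization L_epsilon}, together with interior Schauder estimates and the Arzel\`a--Ascoli theorem, I would extract subsequential limits: either a bounded solution $u_\infty\in\ker\fL_{P,0}$ on the CS side, or a bounded solution on the AC side $\ker\fL_{L,0}$, or a bounded solution on the model cone, i.e.\ an element of $\ker\bD_{C,0}$ (the limiting ``neck''). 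In each case the limit is nonzero because of the concentration, and the matching of the asymptotic limits $i_{P}$ and $i_{L}$ across the neck region (using \autoref{lem main Fredholm} and the explicit decay rates built into $\alpha_\epsilon$, so that no indicial root other than $0$ lies in the relevant window) forces the limit data to assemble into an element of the matching kernel $\mathcal K^{\mathfrak m}$. But the orthogonality conditions defining $\mathcal X^{k+1,\gamma}_\epsilon$ — namely $\inp{({s_\epsilon}_*\Upsilon_*)^{-1}u}{u_L}_{L^2_{K_L}}=\inp{u}{u_P}_{L^2_{K_P}}=0$ — pass to the limit and force this element of $\mathcal K^{\mathfrak m}$ to be $L^2$-orthogonal to itself on the relevant compact pieces, hence zero by unique continuation. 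This contradicts nontriviality of the limit, closing the argument. The weight power $\epsilon^{-\omega}$ enters precisely to make the rescaled norms on the neck/cone comparable: on the conical neck region a bounded-in-$L^\infty_{\epsilon,0}$ family need only give, after rescaling, a solution on $\R^7\setminus\{0\}$ of at most polynomial growth controlled by $\omega$, which by \autoref{lem fourier series Donaldson} and \autoref{prop Fueter} (the indicial roots are discrete) still lies in a homogeneous kernel $V_\lambda$ with $|\lambda|$ small; choosing $\omega$ smaller than the gap from $0$ to the nearest nonzero indicial root forces $\lambda=0$.

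\textbf{Main obstacle.} The genuinely delicate point is the neck analysis: ensuring that in the contradiction argument the rescaling limits do not ``leak'' mass into homogeneous solutions $r^{\lambda-1}\nu_\Sigma$ with $\lambda\neq 0$, and that the three regimes (CS-limit, AC-limit, cone-limit) are correctly glued. This is handled by exploiting that $\mathcal D_C$ is discrete and symmetric about $-1$ (\autoref{prop no sum homogeneous}, \autoref{prop Fueter}\ref{prop Fueter symmetry}), choosing $\omega$ smaller than the distance from $0$ to $\mathcal D_C\setminus\{0\}$, and using the barrier/maximum-principle-type estimates for $\bD_C$ on annuli $A(a,b)$ that come from \autoref{lem fourier series Donaldson} applied on the cone in weighted spaces. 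A second, more bookkeeping obstacle is that $\fL_{P_\epsilon}$ is only \emph{approximately} of the model form on the neck, with error $O(\epsilon^{\fc_q})$ and with the ambient $G_2$-structure differing from the flat one by $O(r)$; these perturbations must be shown to be negligible at the relevant scale, which follows from $\fc_q>0$ and from the fact that the rescaled neck sits at distance $O(\delta/\epsilon)\to\infty$ yet $O(\delta)\to 0$ in $Y$. I expect the write-up of this step to be the longest part of the full proof, but structurally it is the by-now-standard argument of Lockhart--McOwen/Donaldson-type gluing adapted to the Fueter operator, so no new idea is required beyond careful tracking of weights.
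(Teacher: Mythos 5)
Your overall strategy — reduce via the Schauder estimate to the $L^\infty_{\epsilon,0}$ bound, argue by contradiction with a normalized sequence, rescale the CS and AC regions, extract limits by Arzel\`a--Ascoli and elliptic regularity, show the limits form a matching-kernel pair, and use the orthogonality conditions defining $\mathcal X^{k+1,\gamma}_\epsilon$ to force them to vanish — is precisely the paper's approach and is correct in spirit. However, two points in your account diverge from the paper in ways that matter.

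First, you introduce a third regime in the dichotomy, a bounded cone-scale limit in $\ker\bD_{C,0}$. The paper has no such case: the two regions $P^{\pm}_{\epsilon_n,C}$ are chosen so that they overlap on an annulus of radii $(\delta_n,2\delta_n)$ and together cover $P_{\epsilon_n,C}$, so the two rescaled sequences $u_{n,P}:=u_n^-$ and $u_{n,L}:=\epsilon_n(\Upsilon_*{s_{\epsilon_n}}_*)^{-1}u_n^+$ already account for all of the mass. The key technical step is upgrading interior convergence to convergence in $C^{k+1,\gamma}_{P^-_{\epsilon_n,C},0}$ (respectively $C^{k+1,\gamma}_{L_{\epsilon_n^{-1}\delta_n,C},0}$) on these growing regions, which is done by cutting off at an intermediate scale $\delta_n^p$ ($1<p<1/q$), applying the Lockhart--McOwen decomposition at a slightly shifted weight $\mp s$, and paying a factor $\delta_n^{-s}$ when returning to rate $0$. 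Your trichotomy is not wrong in principle, but as stated it leaves a gap: if mass concentrates purely at cone scale, your matching-kernel element would be $(0,0)$, which is trivially orthogonal, and the contradiction would not follow without a further argument (e.g., that a nonzero $V_0$ cone limit matching zero at both ends is impossible). The paper's overlapping decomposition sidesteps this entirely.

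Second, your account of the role of $\omega$ is off. You suggest $\omega$ must be chosen smaller than the distance from $0$ to the nearest nonzero indicial root; but in the statement $\omega>0$ is fixed and arbitrary, and the claim holds for any such $\omega$ (with a worse constant $C_\omega$). The factor $\epsilon^{-\omega}$ does not police which $V_\lambda$ the neck limit lands in. Rather, it absorbs the loss $\delta_n^{-s}\sim\epsilon_n^{-qs}$ incurred when converting from the auxiliary weight $-s$ (needed for the Lockhart--McOwen kernel projection) back to rate $0$; the auxiliary parameter $s>0$ is then chosen small (relative to $\omega/q$), not $\omega$ small (relative to the indicial gap). This distinction is important: the weighted spaces at rate $\pm s$, not an $\omega$-dependent choice of allowable indicial exponents, are what control the kernel projection.
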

\begin{proof}By Schauder estimate in \autoref{prop Schauder estimate} it is enough to prove that
 $$\norm{u}_{L^{\infty}_{\epsilon,0}}\leq C_\omega\epsilon^{-\omega}\norm{\fL _{P_\epsilon}u}_{C^{k,\gamma}_{\epsilon,-1}}.$$
We prove this by contradiction. If this is not true then there exist a decreasing sequence $\epsilon_n \to 0$ as $n\to\infty$ and $u_n$ in $\mathcal X^{k+1,\gamma}_{\epsilon_n}$  such that 
	$$\norm{u_n}_{L^{\infty}_{\epsilon_n,0}}=1,\ 
	 \epsilon_n^{-\omega}\norm{\fL_{P_{\epsilon_n}}u_n}_{C^{k,\gamma}_{\epsilon_n,-1}}\to 0 \ \text{as}\ n\to\infty.$$
	Denote the restrictions of $u_n$ over $P^\pm_{\epsilon_n,C}$ by $u^\pm_{n}$. We define $u_{n,P}:=u^-_{n}$ and $u_{n,L}:=\epsilon_n(\Upsilon_*{s_{\epsilon_n}}_*)^{-1}u_n^{+}$. By Schauder estimate in \autoref{prop Schauder estimate} we have $\norm{u_n}_{C^{k+1,\gamma}_{\epsilon_n,0}}$ is bounded and hence $\norm{u_{n,P}}_{C^{k+1,\gamma}_{P,0}}$, $\norm{u_{n,{L}}}_{C^{k+1,\gamma}_{L,0}}$ are also bounded.  The Arzelà-Ascoli theorem implies that there exist subsequences which we call again $u_{n,P}$ and $u_{n,L}$, and there exist $u_P$ in $C^{k+1,\frac\gamma2}_{P,0}$, $u_{L}$ in $C^{k+1,\frac\gamma2}_{L,0}$ such that $$\fL_P u_P=0, \quad \fL_{L}u_{L}=0,$$ and
	$$\norm{ u_{n,P}-u_P}_{C^{k+1,\frac\gamma2}_{P,\operatorname{loc}}}\to 0 \ \ \ \text{and} \ \ \  \norm{u_{n,{L}}-u_{L}}_{C^{k+1,\frac\gamma2}_{{L},\operatorname{loc}}}\to 0 \ \text{as}\ n\to\infty.$$
Moreover, by elliptic regularity in \autoref{prop elliptic regularity} we get $u_P\in C^{k+1,\gamma}_{P,0}$ and $u_{L}\in C^{k+1,\gamma}_{L,0}$. 
 By taking further subsequences if necessary we prove the following which leads to the contradiction:
\begin{enumerate}[(i)]
	\item $\norm{ u_{n,P}-u_{P}}_{C^{k+1,\gamma}_{P^-_{\epsilon_n,C},0}}\to 0$ as $n\to\infty$, 
 \item $\norm{ u_{n,L}-u_{L}}_{C^{k+1,\gamma}_{L_{\epsilon_n^{-1}\delta_n,C},0}}\to 0$ as  $n\to\infty$ where $L_{\epsilon_n^{-1}\delta_n,C}:=s^{-1}_{\epsilon_n}\Upsilon^{-1}P^+_{\epsilon_n,C}\subset L_C$,
 \item $u_P=0$ and $u_L=0$. Hence, $\norm{u_n}_{L^{\infty}_{\epsilon_n,0}}\leq \norm{ u^+_{n}}_{L^{\infty}_{P^+_{\epsilon_n,C}}}+\norm{ u^-_{n}}_{L^{\infty}_{P^-_{\epsilon_n,C}}}\to 0 $ as  $n\to\infty$.
\end{enumerate}

 To prove (i), we fix a real number $p$ with $1<p<\frac 1q$. For sufficiently small $s>0$ we have 
	\begin{align*}
	\norm{\fL_P((1-\rho_{\delta_n^{p}})u_n)}_{C^{k,\gamma}_{P,-s-1}}&\lesssim (\epsilon_n^{\fc_{pq}}+\epsilon_n^{\fc_{q}})\norm{u_n}_{C^{k+1,\gamma}_{0,\epsilon_n}}+\norm{\fL_{P_{\epsilon_n}}(1-\rho_{\delta_n^{p}})u_n}_{C^{k,\gamma}_{\epsilon_n,-s-1}}\\
	&\lesssim (\epsilon_n^{\fc_{pq}}+\epsilon_n^{\fc_{q}}+\delta_n^{ps})\norm{u_n}_{C^{k+1,\gamma}_{0,\epsilon_n}}+\norm{\fL_{P_{\epsilon_n}}u_{n}}_{C^{k,\gamma}_{\epsilon_n,-s-1}}\\
	&\lesssim \epsilon_n^{\fc_{pq}}+\epsilon_n^{\fc_{q}}+\delta_n^{ps}+\norm{\fL_{P_{\epsilon_n}}u_{n}}_{C^{k,\gamma}_{\epsilon_n,-1}}\lesssim \epsilon_n^{\fc_{pq}}+\epsilon_n^{\fc_{q}}+\delta_n^{ps}+\epsilon_n^{\omega}.
	\end{align*}	 
Therefore there exists $v_{n,P}\in \ker \fL_{P,-s}=\ker \fL_{P,0}$ satisfying $$\norm{(1-\rho_{\delta_n^{p}})u_n-v_{n,P}}_{C^{k+1,\gamma}_{P,-s}}\lesssim  \epsilon_n^{\fc_{pq}}+\epsilon_n^{\fc_{q}}+\delta_n^{ps}+\epsilon_n^{\omega}$$
and hence
$\norm{u_{n,P}-v_{n,P}}_{C^{k+1,\gamma}_{P^-_{\epsilon_n,C},0}}\lesssim (\epsilon_n^{\fc_{pq}}+\epsilon_n^{\fc_{q}}++\delta_n^{ps}+\epsilon_n^{\omega})\delta_n^{-s }
\to 0$  as $n\to\infty$. As, $\ker \fL_{P,0}$ is finite dimensional, the norms $\norm{\cdot}_{C^{k+1,\frac\gamma 2}_{K_P}}$ and $\norm{\cdot}_{C^{k+1,\gamma}_{P,0}}$ are equivalent. Taking further subsequence yields $\norm{ v_{n,P}-u_P}_{C^{k+1,\frac\gamma2}_{K_P}}\to 0$ and hence  $\norm{ v_{n,P}-u_P}_{C^{k+1,\gamma}_{P,0}}\to 0 \  \ \text{as}\ n\to\infty.$

To prove (ii), we define  $\tilde u_{n,L}:=\epsilon_n(\Upsilon_*{s_{\epsilon_n}}_*)^{-1}(\rho_{\delta_n^{\frac 12}}u_n)$ . For sufficiently small $s>0$ we have 
	\begin{align*}
	\norm{\fL_{L}\tilde u_{n,L}}_{C^{k,\gamma}_{L,s-1}}&\lesssim \epsilon_n^{s-1}\norm{\Upsilon_*{s_{\epsilon_n}}_*\mathfrak L_{L}{s_{\epsilon_n}}^{-1}_*\Upsilon_*^{-1}(\rho_{\delta_n^{\frac 12}}u_n)}_{C_{\epsilon, s-1}^{k,\gamma}}\\
	&\lesssim \epsilon_n^{s}(\epsilon_n^{\fc_{q}}+\delta_n^{\frac {\mu-1} 2})\norm{\rho_{\delta_n^{\frac 12}}u_n}_{C^{k+1,\gamma}_{\epsilon_n,s}}+\epsilon_n^{s}\norm{\fL_{P_{\epsilon_n}}(\rho_{\delta_n^{\frac 12}}u_n)}_{C^{k,\gamma}_{\epsilon_n,s-1}}\\
	&\lesssim \epsilon_n^{s}\delta_n^{-\frac{s}{2}}\norm{u_n}_{C^{k+1,\gamma}_{\epsilon_n,0}}+\epsilon_n^{s}\delta_n^{-\frac{s}{2}}\norm{\fL_{P_{\epsilon_n}}u_{n}}_{C^{k,\gamma}_{\epsilon_n,-1}}\lesssim \epsilon_n^{s}\delta_n^{-\frac{s}{2}}+\epsilon_n^{s}\delta_n^{-\frac{s}{2}}\epsilon_n^\omega.
	\end{align*}
	Then as in (i) there exists $v_{n,L}\in \ker \fL_{L,s}=\ker \fL_{L,0}$ satisfying
\begin{align*}
\norm{u_{n,L}-v_{n,L}}_{C^{k+1,\gamma}_{L,0}}&\lesssim (\epsilon_n^{-1}\delta_n)^s(\epsilon_n^{s}\delta_n^{-\frac{s}{2}}+\epsilon_n^{s}\delta_n^{-\frac{s}{2}}\epsilon_n^\omega)\\
&\lesssim  \delta_n^{\frac{s}2 }+\delta_n^{\frac{s}2 }\epsilon_n^\omega \to 0  \ \text{{as}}\ n\to\infty.
\end{align*}
Again as in (i) by taking further subsequence if necessary we get, $\norm{ v_{n,L}-u_L}_{C^{k+1,\frac\gamma2}_{K_L}}\to 0 $ and $\norm{ v_{n,L}-u_L}_{C^{k+1,\gamma}_{L,0}}\to 0$ as $n\to\infty$.

It remains to prove (iii). For each $\sigma\in \Sigma$ we have,
 $$i_P u_P(\sigma)=\Upsilon_*^{-1}\lim_{n\to \infty}u_{n,P}\big(\Upsilon\Big({\frac {3\delta_n}2}, \sigma\Big)\Big)= \lim_{n\to \infty}u_{n,L}\Big(\frac {3\epsilon_n^{-1}\delta_n}2, \sigma\Big)=i_Lu_L(\sigma). $$
Since $u_n\in \mathcal X^{k+1,\gamma}_{\epsilon_n}$ therefore as $n\to \infty$ we get 
$$\norm{u_P}_{L^2_{K_P}}=\inp{u_P-u_{n,P}}{u_P}_{L^2_{K_P}}\lesssim \norm{u_{n,P}-u_P}_{L^\infty_{P,0}}\norm{u_P}_{L^\infty_{P,0}}\to 0 $$ 
and hence $u_P=0$. Similarly we get $u_L=0$. 
\end{proof}

%%%%%%%%%%%%%%%%%%%%%%%%%%%%%%%%%%%%
From the above linear estimate, we observe that $\mathfrak{L}_{P_\epsilon}$ may not be surjective, as it is self-adjoint, and the matching kernel is never trivial due to the scaling freedom on $L$. This is why we did not choose to perform the desingularization in a fixed $G_2$-structure, but rather along a path of $G_2$-structures. To proceed, we require a corresponding estimate for the linearization operator along this path, and we aim to obtain a uniform lower bound on its restriction to the complement of the corresponding extended matching kernel, which we define next.

\begin{definition}\label{def extended matching kernel} Given $P$ a CS associative submanifold with respect to a co-closed $G_2$-structure $\phi$ and $L$ an AC associative submanifold in $\R^7$ with the same asymptotic cone, and given $\bphi: [0,1]\to \sP$ a path of co-closed $G_2$-structures with $\bphi(t_0)=\phi$ for some $t_0\in (0,1)$, we define the \textbf{extended matching kernel}:
\begin{equation*}
\widetilde{\cK}^{\mathfrak m}:=\{(u_L,u_P,t)\in \ker \fL_{L,0}\oplus C^{k+1,\gamma}_{P,0}\oplus \R:\fL_Pu_P+tf_P=0, i_{L}u_L=i_{P}u_P\},
\end{equation*}
where $f_P$ is defined in \autoref{def 1 para moduli CS}.  Note that, the matching kernel ${\cK}^{\mathfrak m}\subset \widetilde{\cK}^{\mathfrak m}$.
\end{definition}

\begin{prop}[{\textbf{Linear estimate II}}] \label{prop main linear estimate 1 parameter}
If the extended matching kernel $\widetilde{\cK}^{\mathfrak m}$ is equal to the matching kernel $\cK^{\mathfrak m}$ and $\omega>0$ is any real number,
 then for any sufficiently small $\epsilon>0$ there exists a constant $\tilde C_\omega>0$ independent of $\epsilon$ such that for all $u\in \mathcal X^{k+1,\gamma}_\epsilon$ and $t\in \R$,we have
$$\norm{u}_{C^{k+1,\gamma}_{\epsilon,0}}+\abs{t}\leq \tilde C_\omega\epsilon^{-\omega}\norm{\fL_{P_\epsilon}u+tf_\epsilon}_{C^{k,\gamma}_{\epsilon,-1}}.$$
\end{prop}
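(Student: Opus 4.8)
\textbf{Proof strategy for Linear Estimate II (\autoref{prop main linear estimate 1 parameter}).}

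The plan is to run the same contradiction/rescaling argument as in the proof of \autoref{prop main linear estimate}, but now keeping track of the extra scalar parameter $t$ and using the hypothesis $\widetilde{\cK}^{\mathfrak m}=\cK^{\mathfrak m}$ at the crucial final step. Suppose the estimate fails; then there are $\epsilon_n\to 0$, sections $u_n\in\mathcal X^{k+1,\gamma}_{\epsilon_n}$, and scalars $t_n\in\R$ with
\[
\norm{u_n}_{C^{k+1,\gamma}_{\epsilon_n,0}}+\abs{t_n}=1,\qquad \epsilon_n^{-\omega}\norm{\fL_{P_{\epsilon_n}}u_n+t_nf_{\epsilon_n}}_{C^{k,\gamma}_{\epsilon_n,-1}}\to 0.
\]
After passing to a subsequence, $t_n\to t_\infty$ for some $t_\infty\in\R$. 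As in \autoref{prop main linear estimate}, set $u_{n,P}:=u_n^-$ and $u_{n,L}:=\epsilon_n(\Upsilon_*{s_{\epsilon_n}}_*)^{-1}u_n^+$; by the Schauder estimate (\autoref{prop Schauder estimate}) these are uniformly bounded in $C^{k+1,\gamma}_{P,0}$ and $C^{k+1,\gamma}_{L,0}$ respectively. The key difference from before is the limiting equation on the CS side: since $f_{\epsilon_n}$ restricted to $P^-_{\epsilon_n,C}$ converges (in $C^{k,\gamma}_{P,\operatorname{loc}}$, and with the appropriate weighted control near the singularity) to $f_P$ — this needs a small lemma comparing $\frac{d}{dt}\fF_{\epsilon,t}(\alpha_\epsilon)|_{t_0}$ over $P^-_{\epsilon,C}$ with $L_2(\hat\phi)$ of \autoref{lem linearization 3rd component}, using $\abs{\phi-\Upsilon_*\phi_e}=O(r)$ and the quadratic estimate \autoref{prop quadratic estimate with epsilon} — the Arzel\`a--Ascoli limit $u_P\in C^{k+1,\gamma}_{P,0}$ satisfies
\[
\fL_P u_P+t_\infty f_P=0,
\]
while on the AC side (where $f_{\epsilon_n}$ rescales to something that vanishes at the relevant rate, since $f_\epsilon$ is supported, up to $O(\epsilon^{\fc_q})$ errors, away from the neck) the limit $u_L\in C^{k+1,\gamma}_{L,0}$ satisfies $\fL_L u_L=0$.

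Next I would reprove the three convergence claims (i)--(iii) verbatim as in \autoref{prop main linear estimate}: a cutoff-and-project argument on the CS side gives $\norm{u_{n,P}-u_P}_{C^{k+1,\gamma}_{P^-_{\epsilon_n,C},0}}\to 0$ using that $\ker(\fL_P+t_\infty f_P\cdot)$-type solutions are controlled by $\ker\fL_{P,0}$ together with the particular solution — more precisely, one writes $(1-\rho_{\delta_n^p})u_n$ and uses that $\fL_P$ applied to it lies in a weighted space with positive-enough rate, invoking \autoref{lem main Fredholm}/\autoref{lem fourier series Donaldson}, while the inhomogeneous term $t_nf_{\epsilon_n}$ is handled by subtracting a fixed particular solution $u_{P}^{part}$ of $\fL_P u=-t_\infty f_P$; the analogous rescaled argument on the AC side gives $\norm{u_{n,L}-u_L}_{C^{k+1,\gamma}_{L_{\epsilon_n^{-1}\delta_n,C},0}}\to 0$. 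Matching the leading asymptotic terms across the neck at radius $\sim\tfrac32\delta_n$ yields $i_L u_L=i_P u_P$, and the orthogonality conditions defining $\mathcal X^{k+1,\gamma}_{\epsilon_n}$ pass to the limit to give $\inp{u_L}{v_L}_{L^2_{K_L}}=\inp{u_P}{v_P}_{L^2_{K_P}}=0$ for all $(v_L,v_P)\in\cK^{\mathfrak m}$. Therefore $(u_L,u_P,t_\infty)\in\widetilde{\cK}^{\mathfrak m}$.

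Now the hypothesis $\widetilde{\cK}^{\mathfrak m}=\cK^{\mathfrak m}$ forces $t_\infty=0$ (every element of $\cK^{\mathfrak m}$ has third component $0$) and $(u_L,u_P)\in\cK^{\mathfrak m}$; combined with the orthogonality just derived, $(u_L,u_P)$ is $L^2_{K_L}\oplus L^2_{K_P}$-orthogonal to itself, hence $u_L=0$ and $u_P=0$. Then, exactly as in the closing lines of \autoref{prop main linear estimate}, $\norm{u_n}_{L^\infty_{\epsilon_n,0}}\le\norm{u_n^+}_{L^\infty}+\norm{u_n^-}_{L^\infty}\to 0$, so by the Schauder estimate $\norm{u_n}_{C^{k+1,\gamma}_{\epsilon_n,0}}\to 0$; but then $\abs{t_n}\to 1$ while $\norm{\fL_{P_{\epsilon_n}}u_n+t_nf_{\epsilon_n}}_{C^{k,\gamma}_{\epsilon_n,-1}}\to 0$, which forces $\norm{f_{\epsilon_n}}_{C^{k,\gamma}_{\epsilon_n,-1}}\to 0$ — contradicting that $f_{\epsilon_n}$ converges (up to rescaling) to the nonzero $f_P$ (nonvanishing follows because, if $f_P$ vanished identically, $(0,0,1)\in\widetilde{\cK}^{\mathfrak m}\setminus\cK^{\mathfrak m}$, against the hypothesis). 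This contradiction proves the estimate. The main obstacle I anticipate is the bookkeeping in the limiting equation on the CS side: verifying that $t_n f_{\epsilon_n}|_{P^-_{\epsilon_n,C}}$ really does converge to $t_\infty f_P$ in the right weighted-local topology, and that the particular-solution subtraction is compatible with the cutoff-and-project scheme near the conical singularity — this is where the rate conditions from \autoref{def CS weighted topo} and the error estimates of \autoref{prop error estimate with epsilon} must be used carefully, but it is entirely parallel to the fixed-structure case and should go through.
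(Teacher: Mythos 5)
Your proposal is correct and follows essentially the same rescaling/contradiction strategy as the paper, with the only notable organizational difference being that you run a single contradiction argument under the normalization $\norm{u_n}_{C^{k+1,\gamma}_{\epsilon_n,0}}+\abs{t_n}=1$, whereas the paper splits into two claims (one with $\norm{u}\leq 1$ and $t$ fixed at $-1$, the other with $\norm{u}=1$ and $\abs{t}\leq 1$) and then combines them by homogeneity. Your closing step is slightly more involved than necessary: once you have used the hypothesis $\widetilde{\cK}^{\mathfrak m}=\cK^{\mathfrak m}$ to conclude $t_\infty=0$ and the orthogonality to force $u_P=u_L=0$, the convergences (i)--(ii) give $\norm{u_n}_{C^{k+1,\gamma}_{\epsilon_n,0}}\to 0$, and then the normalization forces $\abs{t_n}\to 1$, which already contradicts $t_n\to t_\infty=0$ --- the appeal to $f_P\neq 0$ and $\norm{f_{\epsilon_n}}\not\to 0$ (while correct, and $f_P\neq 0$ does follow from the hypothesis exactly as you note) is redundant; the contradiction is $1=\lim\abs{t_n}=\abs{t_\infty}=0$. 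You correctly identify the genuine technical content: the particular-solution subtraction to tame the inhomogeneous term $t_nf_{\epsilon_n}$ in the CS-side convergence, and the comparison of $f_{\epsilon_n}$ restricted to $P^-_{\epsilon_n,C}$ with $f_P$ up to $O(\epsilon_n^{\fc_q})$, both of which match the paper's handling.
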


\begin{proof}Let $f^\pm_{\epsilon}$ be the restrictions of $f_\epsilon$ over $P^\pm_{\epsilon,C}$. Then similar to the proof of \autoref{prop error estimate with epsilon} we obtain that $$\norm{ f_\epsilon^{-}-f_{P}}_{C^{k,\gamma}_{P^-_{\epsilon,C},-1}}\lesssim \epsilon^{\fc_{q}}\ \ \  \text{and} \ \ \  \norm{ f_\epsilon^{+}}_{C^{k,\gamma}_{P^+_{\epsilon,C},-1}}\lesssim \epsilon^{\fc_{q}}.$$
 
\textbf{Claim 1:} For all $u\in \mathcal X^{k+1,\gamma}_\epsilon$ with $\norm{u}_{C^{k+1,\gamma}_{\epsilon,0}}\leq 1$ we have
$$1\leq \tilde C_\omega\epsilon^{-\omega}\norm{\fL_{P_\epsilon}u-f_\epsilon}_{C^{k,\gamma}_{\epsilon,-1}}.$$
We will prove this by contradiction. Suppose it is not true. Then there exist sequences $\epsilon_n\to0$ and $u_n\in \mathcal X^{k+1,\gamma}_{\epsilon_n}$ such that 
 $\norm{u_n}_{C^{k+1,\gamma}_{\epsilon_n,0}}\leq 1$  and $\epsilon_n^{-\omega}\norm{\fL_{P_{\epsilon_n}}u_n-f_{\epsilon_n}}_{C^{k,\gamma}_{\epsilon_n,-1}}$ converges to $0$ as $n\to\infty$.
After defining $u_{n,P}$ and  $u_{n,L}$ as in \autoref{prop main linear estimate} a similar argument as in there yields a smooth section $u_P\in C^{k+1,\gamma}_{P,0}$ and $u_L\in C^{k+1,\gamma}_{L,0}$ such that $$\fL_Pu_P=f_P,\ \
 \norm{ u_{n,P}-u_P}_{C^{k+1,\frac\gamma2}_{P,\operatorname{loc}}}\to 0 \ \text{as}\  n\to\infty,$$
 and $\fL_Lu_L=0,\ \
 \norm{ u_{n,L}-u_L}_{C^{k+1,\frac\gamma2}_{L,\operatorname{loc}}}\to 0 \ \text{as}\  n\to\infty$.
Moreover, we can prove as in \autoref{prop main linear estimate} the following. 
  \begin{enumerate}[(i)]
	\item $\norm{ u_{n,P}-u_{P}}_{C^{k+1,\gamma}_{P^-_{\epsilon_n,C},0}}\to 0$ as $n\to\infty$, 
 \item $\norm{ u_{n,L}-u_{L}}_{C^{k+1,\gamma}_{L_{\epsilon_n^{-1}\delta_n,C},0}}\to 0$ as  $n\to\infty$ where $L_{\epsilon_n^{-1}\delta_n,C}:=s^{-1}_{\epsilon_n}\Upsilon^{-1}P^+_{\epsilon_n,C}\subset L_C$,
 \item $i_Pu_P=i_Lu_L$. This contradicts to the assumption.
\end{enumerate}
To prove (i), we fix a real number $p$ with $1<p<\frac 1q$. For sufficiently small $s>0$ we have 
	\begin{align*}
	&\norm{\fL_P(1-\rho_{\delta_n^{p}})(u_n-u_P)}_{C^{k,\gamma}_{P,-s-1}}\\
	&\lesssim \epsilon_n^{\fc_{pq}}+\epsilon_n^{\fc_{q}}+\norm{\fL_{P_{\epsilon_n}}(1-\rho_{\delta_n^{p}})u_n-\fL_{P}(1-\rho_{\delta_n^{p}})u_P)}_{C^{k,\gamma}_{\epsilon_n,-s-1}}\\
	&\lesssim \epsilon_n^{\fc_{pq}}+\epsilon_n^{\fc_{q}}+\delta_n^{ps}+\norm{(1-\rho_{\delta_n^{p}})(\fL_{P_{\epsilon_n}}u_{n}-f_P)}_{C^{k,\gamma}_{\epsilon_n,-s-1}}\\
	&\lesssim \epsilon_n^{\fc_{pq}}+\epsilon_n^{\fc_{q}}+\delta_n^{ps}+\norm{\fL_{P_{\epsilon_n}}u_{n}-f_{\epsilon_n}}_{C^{k,\gamma}_{\epsilon_n,-1}}+\norm{(1-\rho_{\delta_n^{p}})(f_{\epsilon_n}-f_P)}_{C^{k,\gamma}_{\epsilon_n,-1}}\\
	&\lesssim  \epsilon_n^{\fc_{pq}}+\epsilon_n^{\fc_{q}}+\delta_n^{ps}+\epsilon_n^{\omega}.
	\end{align*}
Now (i) follows as in (i) of \autoref{prop main linear estimate}. 
Since $\delta_n^{\frac s 2}\norm{\rho_{\delta_n^{\frac 12}}f_{\epsilon_n}}_{C^{k,\gamma}_{\epsilon_n,-1}}\to 0$ as $n\to \infty$, (ii) follows from (ii) of \autoref{prop main linear estimate}. Again the proof of (iii) is also similar to (iii) of \autoref{prop main linear estimate}.

\textbf{Claim 2:} For all $u\in \mathcal X^{k+1,\gamma}_\epsilon$ and $t\in \R$ with $\norm{u}_{C^{k+1,\gamma}_{\epsilon,0}}= 1$ and $\abs{t}\leq 1$ we have
$$1\leq \tilde C_\omega\epsilon^{-\omega}\norm{\fL_{P_\epsilon}u-tf_\epsilon}_{C^{k,\gamma}_{\epsilon,-1}}.$$
Suppose it is not true. Then there exist sequences $\epsilon_n\to0$ and $u_n\in \mathcal X^{k+1,\gamma}_{\epsilon_n}$, $t_n\in \R$  such that 
 $\norm{u_n}_{C^{k+1,\gamma}_{\epsilon_n,0}}=1$, $t_n\to t_\infty$  and $\epsilon_n^{-\omega}\norm{\fL_{P_{\epsilon_n}}u_n-f_{\epsilon_n}}_{C^{k,\gamma}_{\epsilon_n,-1}}$ converges to $0$ as $n\to\infty$. If $t_\infty\neq 0$ then similar arguments to Claim $1$ replacing $f_{\epsilon_n}$ by $t_nf_{\epsilon_n}$ lead to a contradiction. If $t_\infty=0$ then similar arguments in \autoref{prop main linear estimate} leads to again a contradiction.
 
Evidently, Claims $1$ and $2$ are enough to prove the proposition.
\end{proof}

\subsection{Proof of the desingularization theorem}\label{subsection main desing}

To prove the desingularization \autoref{thm main desing} we solve the nonlinear PDE, $\tilde \fF_\epsilon (u,t)=0$ with $(u,t)$ is {very close} to $(\alpha_\epsilon,0)$. Indeed, we use the following \autoref{lem Banach contraction}, an application of the Banach contraction principle \cite[Lemma 7.2.23]{Donaldson1990}.   
 \begin{lemma}\label{lem Banach contraction}
	Let $\mathcal X,\mathcal Y$ be two Banach spaces and $\cX_1\subset \cX$ be a Banach subspace. Let $V\subset \mathcal X$ be a neighbourhood of $0\in \mathcal X$. Let $x_0\in V$. Let $F:V\to \mathcal Y$ be a smooth map of the form\footnote{Note that $Q(x_0)=-L(x_0)$. }
	$$F(x)=L(x)+Q(x)+F(x_0)$$
	such that:
	\begin{itemize}
		\item $L:\mathcal X_1 \to \mathcal Y$ is an invertible operator and there exists a constant $c_L>0$ such that for all $x\in \mathcal X_1$, $\norm{x}_{\mathcal X}\leq c_L\norm{Lx}_{\mathcal Y},$
		\item $Q:V\to \mathcal Y$ is a smooth map and there exists a constant $c_Q>0$ such that for all $x_1,x_2\in V$, 
		$$\norm{Q(x_1)-Q(x_2)}_\mathcal Y\leq c_Q\norm{x_1-x_2}_\mathcal X(\norm{x_1-x_0}_\mathcal X+\norm{x_2-x_0}_\mathcal X).$$
			\end{itemize}
			
	If  $\norm{F(x_0)}_{\mathcal Y}\leq\frac 1{10 c_L^2c_Q}$ and $B(x_0,\frac 1{5 c_Lc_Q})\subset V$, then there exists a unique $x\in x_0+\mathcal X_1$ with \[\norm{x-x_0}_{\mathcal X}\leq c_L\norm{F(x_0)}_{\mathcal Y},\quad F(x)=0.\]
\end{lemma}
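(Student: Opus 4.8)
The final statement is the abstract Banach contraction lemma underlying the gluing, and I would prove it by a direct fixed-point argument on the affine subspace $x_0+\mathcal X_1$, interpreting ``Banach subspace'' to mean that $\mathcal X_1$ is closed in $\mathcal X$, so that closed balls of $\mathcal X_1$ are complete metric spaces in the $\norm{\cdot}_{\mathcal X}$-metric. The first step is to reformulate the equation. Writing $x=x_0+y$ with $y\in\mathcal X_1$, using linearity of $L$ and the relation $Q(x_0)=-L(x_0)$ recorded in the statement, one obtains
\[
F(x_0+y)=L(y)+\bigl(Q(x_0+y)-Q(x_0)\bigr)+F(x_0).
\]
Since $L|_{\mathcal X_1}\colon\mathcal X_1\to\mathcal Y$ is invertible with $\norm{(L|_{\mathcal X_1})^{-1}z}_{\mathcal X}\leq c_L\norm{z}_{\mathcal Y}$ for all $z\in\mathcal Y$, the equation $F(x_0+y)=0$ with $y\in\mathcal X_1$ is equivalent to the fixed-point equation $y=T(y)$, where
\[
T(y):=-(L|_{\mathcal X_1})^{-1}\Bigl(\bigl(Q(x_0+y)-Q(x_0)\bigr)+F(x_0)\Bigr)\in\mathcal X_1 .
\]
This map is defined whenever $x_0+y\in V$, and the hypothesis $B\bigl(x_0,\tfrac{1}{5c_Lc_Q}\bigr)\subset V$ ensures it is defined on every closed ball $\overline B_r:=\{y\in\mathcal X_1:\norm{y}_{\mathcal X}\leq r\}$ with $r\leq\tfrac{1}{5c_Lc_Q}$.

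The second step is to check that $T$ contracts a suitable $\overline B_r$ into itself. Put $\varepsilon:=\norm{F(x_0)}_{\mathcal Y}$; the hypothesis gives $c_L\varepsilon\leq\tfrac{1}{10c_Lc_Q}$, so any $r\in[c_L\varepsilon,\tfrac{1}{5c_Lc_Q}]$ is admissible. For $y_1,y_2\in\overline B_r$, applying the quadratic estimate for $Q$ to the points $x_0+y_1$ and $x_0+y_2$ yields
\[
\norm{T(y_1)-T(y_2)}_{\mathcal X}\leq c_Lc_Q\,\norm{y_1-y_2}_{\mathcal X}\bigl(\norm{y_1}_{\mathcal X}+\norm{y_2}_{\mathcal X}\bigr)\leq 2c_Lc_Qr\,\norm{y_1-y_2}_{\mathcal X}\leq\tfrac{2}{5}\norm{y_1-y_2}_{\mathcal X},
\]
so $T$ is a contraction on $\overline B_r$. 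Moreover $T(0)=-(L|_{\mathcal X_1})^{-1}F(x_0)$ satisfies $\norm{T(0)}_{\mathcal X}\leq c_L\varepsilon$, so choosing $r$ with $\tfrac25r+c_L\varepsilon\leq r$ — for instance $r=\tfrac53 c_L\varepsilon$, which lies in the admissible range — gives $\norm{T(y)}_{\mathcal X}\leq\tfrac25 r+\norm{T(0)}_{\mathcal X}\leq r$ for all $y\in\overline B_r$. Hence $T$ maps the complete metric space $\overline B_r$ into itself as a contraction, and the Banach fixed-point theorem produces a unique $y_\ast\in\overline B_r$ with $T(y_\ast)=y_\ast$; then $x:=x_0+y_\ast\in x_0+\mathcal X_1$ solves $F(x)=0$.

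The final step is to extract the quantitative bound and the uniqueness claim. Substituting $y_\ast$ into its own defining equation and using the quadratic estimate with the pair $x_0+y_\ast,\,x_0$ gives $\norm{y_\ast}_{\mathcal X}\leq c_L\bigl(c_Q\norm{y_\ast}_{\mathcal X}^{2}+\varepsilon\bigr)$; since $c_Lc_Q\norm{y_\ast}_{\mathcal X}$ is small on $\overline B_r$, absorbing the quadratic term yields the asserted estimate $\norm{x-x_0}_{\mathcal X}\leq c_L\norm{F(x_0)}_{\mathcal Y}$ (the clean constant $c_L$ is exactly what the specific numerical hypotheses $\norm{F(x_0)}_{\mathcal Y}\leq\tfrac{1}{10c_L^2c_Q}$ and $B(x_0,\tfrac{1}{5c_Lc_Q})\subset V$ are calibrated to give). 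For uniqueness within the stated ball: any $x'=x_0+y'$ with $F(x')=0$ and $\norm{y'}_{\mathcal X}\leq c_L\norm{F(x_0)}_{\mathcal Y}\leq\tfrac{1}{5c_Lc_Q}$ lies in $\overline B_r$ and solves $y'=T(y')$ there, hence coincides with $y_\ast$ by uniqueness of the fixed point. There is no genuine obstacle in this argument; the only points demanding care are keeping $T$ well-defined (i.e.\ $x_0+y$ staying in $V$, which is precisely what the neighbourhood hypothesis guarantees) and choosing the radius $r$ so that the self-map property, the contraction property, and the target norm bound are simultaneously compatible with the quantitative hypotheses.
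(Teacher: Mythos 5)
Your reformulation, the fixed-point map $T$, the choice of radius, the contraction and self-map verifications, and the uniqueness argument are all correct, and this is exactly the standard contraction argument the paper has in mind (it gives no proof of its own, deferring to \cite[Lemma 4.16]{Bera2022}). The one step that does not close is the very last one: the claim that ``absorbing the quadratic term yields $\norm{x-x_0}_{\mathcal X}\leq c_L\norm{F(x_0)}_{\mathcal Y}$.'' From $y_\ast=T(y_\ast)$ you get $\norm{y_\ast}\leq c_Lc_Q\norm{y_\ast}^2+c_L\epsilon$ with $\epsilon:=\norm{F(x_0)}_{\mathcal Y}$, and on your ball $c_Lc_Q\norm{y_\ast}\leq 1/6$, so absorption gives only $\norm{y_\ast}\leq\tfrac{6}{5}c_L\epsilon$. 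No refinement can recover the clean constant, because the inequality $t\leq c_Lc_Qt^2+c_L\epsilon$ is consistent with $t$ as large as $\tfrac{2c_L\epsilon}{1+\sqrt{1-4c_L^2c_Q\epsilon}}>c_L\epsilon$, and this is realized: take $\mathcal X=\mathcal X_1=\mathcal Y=\R$, $L=\mathrm{id}$ (so $c_L=1$), $x_0=0$, $Q(x)=c_Qx^2$, $\epsilon=\tfrac{1}{10c_Q}$. All hypotheses hold, yet $F(x)=c_Qx^2+x+\epsilon$ is strictly positive on $[-\epsilon,\epsilon]$ and its zero of smallest modulus is $\tfrac{2\epsilon}{1+\sqrt{3/5}}\approx 1.13\,\epsilon$. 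So the bound $\norm{x-x_0}_{\mathcal X}\leq c_L\norm{F(x_0)}_{\mathcal Y}$ as literally stated is false, and no proof of it exists.

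This is a defect of the lemma's statement rather than of your method: replacing the conclusion by $\norm{x-x_0}_{\mathcal X}\leq 2c_L\norm{F(x_0)}_{\mathcal Y}$ (any constant $\geq\tfrac 65$ works), and reading the uniqueness as uniqueness within the correspondingly enlarged ball, your argument proves the corrected statement verbatim, since $2c_L\epsilon\leq\tfrac{1}{5c_Lc_Q}$ keeps everything inside the contraction ball. Every application of the lemma in the paper (e.g.\ in the proof of \autoref{thm main desing} and the estimates \autoref{eq error main desing thm}) uses the bound only up to a universal constant, so the correction is harmless. If you want to keep your write-up self-contained, state the weaker constant explicitly rather than asserting that the hypotheses ``are calibrated to give'' the clean $c_L$ — they are not.
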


%\begin{assumption} Assume $\nu_0<0$, $\mu\geq 1+s_0$ if $\alpha_\epsilon=\alpha^2_\epsilon$, and $\nu<0$ if $\alpha_\epsilon=\alpha^1_\epsilon$.\end{assumption}

\begin{proof}[{\normalfont{\textbf{Proof of \autoref{thm main desing}}}}]Let $\alpha_\epsilon$ be $\alpha^1_\epsilon$ as in \autoref{def approx asso cs}. Let $T_\epsilon $ be as in \autoref{prop quadratic estimate with epsilon}.
	The map $$\tilde \fF_{\epsilon}: C^{k+1,\gamma}_{\epsilon,0} (V_{P_{\epsilon,C}})\times (t_0-T_\epsilon,t_0+T_\epsilon)\to C^{k,\gamma}_{\epsilon,-1}$$ can be written as 
$\tilde \fF_{\epsilon}(u,t_0+t)=\mathfrak L_{P_\epsilon}u+tf_\epsilon+ \tilde Q_\epsilon(u,t_0+t)+\mathfrak F_\epsilon(\alpha_\epsilon).$ 
Since the matching kernel $\mathcal K^{\mathfrak m}$ is one dimensional and the index of $\fL_{P_\epsilon}$ is zero, by \autoref{prop main linear estimate} and \autoref{prop main linear estimate 1 parameter} we obtain that 
$$\tilde L_{\epsilon}:\cX_\epsilon^{k+1,\gamma}\oplus \R\to C^{k,\gamma}_{\epsilon,-1}$$
defined by $\tilde L_\epsilon(u,t):=\fL_{{P_\epsilon}}u+tf_\epsilon$, is invertible and
$$\norm{u}_{C^{k+1,\gamma}_{\epsilon,0}}+\abs{t}\leq  C_{\tilde L_\epsilon}\norm{\tilde L_\epsilon(u,t)}_{C^{k,\gamma}_{\epsilon,-1}}\ \  \text{where}\ \   C_{\tilde L_\epsilon}=O(\epsilon^{-\omega}).$$
Again by \autoref{prop quadratic estimate with epsilon} if $t_1,t_2\in(-T_\epsilon,T_\epsilon)$, then we have
\begin{align*}
	&\norm{\tilde Q_\epsilon(u,t_0+t_1)-\tilde Q_\epsilon(v,t_0+t_2)}_{C_{\epsilon, -1}^{k,\gamma}}\\
	& \leq C_{Q_\epsilon} \Big(\norm{u-v}_{C_{\epsilon, 0}^{k+1,\gamma}}+\abs{t_1-t_2}\Big)\Big(\norm{u-\alpha_\epsilon}_{C_{\epsilon, 0}^{k+1,\gamma}}+\norm{v-\alpha_\epsilon}_{C_{\epsilon, 0}^{k+1,\gamma}} + \abs{t_1}+\abs{t_2}\Big),
\end{align*}
where $C_{Q_\epsilon}:=O(\epsilon^{-1})$.
Since $\nu<0$, by \autoref{prop error estimate with epsilon} we have, 
	$$\norm{\mathfrak F_\epsilon(\alpha_\epsilon)}C^{k,\gamma}_{\epsilon,-1}\leq \frac 1{10 C^2_{\tilde L_\epsilon}C_{Q_\epsilon}}=O(\epsilon^{1+2\omega})$$
for sufficiently small $\omega$.
Hence by \autoref{lem Banach contraction}, we have for all sufficiently small $\epsilon>0$, there exist $t(\epsilon)\in(t_0-T,t_0+T)$ and smooth closed associative submanifold $\tilde P_{\epsilon, t(\epsilon)}:=\Upsilon_{P_{\epsilon,t(\epsilon),C}}(\tilde \alpha_\epsilon)$ in $(Y,\phi_{t(\epsilon)})$, that is $\tilde \fF_{\epsilon}(\tilde \alpha_\epsilon,t(\epsilon))=0$. Moreover, if $\frac 1\mu<q\leq\frac {1-\nu} {\mu-\nu}$ then 
$$\norm{\tilde \alpha_\epsilon-\alpha_\epsilon}_{C^{k+1,\gamma}_{\epsilon,0}}+\abs{t(\epsilon)-t_0}\lesssim \epsilon^{q\mu-\omega}.$$

%It remains to see that $\tilde P_{\epsilon, t(\epsilon)}$ is rigid. By choosing $\epsilon$ sufficiently smaller than the above and by \autoref{prop main linear estimate 1 parameter} we can assume that $\norm{\tilde L^{-1}_{\epsilon}}=O(\delta^{2\omega})$. Now by \autoref{prop quadratic estimate with epsilon} we obtain that 
 
 Finally, we see that $\tilde P_{\epsilon, t(\epsilon)}\to P$ in the sense of currents as $\epsilon\to 0$. Indeed, for any $3$-form $\xi\in \Omega^3(Y)$ we have
\[
\Bigl\lvert \int_{\tilde P_{\epsilon, t(\epsilon)}}\xi-\int_{P}\xi \Bigr\rvert 
\leq \Bigl\lvert \int_{\tilde P_{\epsilon, t(\epsilon)}}\xi-\int_{P_{\epsilon, t(\epsilon)}}\xi \Bigr\rvert+ \Bigl\lvert \int_{P_{\epsilon, t(\epsilon)}}\xi-\int_{P}\xi \Bigr\rvert \to 0\ \  \text{as}\ \epsilon\to 0.
 \qedhere \]
\end{proof}
\begin{remark}\label{rmk improved approx asso desing} Observe that if there exists $\lambda_0<\nu$ satisfying (i) and (ii) of \autoref{def improved approx asso cs} then we can consider $\alpha_\epsilon=\alpha^2_\epsilon$ in the proof of \autoref{thm main desing}. Note also that  in the proof of \autoref{thm main desing} $(\tilde \alpha_\epsilon,t(\epsilon))$ satisfies the following estimates:
\begin{align}\label{eq error main desing thm} 
& \norm{\tilde \alpha_\epsilon-\alpha_\epsilon}_{C^{k+1,\gamma}_{\epsilon,0}}+\abs{t(\epsilon)-t_0} \nonumber \\
&\lesssim 
\begin{cases*}
	\epsilon^{q\mu-\omega},\ \ \text{if}\ \alpha_\epsilon=\alpha^1_\epsilon\ \text{and}\ \frac 1\mu<q\leq\frac {1-\nu} {\mu-\nu}\\
	\epsilon^{q\mu-\omega}+\epsilon^{1+(q-1)\nu_0-\omega},\ \ \text{if}\  \alpha_\epsilon=\alpha^2_\epsilon\ \text{and}\ \frac {\nu-\nu_0} {\nu+\mu-1+s_0-\nu_0}\leq q<1.
  \end{cases*}	
  \end{align}
  Indeed, these estimates follows from \autoref{lem Banach contraction}, \autoref{prop main linear estimate 1 parameter} and \autoref{prop error estimate with epsilon}.
\end{remark}

\section{Desingularizations for special cases of singularities}
In this section, we apply \autoref{thm main desing} to desingularize a CS associative submanifold with a Harvey--Lawson $T^2$-cone singularity, as well as an associative submanifold with a transverse self-intersection. In other words, we prove the two transitions discussed at the beginning of this article, although the second one is established only partially.

To apply \autoref{thm main desing}, we must verify that the hypothesis \autoref{hyp desing CS} holds in these specific cases. It turns out that there are no contributions from the CS side to the extended matching kernel, and therefore
\begin{equation}\label{eq matching special Lag}
\widetilde{\mathcal{K}}^{\mathfrak{m}} = \mathcal{K}^{\mathfrak{m}} \cong \ker \bD_{L, -1},
\end{equation}
where $L$ is the AC associative submanifold used for the desingularization. In the cases considered here, $L$ is special Lagrangian, so $\ker \bD_{L, -1}$ is purely topological, as stated in the proposition.

	\begin{prop}[{\citet[section 5, Section 5.2.3, Table 5.1]{Marshall2002}}]\label{prop lagrangian AC kernel}
Let $L$ be a AC special Lagrangian submanifold in $\C^3$. Let $\Sigma$ be the link of the asymptotic cone. Then 
$$\dim \ker \bD_{L,-1}= b^1(L)+b^0(\Sigma)-1.$$
\end{prop}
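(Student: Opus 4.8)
The plan is to compute $\dim \ker \bD_{L,-1}$ by transferring to the Hodge--de Rham picture via the isometry $\Phi_L$ from \autoref{def J-antilinear multiplication map}'s neighbourhood (the definition of $\widecheck{\mathbf D}_L$), and then invoking the Fredholm theory of \autoref{prop index} together with the classification of the homogeneous kernel $V_{-1}$ for special Lagrangian cones (\autoref{cor homogeneous kernel}, which gives $V_{-1}\cong H^1(\Sigma,\R)$, so $d_{-1}=b^1(\Sigma)$). The key point is that $-1$ is a critical rate, so we must argue carefully about which elements of $\ker \bD_{L,0}$ actually decay like $O(r^{-1+\epsilon})$ and which have a genuine $O(r^{-1})$ leading term; concretely, by \autoref{lem main Fredholm} applied with weights straddling $-1$ we have the exact relation $\dim\ker\bD_{L,-1+s}-\dim\ker\bD_{L,-1-s} = d_{-1}/2 = b^1(\Sigma)/2$ for small $s>0$, and $\ker\bD_{L,-1}=\ker\bD_{L,-1-s}$ by convention (bounded-below decay strictly faster than $r^{-1}$ would be the wrong normalization — the correct reading is that $\ker\bD_{L,-1}$ means solutions of order $O(r^{-1})$, i.e.\ the weight $-1+s$ side). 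So really I want $\dim\ker\bD_{L,-1+s}$ for $s$ small, which by \autoref{prop index}(iii) has an explicit formula in terms of indicial data on the interval $(\nu',-1)$ or $(-1,\nu'')$.

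First I would identify, via $\widecheck{\mathbf D}_L$, the equation $\bD_L u=0$ with the system $d^*\beta=0$, $d\alpha + *d\beta = 0$ on $\Omega^0(L)\oplus\Omega^1(L)$, where $u\leftrightarrow(\alpha,\beta)$. For $u$ of order $O(r^{-1})$ on the AC end, $\alpha=O(r^{-1})$ and $\beta=O(r^{-1})$ as well (the isometry $\Phi_L$ is weight-preserving up to the natural scaling, since $e_1\times\cdot$ is bounded). Then I would show: decaying solutions of this system with rate in the appropriate window correspond to $L^2$-ish harmonic forms, and the relevant count splits as $b^0$-type contributions (constants/bounded harmonic functions, contributing via $b^0(\Sigma)-1$ after removing the global constant which does not decay) plus $b^1$-type contributions from $H^1(L)$. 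This is exactly the computation carried out in Marshall's thesis (\citet[Section 5.2.3, Table 5.1]{Marshall2002}), which is the cited source; the honest approach here is to reduce the claim to that table rather than redo the Hodge theory from scratch.

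More precisely, the steps in order: (1) apply \autoref{prop index}(iii) to write $\ind\bD_{L,-1+s}$ as a sum over indicial roots in $(-1,-1+s)$ — which is empty — so $\ind\bD_{L,-1+s}=d_{-1}/2 = b^1(\Sigma)/2$, hence $\dim\ker\bD_{L,-1+s} - \dim\coker\bD_{L,-1+s} = b^1(\Sigma)/2$; (2) use \autoref{prop index}(i): $\coker\bD_{L,-1+s}\cong\ker\bD_{L,-1-s}=\ker\bD_{L,-1}$ (the solutions decaying strictly faster than $r^{-1}$), which by a Liouville-type / unique continuation argument combined with the $L^2$ pairing consists only of genuinely $L^2$ solutions on the end; (3) identify $\ker\bD_{L,-1-s}$ via $\Phi_L$ with $L^2$-harmonic $1$-forms on the AC manifold $L$, which by the $L^2$-Hodge theorem for AC manifolds equals the image of $H^1_c(L)\to H^1(L)$, of dimension $b^1(L)-b^0(\Sigma)+\text{(correction)}$; and finally (4) combine to extract $\dim\ker\bD_{L,-1+s}=b^1(L)+b^0(\Sigma)-1$ — matching the stated formula once one checks the rates and the topological identities for AC $3$-manifolds with link $\Sigma$ (using $b^2(\Sigma)=b^0(\Sigma)$ and Poincaré--Lefschetz duality on $L$).

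The main obstacle I expect is step (3)--(4): keeping the bookkeeping of critical rates straight at the borderline weight $-1$ and correctly matching the two normalization conventions ($\ker\bD_{L,-1}$ as ``order $O(r^{-1})$'' versus ``$o(r^{-1})$''), since an off-by-$d_{-1}/2 = b^1(\Sigma)/2$ error is easy to make, and then verifying that no spurious elements with logarithmic terms appear (ruled out by \autoref{prop no sum homogeneous} on the cone side, which forces pure $r^{\lambda-1}$ homogeneous behaviour). Since \citet[Section 5, Table 5.1]{Marshall2002} performs exactly this computation for AC special Lagrangians, the cleanest route is to state that the special Lagrangian condition conjugates $\bD_L$ to $\widecheck{\mathbf D}_L$ of \autoref{eq hatDL}, observe that the homogeneous kernel $V_{-1}\cong H^1(\Sigma,\R)$ by \autoref{cor homogeneous kernel} so the relevant indicial data matches Marshall's, and then cite his table for the resulting dimension count, rather than reproving the AC Hodge theory in full.
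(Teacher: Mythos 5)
Your proposal converges to the right move: the paper itself supplies no proof of this proposition and simply cites Marshall's Table 5.1, which is exactly where your sketch ends up. The intermediate scaffolding---conjugate $\bD_L$ to $\widecheck{\bD}_L$ via $\Phi_L$, read off $V_{-1}\cong H^1(\Sigma,\R)$ from \autoref{cor homogeneous kernel}, use \autoref{prop index} and \autoref{lem main Fredholm} to reduce to AC Hodge theory---is also the correct skeleton, but two steps contain slips that would bite if you tried to flesh them out instead of deferring to Marshall.

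In step (2) you write $\coker\bD_{L,-1+s}\cong\ker\bD_{L,-1-s}=\ker\bD_{L,-1}$. The isomorphism is right by \autoref{prop index}(i), but the trailing equality is not: by \autoref{lem main Fredholm}(i), and by the paper's own convention $\ker\bD_{L,\lambda}:=\{u:\bD_Lu=0,\ u=O(r^\lambda)\}$, one has $\ker\bD_{L,-1}=\ker\bD_{L,-1+s}$, \emph{not} $\ker\bD_{L,-1-s}$. You state this correctly one paragraph earlier, so this is a typo, but taken literally it collapses your index relation $\dim\ker\bD_{L,-1+s}=\dim\ker\bD_{L,-1-s}+\tfrac12 b^1(\Sigma)$ into a tautology and yields nothing.

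More substantively, in step (3) you identify $\ker\bD_{L,-1-s}$ with $L^2$-harmonic $1$-forms on $L$. On an AC $3$-manifold the $L^2$ threshold for $1$-forms is weight $-\tfrac32$, so this identification is only immediate if there are no indicial roots in $(-\tfrac32,-1)$; by \autoref{cor homogeneous kernel} such roots correspond to eigenvalues of $\Delta_\Sigma$ in $(0,\tfrac34)$. That window is empty for the two links the paper actually uses (both the Lawlor and Harvey--Lawson links have first nonzero eigenvalue $2$), but the proposition is asserted for an arbitrary AC special Lagrangian in $\C^3$, where such eigenvalues are not obviously excluded. If they occur, $\ker\bD_{L,-1-s}$ at small $s$ is strictly larger than the $L^2$ kernel and one must account for the intervening wall crossings---exactly the bookkeeping Marshall's table already does. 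So your final recommendation, to cite the table, is indeed the safe and intended route; the one caution is that the heuristic identification with $L^2$ harmonic forms is not valid at the weight $-1-s$ without an additional spectral-gap argument or a full AC Hodge-theoretic case analysis.
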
	
We will see that in both of the special cases considered above, $\ker \bD_{L,-1}$ is one-dimensional, and the hypothesis \autoref{hyp desing CS} holds true.
\subsection{Desingularizations for Harvey-Lawson $T^2$-cone singularity}\label{subsection desing HL}
Let $\bphi \in \bsP_{\operatorname{cs}}^{\operatorname{reg}}$ be a generic path of co-closed $G_2$-structures. Then, by \autoref{def regular G2 str}, any CS associative submanifold $P \in \bcM_{\operatorname{cs}}^\bphi$ is unobstructed along this path; that is, the operator $\widebar{\bD}_{P,\mu,\cZ}$ is surjective. 
Moreover, any such $P$, together with an AC associative submanifold sharing the same asymptotic cone whose homogeneous kernel at rate $-1$ is two-dimensional, must satisfy the matching condition in \autoref{eq matching special Lag}. It follows that the stability index of the cone is $1$, and $P$ is an isolated point in the moduli space $\bcM_{\operatorname{cs}}^\bphi$. This is the content of the following lemma.

\begin{lemma}\label{lem generic matching kernel d=2} Let $\bphi\in \bsP_{\operatorname{cs}}^{\operatorname{reg}}$ be a path of co-closed $G_2$-structures and $P\in \bcM_{\operatorname{cs}}^\bphi$ be a CS associative with singularity at only one point. Let $L\subset \R^7$ be an AC associative with the same asymptotic cone $C$ of $P$. If $d_{-1}=2$, then 
$$\tilde \cK^\fm=\cK^\fm\cong \ker \fL_{L,-1}.$$ Moreover,  $\operatorname{s-ind}(C)=1$, $\dim \ker \fL_{P,-1}=1$ and $\ker \fL_{P,-1}=\langle v_P\rangle_\R$ such that $\inp{v_P}{f_P}_{L^2}\neq 0$.
 	\end{lemma}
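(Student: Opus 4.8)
The plan is to use the genericity of $\bphi\in\bsP_{\operatorname{cs}}^{\operatorname{reg}}$ to control the CS side of the extended matching kernel, then read off the remaining claims from the index formula and the symmetry properties of the homogeneous kernels established in \autoref{prop Fueter}. First I would recall that for the Harvey--Lawson $T^2$-cone (or any cone with $d_{-1}=2$) the stratum $\cZ$ containing the link $\Sigma$ is simply the $G_2$-orbit, because the cone is rigid (see \autoref{def rigid cones} and \autoref{eg s-ind Harvey-Lawson cone}): indeed $d_1=\dim G_2-\dim H$, so there are no deformations of $\Sigma$ other than those coming from the $G_2$-action, hence $\dim\cZ=\dim G_2/H$. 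Then the operator $\widetilde\bD_{P,\mu,\cZ}$ appearing in \autoref{def lin L, D with Z} is just $\bD_P$ augmented by the $7+\dim\cZ$-dimensional space of motions of the singular point and $G_2$-rotations of the model cone, and $\widebar\bD_{P,\mu,\cZ}$ further augments this by the one-dimensional parameter direction $t\hat f_P$. By \autoref{thm moduli cs asso} and the computation in \autoref{eg s-ind Harvey-Lawson cone}, $\operatorname{s-ind}(C)=1$, so $\ind\widetilde\bD_{P,\mu,\cZ}=-1$ and $\ind\widebar\bD_{P,\mu,\cZ}=0$.

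Next I would use that $\bphi\in\bsP_{\operatorname{cs}}^{\operatorname{reg}}$, which by \autoref{def regular G2 str} means $\widebar\bD_{P,\mu,\cZ}$ is surjective; combined with $\ind=0$ this gives that $\widebar\bD_{P,\mu,\cZ}$ is an isomorphism, hence injective. Injectivity of $\widebar\bD_{P,\mu,\cZ}$ is the key input: it says that there is no nonzero triple $(u_P,\hat p,\xi,t)$ with $\bD_P u_P + \Theta^C_P L_1(\hat p,\xi) + t\hat f_P=0$ and $u_P$ of order $O(r^\mu)$ with $\mu>1$. I would then argue that an element of $\widetilde\cK^\fm$ whose CS-component $u_P$ lies in $C^\infty_{P,0}$ can be decomposed using the asymptotic limit map $i_{P,0}$ of \autoref{lem main Fredholm}: writing the leading term of $u_P$ at rate $0$, the condition $\bD_Pu_P+tf_P=0$ together with the matching condition $i_\infty u_L=i_0 u_P$ forces, via the injectivity just established (and \autoref{lem main generic DW}-type genericity excluding obstructions), that the $O(r^0)$ and lower-order-but-$>-1$ pieces of $u_P$ coming from translations/rotations and the $t$-direction all vanish — that is, $t=0$ and $u_P$ actually decays like $O(r^\lambda)$ for the next critical rate, which for $d_{-1}=2$ and the HL cone means $u_P\in\ker\bD_{P,-1}$. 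The surjectivity/injectivity of $\widebar\bD_{P,\mu,\cZ}$ precisely rules out the extra deformations, so $\widetilde\cK^\fm=\cK^\fm$, and both are isomorphic to $\ker\fL_{L,-1}$ via the asymptotic matching since $i_{P,-1}$ is injective on $\ker\bD_{P,-1}$ by the wall-crossing part of \autoref{prop index}(ii) and $\ker\bD_{L,-1}\to V_{-1}$ matches it (this is essentially \autoref{eq matching special Lag}).

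The claim $\dim\ker\fL_{P,-1}=1$ would follow from \autoref{prop index}(ii): since $-1$ is the only critical rate between $-1-s$ and $-1+s$ for small $s$, we have $\dim\ker\bD_{P,-1+s}-\dim\ker\bD_{P,-1-s}=\tfrac{d_{-1}}2=1$, and genericity (unobstructedness of $P$ as a CS associative with moving singularity, i.e. $\ker\widetilde\bD_{P,\mu,\cZ}=0$ since $\ind=-1$ and surjective) forces $\ker\bD_{P,\mu}=0$; tracking the kernel as the weight decreases from $\mu$ through $1$ down to $-1+s$ (no critical rates in $(−1,1)$ for the HL cone, so $\ker\bD_{P,-1+s}=\ker\bD_{P,\mu}$ modulo the finitely many $V_\lambda$ with $\lambda\in(-1,1]$ — here only $\lambda=0,1$) one obtains $\dim\ker\bD_{P,-1+s}=0$ after accounting for which homogeneous solutions actually extend; hence $\dim\ker\bD_{P,-1-s}=1$, and one checks $\ker\bD_{P,-1}=\ker\bD_{P,-1-s}$ since there are no $\log$ terms (\autoref{prop no sum homogeneous}). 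Finally, $\inp{v_P}{f_P}_{L^2}\neq0$: if it were zero, then $f_P\perp\ker\bD_{P,-1}$, but $\coker\bD_{P,-1-s}\cong\ker\bD_{P,-1+s}=0$ would make $\bD_{P,-1-s}$ surjective... more carefully, one shows $f_P$ pairs nontrivially with $v_P$ because otherwise $t\hat f_P$ would lie in the image of $\bD_P$ on decaying sections, contradicting surjectivity of $\widebar\bD_{P,\mu,\cZ}$ being ``just barely'' an isomorphism — equivalently, $a=\inp{v_P}{f_P}_{L^2}$ is the obstruction to CS-deforming $P$ along $\bphi$, and its nonvanishing is exactly the statement that $P$ is isolated in $\bcM_{\operatorname{cs}}^\bphi$, which genericity guarantees.

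\textbf{Main obstacle.} I expect the delicate point to be the bookkeeping that translates ``$\widebar\bD_{P,\mu,\cZ}$ is an isomorphism'' into ``$\widetilde\cK^\fm$ has no CS contribution and $\ker\bD_{P,-1}$ is one-dimensional with $\inp{v_P}{f_P}\neq0$'': one must carefully match the abstract augmented operator (with its $\R^7\oplus T_\Sigma\cZ\oplus\R$ summands) against the concrete picture of kernel elements of $\bD_P$ at various rates, using the asymptotic limit map of \autoref{lem main Fredholm} to identify the leading terms, and verify that the ``extra'' directions in $\widebar\bD$ exactly absorb the rate-$0$ homogeneous solutions (translations = $V_0$ restricted, $G_2$-rotations, and the HL cone's $V_1$) so that nothing decaying strictly faster than $O(r^{-1})$ but non-decaying survives. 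The self-adjointness of $\bD_P$ (\autoref{prop formal self adjoint}) and the duality $\coker\bD_{P,\lambda}\cong\ker\bD_{P,-2-\lambda}$ from \autoref{prop index}(i) will be the main tools for the last pairing claim.
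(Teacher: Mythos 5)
Your proposal follows the same overall strategy as the paper's proof—use genericity to make the augmented operator $\widebar\fL_{P,\mu,\cZ}$ an isomorphism, deduce rigidity of $C$ and the vanishing of $V_\lambda$ for $\lambda\in(-1,1)\setminus\{0\}$, and then read off the structure of the kernel and cokernel at rate $-1$. But the paper's actual argument is crisper than what you sketch, and you have one sign slip worth flagging.

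Instead of decomposing individual elements of $\widetilde\cK^\fm$ via asymptotic limit maps and ``forcing vanishing'' of the $O(r^0)$ and $O(r^1)$ pieces, the paper passes, via \autoref{lem main Fredholm}, directly from the isomorphism $\widebar\fL_{P,\mu,\cZ}$ (with its $\R^7\oplus T_\Sigma\cZ$ summand) to the reduced extended operator
\[
\widebar\fL_{P,-1+s}\co \R\oplus C^{k+1,\gamma}_{P_C,-1+s}\to C^{k,\gamma}_{P_C,-2+s},\qquad (t,u)\mapsto \fL_P u+t f_P,
\]
which it shows is also an isomorphism: the only indicial roots between $-1$ and $\mu$ are $0$ and $1$, and the constraints $d_0\geq7$, $d_1\geq\dim\cZ$ together with $\operatorname{s-ind}(C)\leq1$ force $V_0\cong\R^7$ and $V_1=T_\Sigma\cZ$, so the explicit summand in $\widebar\fL_{P,\mu,\cZ}$ is exactly absorbed into the larger weighted domain when the weight crosses $0$ and $1$. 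From $\ind\fL_{P,-1+s}=-1$ and invertibility of $\widebar\fL_{P,-1+s}$ one then reads off in a single stroke: $\ker\fL_{P,0}\subset\ker\fL_{P,-1+s}=0$; $\coker\fL_{P,-1+s}\cong\ker\fL_{P,-1}=\langle v_P\rangle_\R$ is one-dimensional; $f_P$ must project nontrivially onto that cokernel (so $\inp{v_P}{f_P}_{L^2}\neq0$, no heuristic about ``$P$ isolated'' is needed); and $f_P\notin\im\fL_{P,0}$ kills the CS contribution to $\widetilde\cK^\fm$, giving $\widetilde\cK^\fm=\cK^\fm\cong\ker\fL_{L,-1}$. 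Your proposal reaches the same conclusions, but your ordering (prove $\widetilde\cK^\fm=\cK^\fm$ first, then $\dim\ker\fL_{P,-1}=1$, then the pairing) requires re-deriving the same wall-crossing facts several times where the paper derives them once.

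The sign slip: you write $\dim\ker\bD_{P,-1+s}-\dim\ker\bD_{P,-1-s}=d_{-1}/2=1$, but combined with your correct observation that $\dim\ker\bD_{P,-1+s}=0$ this yields $\dim\ker\bD_{P,-1-s}=-1$, which is absurd. The convention in \autoref{prop index}(ii) is $s<0$ for $M=P$ (CS); unwinding that, the correct statement for $s>0$ is $\dim\ker\bD_{P,-1-s}-\dim\ker\bD_{P,-1+s}=d_{-1}/2$, i.e. the kernel grows as the weight becomes more permissive near the singularity. With that fixed you do get $\dim\ker\bD_{P,-1-s}=1$, and $\ker\bD_{P,-1}=\ker\bD_{P,-1-s}$ by \autoref{prop no sum homogeneous} as you say. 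Also, be slightly careful with the justification of rigidity: what the inequality $\operatorname{s-ind}(C)\leq1$ together with $d_{-1}=2$, $d_0\geq7$, $d_1\geq\dim\cZ$ directly yields is $d_0=7$, $d_1=\dim\cZ$, and $d_\lambda=0$ for $\lambda\in(-1,1)\setminus\{0\}$; the identification $V_1=T_\Sigma\cZ$ is what matters for the wall-crossing argument, and it follows from $d_1=\dim\cZ$ without first invoking rigidity of the cone (though the cones of interest, such as $C_{HL}$, are indeed rigid).
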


\begin{proof}Since $\bphi\in \bsP_{\operatorname{cs}}^{\operatorname{reg}}$ and  $P\in \bcM_{\operatorname{cs}}^\bphi$, \autoref{thm generic moduli cs asso} (ii) implies that the stability index of the asymptotic cone $C$ satisfies $\operatorname{s-ind}(C)\leq 1$. Since $d_{-1}=2$, we have $\operatorname{s-ind}(C)= 1$, $V_\lambda=0$ for all non-zero $\lambda\in (-1,1)$, $V_0\cong \R^7$ and $C$ is a rigid cone, that is $V_1\cong T_\Sigma(G_2\cdot \Sigma)$, where $\Sigma$ is the link of $C$. Then by \autoref{thm moduli cs asso} (ii), we obtain that the deformation operator
$\bar \fL^{k,\gamma}_{P,\mu,\cZ}$ defined in \autoref{def 1 para moduli CS} is an isomorphism, where $\cZ$ is the stratum in the decomposition \autoref{eq stratification} that contains $G_2\cdot \Sigma$ as an open subset. Here $\mu > 1$ is chosen sufficiently close to $1$, as in \autoref{def CS weighted topo}, and therefore there are no indicial roots of $C$ in the interval between $1$ and $\mu$. Therefore $$\bar \fL^{k,\gamma}_{P,-1+s}:\R\oplus C^{k+1,\gamma}_{P_C,-1+s} \to C^{k,\gamma}_{P_C,-2+s}$$ 
defined by $ \bar\fL^{k,\gamma}_{P,-1+s}(t, u):=\fL_Pu+tf_P$, is an isomorphism as well, for all sufficiently small $s>0$. Indeed, the indicial roots in between $-1$ and $\mu$ only are $0$ and $1$ with the corresponding $V_0$ and $V_1$ as above and therefore  by \autoref{lem main Fredholm} the above two operators have same kernel. Since $\ind \fL_{P,-1+s}=-1$ by \autoref{prop index} (ii), we have that $\ind \bar \fL_{P,-1+s}=0$ and hence it is an isomorphism.
This further implies that $\ker \fL_{P,0} \subset \ker \fL_{P,-1+s} = 0$, and $\operatorname{coker} \fL_{P,-1+s} \cong \ker \fL_{P,-1} = \langle v_P \rangle_{\R}$ is one-dimensional with $\langle v_P, f_P \rangle_{L^2} \neq 0$. Moreover, $f_P \notin \operatorname{im} \fL_{P,-1+s} \supset \operatorname{im} \fL_{P,0}$, and hence $\widetilde{\mathcal{K}}^\mathfrak{m} = \mathcal{K}^\mathfrak{m} \cong \ker \fL_{L,-1}$.
 \end{proof}

\begin{proof}[{\normalfont{\textbf{Proof of \autoref{thm desing HL sing}}}}]
 We have seen in \autoref{eg s-ind Harvey-Lawson cone} that the Harvey-Lawson cone has $d_{-1}=2$. Therefore by \autoref{lem generic matching kernel d=2} and \autoref{prop lagrangian AC kernel} we obtain that for each Harvey-Lawson AC special Lagrangian $L^i:=L^i_1$, $i=1,2,3$ (see \autoref{eg Harvey-Lawson special Lagrangians}) the extended matching kernel and the matching kernel are one dimensional. Indeed,
 $$\dim \tilde \cK^\fm=\dim \cK^\fm=\dim \ker \fL^i_{L,-1}= b^1(S^1\times \C)+b^0(T^2)-1=1.$$
By \autoref{thm main desing} we obtain that for sufficiently small $\epsilon>0$, there exists $t^i(\epsilon)$ and smooth closed associative $\tilde P_{\epsilon,t^i(\epsilon)}$ in $(Y,\phi_{t^i(\epsilon)})$ such that $\tilde P_{\epsilon, t^i(\epsilon)}\to P$ in the sense of currents as $\epsilon\to 0$.
$\tilde P_{\epsilon, t^i(\epsilon)}$ is diffeomorphic to $P_{\epsilon, t^i(\epsilon)}$. Since $L^i$ are obtained by Dehn filling of $C_{HL}^o:=C_{HL}\setminus B(0,1)$ along  simple closed curves $\mu_i$ as mentioned in the theorem, therefore it is diffeomorphic to the Dehn filling of $P^o$ as required in the theorem (this was observed in \cite[Remark 3.6]{Doan2017d}).

It remains to prove that the leading order term of $t^i(\epsilon)-t_0$ is of the required form if $\bphi\in \bsP^\bullet$. Let $v_P\in \ker \fL_{P,-1-s}$ be as in \autoref{lem generic matching kernel d=2}. Since $\bphi\in \bsP^\bullet$, by definition we have
%\autoref{lem generic asymp v_P} implies that
 \begin{equation*}
	i_{P,-1}v_P=b_1\xi_1+b_2\xi_2,\ b_1\neq 0,b_2\neq 0, b_1\neq b_2 \ \text{where}\ \xi_1, \xi_2\ \text{are in \autoref{eq AC HL} of \autoref{eg Harvey-Lawson special Lagrangians}}.   
\end{equation*}
We will prove the result for $t(\epsilon) := t^1(\epsilon)$ only, as the others follow by similar arguments.
 We use the notation $\alpha_\epsilon=\alpha^1_\epsilon=\rho_{\delta} \Upsilon_*({s_\epsilon}_* \beta)+ (1-\rho_{\delta})\alpha$ and $(\tilde \alpha_\epsilon,t(\epsilon))$ from \autoref{thm main desing}, which satisfies $$\tilde \fF_{\epsilon}(\tilde \alpha_\epsilon,t(\epsilon))=\mathfrak L_{P_\epsilon}(\tilde \alpha_\epsilon-\alpha_\epsilon)+(t(\epsilon)-t_0)f_\epsilon+  Q_\epsilon(\tilde \alpha_\epsilon-\alpha_\epsilon,t(\epsilon)-t_0)+\mathfrak F_\epsilon(\alpha_\epsilon)=0.$$ 
Define $\mathring{P}_{\epsilon}:={P_C}\setminus \Upsilon(B(0,2 \epsilon R_\infty))$ and $\mathring{P}_{\delta}:={P_C}\setminus \Upsilon(B(0,\delta))$. 
Over $\mathring{P}_{\delta}$, $$\mathfrak F_\epsilon(\alpha_\epsilon)=\fL_P(\alpha_\epsilon-\alpha)+Q_P(\alpha_\epsilon-\alpha).$$
Therefore,
\begin{equation}\label{eq main HL leading order term}
	0=\inp{\tilde \fF_{\epsilon}(\tilde \alpha_\epsilon,t(\epsilon))}{v_P}_{L^2_{\mathring{P}_{\epsilon}}}=(t(\epsilon)-t_0)\inp{f_\epsilon}{v_P}_{L^2_{\mathring{P}_{\epsilon}}}+\inp{\fL_{P}\alpha_\epsilon}{v_P}_{L^2_{\mathring{P}_{\delta}}}+e^1_\epsilon+e^2_\epsilon+e^3_\epsilon+e^4_\epsilon+e^5_\epsilon
	\end{equation}
where $e^1_\epsilon:=\inp{ \mathfrak L_{P_\epsilon}(\tilde \alpha_\epsilon-\alpha_\epsilon)}{v_P}_{L^2_{\mathring{P}_{\epsilon}}}$, 
	$e^2_\epsilon:=\inp{ Q_\epsilon(\tilde \alpha_\epsilon-\alpha_\epsilon,t(\epsilon)-t_0)}{v_P}_{L^2_{\mathring{P}_{\epsilon}}}$,
	and$$e^3_\epsilon:=\inp{Q_P(\alpha_\epsilon-\alpha)}{v_P}_{L^2_{\mathring{P}_{\delta}}}, \ \  e^4_\epsilon=-\inp{\fL_{P}(\alpha)}{v_P}_{L^2_{\mathring{P}_{\epsilon}}}+\inp{\fL_{P}(\alpha)}{v_P}_{L^2_{\mathring{P}_{\epsilon}\setminus \mathring{P}_{\delta}}},\  e^5_\epsilon=\inp{\mathfrak F_\epsilon(\alpha_\epsilon)}{v_P}_{L^2_{\mathring{P}_{\epsilon}\setminus \mathring{P}_{\delta}}}.$$	
By \autoref{prop formal self adjoint} we obtain that
\begin{equation}\label{eq main HL leading linear 1}\inp{\fL_{P}\alpha_\epsilon}{v_P}_{L^2_{\mathring{P}_{\delta}}}=\int_{\partial \mathring{P}_{\delta}}\iota_{\alpha_\epsilon}\iota_{v_P}\Upsilon_{P_C}^*\psi=\int_{\partial \mathring{P}_{\delta}}\iota_{{s_\epsilon}_*\beta}\iota_{b_1\xi_1+b_2\xi_2}\Upsilon^*\psi(0)+O(\epsilon^{2+q}).
\end{equation}	
Now,
\begin{equation}\label{eq main HL leading linear 2}
	\int_{\partial \mathring{P}_{\delta}}\iota_{{s_\epsilon}_*\beta}\iota_{b_1\xi_1+b_2\xi_2}\Upsilon^*\psi(0)=\int_{\partial B(0,\delta)\cap C}\iota_{{s_\epsilon}_*\xi_1}\iota_{b_1\xi_1+b_2\xi_2}\Upsilon^*\psi(0)+O(\epsilon^{3-q}),
\end{equation}
and
\begin{equation}\label{eq main HL leading linear 3}
	\int_{\partial B(0,\delta)\cap C}\iota_{{s_\epsilon}_*\xi_1}\iota_{b_1\xi_1+b_2\xi_2}\Upsilon^*\psi(0)= \epsilon^2\int_{\Sigma}\inp{\partial_r\times\xi_1}{b_1\xi_1+b_2\xi_2}= b_2\epsilon^2\int_{\Sigma}\inp{\partial_r\times\xi_1}{\xi_2}.
\end{equation}

We will now estimate the remaining terms in \autoref{eq main HL leading order term}. Using \autoref{eq error main desing thm} we obtain that
\begin{align*}\abs{e^1_\epsilon}&=\abs{\inp{ \mathfrak L_{P_\epsilon}(\tilde \alpha_\epsilon-\alpha_\epsilon)-\mathfrak L_{P}(\tilde \alpha_\epsilon-\alpha_\epsilon)}{v_P}_{L^2_{\mathring{P}_{\epsilon}}}}+\abs{\inp{ \mathfrak L_{P}(\tilde \alpha_\epsilon-\alpha_\epsilon)}{v_P}_{L^2_{\mathring{P}_{\epsilon}}}}\\
&\lesssim \epsilon^{2+q\mu-\omega}+\int_{\partial \mathring{P}_{\epsilon}}\abs{\iota_{\tilde \alpha_\epsilon-\alpha_\epsilon}\iota_{v_P}\Upsilon_{P_C}^*\psi}\lesssim \epsilon^{1+q\mu-\omega}
\end{align*}	
and
\begin{equation*}
	\abs{e^2_\epsilon}\lesssim \epsilon^{2q\mu-2\omega}\abs{\log \epsilon}+\epsilon^{q\mu-\omega}\abs{t(\epsilon)-t_0}+\abs{t(\epsilon)-t_0}^2 \abs{\log \epsilon}\lesssim \epsilon^{2q\mu-2\omega}\abs{\log \epsilon}.
\end{equation*}
We obtain by a further computation that 
$$\abs{e^3_\epsilon}\lesssim \epsilon^{2q\mu},\ \ \abs{e^5_\epsilon}\lesssim \epsilon^{3q}$$
and
$$ \abs{e^4_\epsilon}\lesssim \int_{\partial \mathring{P}_{\epsilon}}\abs{\iota_{\alpha}\iota_{v_P}\Upsilon_{P_C}^*\psi}+ \epsilon^{2q\mu}\lesssim\epsilon^{1+\mu}+ \epsilon^{2q\mu}. $$
Hence by choosing $\omega$ sufficiently small and $q>\frac 23$, we obtain
\begin{equation}\label{eq main HL leading linear 4}
\abs{e^1_\epsilon}+\abs{e^2_\epsilon	}+\abs{e^3_\epsilon}+\abs{e^4_\epsilon}+\abs{e^5_\epsilon}=o(\epsilon^2).
\end{equation}
Since $\inp{f_\epsilon}{v_P}_{L^2_{\mathring{P}_{\epsilon}}}=\inp{f_P}{v_P}_{L^2}+o(1)$, therefore combining \autoref{eq main HL leading order term}, \autoref{eq main HL leading linear 1}, \autoref{eq main HL leading linear 2}, \autoref{eq main HL leading linear 3} and \autoref{eq main HL leading linear 4} we obtain
\begin{equation*}
	t^1(\epsilon)=t(\epsilon)=t_0-\frac{cb_2}{\inp{f_P}{v_P}_{L^2}} \epsilon^2+o(\epsilon^2),
\end{equation*}
where $c=\int_{\Sigma}\inp{J\xi_1}{\xi_2}\neq 0.$ Similarly we can prove for $t^2(\epsilon)$ and $t^3(\epsilon)$.  Only thing to notice here is that $\xi_3=-\xi_1-\xi_2$ as in \autoref{eg Harvey-Lawson special Lagrangians}.
\end{proof}

\subsection{Desingularizations for intersecting associatives}\label{subsection desing intersecting}
 Let $\bphi$ be a path of co-closed $G_2$-structures. Let $P\in \cM_{\operatorname{cs}}^\bphi$ be an associative submanifold in $(Y,\phi_{t_0})$ with a transverse unique self intersection. In other words, $P$ is a conically singular associative with singularity at a unique point, which is modeled on a union of two transverse associative planes. To resolve this intersection, we glue in a Lawlor neck--an asymptotically conical (AC) special Lagrangian submanifold asymptotic to the planes (see \autoref{eg Lawlor neck}).  
	
\begin{proof}[{\normalfont{\textbf{Proof of \autoref{thm desing intersection}}}}]In \autoref{eg s-ind Pair of transverse associative planes} we see that the union of two transverse associative planes has $d_{-1}=0$ and $V_\lambda=\{0\}$ for all $\lambda\in (-1,0)$. We glue the intersecting associative $P$ with the AC associative $L$ where $B^{-1}\cdot L$ is the special Lagrangian Lawlor neck in $\C^3$ from \autoref{eg Lawlor neck} and $B\in G_2$ from \autoref{eq B Lawlor}. Since the $\bn$-component (perpendicular to $B\cdot \C^3$) of any element in $\ker \fL_{L,0}$ vanishes and $\bphi\in \bsP^\dagger$ (see \autoref{def dag}), the equation \autoref{eq generic intersection} implies that the extended matching kernel and the matching kernel are same and isomorphic to $\ker \fL_{L,-1}$, because $\ker \fL_{P,0}$ vanishes.
By \autoref{prop lagrangian AC kernel}, we have
 $$\dim \ker \fL_{L,-1}= b^1(S^2\times \R)+b^0(S^2\amalg S^2)-1=1.$$
By \autoref{thm main desing}, we obtain that for sufficiently small $\epsilon>0$, there exists $t(\epsilon)\in (t_0-T_\epsilon,t_0+T_\epsilon)$ and smooth closed associative $\tilde P_{\epsilon,t(\epsilon)}$ in $(Y,\phi_{t(\epsilon)})$ such that $\tilde P_{\epsilon, t(\epsilon)}\to P$ in the sense of currents as $\epsilon\to 0$.  Moreover,
 $\tilde P_{\epsilon, t(\epsilon)}$ are diffeomorphic to $P_{\epsilon, t(\epsilon)}$ and hence are diffeomorphic to the connected sums as mentioned in the theorem.
 \end{proof}

\begin{remark}[{\textbf{Discussion on leading order term}}]\label{rmk discussion on leading order term}
 It remains to prove that the leading order term of $t(\epsilon)-t_0$ is of the required form if $\bphi\in \bsP^\ddagger$ (see \autoref{remark ddag} for the reason behind this restriction).  We can consider $$\alpha_\epsilon=\alpha^2_\epsilon:=\rho_{\delta} \Upsilon_*({s_\epsilon}_* \beta)+ (1-\rho_{\delta})(\epsilon^3u_P+\alpha).$$
 Here $\alpha$ and $\beta$ represents $P$ and $L$ repectively and $u_P:=(\Theta^C_P)^{-1}\hat u_P\in \ker \fL_{P,-2}$ with $$\beta_1^\pm=B\xi^\pm, l=1, \lambda_1=\nu=-2, \lambda_0=-4, \mu=2, s_0=2, q=\frac 25.$$  

By abusing notation we denote $B\xi^\pm$ by simply $\xi^\pm$. We use the notation $(\tilde \alpha_\epsilon,t(\epsilon))$ from \autoref{thm main desing} following \autoref{rmk improved approx asso desing}, which satisfies
 \begin{equation}\tilde \fF_{\epsilon}(\tilde \alpha_\epsilon,t(\epsilon)-t_0)=\mathfrak L_{P_\epsilon}(\tilde \alpha_\epsilon-\alpha_\epsilon)+(t(\epsilon)-t_0)f_\epsilon+  Q_\epsilon(\tilde \alpha_\epsilon-\alpha_\epsilon,t(\epsilon))+\mathfrak F_\epsilon(\alpha_\epsilon)=0.
 \end{equation}
 
 Define $\mathring{P}_{\epsilon}:={P_C}\setminus \Upsilon(B(0,2 \epsilon R_\infty))$ and $\mathring{P}_{\delta}:={P_C}\setminus \Upsilon(B(0,\delta))$. 
Over $\mathring{P}_{\delta}$, $$\mathfrak F_\epsilon(\alpha_\epsilon)=\fL_P(\alpha_\epsilon-\alpha)+Q_P(\alpha_\epsilon-\alpha).$$
Therefore,
\begin{equation}\label{eq main Lawlor leading order term}
	0=\inp{\tilde \fF_{\epsilon}(\tilde \alpha_\epsilon,t(\epsilon))}{u_P}_{L^2_{\mathring{P}_{\epsilon}}}=(t(\epsilon)-t_0)\inp{f_\epsilon}{u_P}_{L^2_{\mathring{P}_{\epsilon}}}+\inp{\fL_{P}\alpha_\epsilon}{u_P}_{L^2_{\mathring{P}_{\delta}}}+e^1_\epsilon+e^2_\epsilon+e^3_\epsilon+e^4_\epsilon+e^5_\epsilon
	\end{equation}
where $e^1_\epsilon:=\inp{ \mathfrak L_{P_\epsilon}(\tilde \alpha_\epsilon-\alpha_\epsilon)}{u_P}_{L^2_{\mathring{P}_{\epsilon}}}$, 
	$e^2_\epsilon:=\inp{ Q_\epsilon(\tilde \alpha_\epsilon-\alpha_\epsilon,t(\epsilon)-t_0)}{u_P}_{L^2_{\mathring{P}_{\epsilon}}}$,
	and$$e^3_\epsilon:=\inp{Q_P(\alpha_\epsilon-\alpha)}{u_P}_{L^2_{\mathring{P}_{\delta}}}, \ \  e^4_\epsilon=-\inp{\fL_{P}(\alpha)}{u_P}_{L^2_{\mathring{P}_{\epsilon}}}+\inp{\fL_{P}(\alpha)}{u_P}_{L^2_{\mathring{P}_{\epsilon}\setminus \mathring{P}_{\delta}}},\  e^5_\epsilon=\inp{\mathfrak F_\epsilon(\alpha_\epsilon)}{u_P}_{L^2_{\mathring{P}_{\epsilon}\setminus \mathring{P}_{\delta}}}.$$	
By \autoref{prop formal self adjoint} we obtain that $\inp{\fL_{P}\alpha_\epsilon}{u_P}_{L^2_{\mathring{P}_{\delta}}}$ is
\begin{equation}\label{eq main Lawlor leading linear 1}\int_{\partial \mathring{P}_{\delta}}\iota_{\alpha_\epsilon}\iota_{u_P}\Upsilon_{P_C}^*\psi=\epsilon^{3}\int_{\partial B_{\Pi^+}(0,\delta)}\iota_{\xi^+}\iota_{u_P-\xi^+}\Upsilon^*\psi-\epsilon^{3}\int_{\partial B_{\Pi^-}(0,\delta)}\iota_{\xi^-}\iota_{u_P-\xi^-}\Upsilon^*\psi+o(\epsilon^{3}).
\end{equation}	
Now,
\begin{equation}\label{eq main Lawlor leading linear 2}
	\int_{\partial B_{\Pi^\pm}(0,\delta)}\iota_{\xi^\pm}\iota_{u_P-\xi^\pm}\Upsilon^*\psi=\int_{\partial B_{\Pi^\pm}(0,\delta)}\iota_{\xi^\pm}\iota_{u_P-\xi^\pm}\psi_e+o(\epsilon)
\end{equation}
and
\begin{equation}\label{eq main Lawlor leading linear 3}
	\int_{\partial B_{\Pi^\pm}(0,\delta)}\iota_{\xi^\pm}\iota_{u_P-\xi^\pm}\psi_e=(u_P-\xi^\pm)(0)\cdot \bn+o(\epsilon).
\end{equation}

Define $v_P=(\Theta^C_P)^{-1}\hat v_P$. Then $\fL_Pv_P=f_P$ . We observe $\inp{f_\epsilon}{u_P}_{L^2_{\mathring{P}_{\epsilon}}}=\inp{f_P}{u_P}_{L^2_{\mathring{P}_{\delta}}}+o(1)$. Therefore, similar to \autoref{eq main Lawlor leading linear 1}, \autoref{eq main Lawlor leading linear 2}, \autoref{eq main Lawlor leading linear 3}  we obtain
 \begin{align}\label{eq main Lawlor leading linear 5}
 \inp{f_P}{u_P}_{L^2_{\mathring{P}_{\delta}}}&=\int_{\partial B_{\Pi^+}(0,\delta)}\iota_{v_P}\iota_{\xi^+}\psi_e-\int_{\partial B_{\Pi^-}(0,\delta)}\iota_{v_P}\iota_{\xi^-}\psi_e+o(1) \nonumber\\
 &=\big(v_P^+(0)-v_P^-(0)\big)\cdot \bn+o(1).
\end{align}

We can further compute 
\begin{equation}\label{eq main Lawlor leading linear 4}
\abs{e^3_\epsilon}+\abs{e^4_\epsilon}+\abs{e^5_\epsilon}=o(\epsilon^3).
\end{equation}

If, in addition, one had $\abs{e^1_\epsilon} + \abs{e^2_\epsilon} = o(\epsilon^3)$, then the leading-order expansion would coincide with the expression given in \autoref{eq t(epsilon) intersection}. However, it is \textbf{not} immediately evident from our desingularization theorem why the estimate in \autoref{eq main Lawlor leading linear 4} should hold. Using \autoref{eq error main desing thm}, we can only deduce the following:
\begin{align*}
\abs{e^1_\epsilon} &= \abs{\inp{ \mathfrak{L}_{P_\epsilon}(\tilde \alpha_\epsilon - \alpha_\epsilon) - \mathfrak{L}_{P}(\tilde \alpha_\epsilon - \alpha_\epsilon)}{u_P}_{L^2(\mathring{P}_{\epsilon})}} + \abs{\inp{ \mathfrak{L}_{P}(\tilde \alpha_\epsilon - \alpha_\epsilon)}{u_P}_{L^2(\mathring{P}_{\epsilon})}} \\
&\lesssim o(\epsilon^{3}) + \left\lvert \int_{\partial B_{\Pi^+}(0, 2\epsilon R_\infty)} \iota_{\tilde \alpha_\epsilon - \alpha_\epsilon} \iota_{\xi^+} \Upsilon^* \psi - \int_{\partial B_{\Pi^-}(0, 2\epsilon R_\infty)} \iota_{\tilde \alpha_\epsilon - \alpha_\epsilon} \iota_{\xi^-} \Upsilon^* \psi \right\rvert.
\end{align*}
In fact, \autoref{eq error main desing thm} does not imply that the final expression above must be $o(\epsilon^3)$. A similar issue arises for $e^2_\epsilon$ as well.

Although not obvious, a suitable adaptation of the desingularization theorem in this special case might still yield the estimate in \autoref{eq main Lawlor leading linear 4}, and could be pursued in future work.
\end{remark}

\appendix
\section{Lemma for quadratic estimates}\label{appendix}
The following lemma provides a key pointwise estimate used in the proofs of the quadratic estimates stated in \autoref{prop cs quadratic estimate}, \autoref{prop Holder cs quadratic estimate}, and \autoref{prop quadratic estimate with epsilon}.
\begin{lemma}\label{lem quadratic}Let $M$ be a $3$-dimensional oriented submanifold of an almost $G_2$-manifold $(Y,\phi)$. Let $\Upsilon_M:V_M\subset NM\to Y$ be a tubular neighbourhood map. There exists a constant $C>0$ such that if $w \in C^\infty(NM)$ and $u, v, s\in C^\infty(V_M)$ then over $\Gamma_s:=\graph s\subset V_M$ we have $$\abs{\iota_w\mathcal L_u\mathcal L_v\psi}\lesssim \abs{w}\Big(f_{1}\abs{u}\abs{v}
+f_{2}\big(\abs{u}\abs{\nabla v}+\abs{v}\abs{\nabla u}\big)+\abs{\nabla u}\abs{\nabla v}\abs{\psi}\Big),$$
with 
\begin{align*}&f_{1}:=\abs{\psi}\abs{\nabla B}+\abs{\psi}\abs{B}^2+\abs{B}\abs{\nabla\psi}+\abs{\nabla^2\psi}+\abs{R}\abs{\psi}\\
&f_{2}:=\abs{\nabla\psi}+\abs{B}\abs{\psi},
\end{align*}
where $R$ is the Riemann curvature tensor and $B:C^\infty(NM)\times C^\infty(NM)\to C^\infty(T{V_M})$ is defined by
 $B(u,v):=\nabla_uv$. Here $u$ is understood as the extension of the vector field $u$ over $NM$ by fiberwise translation. The $4$-form $\psi$ denotes $\Upsilon_{M}^*\psi$, and $\fL$ stands for the Lie derivative.

\end{lemma}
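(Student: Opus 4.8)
The plan is to compute $\mathcal{L}_u\mathcal{L}_v\psi$ by repeatedly applying Cartan's formula and the identity $\mathcal{L}_X = \nabla_X + (\nabla X)$ acting on tensors, where $(\nabla X)$ denotes the endomorphism-type contribution of the covariant derivative of $X$; since $\psi = \Upsilon_M^*\psi$ is a fixed $4$-form on $V_M$ and the extensions $u,v$ of normal vector fields are defined by fiberwise translation, all derivatives will land either on $\psi$, on its first or second covariant derivatives, or on the $B$-terms that record how $\nabla$ fails to commute with the fiberwise translation framing. Concretely, I would first write $\mathcal{L}_v\psi = \iota_v d\psi + d\iota_v\psi$, but it is cleaner to use the tensorial form $\mathcal{L}_v\psi = \nabla_v\psi + \psi(\nabla_\bullet v, \cdots) + \cdots$, i.e. $(\mathcal{L}_v\psi)(X_1,\dots,X_4) = (\nabla_v\psi)(X_1,\dots,X_4) - \sum_i \psi(X_1,\dots,\nabla_{X_i}v,\dots,X_4)$. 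The point is that $\nabla v$ enters linearly and $\nabla_{X_i}v = B(X_i, v)$ up to the translation correction, so $\mathcal{L}_v\psi$ is schematically $O(\nabla\psi)\cdot v + \psi\cdot \nabla^\perp v + O(B\cdot\psi)\cdot v$.

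Next I would apply $\mathcal{L}_u$ to this expression and expand by the Leibniz rule. Each of the three schematic terms produces: (i) a piece where $\mathcal{L}_u$ hits $\nabla\psi$ or $\nabla^2\psi$ or $R\cdot\psi$ (from commuting $\nabla_u$ past $\nabla_v$, which is where the Riemann curvature $R$ appears), contributing to $f_1 |u||v|$; (ii) a piece where $\mathcal{L}_u$ hits one of the $\nabla v$ or $\nabla u$ factors already present, contributing to $f_2(|u||\nabla v| + |v||\nabla u|)$; and (iii) a single term where both $\mathcal{L}_u$ and $\mathcal{L}_v$ differentiate, producing $\psi \cdot \nabla^\perp u \cdot \nabla^\perp v$. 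The $B$-terms arise because the extension-by-translation framing is not parallel: $\nabla_X \tilde u$ differs from the ``flat'' derivative by $B(X,\cdot)$-type terms, and $\nabla B$ and $B\cdot B$ then appear in $f_1$ when a second derivative falls on such a correction. Throughout, the normal components versus tangential components can be separated using $TY|_M = TM \oplus NM$ and the decomposition of $\nabla$ in \autoref{def prelim normal connection}; since $[\tilde u,\tilde v]=0$ by \autoref{rmk commutator}, no extra Lie-bracket terms survive, which keeps the bookkeeping manageable. Finally, contracting with $w$ and taking pointwise norms with respect to $g_\phi$ over the compact-fiber tubular neighbourhood (where all the background quantities $\psi, \nabla\psi, \nabla^2\psi, B, \nabla B, R$ are bounded on the relevant region) gives the stated inequality with the constant $C$ depending only on these background bounds.

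The main obstacle I anticipate is purely organizational rather than conceptual: carefully tracking which covariant derivatives of $\psi$ (up to second order) and which powers of $B$ can appear, and verifying that no term of the form $|u||v|$ with a \emph{worse} coefficient than $f_1$ slips through — in particular confirming that the ``double derivative on a single slot'' terms genuinely reorganize into $\nabla^2\psi$, $R\cdot\psi$, $\nabla B \cdot \psi$, or $B^2\cdot\psi$ and nothing else. A secondary subtlety is being precise about the translation-framing correction: one must check that differentiating the fiberwise-constant extension $\tilde u$ in the base directions produces only $B$-type terms (not, say, terms involving $\nabla$ of the tubular map to higher order), which is where the defining property of $\Upsilon_M$ in \autoref{def prelim tubular neighbourhood map} and the convexity of fibers is used. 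Once the schematic expansion $\mathcal{L}_u\mathcal{L}_v\psi = O(f_1)\cdot u\cdot v + O(f_2)\cdot(u\cdot\nabla^\perp v + v\cdot\nabla^\perp u) + \psi\cdot\nabla^\perp u\cdot\nabla^\perp v$ is established, the pointwise norm estimate and the identification of $f_1, f_2$ are immediate. I would present the argument at the level of this schematic expansion, spelling out one representative term in each of the three groups and leaving the remaining parallel computations to the reader.
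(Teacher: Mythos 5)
Your proposal follows essentially the same route as the paper: express $\mathcal L_v\psi$ tensorially via the Levi--Civita connection, apply $\mathcal L_u$ to that expression using the Leibniz rule and the identity $[u,w]=0$ for the fiberwise-translation extensions, collect terms into coefficients involving $\psi,\nabla\psi,\nabla^2\psi,B,\nabla B,R$, and take pointwise absolute values. The paper carries this out by evaluating everything on $(w,e_1,e_2,e_3)$ with $\{e_i\}$ an orthonormal frame for $T\Gamma_s$ and then doing the explicit expansion term by term; your ``schematic'' plan is the same computation at a higher level of abstraction, and the anticipated landing spots for each kind of term ($\nabla^2\psi$ and $R\cdot\psi$ into $f_1$, mixed $\nabla\psi\cdot\nabla u$ into $f_2$, $\psi\cdot\nabla u\cdot\nabla v$ standalone) match the paper exactly.

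Two small imprecisions worth flagging. First, the Lie derivative identity for a covariant tensor with a torsion-free connection has a plus sign: $(\mathcal L_v\psi)(X_1,\dots,X_4) = (\nabla_v\psi)(X_1,\dots,X_4) + \sum_i \psi(X_1,\dots,\nabla_{X_i}v,\dots,X_4)$, since $\nabla_v X_i - [v,X_i] = \nabla_{X_i}v$. You wrote a minus sign; this does not affect the eventual estimate because everything is passed through $|\cdot|$, but the intermediate cancellations the paper relies on (``putting together after some cancellations'') do depend on the signs. Second, ``no extra Lie-bracket terms survive'' is too optimistic: $[u,w]=0$ kills the bracket term in the $w$-slot, but the $[u,e_i]$ brackets in the tangent-frame slots are genuinely present and must be recombined with $\nabla_u e_i$ to produce $\nabla_{e_i}u$ terms. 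This is precisely where the $B$- and $\nabla B$-contributions are organized, and it is the place where the bookkeeping is least automatic, so I would not wave it away. Neither issue invalidates the strategy, but both would need fixing in a full write-up.
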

\begin{proof}
Let $\{e_1,e_2,e_3\}$ be a local orthonormal frame for $T\Gamma_s$. For a torsion free connection we have
$$\mathcal L_v\psi(w,e_1,e_2,e_3)=\nabla_v\psi(w,e_1,e_2,e_3)+\sum_{\substack{\text{cyclic}\\\text{permutations}}}\psi(w,\nabla_{e_1}v,e_2,e_3)\ +\psi(\nabla_w v,{e_1},e_2,e_3).$$
By definition we have
 $$\mathcal L_u\mathcal L_v\psi(w,e_1,e_2,e_3)=u\big(\mathcal L_v\psi(w,e_1,e_2,e_3)\big)-\sum_{\substack{\text{cyclic}\\\text{permutations}}}\mathcal L_v\psi(w,[u, e_1],e_2,e_3).$$
Let's combine the above. We obtain
 \begin{align*}
 	 u\big(\nabla_v\psi(w,e_1,e_2,e_3)\big)&=\nabla_u\nabla_v\psi(w,e_1,e_2,e_3)\\
 	&+\sum_{\substack{\text{cyclic}\\\text{permutations}}}\nabla_v\psi(w,{\nabla_{e_1}u+[u,e_1]},e_2,e_3) +\nabla_v\psi(\nabla_wu,e_1,e_2,e_3),
	\end{align*}
\begin{align*} 
u\big(\psi(w,\nabla_{e_1}v,e_2,e_3)\big)
&= \nabla_u\psi(w,\nabla_{e_1}v,e_2,e_3)
	+\psi(w,{\nabla_u\nabla_{e_1}v},e_2,e_3)+{\psi}(w,\nabla_{e_1}v,\nabla_ue_2,e_3)\\ 
	&+{\psi}(w,\nabla_{e_1}v,e_2,{\nabla_u{e_3}})+\psi(\nabla_wu,\nabla_{e_1}v,e_2,e_3),
\end{align*}
\begin{align*}u\big(\psi(\nabla_w v,{e_1},e_2,e_3)\big)&=\nabla_u\psi(\nabla_w v,{e_1},e_2,e_3)+\sum_{\substack{\text{cyclic}\\\text{permutations}}}\psi(\nabla_w v,{\nabla_u{e_1}},e_2,e_3)\\
& +\psi({\nabla_w\nabla_uv+R(u,w)v},{e_1},e_2,e_3),	
\end{align*}
and 
\begin{align*}
&\mathcal L_v\psi(w,[u, e_1],e_2,e_3)\\
&=\psi(w,{\nabla_{[u, e_1]}v},e_2,e_3) +\psi(\nabla_wv,{[u, e_1]},e_2,e_3)\\
&+ \psi(w,{[u, e_1]},\nabla_{e_2}v,e_3)+\psi(w,{[u, e_1]},e_2,\nabla_{e_3}v)
	+\nabla_v\psi(w,{[u, e_1]},e_2,e_3).
\end{align*}	
Finally, putting together after some cancellations we obtain
\begin{align*}
\abs{\mathcal L_u\mathcal L_v\psi(w,e_1,e_2,e_3)}
&\lesssim	\abs{w}(\abs{\nabla_u\nabla_v\psi}+\abs{\nabla\psi}\abs{\nabla u}\abs{v})+\abs{\nabla\psi}\abs{ v}\abs{\nabla_wu}\\
&+\abs{w}\abs{\nabla\psi}\abs{u}\abs{\nabla v}+\abs{\psi}\abs{\nabla v}\abs{\nabla_wu}\\
&+\abs{\nabla_wv}(\abs{\nabla\psi}\abs{u}+\abs{\psi}\abs{\nabla u})+\abs{\psi}(\abs{\nabla_{w}\nabla_u v}+\abs{R}\abs{w}\abs{u}\abs{v})\\
&+\abs{w}\abs{\psi}\Big(\sum_{i=1}^3\abs{\nabla_{e_i}(\nabla_uv)}+\abs{R}\abs{u}\abs{v}+\abs{\nabla u}\abs{\nabla v}\Big).
\end{align*}
Since,
$$\abs{\nabla_u\nabla_v\psi}\lesssim \abs{u}\abs{v}(\abs{B}\abs{\nabla\psi}+\abs{\nabla^2\psi}),$$
 $$\abs{\nabla_{e_i}(\nabla_uv)}\lesssim\abs{\nabla B}\abs{u}\abs{v}+\abs{B}\abs{u}\abs{\nabla v}+\abs{B}\abs{v}\abs{\nabla u},$$
 $$\abs{\nabla_{w}(\nabla_uv)}\lesssim\abs{w}\abs{u}\abs{v}\big(\abs{\nabla B}+\abs{B}^2),$$
 we have  \[\abs{\iota_w\mathcal L_u\mathcal L_v\psi}\lesssim f_{1}\abs{w}\abs{u}\abs{v}
+f_{2}\abs{w}(\abs{u}\abs{\nabla^\perp v}+\abs{v}\abs{\nabla^\perp u})+\abs{w}\abs{\nabla^\perp u}\abs{\nabla^\perp v}\abs{\psi}.\qedhere\]
\end{proof}

\printreferences


@incollection {Hironaka1973,
    AUTHOR = {Hironaka, Heisuke},
     TITLE = {Subanalytic sets},
 BOOKTITLE = {Number theory, algebraic geometry and commutative algebra, in
              honor of {Y}asuo {A}kizuki},
     PAGES = {453--493},
 PUBLISHER = {Kinokuniya Book Store, Tokyo},
      YEAR = {1973},
   MRCLASS = {32C05 (32B20)},
  MRNUMBER = {377101},
MRREVIEWER = {A.\ Hirschowitz},
}

@article{Hardt1975,
		author = {Robert M. Hardt},
	date-added = {2025-05-21 10:12:54 -0400},
	date-modified = {2025-05-21 10:12:54 -0400},
	issn = {00029947},
	journal = {Transactions of the American Mathematical Society},
	pages = {57--70},
	publisher = {American Mathematical Society},
	title = {Topological Properties of Subanalytic Sets},
	url = {http://www.jstor.org/stable/1997218},
	urldate = {2025-05-21},
	volume = {211},
	year = {1975}}

@book{Gibson1976,
	author = {Gibson, Christopher G. and Wirthm{\"u}ller, Klaus and du Plessis, Andrew A. and Looijenga, Eduard J. N.},
	date-added = {2025-05-20 12:20:26 -0400},
	date-modified = {2025-05-20 12:20:26 -0400},
	doi = {10.1007/bfb0095244},
	isbn = {9783540379577},
	issn = {1617-9692},
	journal = {Lecture Notes in Mathematics},
	publisher = {Springer Berlin Heidelberg},
	title = {Topological Stability of Smooth Mappings},
	url = {http://dx.doi.org/10.1007/BFb0095244},
	year = {1976}}

@article{Kusner23,
url = {https://doi.org/10.1515/crelle-2023-0066},
title = {On the index of minimal 2-tori in the 4-sphere},
title = {},
author = {Rob Kusner and Peng Wang},
pages = {9--22},
volume = {2024},
number = {806},
journal = {Journal für die reine und angewandte Mathematik (Crelles Journal)},
doi = {doi:10.1515/crelle-2023-0066},
year = {2024},
lastchecked = {2025-06-06}
}

@book{Karigiannis2020,
	date-added = {2023-04-08 16:41:25 +0200},
	date-modified = {2023-04-08 16:41:25 +0200},
	doi = {10.1007/978-1-0716-0577-6},
	editor = {Spiro Karigiannis and Naichung Conan Leung and Jason D. Lotay},
	publisher = {Springer {US}},
	title = {Lectures and Surveys on G2-Manifolds and Related Topics},
	url = {https://doi.org/10.1007%2F978-1-0716-0577-6},
	year = 2020}

@article{Shen1995,
	author = {Shen, Zhong Min},
	date-added = {2023-03-29 15:55:24 +0200},
	date-modified = {2023-03-29 15:55:24 +0200},
	doi = {10.2307/2154814},
	fjournal = {Transactions of the American Mathematical Society},
	issn = {0002-9947},
	journal = {Trans. Amer. Math. Soc.},
	mrclass = {53C20 (53C15 53C23 53C30 53C40)},
	mrnumber = {1254853},
	mrreviewer = {Conrad Plaut},
	number = {4},
	pages = {1343--1350},
	title = {A convergence theorem for {R}iemannian submanifolds},
	url = {https://doi.org/10.2307/2154814},
	volume = {347},
	year = {1995},
	bdsk-url-1 = {https://mathscinet.ams.org/mathscinet-getitem?mr=1254853}}

@article{Lawlor1989,
	author = {Lawlor, Gary},
	date-added = {2023-03-13 15:07:36 +0100},
	date-modified = {2023-03-13 15:07:36 +0100},
	doi = {10.1007/BF01393905},
	fjournal = {Inventiones Mathematicae},
	issn = {0020-9910},
	journal = {Invent. Math.},
	mrclass = {49F10 (53A10)},
	mrnumber = {974911},
	mrreviewer = {Themistocles M. Rassias},
	number = {2},
	pages = {437--446},
	title = {The angle criterion},
	url = {https://doi.org/10.1007/BF01393905},
	volume = {95},
	year = {1989},
	bdsk-url-1 = {https://mathscinet.ams.org/mathscinet-getitem?mr=974911}}

@article{Spolaor,
	author = {Luca Spolaor},
	date-added = {2025-06-06 18:59:17 -0400},
	date-modified = {2025-06-06 18:59:17 -0400},
	doi = {https://doi.org/10.1016/j.aim.2019.04.057},
	issn = {0001-8708},
	journal = {Advances in Mathematics},
	keywords = {Integral currents, Semicalibrations, Regularity},
	pages = {747-815},
	title = {Almgren's type regularity for semicalibrated currents},
	url = {https://www.sciencedirect.com/science/article/pii/S0001870819302269},
	volume = {350},
	year = {2019}}

@article{Bera2022,
	author = {Gorapada Bera},
	date-added = {2023-09-16 15:19:57 -0400},
	date-modified = {2023-09-16 15:19:57 -0400},
	arxiv = {2209.00156},
	month = {09},
	title = {Associative submanifolds in twisted connected sum $G_2$-manifolds},
	url = {https://arxiv.org/pdf/2209.00156.pdf},
	year = {2022},
	bdsk-url-1 = {https://arxiv.org/pdf/2209.00156.pdf},
	bdsk-url-2 = {https://arxiv.org/abs/2209.00156}}

@article{Lotay2007a,
	author = {Lotay, Jason Dean},
	date-added = {2022-06-22 11:35:41 +0200},
	date-modified = {2022-06-22 11:35:41 +0200},
	doi = {10.1093/qmath/hal015},
	fjournal = {The Quarterly Journal of Mathematics},
	issn = {0033-5606},
	journal = {Q. J. Math.},
	mrclass = {53C38},
	mrnumber = {2305050},
	mrreviewer = {Erxiao Wang},
	number = {1},
	pages = {53--70},
	title = {Calibrated submanifolds of {$\R^7$} and {$\R^8$} with symmetries},
	url = {https://doi.org/10.1093/qmath/hal015},
	volume = {58},
	year = {2007},
	bdsk-url-1 = {https://mathscinet.ams.org/mathscinet-getitem?mr=2305050}}

@article{Joyce2002,
	author = {Joyce, Dominic},
	date-added = {2022-06-22 11:19:45 +0200},
	date-modified = {2022-06-22 11:19:45 +0200},
	doi = {10.1215/S0012-7094-02-11511-7},
	fjournal = {Duke Mathematical Journal},
	issn = {0012-7094},
	journal = {Duke Math. J.},
	mrclass = {53C38 (53D12)},
	mrnumber = {1932324},
	mrreviewer = {Tommaso Pacini},
	number = {1},
	pages = {1--51},
	title = {Special {L}agrangian {$m$}-folds in {$\C^m$} with symmetries},
	url = {https://doi.org/10.1215/S0012-7094-02-11511-7},
	volume = {115},
	year = {2002},
	bdsk-url-1 = {https://mathscinet.ams.org/mathscinet-getitem?mr=1932324}}

@article{Joyce2001,
	author = {Joyce, Dominic D.},
	date-added = {2022-06-22 11:19:45 +0200},
	date-modified = {2022-06-22 11:19:45 +0200},
	doi = {10.1023/A:1013034620426},
	fjournal = {Annals of Global Analysis and Geometry},
	issn = {0232-704X},
	journal = {Ann. Global Anal. Geom.},
	mrclass = {53C38 (53D12)},
	mrnumber = {1876865},
	mrreviewer = {Andrew Swann},
	number = {4},
	pages = {345--403},
	title = {Evolution equations for special {L}agrangian 3-folds in {$\C^3$}},
	url = {https://doi.org/10.1023/A:1013034620426},
	volume = {20},
	year = {2001},
	bdsk-url-1 = {https://mathscinet.ams.org/mathscinet-getitem?mr=1876865}}

@article{Haskins2004a,
	author = {Haskins, Mark},
	date-added = {2022-06-22 11:08:38 +0200},
	date-modified = {2022-06-22 11:08:38 +0200},
	doi = {10.1007/s00222-003-0348-x},
	fjournal = {Inventiones Mathematicae},
	issn = {0020-9910},
	journal = {Invent. Math.},
	mrclass = {53C38 (32Q25 53C42)},
	mrnumber = {2135184},
	mrreviewer = {Diego Matessi},
	number = {1},
	pages = {11--70},
	title = {The geometric complexity of special {L}agrangian {$T^2$}-cones},
	url = {https://doi.org/10.1007/s00222-003-0348-x},
	volume = {157},
	year = {2004},
	bdsk-url-1 = {https://mathscinet.ams.org/mathscinet-getitem?mr=2135184}}

@article{Haskins2007,
	author = {Haskins, Mark and Kapouleas, Nikolaos},
	date-added = {2022-06-22 11:08:38 +0200},
	date-modified = {2022-06-22 11:08:38 +0200},
	doi = {10.1007/s00222-006-0010-5},
	fjournal = {Inventiones Mathematicae},
	issn = {0020-9910},
	journal = {Invent. Math.},
	mrclass = {53C38 (53D12)},
	mrnumber = {2270454},
	mrreviewer = {Henri Anciaux},
	number = {2},
	pages = {223--294},
	title = {Special {L}agrangian cones with higher genus links},
	url = {https://doi.org/10.1007/s00222-006-0010-5},
	volume = {167},
	year = {2007},
	bdsk-url-1 = {https://mathscinet.ams.org/mathscinet-getitem?mr=2270454}}

@article{Haskins2004b,
	author = {Haskins, Mark},
	date-added = {2022-06-22 11:08:38 +0200},
	date-modified = {2022-06-22 11:08:38 +0200},
	fjournal = {American Journal of Mathematics},
	issn = {0002-9327},
	journal = {Amer. J. Math.},
	mrclass = {53C38 (32Q25 53C42)},
	mrnumber = {2075484},
	mrreviewer = {Henri Anciaux},
	number = {4},
	pages = {845--871},
	title = {Special {L}agrangian cones},
	url = {http://muse.jhu.edu/journals/american_journal_of_mathematics/v126/126.4haskins.pdf},
	volume = {126},
	year = {2004},
	bdsk-url-1 = {https://mathscinet.ams.org/mathscinet-getitem?mr=2075484}}

@article{Lotay2011,
	author = {Lotay, Jason D.},
	date-added = {2022-06-22 11:04:02 +0200},
	date-modified = {2022-06-22 11:04:02 +0200},
	doi = {10.1093/qmath/hap036},
	fjournal = {The Quarterly Journal of Mathematics},
	issn = {0033-5606},
	journal = {Q. J. Math.},
	mrclass = {53C38 (58D27)},
	mrnumber = {2774358},
	mrreviewer = {Spiro Karigiannis},
	number = {1},
	pages = {131--156},
	title = {Asymptotically conical associative 3-folds},
	url = {https://doi.org/10.1093/qmath/hap036},
	volume = {62},
	year = {2011},
	bdsk-url-1 = {https://mathscinet.ams.org/mathscinet-getitem?mr=2774358}}

@article{Madnick2021,
	author = {Madnick, Jesse},
	date = {2022/09/22},
	date-added = {2025-06-06 18:54:44 -0400},
	date-modified = {2025-06-06 18:54:44 -0400},
	doi = {10.1007/s12220-022-01040-9},
	id = {Madnick2022},
	isbn = {1559-002X},
	journal = {The Journal of Geometric Analysis},
	number = {12},
	pages = {295},
	title = {The Second Variation for Null-Torsion Holomorphic Curves in the 6-Sphere},
	url = {https://doi.org/10.1007/s12220-022-01040-9},
	volume = {32},
	year = {2022},
	bdsk-url-1 = {https://doi.org/10.1007/s12220-022-01040-9}}

@book{Rowland1999,
	author = {Rowland, Todd Westley},
	date-added = {2022-06-22 10:22:18 +0200},
	date-modified = {2022-06-22 10:22:18 +0200},
	isbn = {978-0599-44788-2},
	mrclass = {Thesis},
	mrnumber = {2716854},
	note = {Thesis (Ph.D.)--The University of Chicago},
	pages = {55},
	publisher = {ProQuest LLC, Ann Arbor, MI},
	title = {Smooth holomorphic curves in $S^6$},
	url = {http://gateway.proquest.com/openurl?url_ver=Z39.88-2004&rft_val_fmt=info:ofi/fmt:kev:mtx:dissertation&res_dat=xri:pqdiss&rft_dat=xri:pqdiss:9943108},
	year = {1999},
	bdsk-url-1 = {https://mathscinet.ams.org/mathscinet-getitem?mr=2716854}}

@article{Bryant1982,
	author = {Robert L. Bryant},
	date-added = {2022-06-21 20:01:32 +0200},
	date-modified = {2022-06-21 20:01:32 +0200},
	doi = {10.4310/jdg/1214436919},
	journal = {Journal of Differential Geometry},
	month = {jan},
	number = {2},
	publisher = {International Press of Boston},
	title = {Submanifolds and special structures on the octonians},
	url = {https://doi.org/10.4310%2Fjdg%2F1214436919},
	volume = {17},
	year = 1982,
	bdsk-url-1 = {https://doi.org/10.4310%2Fjdg%2F1214436919},
	bdsk-url-2 = {https://doi.org/10.4310/jdg/1214436919}}

@book{Joyce2007,
	author = {Joyce, Dominic D.},
	date-added = {2022-06-01 23:20:25 +0200},
	date-modified = {2022-06-01 23:20:25 +0200},
	isbn = {978-0-19-921559-1},
	mrclass = {53C38 (53C29)},
	mrnumber = {2292510},
	mrreviewer = {Spiro Karigiannis},
	pages = {x+303},
	publisher = {Oxford University Press, Oxford},
	series = {Oxford Graduate Texts in Mathematics},
	title = {Riemannian holonomy groups and calibrated geometry},
	url = {https://mathscinet.ams.org/mathscinet-getitem?mr=2292510},
	volume = {12},
	year = {2007},
	bdsk-url-1 = {https://mathscinet.ams.org/mathscinet-getitem?mr=2292510}}

@article{Karigiannis_2020,
	author = {Spiro Karigiannis and Jason D. Lotay},
	date-added = {2022-05-29 19:15:04 +0200},
	date-modified = {2022-05-29 19:15:04 +0200},
	doi = {10.4310/cag.2020.v28.n5.a1},
	journal = {Communications in Analysis and Geometry},
	number = {5},
	pages = {1057--1210},
	publisher = {International Press of Boston},
	title = {Deformation theory of $G_2$ conifolds},
	url = {https://doi.org/10.4310%2Fcag.2020.v28.n5.a1},
	volume = {28},
	year = 2020,
	bdsk-url-1 = {https://doi.org/10.4310%2Fcag.2020.v28.n5.a1},
	bdsk-url-2 = {https://doi.org/10.4310/cag.2020.v28.n5.a1}}

@article{Adams1988,
	author = {Adams, David and Simon, Leon},
	date-added = {2022-05-29 18:16:31 +0200},
	date-modified = {2022-05-29 18:16:31 +0200},
	doi = {10.1512/iumj.1988.37.37012},
	fjournal = {Indiana University Mathematics Journal},
	issn = {0022-2518},
	journal = {Indiana Univ. Math. J.},
	mrclass = {58E15 (58E12 58E20)},
	mrnumber = {963501},
	mrreviewer = {Gong Qing Zhang},
	number = {2},
	pages = {225--254},
	title = {Rates of asymptotic convergence near isolated singularities of geometric extrema},
	url = {https://mathscinet.ams.org/mathscinet-getitem?mr=963501},
	volume = {37},
	year = {1988},
	bdsk-url-1 = {https://mathscinet.ams.org/mathscinet-getitem?mr=963501}}

@article{Braun2018,
	author = {Braun, A.~P. and Del Zotto, M. and Halverson, J. and Larfors, M. and Morrison, D.~R. and Sch{\"a}fer-Nameki, S.},
	doi = {10.1007/jhep09(2018)077},
	journal = {Journal of High Energy Physics},
	number = {9},
	publisher = {Springer},
	title = {{Infinitely many M2--instanton corrections to M--theory on $G_2$--manifolds}},
	volume = {2018},
	year = {2018},
	bdsk-url-1 = {https://doi.org/10.1007/jhep09(2018)077}}

@thesis{Marshall2002,
	author = {Marshall, S.},
	institution = {University of Oxford},
	title = {{Deformations of special Lagrangian submanifolds}},
	url ={https://people.maths.ox.ac.uk/joyce/theses/MarshallDPhil.pdf},
bdsk-url-1 ={https://people.maths.ox.ac.uk/joyce/theses/MarshallDPhil.pdf},
	type = {DPhil. Thesis},
	year = {2002}}

@article{Dwivedi2022,
	abstract = {This article constructs examples of associative submanifolds in {\$}{\$}G{\_}2{\$}{\$}--manifolds obtained by resolving {\$}{\$}G{\_}2{\$}{\$}--orbifolds using Joyce's generalised Kummer construction. As the {\$}{\$}G{\_}2{\$}{\$}--manifolds approach the {\$}{\$}G{\_}2{\$}{\$}--orbifolds, the volume of the associative submanifolds tends to zero. This partially verifies a prediction due to Halverson and Morrison.},
	author = {Dwivedi, Shubham and Platt, Daniel and Walpuski, Thomas},
	date-added = {2023-08-04 18:04:32 +0200},
	date-modified = {2023-08-04 18:04:32 +0200},
	day = {01},
	doi = {10.1007/s00220-023-04716-7},
	issn = {1432-0916},
	journal = {Communications in Mathematical Physics},
	month = {Aug},
	number = {3},
	pages = {2327--2353},
	title = {Associative Submanifolds in Joyce's Generalised Kummer Constructions},
	url = {https://link.springer.com/content/pdf/10.1007/s00220-023-04716-7.pdf},
	volume = {401},
	year = {2023},
	bdsk-url-1 = {https://link.springer.com/content/pdf/10.1007/s00220-023-04716-7.pdf},
	bdsk-url-2 = {https://doi.org/10.1007/s00220-023-04716-7}}

@article{Gayet2014,
	author = {Gayet, D.},
	doi = {10.1093/qmath/hat042},
	issn = {0033-5606},
	journal = {The Quarterly Journal of Mathematics},
	mr = {3285769},
	number = {4},
	pages = {1213--1240},
	title = {Smooth moduli spaces of associative submanifolds},
	volume = {65},
	year = {2014},
	zbl = {1310.53049},
	bdsk-url-1 = {https://doi.org/10.1093/qmath/hat042}}

@misc{Wendl2021,
	author = {Wendl, C.},
	title = {How I learned to stop worrying and love the Floer $C_\epsilon$ space},
	url = {https://symplecticfieldtheorist.wordpress.com/2021/05/14/how-i-learned-to-stop-worrying-and-love-the-floer-c-epsilon-space/},
	year = {2021},
	bdsk-url-1 = {https://symplecticfieldtheorist.wordpress.com/2021/05/14/how-i-learned-to-stop-worrying-and-love-the-floer-c-epsilon-space/}}

@book{Wendl2016a,
	arxiv = {1612.01009},
	author = {Wendl, C.},
	title = {{Lectures on Symplectic Field Theory}},
	year = {2019}}

@misc{Ball2000,
	arxiv = {2003.13169},
	author = {Ball, G. and Madnick, J.},
	pubstate = {to appear in {\it Communications in Analysis and Geometry}},
	title = {{Associative Submanifolds of the Berger Space}},
	year = {2020}}

@article{Kawai2015,
	author = {Kawai, K.},
	doi = {10.1093/qmath/hav021},
	journal = {The Quarterly Journal of Mathematics},
	mr = {3396095},
	number = {3},
	pages = {861--893},
	publisher = {Oxford University Press},
	title = {{Some associative submanifolds of the squashed $7$--sphere}},
	volume = {66},
	year = {2015},
	zbl = {1326.53070},
	bdsk-url-1 = {https://doi.org/10.1093/qmath/hav021}}

@article{Bryant1987,
	author = {Bryant, R.~L.},
	coden = {ANMAAH},
	date-added = {2019-07-22 09:48:28 -0400},
	date-modified = {2019-07-22 09:48:28 -0400},
	doi = {10.2307/1971360},
	issn = {0003-486X},
	journal = {Annals of Mathematics},
	mr = {916718},
	mrclass = {53C25 (53C15 53C57)},
	mrreviewer = {Marisa Fern{\'a}ndez},
	number = {3},
	pages = {525--576},
	title = {Metrics with exceptional holonomy},
	volume = {126},
	year = {1987},
	zbl = {0637.53042},
	bdsk-url-1 = {http://dx.doi.org/10.2307/1971360}}

@article{Bryant1989,
	author = {Bryant, R.~ L. and Salamon, S.~M.},
	coden = {DUMJAO},
	date-added = {2019-07-22 09:48:28 -0400},
	date-modified = {2019-07-22 09:48:28 -0400},
	doi = {10.1215/S0012-7094-89-05839-0},
	issn = {0012-7094},
	journal = {Duke Mathematical Journal},
	mr = {1016448},
	mrclass = {53C25 (53C57)},
	mrreviewer = {Krzysztof Galicki},
	number = {3},
	pages = {829--850},
	title = {On the construction of some complete metrics with exceptional holonomy},
	volume = {58},
	year = {1989},
	bdsk-url-1 = {http://dx.doi.org/10.1215/S0012-7094-89-05839-0}}

@article{CHNP15,
	author = {Corti, A. and Haskins, M. and Nordstr{\"o}m, J. and Pacini, T.},
	date-added = {2019-07-22 09:48:28 -0400},
	date-modified = {2019-07-22 09:48:28 -0400},
	doi = {10.1215/00127094-3120743},
	issn = {0012-7094},
	journal = {Duke Mathematical Journal},
	mr = {3369307},
	mrclass = {53C29 (14J28 14J32 14J45 53C25 53C38)},
	number = {10},
	pages = {1971--2092},
	title = {{$G_2$--manifolds and associative submanifolds via semi-Fano $3$--folds}},
	volume = {164},
	year = {2015},
	zbl = {06486366},
	bdsk-url-1 = {http://dx.doi.org/10.1215/00127094-3120743}}

@article{Doan2017d,
	arxiv = {1712.08383},
	author = {Doan, A. and Walpuski, T.},
	date-added = {2019-07-22 09:48:28 -0400},
	date-modified = {2019-07-22 09:48:28 -0400},
	doi = {10.4310/PAMQ.2019.v15.n4.a4},
	journal = {Pure and Applied Mathematics Quarterly},
	number = {4},
	pages = {1047--1133},
	title = {{On counting associative submanifolds and Seiberg--Witten monopoles}},
	url = {https://walpu.ski/Research/CountingAssociativesSeibergWitten.pdf},
	volume = {15},
	year = {2019},
	bdsk-url-1 = {https://walpu.ski/Research/CountingAssociativesSeibergWitten.pdf}}

@article{Doan2018a,
	arxiv = {1809.04731},
	author = {Doan, A. and Walpuski, T.},
	date-added = {2019-07-22 09:48:28 -0400},
	date-modified = {2019-07-22 09:56:37 -0400},
	doi = {10.1016/j.aim.2020.107550},
	journal = {Advances in Mathematics},
	mr = {4199270},
	title = {{Castelnuovo's bound and rigidity in almost complex geometry}},
	url = {https://walpu.ski/Research/CastelnuovoRigidity.pdf},
	volume = {379},
	year = {2021},
	zbl = {07300460},
	bdsk-url-1 = {https://walpu.ski/Research/CastelnuovoRigidity.pdf},
	bdsk-url-2 = {https://doi.org/10.1016/j.aim.2020.107550}}

@book{Donaldson1990,
	address = {New York},
	author = {Donaldson, S.~K. and Kronheimer, P.~B.},
	date-added = {2019-07-22 09:48:28 -0400},
	date-modified = {2019-07-22 09:48:28 -0400},
	mr = {MR1079726},
	series = {Oxford Mathematical Monographs},
	title = {The geometry of four-manifolds},
	year = {1990},
	zbl = {0904.57001}}

@incollection{Donaldson1998,
	author = {Donaldson, S.~K. and Thomas, R.~P.},
	booktitle = {{The geometric universe}},
	date-added = {2019-07-22 09:48:28 -0400},
	date-modified = {2019-07-22 09:48:28 -0400},
	mr = {1634503},
	pages = {31--47},
	publisher = {Oxford University Press},
	timestamp = {2009.11.23},
	title = {Gauge theory in higher dimensions},
	url = {http://www.ma.ic.ac.uk/~rpwt/skd.pdf},
	year = 1998,
	zbl = {0926.58003},
	bdsk-url-1 = {http://www.ma.ic.ac.uk/~rpwt/skd.pdf}}

@book{Donaldson2002,
	address = {Cambridge},
	author = {Donaldson, S. K.},
	date-added = {2019-07-22 09:48:28 -0400},
	date-modified = {2019-07-22 09:48:28 -0400},
	doi = {10.1017/CBO9780511543098},
	isbn = {0-521-80803-0},
	mr = {1883043},
	mrclass = {57R58 (57R57 58J10)},
	mrreviewer = {Vicente Mu{\~n}oz},
	note = {With the assistance of M. Furuta and D. Kotschick},
	number = {147},
	pages = {viii+236},
	publisher = {Cambridge University Press},
	series = {Cambridge Tracts in Mathematics},
	title = {Floer homology groups in {Y}ang--{M}ills theory},
	year = {2002},
	zbl = {0998.53057},
	bdsk-url-1 = {http://dx.doi.org/10.1017/CBO9780511543098}}

@incollection{Donaldson2009,
	arxiv = {0902.3239},
	author = {Donaldson, S.~K. and Segal, E.~P.},
	booktitle = {Surveys in differential geometry},
	date-added = {2019-07-22 09:48:28 -0400},
	date-modified = {2019-07-22 09:48:28 -0400},
	mr = {2893675},
	mrclass = {53Cxx},
	note = {Geometry of special holonomy and related topics},
	pages = {1--41},
	publisher = {International Press},
	title = {{Gauge theory in higher dimensions, II}},
	volume = {16},
	year = {2011},
	zbl = {1256.53038}}

@article{Fernandez1982,
	author = {Fern{\'a}ndez, M. and Gray, A.},
	coden = {ANLMAE},
	date-added = {2019-07-22 09:48:28 -0400},
	date-modified = {2019-07-22 09:48:28 -0400},
	doi = {10.1007/BF01760975},
	fjournal = {Annali di Matematica Pura ed Applicata. Serie Quarta},
	issn = {0003-4622},
	journal = {Ann. Mat. Pura Appl. (4)},
	mr = {696037},
	mrclass = {53C25},
	mrreviewer = {Hidekiyo Wakakuwa},
	pages = {19--45 (1983)},
	title = {Riemannian manifolds with structure group {$\Gtwo$}},
	volume = {132},
	year = {1982},
	zbl = {0524.53023},
	bdsk-url-1 = {http://dx.doi.org/10.1007/BF01760975}}

@article{Gray1969,
	author = {Gray, A.},
	date-added = {2019-07-22 09:48:28 -0400},
	date-modified = {2019-07-22 09:48:28 -0400},
	issn = {0002-9947},
	journal = {Transactions of the American Mathematical Society},
	mr = {0243469},
	mrclass = {53.80 (57.00)},
	mrreviewer = {Ronald Brown},
	pages = {465--504},
	title = {Vector cross products on manifolds},
	volume = {141},
	year = {1969},
	zbl = {0182.24603}}

@article{Harvey1982,
	author = {Harvey, R. and Lawson, Jr., H.~B.},
	coden = {ACMAA8},
	date-added = {2019-07-22 09:48:28 -0400},
	date-modified = {2019-07-22 09:48:28 -0400},
	doi = {10.1007/BF02392726},
	fjournal = {Acta Mathematica},
	issn = {0001-5962},
	journal = {Acta Math.},
	mr = {MR666108},
	mrclass = {53C40 (49F20 53C65 58E15 58G30)},
	owner = {thomas},
	pages = {47--157},
	timestamp = {2010.07.15},
	title = {Calibrated geometries},
	volume = {148},
	year = {1982},
	zbl = {0584.53021},
	bdsk-url-1 = {http://dx.doi.org/10.1007/BF02392726}}

@book{Harvey1990,
	address = {Boston, MA},
	author = {Harvey, F.~R.},
	date-added = {2019-07-22 09:48:28 -0400},
	date-modified = {2019-07-22 09:48:28 -0400},
	isbn = {0-12-329650-1},
	pages = {xiv+323},
	publisher = {Academic Press Inc.},
	series = {Perspectives in Mathematics},
	title = {Spinors and calibrations},
	volume = {9},
	year = {1990}}

@article{Hitchin1974,
	author = {Hitchin, N.},
	date-added = {2019-07-22 09:48:28 -0400},
	date-modified = {2019-07-22 09:48:28 -0400},
	journal = {Advances in Math.},
	mr = {0358873},
	pages = {1--55},
	title = {Harmonic spinors},
	volume = {14},
	year = {1974}}

@incollection{Hitchin2001,
	address = {Providence, RI},
	arxiv = {math/0107101},
	author = {Hitchin, N.~J.},
	booktitle = {Global differential geometry: the mathematical legacy of {A}lfred {G}ray ({B}ilbao, 2000)},
	date-added = {2019-07-22 09:48:28 -0400},
	date-modified = {2019-07-22 09:48:28 -0400},
	mr = {1871001},
	mrclass = {53C25 (53C10 53C29 58A10 58E11)},
	mrreviewer = {Santiago R. Simanca},
	pages = {70--89},
	publisher = {Amer. Math. Soc.},
	series = {Contemp. Math.},
	title = {Stable forms and special metrics},
	volume = {288},
	year = {2001},
	zbl = {1004.53034}}

@article{Joyce1996a,
	author = {Joyce, D.~D.},
	coden = {JDGEAS},
	date-added = {2019-07-22 09:48:28 -0400},
	date-modified = {2019-07-22 09:48:28 -0400},
	doi = {10.4310/jdg/1214458110},
	issn = {0022-040X},
	journal = {Journal of Differential Geometry},
	mr = {MR1424428},
	mrclass = {53C25},
	mrreviewer = {Claude LeBrun},
	number = {2},
	owner = {thomas},
	pages = {329--375},
	timestamp = {2009.12.09},
	title = {Compact {R}iemannian {$7$}--manifolds with holonomy {$\Gtwo$}. {II}},
	volume = {43},
	year = {1996},
	zbl = {0861.53023},
	bdsk-url-1 = {http://projecteuclid.org/getRecord?id=euclid.jdg/1214458109}}

@article{Joyce1996b,
	author = {Joyce, D.~D.},
	coden = {INVMBH},
	date-added = {2019-07-22 09:48:28 -0400},
	date-modified = {2019-07-22 09:48:28 -0400},
	doi = {10.1007/s002220050039},
	fjournal = {Inventiones Mathematicae},
	issn = {0020-9910},
	journal = {Invent. Math.},
	mr = {MR1383960},
	mrclass = {53C25 (53C20)},
	mrreviewer = {Claude LeBrun},
	number = {3},
	owner = {thomas},
	pages = {507--552},
	timestamp = {2009.12.09},
	title = {Compact {$8$}--manifolds with holonomy {${\rm Spin}(7)$}},
	volume = {123},
	year = {1996},
	zbl = {0858.53037},
	bdsk-url-1 = {http://dx.doi.org/10.1007/s002220050039}}

@article{Joyce2003,
	author = {Joyce, D.~D.},
	coden = {JDGEAS},
	date-added = {2019-07-22 09:48:28 -0400},
	date-modified = {2019-07-22 09:48:28 -0400},
	issn = {0022-040X},
	journal = {Journal of Differential Geometry},
	mr = {2015549},
	mrclass = {53C38 (32Q25)},
	mrreviewer = {Tommaso Pacini},
	number = {2},
	pages = {279--347},
	title = {Special {L}agrangian submanifolds with isolated conical singularities. {V}. {S}urvey and applications},
	url = {http://projecteuclid.org/getRecord?id=euclid.jdg/1090426679},
	volume = {63},
	year = {2003},
	bdsk-url-1 = {http://projecteuclid.org/getRecord?id=euclid.jdg/1090426679}}

@article{Joyce2004a,
	author = {Joyce, D.~D.},
	coden = {AGAGDV},
	date-added = {2019-07-22 09:48:28 -0400},
	date-modified = {2019-07-22 09:48:28 -0400},
	doi = {10.1023/B:AGAG.0000023229.72953.57},
	issn = {0232-704X},
	journal = {Annals of Global Analysis and Geometry},
	mr = {2053761},
	mrclass = {53C38 (32Q25)},
	mrreviewer = {Spiro Karigiannis},
	number = {3},
	pages = {201--251},
	title = {Special {L}agrangian submanifolds with isolated conical singularities. {I}. {R}egularity},
	volume = {25},
	year = {2004},
	bdsk-url-1 = {http://dx.doi.org/10.1023/B:AGAG.0000023229.72953.57}}

@article{Joyce2004c,
	author = {Joyce, D.~D.},
	coden = {AGAGDV},
	date-added = {2019-07-22 09:48:28 -0400},
	date-modified = {2019-07-22 09:48:28 -0400},
	doi = {10.1023/B:AGAG.0000023231.31950.cc},
	issn = {0232-704X},
	journal = {Annals of Global Analysis and Geometry},
	mr = {2054578},
	mrclass = {53C38 (32Q25 53D12)},
	mrreviewer = {Spiro Karigiannis},
	number = {1},
	pages = {1--58},
	title = {Special {L}agrangian submanifolds with isolated conical singularities. {III}. {D}esingularization, the unobstructed case},
	volume = {26},
	year = {2004},
	bdsk-url-1 = {http://dx.doi.org/10.1023/B:AGAG.0000023231.31950.cc}}

@article{Joyce2004d,
	author = {Joyce, D.~D.},
	coden = {AGAGDV},
	date-added = {2019-07-22 09:48:28 -0400},
	date-modified = {2019-07-22 09:48:28 -0400},
	doi = {10.1023/B:AGAG.0000031067.19776.15},
	issn = {0232-704X},
	journal = {Annals of Global Analysis and Geometry},
	mr = {2070685},
	mrclass = {53C38 (32Q25 53D12)},
	mrreviewer = {Spiro Karigiannis},
	number = {2},
	pages = {117--174},
	title = {Special {L}agrangian submanifolds with isolated conical singularities. {IV}. {D}esingularization, obstructions and families},
	volume = {26},
	year = {2004},
	bdsk-url-1 = {http://dx.doi.org/10.1023/B:AGAG.0000031067.19776.15}}

@incollection{Joyce2016,
	arxiv = {1610.09836},
	author = {Joyce, D.~D.},
	booktitle = {{Modern geometry: a celebration of the work of Simon Donaldson}},
	date-added = {2019-07-22 09:48:28 -0400},
	date-modified = {2019-07-22 09:48:28 -0400},
	doi = {10.1090/pspum/099/01739},
	mr = {3838881},
	number = {99},
	pages = {97--160},
	publisher = {American Mathematical Society},
	series = {Proceedings of Symposia in Pure Mathematics},
	title = {{Conjectures on counting associative $3$--folds in $\Gtwo$--manifolds}},
	year = {2018},
	zbl = {1448.53061},
	bdsk-url-1 = {https://doi.org/10.1090/pspum/099/01739}}

@article{Joyce2017,
	arxiv = {1707.09325},
	author = {Joyce, D.D. and Karigiannis, S.},
	date-added = {2019-07-22 09:48:28 -0400},
	date-modified = {2019-07-22 09:48:28 -0400},
	doi = {10.4310/jdg/1612975017},
	issn = {0022-040X},
	journal = {Journal of Differential Geometry},
	mrn = {4214342},
	number = {2},
	pages = {255--343},
	title = {{A new construction of compact torsion-free $\Gtwo$--manifolds by gluing families of Eguchi--Hanson spaces}},
	url = {https://doi.org/10.4310/jdg/1612975017},
	volume = {117},
	year = 2017,
	zbl = {1464.53067},
	bdsk-url-1 = {https://doi.org/10.4310/jdg/1612975017}}

@article{Kov03,
	author = {Kovalev, A.},
	coden = {JRMAA8},
	date-added = {2019-07-22 09:48:28 -0400},
	date-modified = {2019-07-22 09:48:28 -0400},
	doi = {10.1515/crll.2003.097},
	issn = {0075-4102},
	journal = {Journal f\"ur die Reine und Angewandte Mathematik},
	mr = {MR2024648},
	mrclass = {53C29 (14J28 32Q20 58J37)},
	mrreviewer = {Andrew Swann},
	owner = {thomas},
	pages = {125--160},
	timestamp = {2009.12.09},
	title = {Twisted connected sums and special {R}iemannian holonomy},
	volume = {565},
	year = {2003},
	zbl = {1043.53041},
	bdsk-url-1 = {http://dx.doi.org/10.1515/crll.2003.097}}

@article{KL11,
	author = {Kovalev, A. and Lee, N.-H.},
	coden = {MPCPCO},
	date-added = {2019-07-22 09:48:28 -0400},
	date-modified = {2019-07-22 09:48:28 -0400},
	doi = {10.1017/S030500411100003X},
	issn = {0305-0041},
	journal = {Mathematical Proceedings of the Cambridge Philosophical Society},
	mr = {2823130},
	mrclass = {53C29 (14J28 32Q20)},
	mrreviewer = {G. K. Sankaran},
	number = {2},
	pages = {193--218},
	title = {{$K3$} surfaces with non-symplectic involution and compact irreducible {$G_2$}--manifolds},
	volume = {151},
	year = {2011},
	zbl = {1228.53064},
	bdsk-url-1 = {http://dx.doi.org/10.1017/S030500411100003X}}

@book{Lawson1989,
	address = {Princeton, NJ},
	author = {Lawson, Jr., H.~B. and Michelsohn, M.-L.},
	date-added = {2019-07-22 09:48:28 -0400},
	date-modified = {2019-07-22 09:48:28 -0400},
	isbn = {0-691-08542-0},
	mr = {1031992},
	mrclass = {53-02 (53A50 53C20 57R75 58G10)},
	mrreviewer = {N. J. Hitchin},
	pages = {xii+427},
	publisher = {Princeton University Press},
	series = {Princeton Mathematical Series},
	title = {Spin geometry},
	volume = {38},
	year = {1989},
	zbl = {0688.57001}}

@article{Lockhart1985,
	author = {Lockhart, R.~B. and McOwen, R.~C.},
	coden = {PSNAAI},
	date-added = {2019-07-22 09:48:28 -0400},
	date-modified = {2019-07-22 09:48:28 -0400},
	issn = {0391-173X},
	journal = {Annali della Scuola Normale Superiore di Pisa},
	mr = {837256},
	mrclass = {58G15 (47A53 47F05 58G10)},
	mrreviewer = {William Margulies},
	number = {3},
	pages = {409--447},
	title = {Elliptic differential operators on noncompact manifolds},
	url = {http://www.numdam.org/item?id=ASNSP_1985_4_12_3_409_0},
	volume = {12},
	year = {1985},
	zbl = {0615.58048},
	bdsk-url-1 = {http://www.numdam.org/item?id=ASNSP_1985_4_12_3_409_0}}

@article{Lotay2009a,
	author = {Lotay, J. D.},
	coden = {GFANFB},
	date-added = {2019-07-22 09:48:28 -0400},
	date-modified = {2019-07-22 09:48:28 -0400},
	doi = {10.1007/s00039-009-0711-1},
	issn = {1016-443X},
	journal = {Geometric and Functional Analysis},
	mr = {2491698},
	mrclass = {53C38 (58J05)},
	mrreviewer = {Spiro Karigiannis},
	number = {6},
	pages = {2055--2100},
	title = {Desingularization of coassociative $4$--folds with conical singularities},
	volume = {18},
	year = {2009},
	bdsk-url-1 = {http://dx.doi.org/10.1007/s00039-009-0711-1}}

@article{Lotay2012a,
	author = {Lotay, J.D.},
	doi = {10.1112/plms/pds029},
	journal = {Proceedings of the London Mathematical Society},
	mr = {3004102},
	number = {6},
	pages = {1183--1214},
	publisher = {Wiley},
	title = {Associative submanifolds of the $7$--sphere},
	volume = {105},
	year = {2012},
	zbl = {1268.53019},
	bdsk-url-1 = {https://doi.org/10.1112/plms/pds029}}

@article{Lotay2014,
	author = {Lotay, J. D.},
	date-added = {2019-07-22 09:48:28 -0400},
	date-modified = {2019-07-22 09:48:28 -0400},
	doi = {10.1090/S0002-9947-2014-06193-X},
	issn = {0002-9947},
	journal = {Transactions of the American Mathematical Society},
	mr = {3256193},
	mrclass = {53C38},
	mrreviewer = {Spiro Karigiannis},
	number = {11},
	pages = {6051--6092},
	title = {Desingularization of coassociative $4$--folds with conical singularities: obstructions and applications},
	volume = {366},
	year = {2014},
	bdsk-url-1 = {http://dx.doi.org/10.1090/S0002-9947-2014-06193-X}}

@article{Mazya1978,
	author = {Maz'ya, V. G. and Plamenevski{\u \i}, B. A.},
	date-added = {2019-07-22 09:48:28 -0400},
	date-modified = {2019-07-22 09:48:28 -0400},
	issn = {0025-584X},
	journal = {Mathematische Nachrichten},
	mr = {0492821},
	mrclass = {35J40},
	mrreviewer = {L. Cattabriga},
	pages = {25--82},
	title = {Estimates in {$L_{p}$} and in {H}\"older classes, and the {M}iranda--{A}gmon maximum principle for the solutions of elliptic boundary value problems in domains with singular points on the boundary},
	volume = {81},
	year = {1978}}

@book{McDuff2012,
	author = {McDuff, D. and Salamon, D.},
	date-added = {2019-07-22 09:48:28 -0400},
	date-modified = {2019-07-22 09:48:28 -0400},
	edition = {Second},
	mr = {2954391},
	pages = {xiv+726},
	publisher = {American Mathematical Society, Providence, RI},
	series = {American Mathematical Society Colloquium Publications},
	title = {{$J$--holomorphic curves and symplectic topology}},
	volume = {52},
	year = {2012},
	zbl = {1272.53002}}

@article{McLean1998,
	author = {McLean, R.~C.},
	date-added = {2019-07-22 09:48:28 -0400},
	date-modified = {2019-07-22 09:48:28 -0400},
	doi = {10.4310/CAG.1998.v6.n4.a4},
	issn = {1019-8385},
	journal = {Communications in Analysis and Geometry},
	mr = {1664890},
	number = {4},
	pages = {705--747},
	title = {Deformations of calibrated submanifolds},
	volume = {6},
	year = {1998},
	zbl = {0929.53027},
	bdsk-url-1 = {https://doi.org/10.4310/CAG.1998.v6.n4.a4}}

@misc{Nordstrom2013,
	author = {Nordstr{\"o}m, J.},
	date-added = {2019-07-22 09:48:28 -0400},
	date-modified = {2019-07-22 09:48:28 -0400},
	note = {unpublished},
	title = {{Desingularising intersecting associatives}},
	year = {2013}}

@inproceedings{Salamon2010,
	arxiv = {1005.2820},
	author = {Salamon, D. A. and Walpuski, T.},
	booktitle = {{Proceedings of the 23rd G{\"o}kova Geometry--Topology Conference}},
	date-added = {2019-07-22 09:48:28 -0400},
	date-modified = {2019-07-22 09:55:49 -0400},
	mr = 3676083,
	pages = {1--85},
	title = {{Notes on the octonions}},
	url = {https://walpu.ski/Research/Octonions.pdf},
	year = 2017,
	zbl = 06810387,
	bdsk-url-1 = {https://walpu.ski/Research/Octonions.pdf}}

@book{Simon1983,
	address = {Canberra},
	author = {Simon, L.},
	date-added = {2019-07-22 09:48:28 -0400},
	date-modified = {2019-07-22 09:48:28 -0400},
	isbn = {0-86784-429-9},
	mr = {756417},
	publisher = {Australian National University Centre for Mathematical Analysis},
	series = {Proceedings of the Centre for Mathematical Analysis, Australian National University},
	title = {Lectures on geometric measure theory},
	volume = {3},
	year = {1983},
	zbl = {0546.49019}}
\end{document}

%\begin{figure}
%\centering
%	\includegraphics[width=100mm,scale=10]{../../../Desktop/TwistedConnectedSumG2Manifold}
%	\caption{A test figure}
%\end{figure}